\theoremstyle{plain}
\newtheorem{thm}{Theorem}[section]
\newtheorem{hook}[thm]{Hooking Lemma}
\newtheorem{cor}[thm]{Corollary}
\newtheorem{lem}[thm]{Lemma}
\newtheorem*{lem*}{Lemma}
\newtheorem{prop}[thm]{Proposition}
\newtheorem*{quest*}{Open Question}
\theoremstyle{definition}
\newtheorem{defn}[thm]{Definition}
\newtheorem*{defn*}{Definition}
\newtheorem*{asp}{Assumption}
\theoremstyle{remark}
\newtheorem{oss}[thm]{Remark}
\newtheorem{exm}[thm]{Example}
\DeclareMathOperator{\dist}{d}
\DeclareMathOperator{\dG}{d_{\Gamma}}
\DeclareMathOperator{\dH}{d_{\mathit{h}}}
\DeclareMathOperator{\dF}{d_{\mathsf{F}}}
\DeclareMathOperator{\dB}{d_{\mathsf{B}}}
\DeclareMathOperator{\Haus}{d_{\mathcal{H}}}
\DeclareMathOperator{\diam}{diam}
\DeclareMathOperator{\T}{T^{*}}
\DeclareMathOperator{\Tend}{\mathcal{E}_{T}}
\DeclareMathOperator{\End}{\mathcal{E}}
\begin{document}
	\title{Rationality of the Gromov Boundary of Hyperbolic Groups }
	\author{Davide Perego}
	\date{}
 \thanks{The author is member of the Gruppo Nazionale per le Strutture Algebriche, Geometriche e le loro Applicazioni (GNSAGA) of the Istituto Nazionale di Alta Matematica (INdAM)}

\address{Dipartimento di Matematica e Applicazioni, Universit\`a degli Studi di Milano-Bicocca, Ed. U5, Via R.Cozzi 55, 20125 Milano, Italy, EU}
\email{\href{mailto:d.perego25@campus.unimib.it}{d.perego25@campus.unimib.it}}
	\maketitle

\begin{abstract}
In \cite{BBM}, Belk, Bleak and Matucci proved that hyperbolic groups can be seen as subgroups of the rational group. In order to do so, they associated a tree of atoms to each hyperbolic group. Not so many connections between this tree and the literature on hyperbolic groups were known. In this paper, we prove an atom-version of the fellow traveler property and exponential divergence, together with other similar results. These leads to several consequences: a bound from above of the topological dimension of the Gromov boundary, the definition of an augmented tree which is quasi-isometric to the Cayley graph and a synchronous recognizer which described the equivalence relation given by the quotient map defined from the end of the tree onto the Gromov boundary.
\end{abstract}

\section*{Introduction}
The Gromov boundary of a hyperbolic group is an object which has been widely studied in the past decades. Examples of this past and ongoing interest can be found in the survey \cite{KB}. A particular effort has been made to detect ``recursive'' presentations of such boundary: in their works \cite{CP,CP2}, Coornaert and Papadopoulos show how it can be seen as a \textit{semi-Markovian space} when the group is torsion free, while Pawlik \cite{P} provides a way to describe it as a \textit{Markov-compacta} and completes the work on semi-Markovian presentations in the general case, and Barrett \cite{B} gives an algorithm to determine if the boundary is a circle and investigates other topological properties. Also the well studied tool of \textit{subdivision rules} plays a role in this context, see e.g.\ \cite{R,R2}.\

The concept of \textit{rationality} that we follow can be found in the work \cite{GNS} of Grigorchuk, Nekrashevych and Sushchanski\v{\i}. The idea is to describe sets and hence relations, and functions, by using \textit{finite state machines}. One of the main goals is to define homeomorphisms of the \textit{Cantor set}  $\{0,1\}^{\omega}$ via asynchronous machines (one bit, i.e.\ $0$ or $1$, as input and a finite string written using $\{0,1\}$ as output at each step of the computation), these are \textit{rational functions}. On the other hand, synchronous machines, which for us have just inputs, at each step can read exactly one bit, are used to define \textit{rational sets} and \textit{rational relations}.\

In \cite{BBM} Belk, Bleak and Matucci associate a \textit{self-similar tree} called the \textit{tree of atoms} $\mathcal{A}(\Gamma)$ to any hyperbolic graph $\Gamma$, and they proceed to prove that the action of a hyperbolic group on the boundary of such a tree is rational, that is any element of the group can be regarded as a finite state machine that has a boundary point as input and its image according to the action as output. They also show that the boundary $\partial \mathcal{A}(\Gamma) $ projects onto the Gromov boundary $\partial \Gamma$ of $\Gamma$ (exploiting Webster and Winchester's work \cite{WW}). Since any self-similar tree defines a language, i.e.\ a subset of $\Sigma^{\omega}$ where $\Sigma$ is a finite set of symbols, and the language is also rational, then the projection induces a coding of any element of the Gromov boundary. Here we mean that to any boundary point we associate some (possibly more than one) elements of $\Sigma^{\omega}$. It is natural to ask whether the equivalence relation given by the projection is a rational relation.\\
In this paper, we tried to answer these and other questions about the relation between the tree of atoms and the Gromov boundary.\\
In order to fix the notations, we recall the main tools in metric geometry and geometric group theory we need in Section~\ref{1}. Section~\ref{2} contains the first original results of the paper, which regard the relation of atoms with cones and balls and in some cases are an improvement of what is pointed out in \cite{BBM}. Furthermore, we introduce \textit{tips of atoms} (Definition~\ref{tips}),
which turn out to be useful in our study. \

In Section~\ref{3} we show how infinite sequences of atoms behave like geodesic rays in hyperbolic graphs. The most useful result for the rest of the discussion is the following.\\

\textbf{Theorem~\ref{Conj1}}
\textit{Let $\Gamma$ be a hyperbolic graph and let $ \mathcal{A}(\Gamma)$ be its tree of atoms. Let $u= ( u_{k} )_{k=1}^{\infty}$ and $v= ( v_{k} )_{k=1}^{\infty}$ be two elements of $\partial \mathcal{A}(\Gamma)$. Then there exists a constant $C$ and a family of distances $\{\dG^{k}\}_{k=1}^{\infty}$ each defined on a level of the tree such that the sequences $u$ and $v$ are mapped in the same element of the Gromov boundary $\partial \Gamma$ if and only if $\dG^{k}(u_{k},v_{k} ) \leq C$ for all $k \geq 1$.}\\

Roughly speaking, this is an analog to the fellow traveler property of geodesic rays. In Section~\ref{4}, we prove an atom-version of the exponential divergence for geodesics (see Proposition~\ref{exp_prop_tdb}) and we define the Gromov product for atoms and for infinite sequences of atoms, providing an explicit relation between the latter and the Gromov product of elements of $\partial \Gamma$ (see Lemma~\ref{Gromov_atoms} and the discussion before it).
Moreover, we bound the fibers of the projection $\partial \mathcal{A}(\Gamma) \twoheadrightarrow \partial \Gamma$ (Theorem~\ref{finite_to_one})
and,  consequently, we provide another way to bound the topological dimension of the Gromov boundary using Theorem~\ref{Hurewicz}. Section~\ref{5} contains a generalization of Theorem~\ref{Conj1} (see Theorem~\ref{SConj1}) and a first approximation of the Gromov boundary using atoms in the sense of the \textit{weak Gromov-Hausdorff convergence} (see Definition~\ref{wGH}).\

One can construct, starting from the tree of atoms and the main results of Section~\ref{3}, the \textit{set of tips} $(\mathbf{T},\Haus)$,
where $\Haus$ is the Hausdorff metric, and the \textit{graph of atoms} $\Gamma_{\mathcal{A}}$ endowed with the standard metric on graphs (Definition~\ref{graph_of_atoms}). In particular, the graph is an \textit{augmented tree} in the sense of \cite{K}. In Section~\ref{6} we provide a quasi-isometry between the Cayley graph of a hyperbolic group and the set of tips (see Proposition~\ref{quasi_iso_haus} for both the definition of the set and the quasi-isometry). 
Furthermore,\\

\textbf{Theorem~\ref{quasi_isometry}}. \textit{Let $G$ be a hyperbolic group and let $\Gamma$ be its Cayley graph. Then the graph of atoms $\Gamma_{\mathcal{A}}$ and $\Gamma$ are quasi-isometric.}\\

In Section~\ref{7}, after briefly recalling language theoretic notions, we present the machine which describes the equivalence relation given by the projection $\partial \mathcal{A}(\Gamma) \twoheadrightarrow \partial \Gamma$. The language is based on the \textit{rigid structure} of the tree of atoms, which is a particular self-similar structure that assigns to each edge in the tree an element of $\Sigma$. The construction of the machine uses, again, results from Section~\ref{3}. The whole section can be summarized obtaining the following \\

\textbf{Theorem~\ref{rational_gluing_main_thm}.} 
\textit{The quotient map $\partial \mathcal{A}(\Gamma) \twoheadrightarrow \partial \Gamma$ defines a rational equivalence relation.}\\

We point out that being a semi-Markovian space implies the existence of such a map with such a property, generally the two notions do not coincide and our case seems to fail being semi-Markovian .\

Finally, in Section~\ref{8} we provide an example of a group with an Apollonian gasket as Gromov boundary. The first part investigates its graph of atoms from the point of view of approximation of the Gromov boundary via finite graphs. The second part is devoted to the partial description of its gluing machine. 

\section{Background} \label{1}

\subsection{Metric Geometry}
We fix some convenient notations and recall some useful facts about metric geometry.

\begin{defn}
Let $X$ be a set. A function $\dist: X \times X \rightarrow \mathbb{R} \cup \{ \infty \}$ such that $\dist(x,y)\geq 0$ or $\dist(x,y)=\infty$ is called \textbf{distance} if the following conditions hold:

\begin{itemize}
	\item[(a)] $\dist(x,x)=0$ for all $x \in X$;
	\item[(b)] $\dist(x,y)=\dist(y,x)$ for all $x,y \in X$.
\end{itemize}
\end{defn}

\noindent In some cases, we add further conditions
\begin{itemize}
	\item[($\overline{a}$)] if $\dist(x,y)=0$ with $x,y \in X$, then $x=y$;
	\item[(c)] $\dist(x,y)\leq \dist(x,z)+\dist(z,y)$ for all $x,y,z \in X$.
\end{itemize}

In particular, we call \textbf{semi-metric} a distance which satisfies $(\overline{a})$ and \textbf{pseudometric} a distance for which holds (c).

\begin{defn} \label{metric}
A \textbf{metric} is a semi-metric that is also a pseudometric.
\end{defn}

In metric geometry there are some techniques that allow a distance to become a metric changing the underlying set in a reasonable sense. We are interested in the following two operations (for more details see \cite[I.1.24]{BH} and \cite[Proposition 1.1.5]{BBI}).\

\textit{First Move.} \label{standard_pseudometric} Take a distance $\dist$ on a space $X$ and define $\underline{\dist}$ in the following way: consider all the possible finite sequences of elements that start from $x$ and end in $y$ and taking the minimum of the sums of the distances between two consecutive elements of the sequence. We get that $\underline{\dist}$ is a pseudo-metric. We do not modify the space $X$ in this case.\

\textit{Second Move.} Take a pseudo-metric $\dist$ on a space $X$ and consider the quotient $X/ \dist$ where elements are equivalence classes of the following relation: $x \sim_{\dist} y$ if and only if $\dist(x,y)=0$. This leads to a metric space, which is a quotient of $X$. \\

Given a metric space $(X,\dist)$, we set 
$$B_{k}(x)= \{ y \in X \mid \dist(x,y) \leq k \}$$
to be the \textbf{ball} of radius $k$ centered in $x$. For simplicity, we deal with pointed spaces $(X, x_{0})$ and hence $B_{k}$ will be the ball centered in $x_{0}$. Closed balls will be helpful as we will deal with graphs.\

Given a metric space $(X,\dist)$ it can be possible to turn it into a length space  defining a new metric $\widehat{\dist}$ that satisfies the condition above and such that the lengths of paths are defined using the original metric $\dist$.\

Assume now that $(X,\dist)$ is a length space and $Y$ is a subset of $X$. In general the metric space $(Y,\dist_{\mid_{Y}})$ is not a length space. But, as presented above, we can consider $(Y, \widehat{\dist}_{\mid_{Y}})$ and we call $\widehat{\dist}_{\mid_{Y}}$ the \textbf{intrinsic metric} on $Y$ with respect to $\dist$.\

\begin{oss}
\label{length}
Let $(X,\dist)$ be a length space and let $Y$ be a subset endowed with the intrinsic metric $\widehat{\dist}$. Then the following hold.
\begin{itemize}
\item[-]If $y_{1}, y_{2} \in Y$, then $\dist(y_{1},y_{2}) \leq \widehat{\dist}(y_{1},y_{2})$.
\item[-]Let $y_{1}, y_{2} \in Y$. Suppose that there exists a geodesic $[y_{1},y_{2}]_{X}$ (with respect to $\dist$) which is fully contained in $Y$. Then $\dist(y_{1},y_{2}) = \widehat{\dist}(y_{1},y_{2})$.
\end{itemize}
\end{oss}

\subsection{Graphs and Groups}
A \textbf{graph} $\Gamma=(V,E)$ is a simple undirected one which is locally finite and connected. By an abuse of notation we will often refer to $\Gamma$, and we will write $x \in \Gamma$, to mean the set of vertices.\

We endow a graph $\Gamma$ with the usual metric $\dG$ defined on vertices. We see the graph as a length space due to its quasi-isometric relation with its geometric realization $|\Gamma|$. So that a geodesic in $\Gamma$ is a sequence of vertices quasi-isometric to a geodesic in $|\Gamma|$.
Having this in mind, we can also define spheres (centered in a distinguished point $x_{0}$). Let $n \in \mathbb{N}$. Then the $n$-\textbf{sphere} $S_{n}$ is the collection of all vertices $x$ such that there exists a geodesic between $x$ and $x_{0}$ of length $n$. \\

We make the following hyphothesis on groups, so that their Cayley graphs satisfy our requirements on graphs.

\begin{asp}
A group is always a finitely generated group and a set of generators is always symmetric, namely if $s$ belongs to a set a generators $S$ then also $s^{-1} \in S$, and it does not contain $1_{G}$.
\end{asp}

\subsection{Hyperbolic Groups and their Boundary}
We recall that there are two definitions of hyperbolic graphs, one based on the thinness condition on triangles and the other based on the following 
\begin{equation}
\label{hypineq}
\tag{$\mathfrak{H}$}(x \mid z) \geq  \min \{ (x \mid y), (y \mid z) \} - \tilde{\delta}.
\end{equation}
We will use both of them, hence we denote with $(x \mid y)$ the Gromov product between $x$ and $y$ with respect to $x_{0}$, $\delta$ the constant that bounds triangle thinness and $\tilde{\delta}$ as appears above.\\

The following two results, that are considered folklore in the theory, are mentioned because they are helpful to our discussion: 
\begin{prop}
\label{generalized_fellow_traveler}
Let $\Gamma$ be a hyperbolic graph and let $[z,x]$ and $[z,y]$ be two geodesics such that $\dG(z,x)=t_{x}$ and $\dG(z,y)=t_{y}$. Put $t_{max}= \max \{ t_{x}, t_{y} \}$ and extended the shorter one to $[0,t_{max}]$ by the constant map. Then 
$$ \dG([z,x](t),[z,y](t)) \leq 2\dG(x,y)+4\delta$$ for all $0 \leq t \leq t_{max}$.
\end{prop}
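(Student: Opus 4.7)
The plan is to exploit $\delta$-thinness of the geodesic triangle with vertices $z$, $x$, and $y$. I would begin by assuming without loss of generality that $t_{x} \leq t_{y}$, so that $t_{\max} = t_{y}$ and $[z,x]$ is the shorter geodesic, extended constantly equal to $x$ on $(t_{x}, t_{y}]$. With this reduction, for each fixed $t \in [0, t_{y}]$ I want to compare the point $[z,x](t)$ (which equals $x$ once $t > t_{x}$) with $p := [z,y](t)$.

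Applying $\delta$-thinness of the triangle $\{z,x,y\}$ to $p$ produces a point $q \in [z,x] \cup [x,y]$ with $\dG(p,q) \leq \delta$. The argument then splits according to which side contains $q$. If $q = [z,x](s)$ lies on $[z,x]$, the identity $\dG(z,p) = t$ together with $|\dG(z,p) - \dG(z,q)| \leq \delta$ forces $|s - t| \leq \delta$; comparing $[z,x](t)$ to $q$ (either both as points of $[z,x]$, or with $[z,x](t) = x$ and $q$ at parameter $s \in [t - \delta, t_{x}]$, in which case $t_{x} - s \leq \delta$) gives $\dG([z,x](t), [z,y](t)) \leq 2\delta$.

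If instead $q \in [x,y]$, I would set $r := \dG(x,q)$ and combine the estimates $\dG(z,q) \in [t-\delta, t+\delta]$ with $\dG(z,q) + \dG(q,y) \geq t_{y}$ and $\dG(q,y) = \dG(x,y) - r$ to obtain $r \leq \dG(x,y) - t_{y} + t + \delta$. The triangle inequality applied to $[z,x](t)$, $q$, $p$, using $\dG([z,x](t), q) \leq \max\{0, t_{x} - t\} + r$, then yields
\[
\dG([z,x](t), [z,y](t)) \leq \dG(x,y) + (t_{x} - t_{y}) + 2\delta \leq \dG(x,y) + 2\delta,
\]
where the last inequality uses $t_{x} \leq t_{y}$ in the range $t \leq t_{x}$ and $t \leq t_{y}$ in the range $t > t_{x}$.

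The only real obstacle is the case-by-case bookkeeping, especially handling the constant tail $(t_{x}, t_{y}]$ uniformly with the genuine geodesic portion. It is worth noting that the calculation actually yields the sharper bound $\dG(x,y) + 2\delta$, so the stated $2\dG(x,y) + 4\delta$ leaves ample room; this slack suggests the author may alternatively prefer a less sharp route, such as first applying the equal-length fellow-traveler property to $[z,y]$ truncated at distance $t_{x}$ and then controlling the tail by a single triangle inequality, which absorbs an extra factor along the way.
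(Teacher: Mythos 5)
Your argument is correct, and every step checks out: the slim-triangle dichotomy for $p=[z,y](t)$, the parameter comparison $|s-t|\leq\delta$ in the first case (including the constant tail, where $t_x-s\leq t-s\leq\delta$ still gives $\dG(x,q)\leq\delta$), and the estimate $r\leq \dG(x,y)-t_y+t+\delta$ in the second case all hold, yielding the sharper bound $\dG(x,y)+2\delta$. The paper does not prove this statement at all — it cites \cite[Lemma H.1.15]{BH} — and the cited argument is of a different flavor: it runs through the Gromov product and the four-point inequality (\ref{hypineq}), applying it twice to compare $([z,x](t)\mid [z,y](t))$ with $(x\mid y)$, which is precisely where the multiplicative factor $2$ and the additive $4\delta$ come from. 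Your route via $\delta$-slimness is self-contained, more elementary, and gives a strictly better constant; the only caveat is that you are implicitly using the slim-triangles formulation of hyperbolicity, which is implied by the thinness condition the paper fixes with the same $\delta$, so nothing is lost. Since the downstream results (e.g.\ Proposition~\ref{tips_non-tips_relation}) only use the stated weaker bound, either constant is adequate for the paper's purposes.
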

For the proof see e.g. \cite[Lemma H.1.15]{BH}.

\begin{prop}[Exponential divergence]
\label{exp_div_geodesics}
Let $\Gamma$ be a hyperbolic graph. There exist three constants $E$, $E_{1}$ and $E_{2}$ such that for any two geodesics $[z,x]$ and $[z,y]$ and given $t,\tilde{t}$ such that $\tilde{t}+t \leq \min \{ \dG(z,x), \dG(z,y) \}$, if $\dG([z,x](\tilde{t}),[z,y](\tilde{t})) > E$ and $c$ is a rectifiable path fully contained in $\Gamma-B_{\tilde{t}+t}(z)$ from $[z,x](\tilde{t}+t)$ to $[z,y](\tilde{t}+t)$, then $\ell(c) > E_{1}e^{E_{2}t}$.\\
Moreover, $E_{1}$ and $E_{2}$ only depends on $\delta$. 
\end{prop}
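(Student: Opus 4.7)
The plan is to reduce the proposition to two independent estimates and combine them. Write $\gamma_1 := [z,x]$ and $\gamma_2 := [z,y]$, and denote by $(\,\cdot\mid\cdot\,)_z$ the Gromov product based at $z$ (the hyperbolicity inequality~\eqref{hypineq} holds at any basepoint up to a constant depending only on $\tilde\delta$). The first step is a \emph{linear lower bound}:
\[
\dG\bigl(\gamma_1(\tilde t + t), \gamma_2(\tilde t + t)\bigr) \geq 2t
\]
whenever $E$ exceeds a threshold $K_0(\tilde\delta)$. This would follow from two applications of \eqref{hypineq}: since $\gamma_i(\tilde t)$ lies on $[z,\gamma_i(\tilde t + t)]$ one has $(\gamma_i(\tilde t)\mid \gamma_i(\tilde t + t))_z = \tilde t$, and applying \eqref{hypineq} first to the triple $(\gamma_1(\tilde t), \gamma_1(\tilde t + t), \gamma_2(\tilde t + t))$ and then to $(\gamma_1(\tilde t), \gamma_2(\tilde t + t), \gamma_2(\tilde t))$ transports the inequality $(\gamma_1(\tilde t)\mid\gamma_2(\tilde t))_z < \tilde t - E/2$ upward to $(\gamma_1(\tilde t + t)\mid\gamma_2(\tilde t + t))_z < \tilde t - E/2 + 2\tilde\delta$; rearranging gives $\dG(\gamma_1(\tilde t + t), \gamma_2(\tilde t + t)) > 2t + E - 4\tilde\delta \geq 2t$ for $E \geq 4\tilde\delta$.

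The second and main step is a \emph{logarithmic divergence} estimate on spheres: that there exist $A, B > 0$, depending only on $\delta$, such that for every $r \geq 0$, every pair $p, q \in S_r$, and every rectifiable path $c \subset \Gamma \setminus B_r(z)$ from $p$ to $q$,
\[
\dG(p, q) \leq A \log\bigl(1 + \ell(c)\bigr) + B.
\]
I would prove this by induction on $\lceil \log_2 \ell(c) \rceil$. Given a long $c$, cut it at its midpoint $m$ into halves $c_1, c_2$ of length $\ell(c)/2$; the point $m$ lies outside $B_r(z)$, possibly on a larger sphere $S_{r'}$ with $r \leq r' \leq r + \ell(c)/2$. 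Project $m$ radially to $\widehat m \in S_r$ along a geodesic $[z, m]$, and apply thin triangles to $[z,p,m]$ and $[z,m,q]$ to control $\dG(p, \widehat m)$ and $\dG(\widehat m, q)$ in terms of the (slightly augmented) half-paths, which still lie in $\Gamma \setminus B_r(z)$. The inductive hypothesis then delivers the logarithmic bound, the recursion absorbing the $O(\delta)$-errors at each level into the final additive constant $B$.

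Combining the two steps at $r = \tilde t + t$, $p = \gamma_1(\tilde t + t)$, $q = \gamma_2(\tilde t + t)$ yields $2t \leq A \log(1 + \ell(c)) + B$, whence $\ell(c) \geq E_1 e^{E_2 t}$ with $E_1 = e^{-B/A}$ and $E_2 = 2/A$; since all intermediate constants depend only on $\delta$ (equivalently $\tilde\delta$), so do $E_1$ and $E_2$. I expect the main obstacle to be the inductive step of the second claim: after subdividing $c$ at a midpoint, one must propagate the estimate across spheres of different radii and ensure that the $O(\delta)$-errors introduced at each of the $O(\log \ell(c))$ recursion levels sum to an absolute constant rather than to something growing with $\ell(c)$. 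Equivalently, the radial projection $m \mapsto \widehat m$ must be almost-Lipschitz with a uniform constant, independent of the recursion depth. This is precisely where $\delta$-hyperbolicity is exploited in its sharpest form, and it is the most technically delicate part of classical treatments of exponential divergence, such as the one in \cite[III.H.1.6]{BH}.
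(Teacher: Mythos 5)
The paper offers no proof of this proposition; it is stated as folklore alongside Proposition~\ref{generalized_fellow_traveler}, which is referred to \cite{BH}. So your proposal must stand on its own. Its overall architecture --- a linear lower bound on $\dG(\gamma_1(\tilde t+t),\gamma_2(\tilde t+t))$ obtained from (\ref{hypineq}), combined with a logarithmic upper bound on the distance between the endpoints of a path avoiding a ball --- is exactly the classical route, and your Step 1 is correct as written.

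The gap is in the induction of Step 2. Your induction parameter is $\lceil\log_2\ell(c)\rceil$, but after cutting $c$ at its midpoint $m\in S_{r'}$ and appending the radial segment $[m,\widehat m]$, whose length $r'-r$ can be as large as $\ell(c)/2$, each augmented half-path can have length up to $\ell(c)/2+\ell(c)/2=\ell(c)$: the parameter need not decrease, so the recursion need not terminate. Moreover, you do not explain how ``thin triangles applied to $[z,p,m]$'' control $\dG(p,\widehat m)$ in terms of the half-path; thinness of that triangle says something about points of the geodesic $[p,m]$, not about $c_1$. The standard repair is to drop both the requirement $p,q\in S_r$ and the projection: prove by induction on $\lceil\log_2\ell(c)\rceil$ that if $\dG(z,p),\dG(z,q)\ge r$ and $c\subset\Gamma\setminus B_r(z)$ joins $p$ to $q$, then $(p\mid q)_z\ge r-\tilde\delta\log_2\ell(c)-C$. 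The base case $\ell(c)\le 1$ is immediate, the inductive step is a single application of (\ref{hypineq}) at the midpoint (no projection needed since the hypothesis only asks the endpoints to lie outside the open ball), the halving is now genuine, and specializing to $p,q\in S_r$ recovers precisely your logarithmic bound with $A=2\tilde\delta/\log 2$. Alternatively, quote \cite[III.H.1.6]{BH} --- every point of a geodesic $[p,q]$ lies within $\delta|\log_2\ell(c)|+1$ of the image of $c$ --- and apply it to the point of $[p,q]$ nearest $z$, which by your Step 1 is at distance at most $\tilde t+O(\delta)$ from $z$ while $c$ stays at distance at least $\tilde t+t$. With either repair the remainder of your argument, including the bookkeeping of constants, goes through.
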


We also recall that the Gromov boundary $\partial \Gamma$ of a hyperbolic graph can be seen as a quotient of both Gromov sequences $\{x_{k}\}_{k=1}^{\infty}$ and geodesic rays $\gamma:[0, \infty [ \rightarrow \Gamma$.

It is worth noticing that adding the boundary to the graph $\Gamma$, you get a compact space (the fact that $\Gamma \cup \partial \Gamma$ is a topological space will be stated later on). Actually every compact space can be seen as a boundary of a hyperbolic space. But since we are focused on groups, there are fewer possibilities. Indeed, the Gromov boundary of a non-elementary hyperbolic group is a compact metrizable space without isolated points. For all these facts see \cite[Section 2]{KB} to have further details. Since there are different versions of it in literature, we recall the definition of Gromov product on $\partial \Gamma$
\begin{defn}\label{Gromov-product-boundary}
Let $\Gamma$ be a hyperbolic graph and let $x_{\infty},y_{\infty} \in \partial \Gamma$. Then the Gromov product between $x_{\infty}$ and $y_{\infty}$ is
$$(x_{\infty} \mid y_{\infty}):= \inf \liminf_{k} (x_{k} \mid y_{k})$$
where the infimum is taken over all the Gromov sequences $\{x_{k}\}_{k=1}^{\infty}$ and $\{y_{k}\}_{k=1}^{\infty}$ that converge respectively to $x_{\infty}$ and $y_{\infty}$.
\end{defn} 
There are some considerations that are worth pointing out before continue. For a complete treatment on the argument, we recommend to see \cite[Section 5]{V}.
\begin{oss}\label{Vaisala_Gromov_product}\
\begin{enumerate}[label=(\alph*),ref=\ref{Vaisala_Gromov_product}(\alph*)]
\item In the same way, we can consider the product between an element $x_{\infty} \in \partial \Gamma$ and an element $y \in \Gamma$ by taking $y_{n}=y$, for every $n$. Note that $\liminf_{n} (x_{\infty} \mid y_{n})$ for some Gromov sequence $\{y_{n}\}_{n=1}^{\infty}$ is infinite if and only if the sequence converge to $x_{\infty}$.
\item The hyperbolic inequality (\ref{hypineq}) can be extended to $\Gamma \cup \partial \Gamma$.
\item \label{Vaisala_Gromov_product_c} To define the Gromov product on the boundary, one can consider also the $\inf \limsup$ and even $\sup \liminf$ or  $\sup \limsup$. We choose the smallest one, but they are all related. Indeed, they all lie in a $2\tilde{\delta}$-interval where $\tilde{\delta}$ is the hyperbolic constant involved in  (\ref{hypineq}) (see \cite[Definition 5.7]{V} for details).
\end{enumerate}
\end{oss}

To stress the connection between trees and hyperbolic spaces, we give this result that will be useful later. The proof can be found in Lemma 3.7 of \cite{GMS}.
\begin{lem}
\label{tripod}
Let $\Gamma$ be a hyperbolic graph with a distinguished point $x_{0}$. Let $\gamma$ and $\eta$ be two geodesic rays of $\Gamma$. Then there exists a quasi-isometry, with $L_{1}=1$ and $L_{2}=5\delta$ and $\delta$ the hyperbolic constant, between $\gamma \cup \eta$ and the tripod consisting of the rays glued together along an initial segment of length $( [\gamma] \mid [\eta])$ .
\end{lem}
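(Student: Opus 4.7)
The plan is to exhibit an explicit map from $\gamma\cup\eta$ to a tripod and verify the quasi-isometry inequality via the extended hyperbolic 4-point inequality~(\ref{hypineq}). First I would set $r=([\gamma]\mid[\eta])_{x_{0}}$ and build the abstract tripod $T$ by taking two isometric copies $\bar{\gamma},\bar{\eta}$ of $[0,\infty[$, identifying their initial subsegments of length $r$ pointwise, and endowing the result with its length metric. Define $f\colon \gamma\cup\eta\rightarrow T$ by $f(\gamma(s))=\bar{\gamma}(s)$ and $f(\eta(t))=\bar{\eta}(t)$; for $p=\gamma(s)=\eta(t)\in\gamma\cap\eta$ one has $s=t$ since both equal $\dG(x_{0},p)$, and the central estimate below forces $s\leq r+O(\delta)$, where the two definitions agree.

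Since $\gamma$ and $\eta$ are each geodesics, $f$ is an isometry when restricted to each. For a mixed pair $\gamma(s),\eta(t)$ we have
\[
d_{T}(\bar{\gamma}(s),\bar{\eta}(t))=s+t-2\min\{s,t,r\},\qquad \dG(\gamma(s),\eta(t))=s+t-2(\gamma(s)\mid\eta(t))_{x_{0}},
\]
so proving the $L_{1}=1,\ L_{2}=5\delta$ statement reduces to the single estimate
\[
\bigl|(\gamma(s)\mid\eta(t))_{x_{0}}-\min\{s,t,r\}\bigr|\leq\tfrac{5\delta}{2}.
\]

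For the lower bound I would iterate~(\ref{hypineq}) on the four points $\gamma(s),[\gamma],[\eta],\eta(t)$, extending it to $\Gamma\cup\partial\Gamma$ via Remark~\ref{Vaisala_Gromov_product}; combined with $(\gamma(s)\mid[\gamma])\geq s-O(\tilde{\delta})$ and $([\eta]\mid\eta(t))\geq t-O(\tilde{\delta})$, which follow from Definition~\ref{Gromov-product-boundary} applied to the Gromov sequence $\gamma(k)\to[\gamma]$ (using $(\gamma(s)\mid\gamma(k))_{x_{0}}=s$ for $k\geq s$), this yields $(\gamma(s)\mid\eta(t))_{x_{0}}\geq\min\{s,t,r\}-O(\tilde{\delta})$. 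For the upper bound, $(\gamma(s)\mid\eta(t))_{x_{0}}\leq\min\{s,t\}$ is automatic from the definition of the Gromov product; when $\min\{s,t\}>r$, I would use the triangle inequality
\[
\dG(\gamma(s),\eta(t))\geq \dG(\gamma(k),\eta(k))-(k-s)-(k-t)
\]
for $k\geq\max\{s,t\}$ and let $k\rightarrow\infty$; Remark~\ref{Vaisala_Gromov_product_c} gives $\liminf_{k}(\gamma(k)\mid\eta(k))_{x_{0}}\leq r+O(\tilde{\delta})$, whence $\dG(\gamma(s),\eta(t))\geq s+t-2r-O(\tilde{\delta})$ and the upper bound follows.

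The hard part will be bookkeeping the constants: each invocation of~(\ref{hypineq}) and each switch between the four inequivalent boundary Gromov products of Remark~\ref{Vaisala_Gromov_product_c} costs a small multiple of $\tilde{\delta}$, and these must be arranged so that the total is exactly $5\delta$ rather than an unspecified $O(\delta)$. This forces one to rely on the standard quantitative comparison between $\tilde{\delta}$ and $\delta$ (one may take $\tilde{\delta}\leq 2\delta$) and to count iterations parsimoniously.
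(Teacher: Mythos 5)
The paper does not prove this lemma at all: it is quoted verbatim from the literature, with the proof deferred to Lemma~3.7 of \cite{GMS}. So there is no in-paper argument to compare against, and your proposal should be judged as a self-contained proof. Structurally it is the right one, and almost certainly the same in spirit as the cited reference: reduce the $(1,L_{2})$-quasi-isometry to the single estimate $|(\gamma(s)\mid\eta(t))-\min\{s,t,r\}|\leq L_{2}/2$ with $r=([\gamma]\mid[\eta])$, get the lower bound from the four-point inequality extended to $\Gamma\cup\partial\Gamma$ applied to $\gamma(s),[\gamma],[\eta],\eta(t)$, and get the upper bound from $(\gamma(s)\mid\eta(t))\leq\min\{s,t\}$ together with the divergence of $\gamma(k),\eta(k)$ past the branch point. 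Your tripod distance formula and the identity $\dG(\gamma(s),\eta(t))=s+t-2(\gamma(s)\mid\eta(t))$ are correct, the map is surjective and isometric on each ray, and the small well-definedness issue at points of $\gamma\cap\eta$ with parameter between $r$ and $r+O(\delta)$ is harmless since the mixed-pair estimate covers either choice of branch.

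The one point you have not actually closed is the constant. Counting your own steps: two applications of (\ref{hypineq}) cost $2\tilde{\delta}$, and each of $(\gamma(s)\mid[\gamma])\geq s-2\tilde{\delta}$ and $([\eta]\mid\eta(t))\geq t-2\tilde{\delta}$ costs another $2\tilde{\delta}$ because the boundary product is an $\inf\liminf$ over all sequences, not just the tautological one; the upper bound costs $2\tilde{\delta}$ as well. This yields $|(\gamma(s)\mid\eta(t))-\min\{s,t,r\}|\leq 4\tilde{\delta}$, hence $L_{2}=8\tilde{\delta}$, which is not $5\delta$ under any standard dictionary between $\tilde{\delta}$ and the thinness constant $\delta$; a proof that lands on $5\delta$ exactly would more likely work with geodesic representatives and thin (ideal) triangles directly. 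Since the precise value $5\delta$ is simply inherited from \cite{GMS}, and every downstream use in this paper only needs \emph{some} explicit multiple of $\delta$, this is a cosmetic rather than structural shortfall --- but as written your argument proves the lemma with a different (worse) additive constant, not the stated one.
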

And we recall the definition of metric on the Gromov boundary together with some properties and notations:
\begin{defn}\label{visual_metric}
Let $\beta > 1$ be a fixed constant. The \textbf{visual metric} on the completion $\Gamma \cup \partial \Gamma$ of a hyperbolic graph is defined as follow
$$\curlyvee(x_{*},y_{*}):= \beta^{-(x_{*} \mid y_{*})} \text { with } x_{*},y_{*}  \in \Gamma \cup \partial \Gamma,$$
with the convention that $\beta^{-\infty}=0$.
\end{defn}

\begin{lem}\label{visual_notation}
Let $\Gamma$ be a hyperbolic graph. Then
\begin{itemize}
\item[(a)]the function $\curlyvee$ defines a topology on $\Gamma \cup \partial \Gamma$;
\item[(b)]the function $\curlyvee$ is a semi-metric when restricted to $\partial \Gamma$;
\item[(c)]using the \textit{First Move} explained just after Definition \ref{metric} and with an abuse of notation, we get a new function that we call again $\curlyvee$ and that is a metric on $\partial \Gamma$;
\item[(d)]taking $\curlyvee$ as in the previous point, there exists a costant $\mathcal{B}$, depending only on $\delta$, such that for all $1<\beta\leq \mathcal{B}$ we have
$$ \dfrac{1}{2} \beta^{-(x_{*} \mid y_{*})} \leq \curlyvee(x_{*},y_{*}) \leq \beta^{-(x_{*} \mid y_{*})}.$$
\end{itemize}
\end{lem}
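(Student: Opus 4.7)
My approach is to dispatch (d) first, since it is the analytic heart of the statement, and the remaining three items follow from it almost formally. The key tool is the hyperbolic inequality (\ref{hypineq}), extended to $\Gamma \cup \partial \Gamma$ as permitted by Remark \ref{Vaisala_Gromov_product}, together with a Frink-style chain argument to convert a ``quasi-ultrametric'' into a genuine metric up to a factor of $2$.

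For (d), I rewrite (\ref{hypineq}) multiplicatively: for all $x_{*},y_{*},z_{*} \in \Gamma \cup \partial\Gamma$,
$$\beta^{-(x_{*} \mid z_{*})} \leq \beta^{\tilde{\delta}}\,\max\{\beta^{-(x_{*}\mid y_{*})},\beta^{-(y_{*}\mid z_{*})}\}.$$
This is a quasi-ultrametric inequality with multiplicative constant $\beta^{\tilde{\delta}}$. I then invoke the standard Frink lemma: a symmetric nonnegative function $\rho$ with $\rho(x,x)=0$ satisfying $\rho(x,z)\leq K\max\{\rho(x,y),\rho(y,z)\}$ with $K\leq\sqrt{2}$ has chain-infimum $\underline{\rho}$ (the output of the First Move) controlled by $\tfrac12\rho \leq \underline{\rho} \leq \rho$. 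Taking $\mathcal{B}:=2^{1/(2\tilde{\delta})}$ so that $\mathcal{B}^{\tilde{\delta}}\leq\sqrt{2}$, the Frink hypothesis is satisfied for every $1<\beta\leq\mathcal{B}$, and (d) drops out directly, interpreted as a two-sided comparison between the original $\curlyvee$ and its First-Moved version.

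For (b) and (c), symmetry and $\curlyvee(x_{*},x_{*})=0$ are immediate from the definition of the Gromov product. If $\curlyvee(x_{\infty},y_{\infty})=0$ on $\partial\Gamma$, then $(x_{\infty}\mid y_{\infty})=\infty$; by Remark \ref{Vaisala_Gromov_product_c} and the characterization in Remark \ref{Vaisala_Gromov_product}(a), any Gromov sequence representing $x_{\infty}$ must also represent $y_{\infty}$, so $x_{\infty}=y_{\infty}$, giving (b). The First Move produces a pseudo-metric, and the lower bound in (d) prevents it from identifying distinct boundary points; hence no Second Move is required and the resulting function is already a metric on $\partial\Gamma$, proving (c). For (a), I show that the $\curlyvee$-balls $\{B(x_{*},\epsilon)\}$ form a basis for a topology on $\Gamma\cup\partial\Gamma$: if $z_{*}\in B(x_{*},\epsilon_{x})\cap B(y_{*},\epsilon_{y})$, the quasi-ultrametric inequality from the proof of (d) produces $\eta>0$ (for instance any $\eta<\beta^{-\tilde{\delta}}\min\{\epsilon_{x}-\curlyvee(x_{*},z_{*}),\epsilon_{y}-\curlyvee(y_{*},z_{*})\}$) such that $B(z_{*},\eta)$ is contained in the intersection.

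The main obstacle is the Frink chain estimate hidden inside (d): given an arbitrary chain $x_{*}=x_{0},\dots,x_{n}=y_{*}$, one must bound $\sum_{i} \beta^{-(x_{i-1}\mid x_{i})}$ from below by $\tfrac12\beta^{-(x_{*}\mid y_{*})}$. This is the classical induction on chain length, splitting the chain at the index of maximal partial sum and applying the quasi-ultrametric inequality together with $\beta^{\tilde{\delta}}\leq\sqrt{2}$; it is the only place where the quantitative choice of $\beta$ enters, and everything else in (a)--(c) is a formal consequence of this one estimate together with the extended hyperbolic inequality.
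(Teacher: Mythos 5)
Your overall strategy is the standard one and is essentially the content of V\"ais\"al\"a's Proposition 5.16, which is all the paper offers as proof: rewrite the extended inequality (\ref{hypineq}) multiplicatively as a quasi-ultrametric bound with constant $\beta^{\tilde{\delta}}$, choose $\mathcal{B}$ so that $\mathcal{B}^{\tilde{\delta}}\leq\sqrt{2}$, and run the Frink chain induction to get the two-sided comparison in (d); parts (b) and (c) then follow exactly as you say, since $(x_{\infty}\mid y_{\infty})=\infty$ forces $x_{\infty}=y_{\infty}$ and the lower bound in (d) shows the First Move cannot collapse distinct boundary points. Two small cautions: the statement of (d) lets $x_{*},y_{*}$ range over $\Gamma\cup\partial\Gamma$, where $\curlyvee(x,x)=\beta^{-\dG(x,x_{0})}\neq 0$ for interior vertices (it is a metametric, not a distance), so if you run the chain argument on the whole completion you must handle the degenerate base case of the induction; restricted to $\partial\Gamma$, as you treat it, this issue disappears.

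The genuine gap is in your argument for (a). You claim that the $\curlyvee$-balls form a basis by exhibiting, for $z_{*}\in B(x_{*},\epsilon_{x})\cap B(y_{*},\epsilon_{y})$, a radius $\eta<\beta^{-\tilde{\delta}}\min\{\epsilon_{x}-\curlyvee(x_{*},z_{*}),\,\epsilon_{y}-\curlyvee(y_{*},z_{*})\}$ with $B(z_{*},\eta)$ inside the intersection. This does not follow: for $w\in B(z_{*},\eta)$ the only available estimate is $\curlyvee(x_{*},w)\leq\beta^{\tilde{\delta}}\max\{\curlyvee(x_{*},z_{*}),\curlyvee(z_{*},w)\}$, and when $\curlyvee(z_{*},w)\leq\curlyvee(x_{*},z_{*})$ this gives only $\curlyvee(x_{*},w)\leq\beta^{\tilde{\delta}}\curlyvee(x_{*},z_{*})$, which exceeds $\epsilon_{x}$ whenever $\curlyvee(x_{*},z_{*})>\beta^{-\tilde{\delta}}\epsilon_{x}$, no matter how small $\eta$ is. This is the well-known phenomenon that balls of a quasi-metric need not be open; the sets $\{q:(p\mid q)>r\}$ form a neighborhood basis but are not themselves open in general. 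The repair is cheap and already contained in your own plan: either define the topology by declaring $U$ open iff every point of $U$ has some $\curlyvee$-ball inside $U$ (the neighborhood-filter formulation, which only needs the quasi-ultrametric inequality in the weaker form that a sufficiently shrunken ball around $z_{*}$ maps into $B(x_{*},\epsilon_{x})$ when $\curlyvee(x_{*},z_{*})<\beta^{-\tilde{\delta}}\epsilon_{x}$), or first perform the First Move on all of $\Gamma\cup\partial\Gamma$ and use the resulting genuine pseudometric, comparable to $\curlyvee$ within a factor of $2$ by (d), whose balls do form a basis. Either way (a) is salvaged, but as written the basis verification is incorrect.
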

\begin{proof}
See Proposition 5.16 in \cite{V}.
\end{proof}
Please note that point (d) partially explain the ambiguity of point (c) and the abuse of notation in the Definition, indeed from now on we will assume $\beta$ will satisfy the constraint just introduced.\\

The last notations we need to introduce are about \textit{ends}.

\begin{defn}[Topological Ends]
Let $\Gamma$ be a graph with a distinguished point $x_{0}$. A sequence $\{ \mathcal{C}_{k} \}_{k=1}^{\infty}$ such that $\mathcal{C}_{k+1} \subseteq \mathcal{C}_{k}$ and $\mathcal{C}_{k}$ is a connected component of $\Gamma-B_{k-1}$ is called an \textbf{end} of $\Gamma$. We denote the collection of all ends with $\Tend(\Gamma)$.
\end{defn}

\begin{defn}[Graph Ends]
Let $\Gamma$ be a graph with a distinguished point $x_{0}$.We put an equivalence relation on geodesic rays in this way: two geodesic rays $\gamma_{1}$ and $\gamma_{2}$ are equivalent if $\gamma_{1}([k,\infty[)$ and $\gamma_{2}([k,\infty[)$ belong to the same connected component of $\Gamma - B_{k-1}$ for all $k$. Given a geodesic ray $\gamma$, we denote its equivalence class with $end(\gamma)$ and call this an \textbf{end} of $\Gamma$ and denote the collection of all ends with $\End(\Gamma)$.
\end{defn}

In our case, i.e.\ hyperbolic graphs, it is not difficult to see that the second definition induces a surjective map $\partial \Gamma \twoheadrightarrow \End(\Gamma)$  such that $[\gamma] \mapsto end(\gamma)$. This map gives a correspondence between graph ends and connected components of the Gromov boundary. More in general, the two definitions are equivalent, in the sense that there exists a bijection between $\End(\Gamma)$ and $\Tend(\Gamma)$ whenever $\Gamma$ is a graph (see \cite{DK}). \\

\subsection{Tree associated to Hyperbolic Graphs} \label{Trees} In this subsection we provide an overview of the \textit{tree of atoms}. It is a way to associate a rooted tree to $\Gamma$, together with a quotient map from its boundary onto the Gromov boundary, first introduced in \cite{BBM}. In literature, there are more canonical way to do so (see e.g. \cite{CP}).\\

We start by considering the abelian group  $\mathcal{F}=\mathbb{Z}^{\Gamma}$ of functions from  the vertices of $\Gamma$ (which we will identify with $\Gamma$) to $\mathbb{Z}$. This space is endowed with the product topology (which is also the compact-open topology, since the set of vertices of $\Gamma$ is discrete). In particular, if we take the quotient space $\overline{\mathcal{F}}$ of $\mathcal{F}$ over the subgroup of constant functions, it inherits the quotient topology.\\
We consider the function $d_{x}: \Gamma \rightarrow \mathbb{Z}$ defined as $d_{x}(-):= \dG(x,-)$ with $x \in \Gamma$. The class $\overline{d}_{x}$ in $\mathcal{F}$ has a global minimum in $x$. Hence there is a canonical embedding $\iota: \Gamma \rightarrow  \overline{\mathcal{F}}$  since $\overline{d}_{x}$ is an isolated point in $\Gamma \iota$. 

\begin{defn}
Let $\Gamma$ be a hyperbolic graph. The \textbf{horofunction boundary} $\partial_{h} \Gamma$ of $\Gamma$ is the set of limit points of $\Gamma \iota$.
\end{defn}
\noindent
An element of $\partial_{h} \Gamma$ is called \textbf{horofunction}. Using the definition, we say that a sequence of vertices $\{ x_{k} \}_{k=1}^{\infty}$ converges to a horofunction $u$ if and only if the sequence $\{ \overline{d}_{x_{k}} \}_{k=1}^{\infty}$ converges to $u$ (in the compact-open sense). Horofunctions were initially introduced by Gromov in \cite{BGS}, he refers to them as the ``metric boundary'' and since then, they were widely studied. For a different, but related notion see Definition~\ref{CP_function}.\\
Among horofunctions there are also the so-called \textbf{Busemann points}. Namely, a horofunction is a Busemann point if it is the limit (in the sense explained above) of a geodesic ray. This notion is a first step towards the next result, but please note that in the general hyperbolic case there are horofunctions that are not Busemann points (see e.g. \cite{WW2}).\

This boundary can always be constructed, instead of the Gromov one, which requires hyperbolicity. Moreover, when the two are well-defined, we can retrieve $\partial \Gamma$ starting from horofunctions. Indeed, we have the following 
\begin{prop}
Let $\Gamma$ be a hyperbolic graph. Then there exists a continuous surjective map $\pi_{h}: \partial_{h} \Gamma \twoheadrightarrow \partial \Gamma$.
\end{prop}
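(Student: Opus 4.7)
The approach is to associate to each $u\in\partial_{h}\Gamma$ a canonical geodesic ray $\gamma_{u}$ starting at $x_{0}$, and to set $\pi_{h}(u):=[\gamma_{u}]\in\partial\Gamma$. The first step is to show that every horofunction admits a descending direction at every vertex: for each $y\in\Gamma$ there is a neighbor $z$ of $y$ with $u(z)=u(y)-1$. Fix any sequence $\{x_{k}\}$ with $\overline{d}_{x_{k}}\to u$, and normalize the representatives $f_{k}=d_{x_{k}}-\dG(x_{0},x_{k})$ so that $f_{k}(x_{0})=0$ and $f_{k}\to u$ pointwise. Because $\overline{d}_{x}$ is isolated in $\Gamma\iota$ for every vertex $x$ and $\Gamma$ is locally finite, $\dG(x_{0},x_{k})\to\infty$, so eventually $y\neq x_{k}$ and some neighbor $z_{k}^{y}$ of $y$ satisfies $d_{x_{k}}(z_{k}^{y})=d_{x_{k}}(y)-1$; by local finiteness and $\mathbb{Z}$-valued pointwise convergence, a single neighbor $z$ works for infinitely many $k$ and yields $u(z)=u(y)-1$.

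I then construct $\gamma_{u}$ greedily: $\gamma_{u}(0)=x_{0}$ and $\gamma_{u}(n+1)$ is a neighbor of $\gamma_{u}(n)$ with $u$-value one less. The identity $u(\gamma_{u}(n))=-n$ combined with the fact that $u$ is $1$-Lipschitz gives $n=|u(\gamma_{u}(n))|\leq\dG(x_{0},\gamma_{u}(n))\leq n$, so $\gamma_{u}$ is indeed a geodesic ray.

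The main obstacle is well-definedness, since the construction depends on both the defining sequence for $u$ and the tie-breaking among neighbors. Given two greedy rays $\gamma,\gamma'$ built from the same $u$, the compact-open convergence yields, for fixed $n$ and $K\to\infty$,
$$\dG(\gamma(n),x_{K})=\dG(x_{0},x_{K})+u(\gamma(n))+o(1)=\dG(x_{0},x_{K})-n+o(1),$$
so that $(\gamma(n)\mid x_{K})\to n$ and similarly $(\gamma'(n)\mid x_{K})\to n$; the hyperbolic inequality~\eqref{hypineq} then gives $(\gamma(n)\mid\gamma'(n))\geq n-\tilde{\delta}-o(1)$, hence $(\gamma(n)\mid\gamma'(n))\to\infty$ and $[\gamma]=[\gamma']$ in $\partial\Gamma$.

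Continuity and surjectivity follow more easily. For continuity, if $u^{(m)}\to u$ in the compact-open topology, then on every ball $B_{N}(x_{0})$ the $\mathbb{Z}$-valued functions $u^{(m)}$ coincide with $u$ for all $m$ large enough, and the greedy rays can be coupled so that $\gamma_{u^{(m)}}$ and $\gamma_{u}$ share their first $N$ vertices; this forces $(\pi_{h}(u^{(m)})\mid\pi_{h}(u))\geq N-O(1)$, and hence $\pi_{h}(u^{(m)})\to\pi_{h}(u)$ in the visual metric $\curlyvee$. For surjectivity, given $x_{\infty}\in\partial\Gamma$ and a geodesic ray $\gamma$ to $x_{\infty}$, the $1$-Lipschitz functions $f_{k}=d_{\gamma(k)}-\dG(x_{0},\gamma(k))$ are uniformly bounded on every ball, so Arzelà-Ascoli extracts a pointwise limit $u\in\partial_{h}\Gamma$; since $f_{k}(\gamma(n))=-n$ for $k>n$ we get $u(\gamma(n))=-n$, so $\gamma$ is a valid greedy ray for $u$ and $\pi_{h}(u)=[\gamma]=x_{\infty}$.
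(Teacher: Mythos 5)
Your argument is correct. Note that the paper does not prove this proposition at all --- it simply cites \cite[Section 4]{WW} --- so your self-contained proof is genuinely additional content. The route taken in \cite{WW} (reflected in the paper's remark immediately after the statement) is to show directly that any sequence $\{x_{k}\}$ converging to a horofunction is a Gromov sequence and to send $u$ to the class of that sequence; your route instead builds a ``gradient'' geodesic ray $\gamma_{u}$ by greedy descent of the $1$-Lipschitz integer-valued function $u$ and maps $u$ to $[\gamma_{u}]$. The two are equivalent, but yours buys something extra: it exhibits an explicit geodesic ray attached to each horofunction, which is close in spirit to the proximal rays the paper constructs later (Proposition~\ref{proximal_ray_prop}) and makes surjectivity essentially immediate. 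All the delicate points are handled correctly: the descending-direction lemma uses local finiteness and pointwise convergence of $\mathbb{Z}$-valued representatives; well-definedness follows since any greedy ray $\gamma$ satisfies $(\gamma(n)\mid x_{K})\to n$, so two such rays have $(\gamma(n)\mid\gamma'(n))\geq n-\tilde{\delta}$ by (\ref{hypineq}), and the diagonal lower bound suffices to identify boundary points; continuity uses that eventual equality on $B_{N}$ lets one couple the first $N$ vertices of the two greedy rays (legitimate precisely because the map is already known to be independent of the greedy choices), plus first countability of $\partial_{h}\Gamma$ to reduce to sequences. The only cosmetic caveat is that in the surjectivity step one should observe that the limit $u$ of $\{\overline{d}_{\gamma(k)}\}$ really lies in $\partial_{h}\Gamma$, i.e.\ is not of the form $\overline{d}_{x}$; this follows from the isolation of the points $\overline{d}_{x}$ in $\Gamma\iota$, which you have already invoked.
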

\begin{proof}
See \cite[Section 4]{WW}.
\end{proof}
\noindent
In particular, if a sequence converges to a point in $\partial_{h} \Gamma$ then it goes to infinity in the sense of Gromov.\

There is another way to represent horofunctions, it is the so called \textbf{tree of atoms} (\cite[Definition 3.4]{BBM}). The idea is to construct a suitable collection of partitions of $\Gamma$ (seen as the set of vertices) and then to endow it with a tree structure.\\
Let $x$ be an element of a hyperbolic graph $\Gamma$. We consider the function
\begin{equation}
\label{horo_representative}
\tag{$\mathfrak{F}$}
f_{x}(-):=\dG(-,x)-\dG(x_{0},x): \Gamma \rightarrow \mathbb{Z}
\end{equation}
for all $x \in \Gamma$.\\
Now we fix $k$ to be a non-negative integer. The $k$-partition comes from the following equivalence relation: two vertices $x$ and $y$ are equivalent if and only if $f_{x}$ and $f_{y}$ agree on the ball $B_{k}$ of radius $k$ centered in $x_{0}$ (this means that $\overline{d}_{x}=\overline{d}_{y}$). We call the equivalence classes that contain an infinite number of vertices $k$\textbf{-level atoms} and we denote the collection of such classes with $\mathcal{A}_{k}(\Gamma)$. When $\Gamma$ will be clear, we will drop it in the notation.\\
It can be shown that each partition is finite and it is a refinement of the previous one. Indeed, given $x \in \Gamma$ and $k \in \mathbb{N}$, there are only finitely many possibilities for the restriction of $f_{x}$ to $B_{k}$. Moreover, since we are dealing with restriction, if $f_{x}$ and $f_{y}$ agree on $B_{k+1}$, then they agree on $B_{k}$.  In particular, if we consider atoms, they have a structure of an infinite tree $$\mathcal{A}(\Gamma):= \coprod_{k=1}^{ \infty} \mathcal{A}_{k}(\Gamma).$$
For further details about this construction see \cite[Subsection 3.1]{BBM}.\

\begin{exm}
Consider the $1$-skeleton of the hyperbolic tiling depicted in Figure \ref{atoms_tiling}.(\subref{atoms_tiling_A}). The first level consists of ten atoms: in Figure \ref{atoms_tiling} the subdivision given by the red lines gives all ten of them and a finite region in which $x_{0}$ is the only element. The second level can be described as follows: every $1$-level atom has three children, given by the intersection between the atom and the brown lines.
\end{exm}

\begin{figure}
    \centering
\begin{subfigure}[t]{0.45\textwidth}
\includegraphics{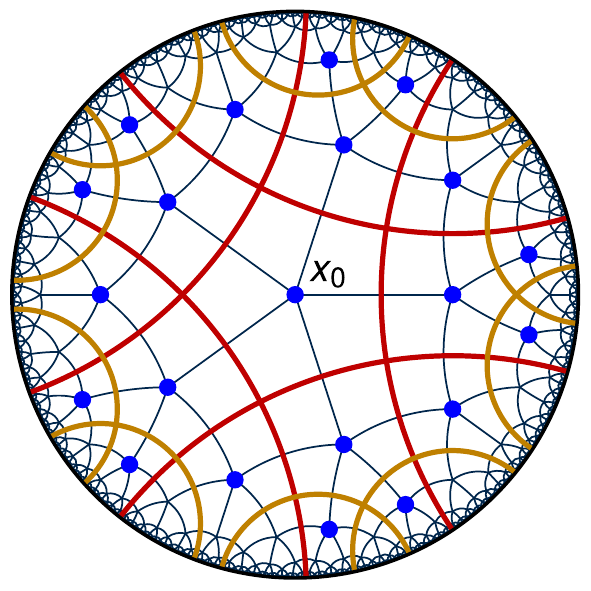}
\caption{The atoms seen as subsets in the graph together with the ball of radius 2 (in blue).}
\label{atoms_tiling_A}
\end{subfigure}
\begin{subfigure}[t]{0.45\textwidth}
\centering
\begin{tikzpicture}[
  grow cyclic,
  rotate=18,
  level distance=1.5cm,
  level/.style={
    level distance/.expanded=\ifnum#1>1 \tikzleveldistance/1.5\else\tikzleveldistance\fi,
    nodes/.expanded={fill}
  },
  level 1/.style={sibling angle=36},
  level 2/.style={sibling angle=35},
  level 3/.style={sibling angle=24},
  level 4/.style={sibling angle=36},
  nodes={circle,draw,inner sep=+0pt, minimum size=2pt},
  ]
\path[rotate=0, blue]
  node[fill] {}
  child foreach \Ilevel in {1,...,10} {
    node {}
    child foreach \IIlevel in {1,...,3} { 
      node {}
      }
    };
\end{tikzpicture}
\caption{The first two levels of the tree.}
\end{subfigure}
    \caption[Example of atoms]{The first two levels of the atoms for the $1$-skeleton of the uniform tiling of the hyperbolic plane given by five squares meeting in each vertex.}
    \label{atoms_tiling}
\end{figure}

A first property which says something about the asymptotic behavior of atoms is the following
\begin{prop}[\cite{BBM}{, Proposition 3.5}]
\label{complement-atom}
Every $k$-level atom is contained in $\Gamma-B_{k-1}$.
\end{prop}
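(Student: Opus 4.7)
The plan is to argue by contradiction: assume some $k$-level atom contains a vertex $x$ with $\dG(x_0,x)\leq k-1$, and then show that the equivalence class of $x$ must actually be the singleton $\{x\}$, contradicting the infiniteness required to be an atom. The key observation is that when $x\in B_{k-1}$, the point $x$ itself (and all its neighbors) lie inside the ball $B_k$ on which representatives of horofunctions are required to coincide, and this forces a rigidity on partners $y\sim x$.

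Concretely, I would first unfold the equivalence: $y$ is equivalent to $x$ iff $f_x(z)=f_y(z)$ for every $z\in B_k$, where $f_x$ is the shifted distance function from (\ref{horo_representative}). Evaluating at $z=x\in B_k$ gives the relation $\dG(x_0,y)=\dG(x_0,x)+\dG(x,y)$, i.e.\ $x$ lies on some geodesic from $x_0$ to $y$. Evaluating at any neighbor $z$ of $x$, which also lies in $B_k$ because $\dG(x_0,z)\leq \dG(x_0,x)+1\leq k$, and then substituting the previous identity, I obtain
\[
\dG(z,y)=\dG(x,y)+1
\qquad\text{for every neighbor $z$ of $x$.}
\]

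Now I conclude: if $y\neq x$, then since $\Gamma$ is connected there exists a geodesic from $x$ to $y$, and its first edge reaches a neighbor $w$ of $x$ with $\dG(w,y)=\dG(x,y)-1$, contradicting the displayed equality. Hence $y=x$, the equivalence class of $x$ reduces to $\{x\}$, and in particular it is finite, so it cannot be a $k$-level atom.

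I do not expect any real obstacle here; the only small point to be careful about is making sure $x$ and all its neighbors genuinely lie in $B_k$ under the assumption $x\in B_{k-1}$, which is immediate from the triangle inequality and the definition of the ball. The argument uses nothing beyond the definition of atoms in (\ref{horo_representative}) and elementary properties of the graph metric, so no hyperbolicity or further results from Section~\ref{1} are required.
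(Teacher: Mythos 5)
Your argument is correct, and it is essentially the same as the one behind \cite{BBM}, Proposition 3.5 (which this paper only cites): the function $f_{x}$ has a strict local minimum at $x$, and when $x \in B_{k-1}$ both $x$ and all its neighbors lie in $B_{k}$, so any $y$ with $f_{y}=f_{x}$ on $B_{k}$ would have to satisfy $\dG(z,y)=\dG(x,y)+1$ for every neighbor $z$ of $x$, which is impossible for $y \neq x$ since $f_{y}$ must decrease along the first edge of a geodesic from $x$ to $y$. Hence the class of such an $x$ is the singleton $\{x\}$ and cannot be a $k$-level atom, exactly as you conclude.
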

But the key aspect of this structure is the following
\begin{thm}[\cite{BBM}{, Theorem 3.6}]\label{horofunction-boundary-tree-atoms}
Let $\Gamma$ be a hyperbolic graph. Then the boundary of $\mathcal{A}(\Gamma)$ is homeomorphic to $\partial_{h} \Gamma$.
\end{thm}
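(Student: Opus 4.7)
The plan is to build an explicit bijection $\Phi \colon \partial \mathcal{A}(\Gamma) \to \partial_h \Gamma$ between the two boundaries and show it is continuous; since the tree boundary is compact and the horofunction boundary is Hausdorff, this will automatically upgrade to a homeomorphism.

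First, I would define $\Phi$ by exploiting the coherence of the restrictions $f_x|_{B_k}$. Given an infinite descending chain of atoms $(u_k)_{k \geq 1}$, the very definition of the $k$-th equivalence relation says that every vertex $x \in u_k$ produces the same function $\phi_{u_k} := f_x|_{B_k} \colon B_k \to \mathbb{Z}$, and $u_{k+1} \subseteq u_k$ forces $\phi_{u_{k+1}}|_{B_k} = \phi_{u_k}$. Because $\Gamma = \bigcup_k B_k$, these data glue into a unique function $\phi \colon \Gamma \to \mathbb{Z}$ vanishing at $x_0$, which I take to be $\Phi((u_k))$. To check that $\phi$ represents a horofunction, pick any $x_k \in u_k$: by Proposition~\ref{complement-atom} we have $\dG(x_k, x_0) \geq k \to \infty$, so no subsequence of $(x_k)$ stabilizes in $\Gamma$ (using local finiteness), and for every $n$ the restrictions $f_{x_k}|_{B_n}$ are eventually equal to $\phi|_{B_n}$, giving $\overline{d}_{x_k} \to \phi$ in the product topology of $\overline{\mathcal{F}}$. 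The limit cannot lie in $\Gamma\iota$ because $\overline{d}_y$ is isolated in $\Gamma\iota$ and attains its minimum at the bounded location $y$, incompatible with $\dG(x_k,x_0) \to \infty$.

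Next, I would verify bijectivity. Injectivity is automatic: if $(u_k) \neq (v_k)$, take the smallest index $k$ at which they differ; then $\phi_{u_k} \neq \phi_{v_k}$ on $B_k$, because the $k$-level atoms are precisely the fibers of $x \mapsto f_x|_{B_k}$. For surjectivity, given $u \in \partial_h \Gamma$ fix a sequence $(y_m)$ with $\overline{d}_{y_m} \to u$. Since $\mathbb{Z}$ is discrete, pointwise convergence means that for each $k$ the restriction $f_{y_m}|_{B_k}$ is eventually equal to $u|_{B_k}$; local finiteness of $\Gamma$ together with $u \notin \Gamma\iota$ forces $\dG(y_m, x_0) \to \infty$. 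Hence the equivalence class $\{x \in \Gamma : f_x|_{B_k} = u|_{B_k}\}$ contains infinitely many distinct $y_m$ and is therefore a genuine $k$-level atom $a_k$. The sequence $(a_k)$ is descending by construction and $\Phi((a_k)) = u$.

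Finally, $\Phi$ is continuous: a basic cylinder of $(u_k)$ in $\partial \mathcal{A}(\Gamma)$ (prescribing the first $n$ atoms) is sent into the neighborhood of $\phi$ prescribing its values on $B_n$, and such cylinders generate the product topology. Since each level $\mathcal{A}_k(\Gamma)$ is finite, $\partial \mathcal{A}(\Gamma)$ is a closed subset of a compact product, hence compact; $\partial_h \Gamma \subseteq \overline{\mathcal{F}}$ is Hausdorff as a subspace of a Hausdorff space. A continuous bijection from compact to Hausdorff is a homeomorphism. The only mildly delicate point, which I expect to require the most attention, is in the surjectivity step: ensuring that the equivalence class matching $u|_{B_k}$ is infinite (so that it is an atom rather than one of the finite classes discarded from $\mathcal{A}_k(\Gamma)$). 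This is settled by observing that the tail of any sequence realizing $u$ is forced into that class, and such a tail has infinitely many distinct vertices because $\dG(y_m, x_0) \to \infty$.
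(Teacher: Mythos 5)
Your argument is correct, and it is essentially the intended one: the paper itself only cites \cite{BBM} for this theorem, and the proof there proceeds exactly as you do, identifying $\partial\mathcal{A}(\Gamma)$ with coherent sequences of restrictions $f_x|_{B_k}$ and using compactness of the tree boundary plus Hausdorffness of $\partial_h\Gamma$ to upgrade the continuous bijection to a homeomorphism. Your attention to the surjectivity step (that the class matching $u|_{B_k}$ is infinite because the tail of a realizing sequence escapes every ball) is precisely the right delicate point.
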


This means that we can represent horofunctions via infinite nested sequences of atoms, namely if $u$ is an element of $\partial_{h} \Gamma$, then there exists a unique nested sequence $(u_{k})_{k=1}^{\infty}$ such that $u_{k}$ is a $k$-level atom and the horofunction corresponds to the sequence by virtue of the homeomorphism. In symbols we will simply write $u=(u_{k})_{k=1}^{\infty}$. We will frequently refer to this representation as the \textbf{atom-coding of the horofunction}.
\begin{oss}
Let $u=(u_{k})_{k=1}^{\infty}$ be a horofunction. If we denote with $f_{u}$ the representative of $u$ such that $f_{u}(x_{0})=0$. Then $f_{u_{\mid_{B_{k}}}}=f_{u_{k}}$ with $f_{u_{k}}$ defined as the restriction on $B_{k}$ of $f_{x}$ for all $x \in u_{k}$, where $f_{x}$ is the function introduced in (\ref{horo_representative}).
\end{oss}

\section{Old and New Results about Atoms} \label{2}
This section is devoted to describing some further properties of atoms. In particular we will present a first relation with the Gromov boundary and others with cones and balls which will be useful to establish a full connection with $\partial \Gamma$. We will also defined Gromov products on atoms and horofunctions in a different (and improper, but very useful) way. Unless specified, the results contained in this section are to be considered new.\

Before starting, we clarify a notation that will be used from now on. If $x \in \Gamma$ and $B \subseteq \Gamma$, then with $\dG(x,B)$ we mean the mininum over all elements of $B$ of their distances with respect to $\dG$ from $x$.\\
Also, in this section we deal with two different definitions of \textit{type} in two different situations. This occurs because usually in the literature the collection of types refers to a partition of the vertices. 

We begin defining some collections of points.
\begin{defn}
Let $\Gamma$ be a hyperbolic graph with a distinguished point $x_{0}$ and let $x$ an element in $\Gamma-B_{k}$ for some $k \in \mathbb{N}$. 
\begin{itemize}
\item[(a)]A \textbf{nearest neighbor} for $x$ is a vertex $\overline{x}$ in $B_{k}$ such that $\dG(B_{k},x)=\dG(\overline{x},x)$.
\item[(b)]A point $p \in B_{k}$ is \textbf{visible} for $x$ if $[p,x] \cap B_{k}=\{p\}$  for every geodesic $[p,x]$ from $p$ to $x$.
\item[(c)]A point $p \in S_{n}$ with $n \leq k$ is said to be $n$-proximal (or simply \textbf{proximal} when the $n$ is clear) to $x$ if there exists a geodesic $[x_{0},p]$ such that for every geodesic $[x_{0},x]$ we have
$$\dG(p_{i},x_{i}) \leq 4\delta+2 \text{ for } 1\leq i \leq n$$
with $p_{i}$ the $i$-vertex of $[x_{0},p]$ and $x_{i}$ the $i$-vertex of $[x_{0},x]$.
\end{itemize}

\end{defn}
\noindent We denote with $N(x,B_{k})$ the collection of nearest neighbors in $B_{k}$ of $x$; and with $V(x,B_{k})$ and $P(x,S_{n})$ the collections of visible points and proximal points respectively. 
\begin{lem}\label{visible_proximal}
Let $\Gamma$ be a hyperbolic graph and let $x\in a$ for some $k$-level atom $a$. Then the following properties hold.
\begin{enumerate}[label=(\alph*),ref=\ref{visible_proximal}.\alph*]
\item \label{visible_proximal_a} Every nearest neighbor of $x$ is visible and every visible point is proximal, in short $N(x,B_{k}) \subseteq V(x,B_{k}) \subseteq P(x,S_{k})$. 
\item \label{visible_proximal_b} A point $p$ is $k$-proximal if and only if there exists a $(k-1)$-proximal point at distance $1$ from $p$ and $\dG(p,q) \leq 4\delta+2$ for all $q \in V(x,B_{k})$. In particular, the diameter of $P(x,S_{k})$ is bounded by $8\delta+4$.
\item \label{visible_proximal_c} If $p$ is a nearest neighbor in $B_{k}$ for $x \in a$, then it is a nearest neighbor for all elements in $a$. The same statement is true for $V(x,B_{k})$ and $P(x,S_{k})$.
\end{enumerate}
\end{lem}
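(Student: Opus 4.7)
The plan is to prove (a), (b), (c) in order, handling each step roughly independently but leaning on (a) for the main geometric content.

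For the first inclusion $N(x,B_{k}) \subseteq V(x,B_{k})$ in (a), I would argue directly from the minimality of nearest neighbors: if $\overline{x} \in N(x,B_{k})$ and $y$ is a vertex of some geodesic $[\overline{x},x]$ with $y \neq \overline{x}$, then $\dG(y,x) < \dG(\overline{x},x) = \dG(B_{k},x)$, so $y \notin B_{k}$, which is precisely the visibility of $\overline{x}$. For $V(x,B_{k}) \subseteq P(x,S_{k})$ I would first verify that any visible $p$ lies in $S_{k}$: the second vertex of any geodesic $[p,x]$ is forced to lie outside $B_{k}$, and since it differs from $p$ by one graph-step and $p \in B_{k}$, equality $\dG(x_{0},p) = k$ is forced. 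The essential step is then to show that $p$ stays within a $\delta$-controlled distance of the $k$-th vertex $x_{k}$ of any geodesic $[x_{0},x]$, via a $\delta$-thin-triangle analysis of $x_{0}px$ that exploits the hypothesis that $[p,x]$ leaves $B_{k}$ immediately at $p$; the generalized fellow-traveler property of Proposition~\ref{generalized_fellow_traveler} applied to $[x_{0},p]$ and $[x_{0},x_{k}]$ then yields the bound $\dG(p_{i},x_{i}) \leq 4\delta+2$ along the first $k$ steps.

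For (b), the forward direction is almost immediate once (a) is in hand. If $p$ is $k$-proximal via a geodesic $[x_{0},p]$, the prefix $[x_{0},p_{k-1}]$ witnesses that $p_{k-1}$ is $(k-1)$-proximal and sits at distance one from $p$; the bound $\dG(p,q) \leq 4\delta+2$ for visible $q$ comes from the $k$-th fellow-traveler step applied to a geodesic $[x_{0},x]$ that realizes $q$ at its $k$-th position. Conversely, given a $(k-1)$-proximal $p'$ at distance one from $p$ together with closeness to all visible $q$, I would build the candidate $[x_{0},p]$ by appending the edge $p'p$ to the witnessing geodesic of $p'$, and control the $k$-th step using that the $k$-th vertex of any geodesic $[x_{0},x]$ is a nearest neighbor, hence visible by (a). The diameter bound $\diam(P(x,S_{k})) \leq 8\delta+4$ then drops out by the triangle inequality through any fixed visible reference point.

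For (c), the key observation is that every condition involved is determined by the restriction $f_{x}|_{B_{k}}$, which is atom-invariant up to the additive constant $\dG(x_{0},x)$: nearest neighborhood refers only to $\dG(y,x)$ with $y \in B_{k}$ and is manifestly atom-intrinsic; visibility reformulates as the non-existence of a neighbor $y \in B_{k}$ of $p$ satisfying $\dG(y,x) = \dG(p,x)-1$, which is again such a condition; and proximality depends only on the first $k$ vertices of geodesics $[x_{0},x]$. For the last one checks, via the identity $\dG(y,x') - \dG(y,x) = \dG(x_{0},x') - \dG(x_{0},x)$ coming from the atom relation, that a length-$k$ geodesic $[x_{0},y]$ extends to a geodesic toward $x$ if and only if it does toward $x'$, so the fellow-traveler condition transfers intact. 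The main obstacle will be extracting the sharp constants $4\delta+2$ and $8\delta+4$ in (a) and (b): a Morse-lemma treatment of the concatenation $[x_{0},p] \ast [p,x]$ would only yield estimates depending on quasi-geodesic constants requiring further optimization, so I expect the clean bounds to come from a direct hyperbolic-quadrilateral analysis on $\{x_{0},p,x_{k},x\}$ that uses both the $\delta$-thinness of triangles and the specific geometric consequence of visibility that $[p,x]$ exits $B_{k}$ at the very first step.
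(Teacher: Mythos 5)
First, note that the paper does not actually prove this lemma: its ``proof'' is a pointer to \cite{BBM} (Propositions 3.15, 3.19, 3.21 and Corollaries 3.22, 3.24). You are therefore attempting something the paper does not do, and your outline of (a)'s first inclusion and of (c) is essentially sound --- in particular the observation driving (c), that nearest-neighborhood, visibility and proximality are all conditions on $f_{x}|_{B_{k}}$ up to an additive constant, is exactly the right mechanism. (One small repair in (c): visibility is not equivalent to the absence of a \emph{neighbor} $y\in B_{k}$ of $p$ with $\dG(y,x)=\dG(p,x)-1$, since a geodesic $[p,x]$ could re-enter $B_{k}$ at a later step; the correct atom-invariant reformulation is that no $y\in B_{k}\setminus\{p\}$ satisfies $\dG(p,y)+\dG(y,x)=\dG(p,x)$.)

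The genuine gaps are quantitative and sit at the heart of (a) and (b). For $V(x,B_{k})\subseteq P(x,S_{k})$ you must produce a geodesic $[x_{0},p]$ whose $i$-th vertex is within exactly $4\delta+2$ of the $i$-th vertex of \emph{every} geodesic $[x_{0},x]$, for all $i\le k$. Your proposed route --- first bound $\dG(p,x_{k})$ by a thin-triangle argument, then feed $[x_{0},p]$ and $[x_{0},x_{k}]$ into Proposition~\ref{generalized_fellow_traveler} --- cannot deliver this constant: that proposition yields $2\dG(p,x_{k})+4\delta$, which exceeds $4\delta+2$ as soon as $\dG(p,x_{k})>1$, and visible points are not within distance $1$ of $\gamma(k)$ in general. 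You flag this yourself as ``the main obstacle'', but the proof as written leaves it open, and it is precisely the content of \cite{BBM}'s Proposition 3.19. Second, in the forward direction of (b) you derive $\dG(p,q)\le 4\delta+2$ for $q\in V(x,B_{k})$ by ``applying the $k$-th fellow-traveler step to a geodesic $[x_{0},x]$ that realizes $q$ at its $k$-th position''. But the set of $k$-th vertices of geodesics $[x_{0},x]$ is exactly $N(x,B_{k})$, and the inclusion $N(x,B_{k})\subseteq V(x,B_{k})$ can be strict; a general visible $q$ lies on no geodesic from $x_{0}$ to $x$. Combining the proximality of $p$ (at index $i=k$) with part~(a) applied to $q$ only gives $\dG(p,q)\le \dG(p,\overline{x})+\dG(\overline{x},q)\le 8\delta+4$ via a nearest neighbor $\overline{x}$, not $4\delta+2$. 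Both of these steps need the sharper, direct hyperbolic arguments carried out in \cite{BBM}.
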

\noindent An immediate consequence is that $N(a,B_{k})$, $V(a,B_{k})$ and $P(a,S_{k})$ are well-defined.\

\begin{proof}
The proofs of all these statements can be found in \cite{BBM}: the first part of (a) is straightforward, for the second see Proposition 3.19, for (b) see Proposition 3.21 and Corollary 3.22, while for (c) we need to put together Proposition 3.15 and Corollary 3.24.
\end{proof}

We note that if $\overline{x}$ belongs to $N(x,B_{k})$ for some $x \in \Gamma-B_{k}$, then there exists a geodesic $[x_{0},x]$ via $\overline{x}$, i.e. $\overline{x} \in [x_{0},x]$. In fact, this can be seen as an alternative definition. \\

Now we briefly recall \textit{cones} and \textit{cone types}, as we explicitly need them in our proofs.
\begin{defn}
Let $\Gamma$ be a hyperbolic graph with a distinguished point $x_{0}$. If $p$ is a vertex of $\Gamma$, then we define its \textbf{cone} to be
$$C(p):=\{ x \in \Gamma \mid \dG(x_{0},x)=\dG(x_{0},p)+\dG(p,x)\}.$$
\end{defn}
\noindent Equivalently, a point $x$ is in the cone of $p$ if and only if there is a geodesic $[x_{0},x]$ such that $p \in [x_{0},x]$.\\

For the case $\Gamma=\Gamma(G,S)$ and $x_{0}=id$ with $G$ some hyperbolic group, we can define the so-called \textit{cone types}.

\begin{defn}
Let $G$ be a hyperbolic group and let $S$ one of its generating set. If $g \in G$ then its \textbf{cone type} is the collection
$$\{  h\in G \mid  \ell(hg)=\ell(h)+\ell(g) \}$$
where $g$ is intended as a geodesic between $id$ and $g$ and $\ell$ is the length.
\end{defn}
In particular, if $g_{1}$ and $g_{2}$ have the same cone type then the map $g_{1}^{-1}g_{2}$ is an isometry of cones.\\

We now recall a fundamental result about cone types in hyperbolic groups, that is due to Cannon~\cite{C}

\begin{prop} \label{cone_type_finite}
Let $G$ be a hyperbolic group. Then the number of cone types of its Cayley graph is finite. 
\end{prop}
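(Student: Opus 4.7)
I would follow Cannon's classical argument: attach to each $g \in G$ a bounded piece of local data and show that two elements with matching data have the same cone type. Fix an integer $K = K(\delta)$, depending only on the hyperbolicity constant, to be chosen large enough for the comparison below; for each $g \in G$ define
$$\tau_K(g) \colon B_K(id) \to \mathbb{Z}, \qquad \tau_K(g)(s) := \ell(gs) - \ell(g).$$
By the triangle inequality its values lie in $\{-K, \dots, K\}$, and $B_K(id)$ is finite because $S$ is, so only finitely many functions $\tau_K(g)$ occur. It therefore suffices to show that $\tau_K(g_1) = \tau_K(g_2)$ implies $g_1$ and $g_2$ have the same cone type (equivalent, in a symmetric generating set, to the ``right-extender'' form $\ell(gh) = \ell(g) + \ell(h)$ via inversion, which I use below).

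I would prove this implication by induction on $\ell(h)$. The base case $\ell(h) \leq K$ is immediate, since $\ell(g_i h) = \ell(g_i) + \ell(h)$ is exactly the condition $\tau_K(g_i)(h) = \ell(h)$. For $\ell(h) > K$, factor $h = h' s$ with $s \in S$ and $\ell(h') = \ell(h) - 1$; a one-line triangle inequality shows that prefixes of cone-type elements remain in the cone type, so by induction $h'$ lies in the cone type of $g_1$ iff in that of $g_2$. What remains is to verify that, whenever $h'$ belongs to both, $h' s$ belongs to one iff it belongs to the other.

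This last equivalence is the geometric heart. The question ``is $\ell(g_i h' s) = \ell(g_i h') + 1$?'' is equivalent to ``is $s^{-1}$ the first letter of some geodesic from $g_i h'$ back to $id$?'' Applying $\delta$-thinness to the triangle with vertices $id$, $g_i$, $g_i h'$ (whose side $[g_i, g_i h']$ is the path spelling $h'$), any such geodesic must $\delta$-fellow-travel the reverse of $h'$ for a $\delta$-controlled initial stretch, then enter a $\delta$-neighborhood of $g_i$, and finally finish as a geodesic from near $g_i$ to $id$. The set of possible first letters at $g_i h'$ is therefore determined by (a) the last letters of $h'$, which are identical for $i=1,2$, and (b) the distance-to-$id$ pattern inside $B_K(g_i)$, which is exactly what $\tau_K(g_i)$ records. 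Hence the two answers coincide, closing the induction.

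The main obstacle is quantitative: choosing $K$ large enough (of order $\delta$) that both the fellow-traveling window along the reverse of $h'$ and the $\delta$-neighborhood of any geodesic $[id, g_i]$ near $g_i$ fit inside $B_K(g_i)$, and then verifying that no data outside $B_K(g_i)$ can affect the first-letter question. Turning this into a clean $K = K(\delta)$ via stability of geodesics in hyperbolic spaces is the classical technical core of Cannon's proof.
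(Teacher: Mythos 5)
The paper does not prove this proposition — it is Cannon's classical theorem and the paper simply cites Proposition 7.5.4 of Löh's book — and your sketch is precisely the standard argument from that reference: finitely many $K$-types (the paper itself introduces these as $N$-types just after Proposition~\ref{cone_type_finite}) and an induction on $\ell(h)$ whose inductive step transfers a hypothetical shortcut for $g_2h$ to one for $g_1h$ via a thin-triangle/quasi-geodesic-stability estimate near $g_i$. Your outline is structurally correct, with the quantitative choice of $K(\delta)$ deferred exactly where the classical proof concentrates its work.
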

\begin{proof}
See e.g. Proposition 7.5.4 in \cite{L}.
\end{proof}

We now come back to our purpose, that is to link atoms with the Gromov boundary and so to the Gromov product. A property which will describe the behavior of the product over an atom is the following
\begin{oss}
If $p \in S_{k}$, i.e. $\dG(x_{0},p)=k$, then the Gromov product restricted to its cone is more than or equal to $k$. Indeed, taken $x,y \in C(p)$ we have
$$(x \mid y)= \dfrac{1}{2}\left[ k+\dG(p,x)+k+\dG(p,y) -\dG(x,y) \right]$$
and a triangular inequality yields the claim.
\end{oss}

One can see that there is a weak connection between cones and atoms, namely
\begin{oss}
Let $x$ be an element of $S_{k}$. Then $C(x)$ is a finite disjoint union of suitable $k$-atoms. Indeed, we take $y \in C(x)$ and we denote $a_{y}$ the $k$-atom that contains $y$. Then $C(x) \subseteq \cup_{y \in C(x)} a_{y}$. Since $x$ is a nearest neighbor for $y$, then $x$ is a nearest neighbor for every $z \in a_{y}$ too. Hence, for all $z \in a_{y}$, we have $z \in C(x)$. So $ \cup_{y \in C(x)} a_{y} \subseteq C(x)$.
\end{oss}
\begin{prop}
\label{cone_intersect}
Let $\Gamma$ be a hyperbolic graph with a distinguished point $x_{0}$ and let $a$ be a $k$-level atom. Then
$$a \subseteq   \displaystyle \bigcap_{p \in N(a, B_{k})} C(p) - \bigcup_{q \in S_{k}-N(a,B_{k})} C(q).$$
\end{prop}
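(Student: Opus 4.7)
The plan is to establish the two components of the set-difference separately by reducing both to a single elementary fact that directly links the combinatorial notion of nearest neighbor to the geometric notion of cone: for every $x \in \Gamma - B_k$, a vertex $p \in B_k$ is a nearest neighbor of $x$ in $B_k$ if and only if $p \in S_k$ and $x \in C(p)$.

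I would prove this characterization first. Any geodesic $[x_0,x]$ has length $\dG(x_0,x) > k$, so its $k$-th vertex $y$ lies in $S_k$, giving the bound $\dG(B_k, x) \leq \dG(y,x) = \dG(x_0, x) - k$. Conversely, for every $p' \in B_k$, the triangle inequality yields $\dG(p',x) \geq \dG(x_0,x) - \dG(x_0,p') \geq \dG(x_0,x) - k$. Hence $\dG(B_k, x) = \dG(x_0,x) - k$. Now if $p \in N(x, B_k)$, then $\dG(p,x) = \dG(x_0,x) - k$, which combined with $\dG(x_0,p) \leq k$ and $\dG(x_0,p) + \dG(p,x) \geq \dG(x_0,x)$ forces $\dG(x_0,p) = k$ and the cone equation $\dG(x_0,x) = \dG(x_0,p) + \dG(p,x)$, i.e., $x \in C(p)$. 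The reverse implication is immediate by reading the same computation backwards.

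With the characterization in hand, the first inclusion is straightforward: for $x \in a$ and $p \in N(a, B_k) = N(x, B_k)$ (well-defined by the observation following Lemma~\ref{visible_proximal}), the characterization gives $x \in C(p)$, so $a \subseteq \bigcap_{p \in N(a, B_k)} C(p)$. For the disjointness from the union, I would argue by contradiction: suppose $x \in a \cap C(q)$ for some $q \in S_k$. Then $\dG(x_0,x) = k + \dG(q,x)$, so $\dG(q,x) = \dG(x_0,x) - k = \dG(B_k, x)$, making $q$ a nearest neighbor of $x$. Part (c) of Lemma~\ref{visible_proximal} then promotes $q$ to an element of $N(a, B_k)$, contradicting $q \in S_k - N(a, B_k)$.

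The only subtlety is the edge case $x \in a \cap S_k$, which is not ruled out by Proposition~\ref{complement-atom} since atoms live in $\Gamma - B_{k-1}$ rather than $\Gamma - B_k$; here the natural unique nearest neighbor is $x$ itself, $x \in C(x)$ is trivial, and no other vertex of $S_k$ has $x$ in its cone. I do not expect any serious obstacle beyond this minor bookkeeping: the entire proof is a clean unpacking of definitions once the nearest-neighbor/cone characterization above is isolated.
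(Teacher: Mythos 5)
Your proof is correct and rests on the same elementary triangle-inequality computation as the paper's: the paper proves the first inclusion directly from the definition and rules out $x \in a \cap C(q)$ for $q \in S_{k}-N(a,B_{k})$ by the chain $\dG(x_{0},x)=\dG(x_{0},q)+\dG(q,x) > \dG(x_{0},p)+\dG(p,x) \geq \dG(x_{0},x)$, which is exactly the content of your characterization $N(x,B_{k})=\{p \in S_{k} : x \in C(p)\}$. Packaging that characterization as a standalone equivalence (and flagging the $x \in a \cap S_{k}$ edge case) is a tidy reorganization, but not a different argument.
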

\begin{proof}
The fact that $a$ lies in the intersection of its nearest neighbors cones follows immediately from the definition.\

Now suppose that $x \in a$ and $x \in C(q)$ with $q \in S_{k}-N(a,B_{k})$. This means that there exists $p \in B_{k}$ such that $\dG(q,x) > \dG(p,x)$. Hence $\dG(x_{0},x)=\dG(x_{0},q)+\dG(q,x) \geq \dG(x_{0},p)+\dG(q,x) > \dG(x_{0},p)+\dG(p,x) \geq \dG(x_{0},x) $, and this proves the claim. 
\end{proof}

The goal of the Proposition should be to fully characterize atoms in geometric terms (i.e. via cones). Even though this question is still open, we can bind atoms with the so called $N$\textit{-types} introduced by Cannon (see \cite{C} and \cite{CP} for other applications). If $\Gamma$ is the Cayley graph of an hyperbolic group, we say that two element $x$ and $y$ \textbf{have the same $N$-type} if $\dG(xz,id)-\dG(x,id)=\dG(yz,id)-\dG(y,id)$ for all $z \in B_{N}(id)$. In fact, they are a useful tool to prove Proposition \ref{cone_type_finite}. 
\begin{oss}
Let $\Gamma$ be the Cayley graph of a hyperbolic group. Then $x,y \in \Gamma$ belong to the same $N$-level atom if and only if $x^{-1}$ and $y^{-1}$ are of the same $N$-type. This is straightforward once we notice that $\dG(x,id)=\dG(x^{-1},id)$ and $\dG(xz,id)=\dG(z,x^{-1})$.
\end{oss}
The problem is that, again, the connection between $N$-types and cones is yet to be fully understood.\\

A topic on which we can say something is the topology of atoms in $\partial \Gamma$. More precisely, exploiting the correspondence given by $\pi_{h}$, we can define the \textbf{shadows} of a $k$-level atom $a$ as
$$\partial a:= \{u \in \partial_{h} \Gamma \mid  f_{u}=f_{a_{k}} \text{ on } B_{k}\}.$$. We know that they are closed subset of $\partial_{h} \Gamma$ and since $\pi_{h}$ is a closed map (it is continuous from a compact space to a metrizable space) we get that $\partial a \pi_{h}$ is closed in $\partial \Gamma$.\\
In this context, the diameter of $\partial a \pi_{h}$ with respect to the visual metric is bounded above by $\beta^{-k}$ with $a \in \mathcal{A}_{k}$.\\

We return for a moment to our (otherwise implicit) group action of $G$ on $\Gamma$. In particular, we use it to induce a notion of morphisms between subtrees of $\mathcal{A}(\Gamma)$.
\begin{defn}
Let $G$ be a group that acts geometrically on a hyperbolic graph $\Gamma$ with a distinguished point $x_{0}$. Let $a_{n} \in \mathcal{A}_{n}$ and $a_{m} \in \mathcal{A}_{m}$. We say that an element $g \in G$ \textbf{induces a morphism} between $a_{n}$ and $a_{m}$ if 
\begin{itemize}
\item[-]$a_{n}g=a_{m}$,
\item[-]$(a_{n} \cap B_{n+k})g= a_{m} \cap B_{m+k}$ for all $k \geq 0$,
\item[-]for each $k>0$ and each atom $\tilde{a}_{n+k} \in \mathcal{A}_{n+k}$ contained in $a_{n}$, there exists an atom $\tilde{a}_{m+k} \in \mathcal{A}_{n+k} $ contained in $a_{m}$ such that $\tilde{a}_{n+k}g=\tilde{a}_{m+k}$.
\end{itemize}
\end{defn}
\noindent The second condition can be restated in this terms
\begin{equation}
\label{hypinrat3.8.1}
\tag{$\mathfrak{L}$}
\dG(x_{0},gx)-m=\dG(x_{0},x)-n \ \ \forall x \in a_{n}.
\end{equation}
\noindent
The third condition fits into the context of subtrees of $\mathcal{A}(\Gamma)$. In fact, we can see the atoms that are contained in a fixed one $a$ as a subtree $\mathcal{A}(\Gamma)_{a}$ rooted in $a$ and the condition admits the existence of an isomorphism between two such subtrees given by $g$. With this in mind, we give the following
\begin{defn}
Let $G$ be a group that acts geometrically on a hyperbolic graph $\Gamma$ with a distinguished point $x_{0}$. Two atoms of $\Gamma$ are of the same \textbf{type} if there exists an  isomorphism (given by an element of $G$) between them.
\end{defn}

As for cones, the following important property holds (see \cite{BBM} for a complete proof and Section \ref{7} for further details). 
\begin{thm}
\label{finite-types}
Let $G$ be a group that acts geometrically on a hyperbolic graph $\Gamma$ with a distinguished point $x_{0}$. Then the number of different types of atoms in $\mathcal{A}(\Gamma)$ is finite.
\end{thm}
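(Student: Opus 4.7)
My plan is to construct a finite-valued signature $\sigma$ on atoms such that atoms with the same signature are automatically of the same type; the theorem then follows at once from the finiteness of the range of $\sigma$.

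For each atom $a \in \mathcal{A}_{n}$, first pick a proximal point $p_{a} \in P(a,S_{n})$; by definition $\dG(x_{0},p_{a})=n$, and by Lemma~\ref{visible_proximal_b} the whole set $P(a,S_{n})$ has diameter at most $8\delta+4$, so the choice is inessential up to bounded error. Using the cocompactness of the $G$-action on $\Gamma$, choose $g_{a} \in G$ so that $g_{a}p_{a}$ lies in a fixed finite set $V_{0}$ of vertex-orbit representatives. Fix a constant $R$ depending only on $\delta$, to be chosen large enough to capture all the relevant local geometry (cones, nearest neighbors, and one-step refinement data). Define $\sigma(a)$ to be the triple of (i) the orbit representative $g_{a}p_{a} \in V_{0}$; (ii) the cone type of $p_{a}$, which ranges over a finite set by Proposition~\ref{cone_type_finite}; and (iii) the isomorphism class of the finite labelled ball $g_{a}B_{R}(p_{a})$, where the labels record which vertices belong to $g_{a}a$ and what the distance function $\dG(g_{a}x_{0},-)$ is on this neighborhood. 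Since $|B_{R}|$ is bounded by local finiteness, and each piece of data takes finitely many values, the signature $\sigma$ ranges over a finite set.

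The main step is to show that $\sigma(a)=\sigma(b)$ for $a \in \mathcal{A}_{n}$ and $b \in \mathcal{A}_{m}$ forces $a$ and $b$ to be of the same type, with candidate witness $g := g_{b}^{-1}g_{a}$. By construction $gp_{a}=p_{b}$, and the matching labelling on $B_{R}$ gives $g(a \cap B_{R}(p_{a}))=b \cap B_{R}(p_{b})$. The set-theoretic equality $ga=b$ then follows from Proposition~\ref{cone_intersect}, which characterizes $a$ as the intersection of cones of its nearest neighbors in $N(a,B_{n})$ minus the cones attached to the remaining points of $S_{n}$; taking $R$ large enough ensures that all the positive and negative cones determining membership in $a$ are already visible inside $B_{R}(p_{a})$, so $g$ sends them correctly to those defining $b$. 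The distance-shift condition (\ref{hypinrat3.8.1}) can be rewritten as the statement that $x \mapsto \dG(g^{-1}x_{0},x)-\dG(x_{0},x)$ is constantly equal to $m-n$ on $a$; this is a horofunction-like constancy, which holds because the relevant values of $g^{-1}x_{0}$ with respect to the atom are already encoded in the local data of $\sigma(a)$.

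The hard part will be verifying the third condition of the definition of morphism, namely the isomorphism of the subtrees rooted at $a$ and at $b$. Because the signature $\sigma(a)$ has bounded radius $R$ while the subtree below $a$ is infinite, this cannot be read off directly; instead one must argue inductively that children of $a$ inherit compatible signatures from $\sigma(a)$, so that the same argument applies level by level. The hyperbolicity of $\Gamma$ is essential here: by Proposition~\ref{exp_div_geodesics}, the distance function outside a ball is tightly constrained by its values inside, which means that the refinement of $a$ at level $n+1$ into children is determined by the local data in a bounded enlargement of $B_{R}(p_{a})$. Iterating this, compatible signatures propagate through the tree, giving $G$-isomorphic subtrees below $a$ and $b$ and completing the verification that $g$ induces a morphism in the sense of the definition, hence that $a$ and $b$ share the same type.
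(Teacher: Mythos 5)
Your overall strategy is the same as the paper's: build a finite-valued invariant from cocompactness, the finiteness of cone types (Proposition~\ref{cone_type_finite}), and bounded local data around the proximal points, and then show that equality of invariants forces equality of types. The paper packages the invariant as a \emph{geometric equivalence} (matching of $P(a,S_{m})$ up to the $G$-action, of the cones of \emph{all} proximal points, and of $\overline{d}_{a}$ restricted to $P(a,S_{m})$), and the finiteness count is essentially your signature count. So far so good.

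The gap is in the implication ``same signature $\Rightarrow$ same type,'' which in the paper is exactly Proposition~\ref{geometric_implies_morphism}; the paper does not reprove it but notes that its proof occupies most of Section~3.5 of \cite{BBM}. Your two substitutes for it do not work. First, you claim that $ga=b$ ``follows from Proposition~\ref{cone_intersect}, which characterizes $a$ as the intersection of cones of its nearest neighbors minus the remaining cones.'' Proposition~\ref{cone_intersect} gives only the inclusion $a \subseteq \bigcap C(p) - \bigcup C(q)$, and the paper explicitly states right after it that a full characterization of atoms via cones is an open question; so you cannot recover the set $a$ from cone data alone. Moreover, membership in $a$ is defined by agreement of $f_{x}$ on the entire ball $B_{n}$, which is not contained in any $R$-ball around $p_{a}$ with $R$ independent of $n$, so the labelled ball $g_{a}B_{R}(p_{a})$ cannot by itself certify $ga=b$. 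Second, the inductive propagation of signatures to children is asserted rather than proved: Proposition~\ref{exp_div_geodesics} is a statement about lengths of paths avoiding balls and does not say that ``the distance function outside a ball is tightly constrained by its values inside''; the claim that the level-$(n+1)$ refinement of $a$ is determined by bounded local data near $p_{a}$ is precisely the hard content of Proposition~\ref{geometric_implies_morphism} (condition~(\ref{hypinrat3.8.1}) and the subtree isomorphism), and it needs the actual argument from \cite{BBM}, not an appeal to exponential divergence. As written, your proof establishes the easy finiteness of the signature but leaves the essential step unproved.
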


Using these considerations, we get a tool, useful in the next section, about atoms and their balls:
\begin{hook}
\label{Conj3}
There exists a constant $\lambda_{a}$ that bounds the distances between
atoms and their balls.
\end{hook}
The symbol $\lambda_{a}$ will be used from now on to denote the \textbf{hooking constant}.
\begin{proof}
The Condition (\ref{hypinrat3.8.1}) says that the distance between an atom and its ball depends only on the type of the atom. Since the graph is hyperbolic we know it has a finite number of types (by Lemma \ref{finite-types}). This two facts combined together allow us to consider the maximum over all the types of the distances and to get the constant.
\end{proof}
\begin{quest*}
Is there a way to express $\lambda_{a}$ with respect to $\delta$?
\end{quest*}

The last collection of vertices we define are tips.

\begin{defn}\label{tips}
We call the \textbf{tip} of an atom $a \in \mathcal{A}_{k}$ the collection $T(a):= \{ x \in a \mid \dG(B_{k},x)=\dG(B_{k},a)\}$, or, equivalently, the first non-empty intersection $a \cap S_{k+i}$ with $i \geq 0$.
\end{defn}

\begin{prop}
\label{diam}
Let $a$ be an atom of $\Gamma$. Then $$\diam T(a):= \max_{x,y \in T(a)} \dG(x,y)$$ is bounded by $2\lambda_{a}$.
\end{prop}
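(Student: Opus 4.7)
The plan is to bound $\diam T(a)$ by showing that the elements of $T(a)$ all lie within distance $i := \dG(B_{k},a)$ of a single common ``hinge'' point in $B_{k}$, and then invoking the triangle inequality. The Hooking Lemma immediately gives $i \leq \lambda_{a}$, so the statement reduces to proving $\dG(x,y) \leq 2i$ for any $x,y \in T(a)$.

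I would split into two cases according to whether $i=0$ or $i \geq 1$. When $i=0$, Proposition~\ref{complement-atom} forces $T(a) \subseteq a \cap S_{k}$; a direct computation with the defining condition $f_{x}=f_{y}$ on $B_{k}$ (evaluated at the point $x \in S_{k} \subseteq B_{k}$) collapses $T(a)$ to a single point, and the bound is trivial. This degenerate case is the only minor annoyance, since the notion of nearest neighbor in $B_{k}$ is defined only outside $B_{k}$.

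The substantive case is $i \geq 1$. Given $x,y \in T(a)$, both lie in $\Gamma - B_{k}$, so pick any $\overline{y} \in N(y,B_{k})$, which satisfies $\dG(\overline{y},y)=i$. The key step is to invoke Lemma~\ref{visible_proximal_c}: since $\overline{y}$ is a nearest neighbor in $B_{k}$ for one element of the atom $a$, it is automatically a nearest neighbor in $B_{k}$ for every element of $a$, in particular for $x$. Hence $\dG(\overline{y},x)=\dG(B_{k},x)=i$ as well, and the triangle inequality yields
\[
\dG(x,y) \;\leq\; \dG(x,\overline{y})+\dG(\overline{y},y) \;=\; 2i \;\leq\; 2\lambda_{a}.
\]

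There is no genuine obstacle: the proposition is a short consequence of the ``global nearest neighbor'' property of atoms recorded in Lemma~\ref{visible_proximal_c}, combined with the Hooking Lemma's uniform control on $\dG(B_{k},a)$. Essentially the whole content of the statement is that the same hinge point $\overline{y}$ works simultaneously for every element of the tip, which is exactly what the atom equivalence relation (``$f_{x}$ and $f_{y}$ agree on $B_{k}$'') is engineered to guarantee.
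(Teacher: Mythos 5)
Your proof is correct and follows essentially the same route as the paper: a triangle inequality through a common nearest neighbor of the atom (valid for all elements by Lemma~\ref{visible_proximal_c}), with each leg bounded by $\lambda_{a}$ via the \hyperref[Conj3]{Hooking Lemma}. Your extra care with the degenerate case $\dG(B_{k},a)=0$, where the tip collapses to a single point, is a detail the paper only records in the lemma that follows, but it does not change the argument.
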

\begin{proof}
It suffices to construct a triangle made by geodesics with two elements of $T(a)$ and a nearest neighbor of $a$ as vertices, namely if $\widehat{x},\widehat{y} \in T(a)$ and $\overline{x} \in N(a,B_{k})$ then $\dG(\widehat{x},\widehat{y}) \leq \dG(\widehat{x},\overline{x})+\dG(\overline{x},\widehat{y})$.
The claim follows by the \hyperref[Conj3]{Hooking Lemma}.
\end{proof}

To understand how small a tip could be, we give the following
\begin{lem}
Let $a_{k} \in \mathcal{A}_{k}$. Suppose $B_{k} \cap a_{k} \neq \emptyset$, then $B_{k} \cap a_{k}$ consists in one point. Which means that $T(a_{k})=N(a_{k},B_{k})$ consists in one point.  
\end{lem}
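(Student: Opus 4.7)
The plan is to show the three sets $B_k \cap a_k$, $T(a_k)$, and $N(a_k, B_k)$ all coincide and reduce to a single element, using the defining property of atoms.

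First I would note that by Proposition~\ref{complement-atom} the atom $a_k$ lies entirely outside $B_{k-1}$. So any hypothetical element of $B_k \cap a_k$ actually sits on the sphere $S_k$. This already gives the identifications $B_k \cap a_k = S_k \cap a_k$, and in the definition of the tip this corresponds to the choice $i=0$, so $T(a_k) = a_k \cap S_k = a_k \cap B_k$. Moreover, once the intersection is non-empty, $\dG(B_k, a_k)=0$ and the nearest-neighbor set $N(a_k, B_k)$ is exactly $a_k \cap B_k$. Hence all three sets coincide, and it suffices to show $a_k \cap B_k$ is a singleton.

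For uniqueness, I would take $x, y \in a_k \cap B_k$ and use the defining condition of an atom: the functions $f_x$ and $f_y$ from equation~(\ref{horo_representative}) agree on $B_k$. Since $x, y$ both lie in $B_k$, I can evaluate both functions at the point $x$ itself. The computation is
\[
f_x(x) = \dG(x,x) - \dG(x_0,x) = -k, \qquad f_y(x) = \dG(x,y) - \dG(x_0,y) = \dG(x,y) - k,
\]
where I used $\dG(x_0,x) = \dG(x_0,y) = k$ (from $x, y \in S_k$ as established above). Setting $f_x(x) = f_y(x)$ gives $\dG(x,y)=0$, hence $x=y$.

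There is no real obstacle here; the only subtlety is observing that membership in $B_k$ combined with Proposition~\ref{complement-atom} forces the distance from $x_0$ to be exactly $k$, which is what makes the evaluation $f_x(x) = f_y(x)$ collapse to $\dG(x,y)=0$. The identifications $T(a_k) = N(a_k,B_k) = a_k \cap B_k$ are then immediate bookkeeping from the relevant definitions.
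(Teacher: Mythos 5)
Your proof is correct and follows essentially the same route as the paper: both reduce to showing $a_k\cap S_k$ is a singleton by evaluating the atom-defining functions at a point of the intersection, yielding $\dG(x,y)-k=-k$. Your write-up merely spells out the bookkeeping (via Proposition~\ref{complement-atom}) that the paper dismisses as straightforward.
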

\begin{proof}
The fact that $B_{k} \cap a_{k}=T(a_{k})=N(a_{k},B_{k})$ is straightforward.\\
It suffices to show that $T(a_{k})$ is a point: taken $x$ and $y$ that belong to $T(a_{k})$, and hence to $S_{k}$, by definition of atom we get
$$\dG(x,y)-k=\dG(x,x)-k=-k$$
that is $\dG(x,y)=0$ and the claim holds.
\end{proof}

\begin{exm}
We consider again the uniform tiling of the hyperbolic plane made of squares such that each vertex has degree 5. Recall that we have ten $1$-level atoms. They divides equally in two different types. One type has the property described above. Indeed, each element of the sphere $S_{1}$ is the tip of one of the five atoms. See Figure \ref{4-5_tiling} for clearence.
\begin{figure}\centering

\includegraphics[]{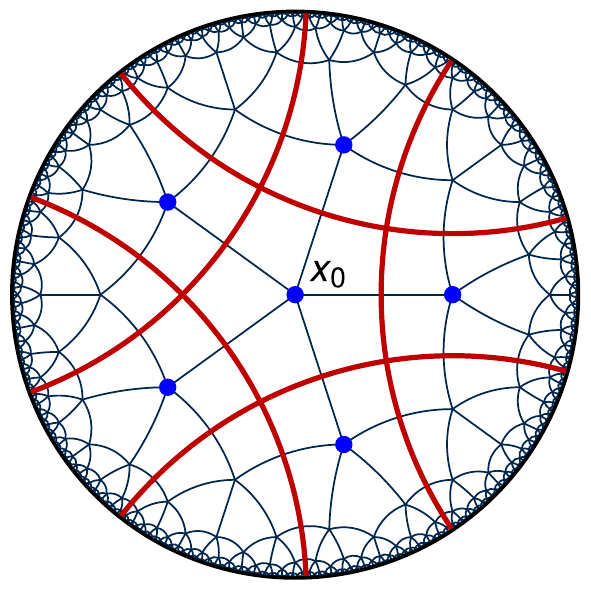}
\caption[Example of sharp tips]{In blue the ball of radius 1. Each element of the sphere coincides with the tip of a $1$-level atom.}
\label{4-5_tiling}
\end{figure}
\end{exm}

\begin{oss} \label{equal_tips}
It is worth pointing out that two different atoms $a$ and $b$ such that $b$ is a children of $a$ can have the same tip. This could happen in one of the following situations
\begin{itemize}
    \item[--] the atom $b$ is the only child of $a$ and it is isometric to it. For an example see narrow type atoms in Section~\ref{8}.
    \item[--] the atom $a$ splits in different children, but one of them has the same tip. See type $C$ and $D$ atoms in Example~\ref{type_automaton_2D}.
\end{itemize}
\end{oss}

A first application of tips is that the two projections $\pi_{h}$ and $\partial \Gamma \twoheadrightarrow \Tend(\Gamma)$ are compatible.
\begin{oss}
We induce the map $\widehat{\iota}: \partial_{h} \Gamma \twoheadrightarrow \Tend(\Gamma)$ from the family of surjective maps $\iota_{k}$ from $\mathcal{A}_{k}$ to  the collection of connected components of $\Gamma - B_{k-1}$ defined in the following way: $a_{k} \mapsto \mathcal{C}_{k}$ if $a_{k} \subseteq \mathcal{C}_{k}$. Note that any $k$-level atom $a$ is fully contained in a connected component of $\Gamma-B_{k-1}$ since, if $x,y \in a $, in order to get a path between the two of them, one can consider a geodesic going from $x$ to a nearest neighbor $\overline{x}$ and then a geodesic from $\overline{x}$ to $y$. 
Now, taking a horofunction $u=(u_{k})_{k=1}^{\infty}$ its image is $\mathcal{C}_{u}=(\mathcal{C}_{k}^{u})_{k=1}^{\infty}$ with $u_{k} \subseteq \mathcal{C}_{k}^{u}$ and $u_{k+1} \subseteq u_{k} \subseteq \mathcal{C}_{k}^{u}$, so $\mathcal{C}_{k+1}^{u}\subseteq \mathcal{C}_{k}^{u}$. Moreover, the diagram below commutes.

\[
\begin{tikzcd}[ampersand replacement=\&]
\partial_{h} \Gamma \arrow[d, "\pi_{h}",twoheadrightarrow] \arrow[dr, "\widehat{\iota}", twoheadrightarrow] \\
\partial \Gamma \arrow[r, twoheadrightarrow]
\&  \Tend(\Gamma)
\end{tikzcd}
\]

To prove this, let $[\gamma_{u}]$ be a point in the Gromov boundary such that $u\pi_{h}=[\gamma_{u}]$. Let $\mathcal{C}_{\gamma} \in \End(\Gamma)$ such that $end(\gamma)=\mathcal{C}_{\gamma}$ and $\mathcal{C}_{\gamma}$ corresponds to some sequence $(\mathcal{C}_{k})_{k=1}^{\infty} \in \Tend(\Gamma)$. We want $\mathcal{C}_{\gamma} \simeq \mathcal{C}_{u}$.\\
Now $\mathcal{C}_{n}$ is such that $\gamma([0, \infty[)\subseteq \mathcal{C}_{n}$ and there exists a sequence $\{\widehat{x}_{k}\}_{k=1}^{\infty}$ in $\Gamma$ such that $\widehat{x}_{k} \in T(u_{k})$, hence going to infinity in the sense of Gromov, that converges to $u$ and such that $(\widehat{x}_{k} \mid \gamma(k)) \rightarrow \infty$.So $\widehat{x}_{k}$ and $\gamma(k)$ need to be in the same connected component. Otherwise 
$$\dG(\widehat{x}_{k},x_{0})+\dG(\gamma(k),x_{0})-\dG(\widehat{x}_{k},\gamma(k)) \leq \lambda_{a}+2\overline{k} \ \ \ \ \ \ \  \forall k > \overline{k}.$$
Indeed, $\dG(\widehat{x}_{k},x_{0}) \leq k+\lambda_{a}$ for the \hyperref[Conj3]{Hooking Lemma}, $\dG(\gamma(k),x_{0})=k$ since $\gamma$ is a geodesic ray and $\dG(\widehat{x}_{k},\gamma(k)) \geq 2k-2\overline{k}$ where $\overline{k} \in \mathbb{N}$ is such that $\mathcal{C}_{k}$ is the first component not containing $\widehat{x}_{\overline{k}}$.
It follows that $\mathcal{C}_{n}=\mathcal{C}_{n}^{u}$ and hence the claim. 
\end{oss}

The conclusion of this section is devoted to describing a different Gromov product for atoms. The defintion of Gromov product we gave can be applied on $\mathcal{A}$ as a tree, we will denote it with $( \cdot \mid \cdot )_{\mathcal{A}}$, it will be useful in the next sections and it has an associated visual metric $\curlyvee_{\mathcal{A}}$. What we want to do here, it is to define a Gromov product between atoms of the same level and one between horofunctions (recall Theorem~\ref{horofunction-boundary-tree-atoms}  and the atom-coding) that keeps track of what is happening in the underlying hyperbolic graph. Namely, we put

$$(a_{k} \mid b_{k})_{k}:=\max_{\widehat{x} \in T(a_{k}),\widehat{y} \in T(b_{k})}(\widehat{x} \mid \widehat{y}) \text{ and } (u \mid v):= \liminf_{k} (u_{k} \mid v_{k})_{k}$$

\noindent We will always drop the $k$ in the notation, as it will be clear from the context.\

We need to check that this notion somehow agree with the one regarding points of the Gromov boundary.

\begin{lem}\label{Gromov_atoms}
Let $u$ and $v$ be two horofunctions of $\Gamma$. Then
$$(u\pi_{h} \mid v\pi_{h}) \leq (u \mid v) \leq (u\pi_{h} \mid v\pi_{h})+2\tilde{\delta}.$$
\end{lem}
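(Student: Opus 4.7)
The proof splits into the two inequalities, both handled via the atom-coding of horofunctions together with the hyperbolic inequality (\ref{hypineq}).

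For the lower bound $(u\pi_{h} \mid v\pi_{h}) \leq (u \mid v)$, I would pick, for each $k$, points $\hat x_k \in T(u_k)$ and $\hat y_k \in T(v_k)$ realizing the maximum in the definition of $(u_k \mid v_k)_k$. Since the tips are contained in the atoms and $u=(u_k)_{k=1}^\infty$, $v=(v_k)_{k=1}^\infty$ are the atom-codings of the two horofunctions, the sequences $\{\hat x_k\}$ and $\{\hat y_k\}$ converge in $\partial_h \Gamma$ to $u$ and $v$ respectively; by the remark following the definition of $\pi_h$, they are therefore Gromov sequences converging to $u\pi_h$ and $v\pi_h$ in $\partial \Gamma$. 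The definition of $(u\pi_{h} \mid v\pi_{h})$ as an infimum of $\liminf$s then yields
\[
(u\pi_{h} \mid v\pi_{h}) \;\leq\; \liminf_k (\hat x_k \mid \hat y_k) \;=\; \liminf_k (u_k \mid v_k)_k \;=\; (u \mid v).
\]

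For the upper bound $(u \mid v) \leq (u\pi_{h} \mid v\pi_{h}) + 2\tilde{\delta}$, I would let $\{x_k\}$ and $\{y_k\}$ be arbitrary Gromov sequences converging to $u\pi_h$ and $v\pi_h$, keeping the $\hat x_k$, $\hat y_k$ chosen above. Two applications of (\ref{hypineq}) give
\[
(x_k \mid y_k) \;\geq\; \min\bigl\{ (x_k \mid \hat x_k),\;(\hat x_k \mid \hat y_k)-\tilde\delta,\;(\hat y_k \mid y_k)-\tilde\delta\bigr\} - \tilde\delta.
\]
Now the two pairs $\{x_k\},\{\hat x_k\}$ and $\{y_k\},\{\hat y_k\}$ are Gromov sequences converging to the same respective boundary points, hence $(x_k \mid \hat x_k) \to \infty$ and $(\hat y_k \mid y_k) \to \infty$. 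Assuming $u\pi_h \neq v\pi_h$ (the other case is trivial, since both sides are infinite), $(\hat x_k \mid \hat y_k)$ stays bounded, so for $k$ large enough the minimum is attained by $(\hat x_k \mid \hat y_k)-\tilde\delta$, giving $(x_k \mid y_k) \geq (\hat x_k \mid \hat y_k) - 2\tilde\delta$. Passing to $\liminf_k$ produces
\[
\liminf_k (x_k \mid y_k) \;\geq\; (u \mid v) - 2\tilde\delta,
\]
and taking the infimum over all pairs of Gromov sequences yields $(u\pi_h \mid v\pi_h) \geq (u \mid v) - 2\tilde\delta$, as required.

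The main subtlety is justifying that $(x_k \mid \hat x_k) \to \infty$ and $(\hat y_k \mid y_k) \to \infty$: this relies on the characterization of equivalence of Gromov sequences (two such sequences define the same boundary point iff their Gromov product diverges), used together with the fact, recalled just before the statement, that horofunction convergence implies convergence in the sense of Gromov. Once this is in place, the rest is a bookkeeping of the $\tilde\delta$-defects in the hyperbolic inequality.
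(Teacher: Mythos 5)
Your proof is correct. The first inequality is handled exactly as in the paper: the tip sequences realizing the maxima are Gromov sequences converging to $u\pi_{h}$ and $v\pi_{h}$, so the infimum in Definition~\ref{Gromov-product-boundary} gives $(u\pi_{h}\mid v\pi_{h})\leq (u\mid v)$ at once. For the upper bound you take a genuinely more self-contained route. The paper bounds $(u\mid v)$ by the supremum, over all pairs of Gromov sequences representing $u\pi_{h}$ and $v\pi_{h}$, of $\liminf_{k}(x_{k}\mid y_{k})$, and then invokes Remark~\ref{Vaisala_Gromov_product_c} (V\"ais\"al\"a's comparison of the four possible $\inf/\sup$--$\liminf/\limsup$ versions of the boundary product), which supplies the $2\tilde{\delta}$ in one stroke. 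You instead re-derive the needed instance of that comparison by hand: two applications of (\ref{hypineq}) relating an arbitrary pair of representing sequences to the tip sequences, together with the fact that $(x_{k}\mid\hat{x}_{k})\to\infty$ and $(\hat{y}_{k}\mid y_{k})\to\infty$ for equivalent Gromov sequences. This is sound; note only that your case distinction on whether $(\hat{x}_{k}\mid\hat{y}_{k})$ stays bounded (which itself leans on the standard fact that $\limsup_{k}(\hat{x}_{k}\mid\hat{y}_{k})=\infty$ would force $u\pi_{h}=v\pi_{h}$) can be skipped: since $\liminf_{k}\min\{f_{k},g_{k}\}\geq\min\{\liminf_{k}f_{k},\liminf_{k}g_{k}\}$ and the two outer terms of your three-term minimum tend to $+\infty$, passing directly to $\liminf$ already yields $\liminf_{k}(x_{k}\mid y_{k})\geq(u\mid v)-2\tilde{\delta}$ in all cases. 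What your argument buys is independence from the cited black box; what the paper's buys is brevity.
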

\begin{proof}
By Definition~\ref{Gromov-product-boundary}, we have $(u\pi_{h} \mid v\pi_{h}) \leq (u \mid v)$. For the second inequality, we start by saying that
$$(u \mid v) \leq \max_{x_{k} \in T(u_{k}), y_{k} \in T(v_{k})} \liminf_{k } (x_{k} | y_{k}),$$
here we mean that the maximum has to be taken over all the possibile couples of sequences of vertices such that $x_{k} \in T(u_{k})$ and $y_{k} \in T(v_{k})$.\\
Now 
$$ \max_{x_{k} \in T(u_{k}), y_{k} \in T(v_{k})} \liminf_{k } (x_{k} | y_{k}) \leq \sup_{\{x_{k}\} \in u\pi_{h} , \{y_{k}\} \in v \pi_{h}} \liminf_{k } (x_{k} | y_{k}),$$ 
hence combining the two inequalities and using Remark \ref{Vaisala_Gromov_product_c}, we get $(u \mid v) \leq (u\pi_{h} \mid v\pi_{h}) +2\tilde{\delta}$.
\end{proof}

\section{Gluing Relation via Atoms}\label{3}
Now that we developed the two points of view for the horofunction boundary of a hyperbolic graph $\Gamma$ and some tools regarding the tree of atoms, we want to find a connection between them to understand the gluing relation given by the quotient map $\pi_{h}: \partial_{h} \Gamma \twoheadrightarrow \partial \Gamma$. More precisely, the goal is a way to determine when two horofunctions are glued by looking at the tree of atoms. This leads to a coarse version of the so called fellow traveler property.\

In order to do that, we introduce the following 
\begin{defn}
Let $\Gamma$ be a hyperbolic graph and let $\mathcal{A}(\Gamma)$ be its tree of atoms. For all $k$, we choose $\dist^{k}$ a semi-metric defined on the $k$-level of $\mathcal{A}(\Gamma)$. We say that $\dist^{k}$ \textbf{represents the gluing} if there exists a constant $C$ such that given two horofunctions $u= ( u_{k} )_{k=1}^{\infty}$ and $v= ( v_{k} )_{k=1}^{\infty}$, then 
$$ u \pi_{h}= v \pi_{h} \ \ \ \Leftrightarrow \ \ \ \forall k \geq 0 \ \dist^{k}(u_{k},v_{k}) \leq C.$$ 
\end{defn}

\noindent In this way, we see that, under such a family of semi-metrics, the horofunctions which glue have a similar, or maybe we should say generalized, behavior of two geodesics that glue on $\partial \Gamma$.\

We are going to introduce three semi-metrics and to prove that they represent the gluing.\

The first one we consider comes from a natural way to think about distances between atoms. Let $a$ and $b$ two $k$-level atoms, then 
$$\dG^{k}(a,b):=  \min_{x \in a, y \in b} \dG(x,y).$$
Since $\dG^{k}$ is defined between atoms of the same level $k$, we will always refer to $\dG^{k}$ simply as $\dG$ when $k$ will be already specified by $a$ and $b$. Whenever we will deal with atoms of different levels and hence with $\dG^{k}$ and $\dG^{k+1}$, we will make it clear. Moreover, in statements and discussions we may consider the collection of semi-metrics $\{ \dG^{k} \}_{_{k=1}}^{^{\infty}}$ and we will simply say \textit{the semi-metric} $\dG$ (note that $\dG$ defined on $\Gamma$ is instead a metric).\\
Since we are interested in horofunctions, given $u= ( u_{k} )_{k=1}^{\infty}$ and $v= ( v_{k} )_{k=1}^{\infty}$, we will consider $\{ \dG(u_{k},v_{k}) \}_{_{k=1}}^{^{\infty}}$. The first thing we note is that the sequence of distances is non-decreasing. Indeed, we take $x_{k+1} \in u_{k+1}$ and $y_{k+1} \in v_{k+1}$ such that $\dG(u_{k+1},v_{k+1})=\dG(x_{k+1},y_{k+1})$. But $x_{k+1} \in u_{k}$ and $y_{k+1} \in v_{k}$, hence by definition $$\dG(u_{k},v_{k}) \leq \dG(x_{k+1},y_{k+1}).$$
Despite the fact that we are looking to $\partial_{h} \Gamma$ from a geometric viewpoint, we can adopt a more analytic distance 
$$\dH(u,v):=\Vert f_{u}-f_{v} \Vert_{\infty}.$$ In particular, we have the following result which is clearly related to our goal and will be useful for our proof.
\begin{prop}
\label{Prop4.4}
Let $\Gamma$ be a hyperbolic graph. Let $u$ and $v$ be two horofunctions of $\Gamma$  and let $f_{u} \in u$ and $f_{v} \in v$ be two representatives such that $f_{u}(x_{0})=f_{v}(x_{0})=0$. Then $u$ and $v$ are glued on $\partial \Gamma$ if and only if there exists $\lambda_{h}$ (independent from $u$ and $v$) such that 
$$d_{h}(u,v):= \Vert f_{u}-f_{v} \Vert_{\infty} < \lambda_{h}. $$ 
\end{prop}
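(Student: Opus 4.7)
My approach is to translate the statement into one about extended Gromov products with boundary points: the forward direction then follows because Gromov products with the same boundary point agree, and the backward direction follows from the hyperbolic inequality extended to the boundary.

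The first step is to rewrite, for any normalized representative $f_u$ (with $f_u(x_0)=0$) and any sequence $\{x_k\}$ converging to $u$ in $\partial_h \Gamma$,
\[
f_u(y) \;=\; \lim_k \bigl( \dG(y,x_k) - \dG(x_0,x_k)\bigr) \;=\; \dG(x_0,y) - 2\lim_k (y \mid x_k).
\]
A horofunction-convergent sequence is in particular a Gromov sequence converging to $u\pi_h$, so by Remark~\ref{Vaisala_Gromov_product_c} the limit $\lim_k(y\mid x_k)$ agrees with $(y\mid u\pi_h)$ up to an additive error of at most $2\tilde\delta$. Hence, uniformly in $y$,
\[
f_u(y) - f_v(y) \;=\; 2(y\mid v\pi_h) - 2(y\mid u\pi_h) + O(\tilde\delta),
\]
and the proposition reduces to showing that $\sup_{y\in\Gamma}\,|(y\mid u\pi_h)-(y\mid v\pi_h)|$ is finite if and only if $u\pi_h=v\pi_h$.

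For the forward implication, if $u\pi_h = v\pi_h$ the two Gromov products coincide by definition, so $\|f_u-f_v\|_\infty$ is bounded by an explicit constant $\lambda_h$ depending only on $\tilde\delta$. For the backward implication, suppose $\xi := u\pi_h \neq v\pi_h =: \eta$, so that $(\xi\mid\eta)<\infty$. Pick a geodesic ray $\gamma$ based at $x_0$ representing $\xi$ (which exists since $\Gamma$ is locally finite, hence proper) and set $y_n := \gamma(n)$. Then $(y_n \mid \xi) \geq n - O(\tilde\delta)$, while the hyperbolic inequality extended to $\Gamma\cup\partial\Gamma$ (Remark~\ref{Vaisala_Gromov_product}) applied to the triple $\xi,y_n,\eta$ gives
\[
(\xi\mid\eta) \;\geq\; \min\{(y_n\mid\xi),(y_n\mid\eta)\} - \tilde\delta.
\]
Hence once $(y_n\mid\xi) > (\xi\mid\eta)+\tilde\delta$, we must have $(y_n\mid\eta) \leq (\xi\mid\eta)+\tilde\delta$. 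Consequently $|(y_n\mid\xi)-(y_n\mid\eta)|\to\infty$, forcing $\|f_u-f_v\|_\infty=\infty$ and contradicting the bound.

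The main obstacle will be the first step: carefully tying the pointwise limit defining $f_u(y)$ to the $\inf\liminf$ formula defining $(y\mid u\pi_h)$, and doing so with an error bound that is uniform in $y$. The subtlety is that the horofunction limit is taken along a single sequence, whereas the extended Gromov product is an infimum of $\liminf$s over all Gromov sequences converging to the given boundary point. The needed estimate is precisely the content of Remark~\ref{Vaisala_Gromov_product_c}: all four natural choices (inf/sup combined with liminf/limsup) lie in a common $2\tilde\delta$-window, so any single Gromov sequence realizes the product up to a bounded, $y$-independent error.
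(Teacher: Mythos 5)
Your argument is correct, but it is worth noting that the paper does not actually prove this proposition: its ``proof'' is a citation to Webster--Winchester \cite[Proposition 4.4]{WW}. What you have written is therefore a genuine self-contained alternative. Your reduction $f_{u}(y)=\dG(x_{0},y)-2\lim_{k}(y\mid x_{k})$ is exact, the limit on the right exists because the horofunction limit exists pointwise, and Remark~\ref{Vaisala_Gromov_product_c} does give the $y$-uniform $2\tilde\delta$-comparison with the extended product $(y\mid u\pi_{h})$, so the forward direction yields an explicit $\lambda_{h}$ depending only on $\tilde\delta$. The backward direction is also sound: $(\gamma(n)\mid\xi)\geq n-2\tilde\delta$ for a ray $\gamma$ representing $\xi$, and the extended inequality (\ref{hypineq}) forces $(\gamma(n)\mid\eta)$ to stay bounded by $(\xi\mid\eta)+\tilde\delta$ once $(\gamma(n)\mid\xi)$ exceeds that threshold, so $|f_{u}-f_{v}|$ blows up along $\gamma$. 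In fact your argument proves the sharper dichotomy that $d_{h}(u,v)$ is either at most a constant depending only on $\tilde\delta$ or infinite, which is slightly more information than the statement demands. Two minor points to tighten if you write this up: justify in one line that a geodesic ray representing $\xi$ exists (properness of $\Gamma$ plus Arzel\`a--Ascoli, or simply replace $\gamma(n)$ by any Gromov sequence converging to $\xi$ and use Remark~\ref{Vaisala_Gromov_product}(a)), and record that the constant in the boundary-extended version of (\ref{hypineq}) may differ from $\tilde\delta$ by a bounded amount, which is harmless here.
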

We will always denote this constant with $\lambda_{h}$. 
\begin{proof}
See \cite[Proposition 4.4]{WW}
\end{proof}
\noindent Note that if  $u= ( u_{k} )_{k=1}^{\infty}$ is a horofunction and $f_{u}$ is the representative such that $f_{u}(x_{0})=0$. When we consider its restriction to $B_{k}$, we get  
$$f_{u_{k}}:=\dG(-,u_{k})-\dG(x_{0},u_{k})$$
and $\dH(u_{k},v_{k})=\Vert f_{u_{k}}-f_{v_{k}} \Vert_{\infty}$.\

With the following results we link the geometric distance with the analytic one. 
\begin{lem}
\label{Conj2}
For all $k \in \mathbb{N}$ and for all pairs of $k$-level atoms $u_{k}$ and $v_{k}$ we have
$$ \frac{1}{2} \dH(u_{k},v_{k}) \leq \dG(u_{k},v_{k}) \leq \dH(u_{k},v_{k})+2\lambda_{a}.$$
\end{lem}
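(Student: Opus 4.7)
The plan is to handle the two inequalities independently: the left inequality $\tfrac{1}{2}\dH(u_{k},v_{k}) \leq \dG(u_{k},v_{k})$ is essentially a triangle-inequality exercise, while the right inequality $\dG(u_{k},v_{k}) \leq \dH(u_{k},v_{k}) + 2\lambda_{a}$ is where tips and the \hyperref[Conj3]{Hooking Lemma} do the real work.

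For the lower bound, I would start by choosing $x \in u_{k}$ and $y \in v_{k}$ that realize the minimum $\dG(u_{k},v_{k}) = \dG(x,y)$, which exists because the graph is locally finite with integer-valued distances. Using $f_{x}$ and $f_{y}$ as representatives of $f_{u_{k}}$ and $f_{v_{k}}$, for every $z \in B_{k}$ I rewrite
\[f_{u_{k}}(z) - f_{v_{k}}(z) = \bigl[\dG(z,x) - \dG(z,y)\bigr] - \bigl[\dG(x_{0},x) - \dG(x_{0},y)\bigr].\]
Two reverse triangle inequalities each bound a bracket by $\dG(x,y)$, so $|f_{u_{k}}(z) - f_{v_{k}}(z)| \leq 2\dG(x,y)$, and maximizing over $z \in B_{k}$ yields the claim.

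For the upper bound, the goal is to exhibit a specific pair $\hat{x} \in u_{k}$, $\hat{y} \in v_{k}$ whose distance is controlled by $\dH(u_{k},v_{k}) + 2\lambda_{a}$. I would pick $\hat{x} \in T(u_{k})$ and let $\bar{x}$ be the vertex at distance exactly $k$ from $x_{0}$ on some chosen geodesic $[x_{0},\hat{x}]$; by the \hyperref[Conj3]{Hooking Lemma} $\dG(\bar{x},\hat{x}) \leq \lambda_{a}$, and by Lemma \ref{visible_proximal_c} $\bar{x}$ is a nearest neighbor of every element of $u_{k}$, hence lies on a geodesic from $x_{0}$ to any $x \in u_{k}$. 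A direct computation then collapses to $f_{u_{k}}(\bar{x}) = -k$. Now pick $\hat{y} \in T(v_{k})$; since tips realize $\dG(B_{k}, v_{k}) \leq \lambda_{a}$, one has $\dG(x_{0},\hat{y}) \leq k + \lambda_{a}$. Evaluating $f_{v_{k}}$ at $\bar{x} \in B_{k}$ against a representative based at $\hat{y}$ gives
\[\dG(\bar{x},\hat{y}) = f_{v_{k}}(\bar{x}) + \dG(x_{0},\hat{y}) \leq \bigl(f_{u_{k}}(\bar{x}) + \dH(u_{k},v_{k})\bigr) + k + \lambda_{a} = \dH(u_{k},v_{k}) + \lambda_{a}.\]
A final triangle inequality $\dG(\hat{x},\hat{y}) \leq \dG(\hat{x},\bar{x}) + \dG(\bar{x},\hat{y})$ yields $\dG(u_{k},v_{k}) \leq \dG(\hat{x},\hat{y}) \leq \dH(u_{k},v_{k}) + 2\lambda_{a}$.

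The only delicate step is the computation $f_{u_{k}}(\bar{x}) = -k$: it relies on $\bar{x}$ being a nearest neighbor of every $x \in u_{k}$, so that $\dG(\bar{x},x) = \dG(x_{0},x) - k$ holds uniformly in $x$ and the value of $f_{u_{k}}(\bar{x})$ is independent of the chosen representative. This is exactly the content of Lemma \ref{visible_proximal_c}; once it is invoked, the rest is bookkeeping driven by the \hyperref[Conj3]{Hooking Lemma}.
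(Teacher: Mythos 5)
Your proof is correct and follows essentially the same route as the paper's: the lower bound is the same pair of reverse triangle inequalities, and the upper bound uses the identical mechanism of evaluating the horofunction representatives at a nearest-neighbor vertex where one of them equals $-k$, comparing via the sup norm, and invoking the \hyperref[Conj3]{Hooking Lemma} twice. The only (immaterial) difference is that you evaluate $f_{v_{k}}$ at a nearest neighbor of $u_{k}$ whereas the paper evaluates $f_{u_{k}}$ at a nearest neighbor of $v_{k}$.
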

\begin{proof}
The first part follows by two triangle inequalities. Indeed, for all $p \in \Gamma$ we have
$$|\dG(u_{k},p)-\dG(v_{k},p)| \leq \dG(u_{k},v_{k}),$$
and since we also have
\begin{align*}|\dG(u_{k},p)-\dG(u_{k},x_{0}) & -\dG(v_{k},p)+\dG(v_{k},x_{0})| \leq \\  &|\dG(u_{k},p)-\dG(v_{k},p)|+|\dG(v_{k},x_{0})-\dG(u_{k},x_{0})|,\end{align*}
with $p \in B_{k}$,
by applying the first inequality with two times,
we get $\dH(u_{k},v_{k}) \leq 2\dG(u_{k},v_{k})$.\

For the second part, we start by taking $x \in u_{k}$ and $ \overline{x} \in B_{k}$ (resp. $y \in v_{k}$ and $ \overline{y} \in B_{k}$) such that $$\dG(B_{k}, u_{k})=\dG(\overline{x},x) \ \ (\text{resp. } \dG(B_{k}, v_{k})=\dG(\overline{y},y)  ).$$
By definition of $f_{v_{k}}$ and $\overline{y}$ we know that $f_{v_{k}}(\overline{y})=-k$. This implies that $f_{u_{k}}(\overline{y})+k \leq d_{h}(u_{k},v_{k})$. Equivalently, we have $\dG(\overline{y},x)-\dG(x_{0},x)+k \leq d_{h}(u_{k},v_{k})$ and by using the definition of $\overline{x}$, we get 
\begin{align*}
 \dG(\overline{y},x)-[\dG(x_{0},\overline{x})+\dG(\overline{x},x)]+k & = \\  \dG(\overline{y},x)-[k+\dG(\overline{x},x)]+k & \leq d_{h}(u_{k},v_{k}). \end{align*}
\noindent
Which means that $\dG(\overline{y},x) \leq \dG(x,\overline{x})+d_{h}(u_{k},v_{k})$.\

Now 
$$\dG(u_{k},v_{k}) \leq \dG(x,y) \leq \dG(x, \overline{y})+\dG(\overline{y}, y),$$
by the discussion in the previous paragraph we get 
$$\dG(u_{k},v_{k}) \leq \dG(x, \overline{x})+d_{h}(u_{k},v_{k}) +\dG(\overline{y}, y)$$
and by the \hyperref[Conj3]{Hooking Lemma} we have the thesis. 
\end{proof}

By combining the previous facts, we get the main result about $\dG$.

\begin{thm}
\label{Conj1}
Let $\Gamma$ be a hyperbolic graph and let $\pi_{h}: \partial_{h} \Gamma \twoheadrightarrow \partial \Gamma$ be the projection of the horofunction boundary onto the Gromov boundary. Let $u= ( u_{k} )_{k=1}^{\infty}$ and $v= ( v_{k} )_{k=1}^{\infty}$ be two horofunctions expressed via their infinite sequences of infinite atoms. The following are equivalent.
\begin{itemize}
\item[(A)]The horofunctions are glued on $\partial \Gamma$ i.e.\ $u \pi_{h}=z_{\infty}=v \pi_{h}$ for some $z_{\infty} \in \partial \Gamma$.
\item[(B)]There exists a constant $C$ and two sequences of vertices $\{ x_{k} \}_{k=1}^{\infty}$ and $\{ y_{k} \}_{k=1}^{\infty}$ such that $x_{k} \in u_{k}$ and $y_{k} \in v_{k}$, and $\dG(x_{k},y_{k}) \leq C$ for all $k \geq 1$. 
\end{itemize}
\end{thm}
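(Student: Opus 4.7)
The plan is to use the two ingredients already placed on the table: the two-sided comparison between the geometric distance $\dG$ and the analytic distance $\dH$ on atoms of the same level (Lemma~\ref{Conj2}), and the characterization of gluing in terms of $\dH$ on horofunctions (Proposition~\ref{Prop4.4}). Together these let me translate ``bounded geometric distance'' and ``same image under $\pi_h$'' into the same analytic statement about $\Vert f_u - f_v \Vert_\infty$, and the theorem falls out by restricting to the exhaustion $B_k \nearrow \Gamma$.

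For the direction (A)$\Rightarrow$(B), I would start from $u\pi_h = v\pi_h$ and invoke Proposition~\ref{Prop4.4} to get a universal constant $\lambda_h$ with $\Vert f_u - f_v\Vert_\infty < \lambda_h$. Since $f_{u_k}$ and $f_{v_k}$ are just the restrictions of $f_u$ and $f_v$ to $B_k$, this immediately gives $\dH(u_k, v_k) \leq \lambda_h$ for every $k$. Plugging this into the upper bound of Lemma~\ref{Conj2} yields $\dG(u_k, v_k) \leq \lambda_h + 2\lambda_a$, so choosing $x_k \in u_k$, $y_k \in v_k$ that realize $\dG(u_k,v_k)$ produces the two sequences in (B) with $C := \lambda_h + 2\lambda_a$.

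For the converse (B)$\Rightarrow$(A), suppose $x_k \in u_k$ and $y_k \in v_k$ satisfy $\dG(x_k,y_k)\leq C$. By definition of $\dG$ between atoms this gives $\dG(u_k,v_k) \leq C$, and the lower bound of Lemma~\ref{Conj2} then yields $\dH(u_k,v_k) \leq 2C$ for every $k$, uniformly. Because $\dH(u_k,v_k) = \sup_{x \in B_k} |f_u(x) - f_v(x)|$ and the balls $B_k$ exhaust $\Gamma$, I can take $k \to \infty$ and conclude $\dH(u,v) = \Vert f_u - f_v\Vert_\infty \leq 2C < \infty$. Proposition~\ref{Prop4.4} then forces $u\pi_h = v\pi_h$, finishing the proof.

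The genuine content of the theorem is in the two cited results; the main obstacle has really already been dealt with in Lemma~\ref{Conj2}, where the upper bound $\dG \leq \dH + 2\lambda_a$ rests on the \hyperref[Conj3]{Hooking Lemma} (and hence on the finiteness of atom types), and in Proposition~\ref{Prop4.4}, which supplies the uniform gluing constant $\lambda_h$ specific to the hyperbolic structure. Once these are granted, Theorem~\ref{Conj1} is essentially a bookkeeping statement converting the analytic ``bounded difference of horofunctions'' picture into the geometric ``fellow traveler for atoms'' picture; no additional hyperbolicity arguments are needed in the assembly.
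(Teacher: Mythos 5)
Your direction (A)$\Rightarrow$(B) is exactly the paper's argument: Proposition~\ref{Prop4.4} gives $\dH(u,v)<\lambda_{h}$, hence $\dH(u_{k},v_{k})<\lambda_{h}$ on each ball, and the upper bound of Lemma~\ref{Conj2} converts this into $\dG(u_{k},v_{k})\leq\lambda_{h}+2\lambda_{a}$. For (B)$\Rightarrow$(A) you take a genuinely different route. The paper argues directly with Gromov sequences: since $x_{k}\in u_{k}$ and $y_{k}\in v_{k}$, both sequences converge to the horofunctions $u$ and $v$ and hence go to infinity, and the bound $\dG(x_{k},y_{k})\leq C$ forces
$(x_{k}\mid y_{k})\geq\tfrac{1}{2}\left[\dG(x_{0},x_{k})+\dG(x_{0},y_{k})-C\right]\rightarrow\infty$,
so the two sequences define the same point of $\partial\Gamma$. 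You instead run Lemma~\ref{Conj2} backwards to get $\dH(u_{k},v_{k})\leq 2C$ uniformly, pass to the limit over the exhaustion of $\Gamma$ by the balls $B_{k}$ to get $\Vert f_{u}-f_{v}\Vert_{\infty}\leq 2C$, and then invoke the converse direction of Proposition~\ref{Prop4.4}. This is tidier in that both implications come from the same two cited results, but it leans on Proposition~\ref{Prop4.4} in a form slightly stronger than its literal statement: the proposition asserts equivalence of gluing with $\dH(u,v)<\lambda_{h}$ for the specific universal constant $\lambda_{h}$, and your bound $2C$ need not lie below $\lambda_{h}$. What you actually need is the version ``$\Vert f_{u}-f_{v}\Vert_{\infty}<\infty$ implies $u\pi_{h}=v\pi_{h}$,'' which is indeed the content of \cite[Proposition 4.4]{WW} (and which, combined with the other direction, shows that a finite difference is automatically $<\lambda_{h}$), but that reading should be made explicit. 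The paper's Gromov-product computation avoids the issue entirely and is the more self-contained of the two; your version has the advantage of exhibiting the theorem as pure bookkeeping between the analytic and geometric pictures once Lemma~\ref{Conj2} and Proposition~\ref{Prop4.4} are in place.
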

From now on $\lambda$ will be the \textbf{gluing constant},  that is the smallest constant such that the claim holds. Note that, since $\dG(x_{k},y_{k})$ is a natural number, $\lambda$ is well-defined and is a natural number.
\begin{proof}
Suppose that (B) holds. The fact that $x_{k} \in u_{k}$ for all $k \geq 1$ implies that $\{ x_{k} \}_{k=1}^{\infty}$ converges to the horofunction $u$. It follows that $\{ x_{k} \}_{k=1}^{\infty}$ goes to infinity in the sense of Gromov. Analogously we have that $\{ y_{k} \}_{k=1}^{\infty}$ converges to $v$ and goes to infinity in the sense of Gromov. A fortiori, the distances $\dG(x_{0},x_{k})$ and $\dG(x_{0},y_{k})$ go to infinity as $k \rightarrow \infty$. By hypothesis
$$(x_{k} \mid y_{k}) \geq \dfrac{1}{2} \left[ \dG(x_{k},x_{0})+\dG(y_{k},x_{0})-C \right]$$ and (A) follows.\

All that is left is to combine the previous results to get (B) starting from (A). By Lemma~\ref{Conj2} we know that $\dG(u_{k},v_{k}) \leq d_{h}(u_{k},v_{k})+2\lambda_{a}$
and by Proposition~\ref{Prop4.4} we have that $d_{h}(u,v) < \lambda_{h}$, hence $d_{h}(u_{k},v_{k}) < \lambda_{h}$. To conclude, we have that $\dG(u_{k},v_{k}) < \lambda_{h}+2\lambda_{a}$ for all $k \geq 1$, so there exist $x_{k} \in u_{k}$ and $y_{k} \in v_{k}$ such that $\dG(x_{k},y_{k}) < C$ with $C=\lambda_{h}+2\lambda_{a}$.
\end{proof}

A straightforward consequence is what we were looking for.

\begin{cor}
The semi-metric $\dG$ represents the gluing.
\end{cor}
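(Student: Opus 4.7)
The plan is to observe that this corollary is essentially a direct repackaging of Theorem~\ref{Conj1}, where the constant $C$ in the definition of \emph{representing the gluing} can be taken to be exactly the gluing constant $\lambda$ produced in that theorem. The only work is to translate between the minimum-realizing pair of vertices $(x_k, y_k)$ in the theorem and the value $\dG(u_k, v_k)$ defined as $\min_{x \in u_k,\, y \in v_k} \dG(x, y)$.

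First I would handle the forward direction: assuming $u\pi_h = v\pi_h$, Theorem~\ref{Conj1} gives sequences $\{x_k\}_{k=1}^\infty$ and $\{y_k\}_{k=1}^\infty$ with $x_k \in u_k$, $y_k \in v_k$ and $\dG(x_k, y_k) \leq \lambda$ for every $k$. Since $\dG(u_k, v_k)$ is the infimum of $\dG(x, y)$ over all such pairs, we immediately obtain $\dG(u_k, v_k) \leq \lambda$ for every $k \geq 1$.

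For the converse direction I would assume $\dG(u_k, v_k) \leq C$ for all $k$ and some constant $C$. For each $k$, the minimum in the definition of $\dG(u_k, v_k)$ is attained (since $\dG$ takes integer values and the atoms are nonempty), so we may select $x_k \in u_k$ and $y_k \in v_k$ with $\dG(x_k, y_k) = \dG(u_k, v_k) \leq C$. These sequences then satisfy hypothesis (B) of Theorem~\ref{Conj1}, which yields $u\pi_h = v\pi_h$.

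There is no real obstacle here; the content lies entirely in Theorem~\ref{Conj1}, and the corollary serves to recast that characterization as a family of semi-metric bounds in the sense of the Definition at the start of Section~\ref{3}. The only subtle point worth mentioning is that the optimal constant $C$ for $\dG$ coincides with the gluing constant $\lambda$, and that attainability of the minimum (which justifies picking the $x_k, y_k$ in the converse step) uses the integer-valuedness of $\dG$ rather than any compactness argument.
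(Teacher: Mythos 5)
Your proposal is correct and matches the paper's intent exactly: the paper gives no separate proof, presenting the corollary as a straightforward consequence of Theorem~\ref{Conj1}, and your unpacking of the two directions (bounding the minimum by the realized pair, and extracting a realizing pair from the bounded minimum) is precisely the routine translation the paper leaves implicit.
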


The second distance we introduce is aimed to illustrate the fact that atoms which glue are near in an asymptotic way i.e.\ the distance occurs to be less than the gluing constant $\lambda$ in an infinite number of points which lie in the atoms.

\begin{defn}
We define $$\dF^{k}(a,b):= \sup_{A \subset a,  B \subset b} \min_{\substack{x \in a-A \\ y \in b-B}} \dG(x,y)$$
with $a$ and $b$ atoms of the $k$-level and the $A$ and $B$ finite sets.
\end{defn}
\noindent Again, we will write $\dF$ when $k$ is clear or to mean the collection of $ \{ \dF^{k} \}_{_{k=1}}^{^{\infty}}$. Note that if $\dF(a,b)$ is finite, then the supremum is actually a maximum.\

The aim, the definition and the discussion that follow show how we can think $\dF$ as a limit of $\dG$.\

A particular benefit of this distance is that, for a fixed level, it can be calculated by looking at the level below. In fact, $\dF(a,b)$ is the minimum over all the children of $a$ and $b$ of the distances between such children.\

We start a comparison with $\dG$ by stating some properties.

\begin{itemize}
\item[(1)]The distance $\dF$ is a semi-metric.
\item[(2)]If $a_{k+1} \subseteq a_{k}$ and $b_{k+1} \subseteq b_{k}$, then $\dF(a_{k},b_{k}) \leq \dF(a_{k+1},b_{k+1})$. This follows immediately from the definition (as for $\dG$) or using the discussion we made above.
\item[(3)] The distance $\dG$ is less than or equal to the distance $\dF$. Indeed, if we put $A=B= \emptyset$ in the definition of $\dF$ we recover $\dG$.
\end{itemize}

In general the converse of Property (3) is not true: it can happen that two atoms are $\dG$-adjacent but not $\dF$-adjacent. But something more specific can be stated.

\begin{lem} \label{gamma_implies_f}
Let $u= ( u_{k} )_{k=1}^{\infty}$ and $v= ( v_{k} )_{k=1}^{\infty}$ two horofunctions. If $\dG(u_{k},v_{k}) \leq C$ for all $k \geq 1$, then $\dF(u_{k},v_{k}) \leq C$ for all $k \geq 1$.
\end{lem}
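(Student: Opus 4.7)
The plan is to exploit the fact that atoms at deeper levels avoid any fixed bounded region, so the finite subsets $A \subset u_k$ and $B \subset v_k$ that appear in the definition of $\dF$ can always be ``escaped'' by descending the tree far enough. Concretely, I would fix $k \geq 1$ and finite sets $A \subset u_k$, $B \subset v_k$, and look for $x \in u_k - A$ and $y \in v_k - B$ with $\dG(x,y) \leq C$; once this is achieved for arbitrary $A,B$, the definition of $\dF^{k}$ immediately gives $\dF(u_k,v_k) \leq C$.

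The key tool is Proposition~\ref{complement-atom}, which states that every $\ell$-level atom is contained in $\Gamma - B_{\ell-1}$. Since $A \cup B$ is finite, there exists $k' \geq k$ so large that $A \cup B \subseteq B_{k'-1}$. For this $k'$, the atoms $u_{k'}$ and $v_{k'}$ are disjoint from $A \cup B$, and since the horofunctions are coded by nested sequences we have $u_{k'} \subseteq u_k$ and $v_{k'} \subseteq v_k$. In particular $u_{k'} \subseteq u_k - A$ and $v_{k'} \subseteq v_k - B$.

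Now I invoke the hypothesis applied at level $k'$: $\dG(u_{k'}, v_{k'}) \leq C$, so there exist $x \in u_{k'}$ and $y \in v_{k'}$ with $\dG(x,y) \leq C$. By the inclusions above, these provide the required witnesses in $u_k - A$ and $v_k - B$. Taking the minimum over $x \in u_k-A$ and $y \in v_k-B$ yields a value at most $C$, and since $A$ and $B$ were arbitrary finite sets, the supremum defining $\dF^{k}(u_k,v_k)$ is also at most $C$. Iterating $k$ gives the statement for all levels.

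There is essentially no obstacle here: the proof is a direct bookkeeping argument combining the nested structure of the atom-coding with Proposition~\ref{complement-atom}. The only subtlety to flag is that $A$ and $B$ are required to be \emph{finite} in the definition of $\dF$, which is exactly what lets us find a single $k'$ bounding the diameters of $A$ and $B$ in terms of $\dG(x_0, \cdot)$; without finiteness the escape argument would fail.
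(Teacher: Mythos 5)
Your argument is correct and follows essentially the same route as the paper: both escape the finite sets $A$ and $B$ by passing to a deeper atom in the nested coding, which lies outside a large ball containing $A \cup B$, and then apply the hypothesis $\dG(u_{k'},v_{k'}) \leq C$ at that deeper level. Your write-up is just a slightly more direct packaging (the paper first bounds the supremum over balls $B_n$ and then compares arbitrary finite sets to balls), so there is nothing to add.
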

\begin{proof}
We fix $k$. We know that for all $n \geq k$ there exists a $\widehat{k} \geq k$ 
such that $u_{\widehat{k}} \subseteq u_{k}-B_{n}$ and $v_{\widehat{k}} \subseteq v_{k}-B_{n}$. So that $$\min_{\substack{x \in u_{k}-B_{n} \\ y \in v_{k}-B_{n}}} \dG(x,y) \leq \min_{\substack{x \in u_{\widehat{k}} \\ y \in v_{\widehat{k}}}} \dG(x,y)=\dG(u_{\widehat{k}},v_{\widehat{k}}) \leq C.$$
It follows that $\displaystyle \sup_{n \geq k} \min_{\substack{x \in u_{k}-B_{n} \\ y \in v_{k}-B_{n}}} \dG(x,y) \leq C$.  \

Taking a finite subset $A$ of $u_{k}$ and a finite subset $B$ of $v_{k}$, there exists an $n$ such that $A \cup B \subseteq B_{n}$. Hence $u_{k}-A \supseteq u_{k}-B_{n}$ and $v_{k}-B \supseteq v_{k}-B_{n}$, that implies
$$\min_{\substack{x \in u_{k}-A \\ y \in v_{k}-B}} \dG(x,y) \leq \min_{\substack{x \in u_{k}-B_{n} \\ y \in v_{k}-B_{n}}} \dG(x,y).$$
Finally, we get $\dF(u_{k},v_{k}) \leq \displaystyle \sup_{n \geq k} \min_{\substack{x \in u_{k}-B_{n} \\ y \in v_{k}-B_{n}}} \dG(x,y)$ and the claim follows.
\end{proof}

As an immediate consequence of Theorem~\ref{Conj1} and Lemma~\ref{gamma_implies_f} we have
\begin{cor}
The semi-metric $\dF$ represents the gluing.
\end{cor}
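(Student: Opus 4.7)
The plan is to assemble the corollary directly from Theorem~\ref{Conj1}, Lemma~\ref{gamma_implies_f}, and Property~(3) of the comparison list between $\dG$ and $\dF$. I would show that the gluing constant $\lambda$ from Theorem~\ref{Conj1} can be taken as the single constant $C$ witnessing that $\dF$ represents the gluing. There is no substantial obstacle here; the point is simply to chain the existing inequalities in both directions.

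For the forward implication, assume $u\pi_{h}=v\pi_{h}$. Theorem~\ref{Conj1} produces sequences $\{x_{k}\}_{k=1}^{\infty}$ and $\{y_{k}\}_{k=1}^{\infty}$ with $x_{k}\in u_{k}$, $y_{k}\in v_{k}$, and $\dG(x_{k},y_{k})\leq\lambda$ for every $k$. Since $\dG(u_{k},v_{k})$ is defined as the minimum of $\dG(x,y)$ over $x\in u_{k}$, $y\in v_{k}$, this yields $\dG(u_{k},v_{k})\leq\lambda$ for all $k\geq 1$. Applying Lemma~\ref{gamma_implies_f} with $C=\lambda$ then gives $\dF(u_{k},v_{k})\leq\lambda$ for every $k$, as required.

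For the converse, assume $\dF(u_{k},v_{k})\leq\lambda$ for every $k\geq 1$. By Property~(3) of the comparison between the two semi-metrics, $\dG(u_{k},v_{k})\leq\dF(u_{k},v_{k})\leq\lambda$. Because $\dG(u_{k},v_{k})$ is an honest minimum (the atoms are nonempty and the value is attained on $u_{k}\times v_{k}$), we can pick $x_{k}\in u_{k}$ and $y_{k}\in v_{k}$ with $\dG(x_{k},y_{k})=\dG(u_{k},v_{k})\leq\lambda$. The implication (B)$\Rightarrow$(A) in Theorem~\ref{Conj1} then forces $u\pi_{h}=v\pi_{h}$. Hence the same constant $\lambda$ serves both directions of the equivalence, showing that $\dF$ represents the gluing.
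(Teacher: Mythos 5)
Your argument is correct and is exactly the chain of implications the paper intends, since it states the corollary as "an immediate consequence of Theorem~\ref{Conj1} and Lemma~\ref{gamma_implies_f}": forward via Theorem~\ref{Conj1} plus Lemma~\ref{gamma_implies_f}, converse via $\dG\leq\dF$ plus the implication (B)$\Rightarrow$(A). Nothing is missing.
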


\begin{oss}\

\begin{itemize}
\item[--]In fact, we have proven that if there exists a level $k$ such that $\dF(u_{k},v_{k}) \leq C$, then there exist two horofunctions $u$ and $v$ with $u_{k}$ and $v_{k}$ respectively in their sequences such that $u \pi_{h}= v \pi_{h}$.
\item[--]If $u \pi_{h} \neq v \pi_{h}$, then $\dG(u_{k},v_{k}) \rightarrow \infty$. So there exists an index $\overline{k}$ such that $\dF(u_{\overline{k}},v_{\overline{k}})$ is finite and $\dF(u_{\overline{k}+1},v_{\overline{k}+1})= \infty$.
\end{itemize}
\end{oss}

We are interested in a distance which can represent the gluing, but also that increases exponentially when there is no gluing. It can easily be seen that $\dG$ is too slow. Indeed, if we suppose that $a$ and $b$ are two $k$-level atoms, then $\dG^{k}(a,b) \leq \dG(x,x_{0})+\dG(x_{0},y) \leq 2k+2\lambda_{a}$ for some $x \in a$ and $y \in b$.
On the other hand, $\dF$ is too fast, as stated in the second point of the previous Remark. With this purpose in mind, we give the following

\begin{defn}
Let $k$ be an integer greater or equal to $1$. We consider $\Gamma-B_{k-1}$ as the induced subgraph with respect to the subset of vertices $\Gamma-B_{k-1}$. We can put a metric on $\Gamma-B_{k-1}$ which is the standard metric on the graph and we call it $\dB^{k}$. 
\end{defn}
\noindent The notation is subject to the same convention used before with $\dG$ and $\dF$.\\
Please note that $\dB^{k}$ is finite if and only if $\Gamma-B_{k-1}$ is connected and that, according to Proposition~\ref{complement-atom}, it holds $\mathcal{A}_{k} \subseteq \Gamma-B_{k-1}$.\\
In the same way we defined the semi-metric $\dG$, we put
$$\dB(a,b)= \min_{x \in a, y \in b} \dB(x,y)$$ with $a,b \in \mathcal{A}_{k}$.\\
We note that $\dB$ is an intrinsic metric with respect to $\dG$, and hence we know that $\dG \leq \dB$ (see Remark~\ref{length}), that is $\dG(a,b) \leq \dB(a,b)$ for all $a,b \in \mathcal{A}_{k}$ and for all $k$. It follows that $\dB$ is faster than $\dG$ as a semi-metric (when defined on atoms). We are going to prove a technical lemma, which will be useful to prove that $\dB$ represents the gluing.

\begin{lem}
\label{fullcont_trick}
Let $a,b \in \mathcal{A}_{k}$ such that $\dF(a,b) \leq C$. Then there exists $n \geq k$ such that there exist $\tilde{x} \in a-B_{n}$ and $\tilde{y} \in b-B_{n}$ with the following properties: $\displaystyle \dG(\tilde{x}, \tilde{y})=\min_{\substack{x \in a-B_{n} \\ y \in b-B_{n}}} \dG(x,y)$ and there exists a geodesic $[\tilde{x},\tilde{y}]_{\Gamma}$ (with respect to $\dG$) fully contained in $\Gamma-B_{k-1}$.
\end{lem}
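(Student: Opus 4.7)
The plan is simple: I unfold the definition of $\dF$ using balls $B_{n}$ as the finite excluded sets, and then exploit the fact that a short geodesic whose endpoints are deep in $\Gamma - B_{n}$ cannot visit $B_{k-1}$ once $n$ is large enough. Both properties required of $(\tilde{x},\tilde{y})$ will fall out of the same choice of $n$.

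First, local finiteness of $\Gamma$ makes $a \cap B_{n}$ and $b \cap B_{n}$ finite for every $n$, and the atoms $a,b$ are themselves infinite by definition, so the sets $a - B_{n}$ and $b - B_{n}$ are non-empty. Specializing the sup in the definition of $\dF$ to the admissible pair $A = a \cap B_{n}$, $B = b \cap B_{n}$, the hypothesis $\dF(a,b) \leq C$ gives
\[
\min_{\substack{x \in a - B_{n} \\ y \in b - B_{n}}} \dG(x,y) \leq C
\]
for every $n \geq k$. Since the left-hand side is a non-negative integer bounded above by $C$, the minimum is attained by some pair $\tilde{x} \in a - B_{n}$, $\tilde{y} \in b - B_{n}$. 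This already secures the first requested equality with constant at most $C$.

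Next, I fix once and for all $n := k + C$, which in particular satisfies $n \geq k$. Then $\dG(x_{0},\tilde{x}) \geq n+1$ and $\dG(x_{0},\tilde{y}) \geq n+1$, while $\dG(\tilde{x},\tilde{y}) \leq C$ by the previous step. A one-line triangle inequality shows that every vertex $z$ on any geodesic $[\tilde{x},\tilde{y}]_{\Gamma}$ in $\Gamma$ satisfies
\[
\dG(x_{0},z) \geq \dG(x_{0},\tilde{x}) - \dG(\tilde{x},z) \geq (n+1) - C \geq k + 1,
\]
so $z \notin B_{k-1}$. Hence the geodesic is fully contained in $\Gamma - B_{k-1}$, which is the second requested property.

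I do not foresee any substantive obstacle; the argument is really just careful bookkeeping in the definition of $\dF$ coupled with a triangle inequality. The two points that deserve a moment of attention are: (i) verifying that the balls $B_{n}$ are admissible as finite excluded sets, which rests on local finiteness of $\Gamma$; and (ii) remembering that $a - B_{n}$ means the vertices of $a$ with $x_{0}$-distance strictly greater than $n$, not some sub-atom of $a$, so no tree-level refinement is used in the proof.
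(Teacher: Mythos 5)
Your proof is correct and follows essentially the same route as the paper: specialize the sup in the definition of $\dF$ to the finite sets $a\cap B_{n}$, $b\cap B_{n}$, take $n$ large compared to $C$, and use a triangle inequality to show a geodesic of length at most $C$ between points outside $B_{n}$ cannot enter $B_{k-1}$. The only (cosmetic) differences are your choice $n=k+C$ versus the paper's $n\geq \frac{C}{2}+k-1$ and your direct estimate in place of the paper's argument by contradiction.
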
 
\begin{proof}
By hypothesis, we know that for all $n \geq k$ it holds we have $\displaystyle \min_{\substack{x \in a-B_{n} \\ y \in b-B_{n}}} \dG(x,y) \leq C$. We take $n \geq \dfrac{C}{2}+k-1$ and $\tilde{x}, \tilde{y} \in \Gamma$ such that $$l:=\dG(\tilde{x}, \tilde{y})=\min_{\substack{x \in a-B_{n} \\ y \in b-B_{n}}} \dG(x,y).$$\\
Suppose that there exists $[\tilde{x},\tilde{y}]_{\Gamma}= \{z_{i} \}_{_{i=0}}^{^{l}}$ and there exists a $j \in \{ 0, \ldots, l \}$ such that $z_{j} \in B_{k-1}$. Now 
$$C \geq \dG(\tilde{x}, \tilde{y})=\dG(\tilde{x}, z_{j})+\dG(z_{j}, \tilde{y})> C$$ 
and the claim follows.
\end{proof}

Combining Lemma~\ref{fullcont_trick} with Remark~\ref{length}, in the case that $\dF(a,b) \leq C$, we get $$\dB(a,b) \leq \dB(\tilde{x}, \tilde{y}) = \dG(\tilde{x}, \tilde{y}) \leq \dF(a,b).$$ Note that $\dF(a,b)= \infty$ the inequality is true. So we can state the following
\begin{prop}The semi-metric $\dB$ is slower than $\dF$, namely $\dB \leq \dF$, which means $\dB(a,b) \leq\dF(a,b)$ for all $a,b \in \mathcal{A}_{k}$ and for all $k$.
\end{prop}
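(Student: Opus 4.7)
The plan is to split on whether $\dF(a,b)$ is finite: the infinite case is vacuous, and in the finite case I will invoke Lemma~\ref{fullcont_trick} to produce a witness pair whose $\dG$-geodesic avoids $B_{k-1}$, so that $\dB$ and $\dG$ agree on it via Remark~\ref{length}.

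In concrete terms, fix $a,b \in \mathcal{A}_{k}$ and set $C := \dF(a,b)$. If $C = \infty$ the inequality $\dB(a,b) \leq C$ is automatic, so assume $C$ is finite. By Lemma~\ref{fullcont_trick}, there exist $n \geq k$ and vertices $\tilde{x} \in a - B_{n}$, $\tilde{y} \in b - B_{n}$ such that
\[
\dG(\tilde{x},\tilde{y}) = \min_{\substack{x \in a - B_n \\ y \in b - B_n}} \dG(x,y) \leq C,
\]
and such that some geodesic $[\tilde{x},\tilde{y}]_{\Gamma}$ in $\Gamma$ is fully contained in $\Gamma - B_{k-1}$. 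Since $\dB$ is the intrinsic metric on $\Gamma - B_{k-1}$ induced by $\dG$, the second bullet of Remark~\ref{length} gives $\dB(\tilde{x},\tilde{y}) = \dG(\tilde{x},\tilde{y})$.

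Putting these together, since $\tilde{x} \in a$ and $\tilde{y} \in b$, the definition of $\dB$ on atoms yields
\[
\dB(a,b) \leq \dB(\tilde{x},\tilde{y}) = \dG(\tilde{x},\tilde{y}) \leq C = \dF(a,b),
\]
which is the desired inequality. There is no serious obstacle: all the work has been pushed into Lemma~\ref{fullcont_trick}, whose role is precisely to supply a realizing pair whose ambient geodesic does not dip into the forbidden ball. The only mild subtlety is remembering that the $\tilde{x},\tilde{y}$ produced by the lemma belong to the atoms themselves (not just to $\Gamma - B_{n}$), so that the minimum defining $\dB(a,b)$ genuinely dominates $\dB(\tilde{x},\tilde{y})$.
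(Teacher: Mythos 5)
Your argument is correct and coincides with the paper's own proof: the paper likewise handles the infinite case trivially and, when $\dF(a,b)$ is finite, combines Lemma~\ref{fullcont_trick} with Remark~\ref{length} to obtain $\dB(a,b) \leq \dB(\tilde{x},\tilde{y}) = \dG(\tilde{x},\tilde{y}) \leq \dF(a,b)$. Nothing is missing.
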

If we put together the previous Proposition and the discussion we made about intrinsic metrics, we get the following
\begin{cor}
The semi-metric $\dB$ represents the gluing.
\end{cor}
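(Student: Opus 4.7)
The plan is to sandwich $\dB$ between $\dG$ and $\dF$ and invoke the fact that both of the latter already represent the gluing. All three semi-metrics share the same shape --- a minimum of pairwise distances over elements of the atoms --- so the main task is only to chain inequalities at each level $k$ with a uniform constant.

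For the forward implication, suppose $u\pi_{h} = v\pi_{h}$. Theorem~\ref{Conj1} supplies a constant $C$ and sequences $\{x_{k}\}$, $\{y_{k}\}$ with $x_{k} \in u_{k}$, $y_{k} \in v_{k}$, and $\dG(x_{k}, y_{k}) \leq C$ for all $k$. In particular $\dG(u_{k}, v_{k}) \leq C$, so Lemma~\ref{gamma_implies_f} upgrades this to $\dF(u_{k}, v_{k}) \leq C$ for all $k$. Applying the proposition just established, namely $\dB \leq \dF$, we conclude $\dB(u_{k}, v_{k}) \leq C$ uniformly in $k$.

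For the converse, assume there is a constant $C$ with $\dB(u_{k}, v_{k}) \leq C$ for all $k$. Since $\dB$ is the intrinsic metric with respect to $\dG$ on $\Gamma - B_{k-1}$, we have the pointwise bound $\dG \leq \dB$ (Remark~\ref{length}); hence $\dG(u_{k}, v_{k}) \leq C$ at every level. Choosing for each $k$ a pair $x_{k} \in u_{k}$, $y_{k} \in v_{k}$ realizing the minimum defining $\dG(u_{k},v_{k})$ gives $\dG(x_{k}, y_{k}) \leq C$, and Theorem~\ref{Conj1} concludes $u\pi_{h} = v\pi_{h}$.

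There is no substantive obstacle: the corollary is simply the payoff of the technical work already done, with Lemma~\ref{fullcont_trick} providing the upper bound $\dB \leq \dF$ and Theorem~\ref{Conj1} together with Lemma~\ref{gamma_implies_f} pinning down both extremes of the sandwich. The only small care needed is to verify that the inequalities are preserved with the \emph{same} constant at every level $k$, which is automatic since all bounds used are uniform in $k$.
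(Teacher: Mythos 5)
Your proof is correct and follows essentially the same route as the paper: the corollary is obtained by sandwiching $\dB$ between $\dG$ and $\dF$ (via Remark~\ref{length} for the lower bound and the proposition $\dB \leq \dF$ derived from Lemma~\ref{fullcont_trick} for the upper bound), then invoking that both extremes represent the gluing via Theorem~\ref{Conj1} and Lemma~\ref{gamma_implies_f}.
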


\section{Distances on tips and consequences} \label{4}
In this section we will combine distances on atoms and tips to answer the question on exponential behaviors of atom-codings that arose at the end of the previous section, more precisely we will find a distance with the following property:
\begin{defn}
A distance $\dist$ on atom-codings has the \textbf{exponential property} if taking two horofunctions $u=(u_{k})_{k=1}^{\infty}$ and $v=(v_{k})_{k=1}^{\infty}$ one of the following holds
\begin{itemize}
\item[(1)]$u\pi_{h}=v\pi_{h}$ and $\dist(u_{k},v_{k})$ is bounded above by some constant (depending only on $\Gamma$) for all $k$ (that is $\dist$ represents the gluing);
\item[(2)]$u\pi_{h} \neq v\pi_{h}$ and $\dist(u_{k},v_{k}) \geq E_{1}e^{E_{2}(k-j)}$ for some constant $E_{1}$ and $E_{2}$ and for all $k \geq j$ with $j$ sufficiently large (we say that $\dist$ \textbf{diverges exponentially}).
\end{itemize}
\end{defn} 
Note that this definition resembles the property of geodesics in hyperbolic spaces (see  Proposition \ref{exp_div_geodesics})
and that it will be discussed again later on using more powerful tools to relate it with the Gromov product. \\
We will also prove that the Hausdorff distance can represent the gluing and we will bound the fibers of $\pi_{h}$.\\
We start by introducing a useful notation. Given a semi-metric $\dist$ on some level of atoms $\mathcal{A}_{k}$, we will denote by
$$\T \dist(a,b):=\dist(T(a),T(b)) \ \ \ \ \text{with } a,b \in \mathcal{A}_{k}.$$
where we recall that $T(a)$ and $T(b)$ denote the tips of $a$ and $b$.

\begin{oss} \label{Conj1_tips}
We point out that we already know something important about $\T \dG$. Indeed, if we look at the proof of Theorem~\ref{Conj1}, we see that we actually prove that \textit{(A) implies (B)}
for $\T \dG$, while for the other direction we can easily exploit the same technique in the proof to get the result.
\end{oss}

So we have the following
\begin{cor}
The semi-metric $\T \dG$ represents the gluing.
\end{cor}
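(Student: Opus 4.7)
The plan is to deduce both directions of the equivalence from Theorem~\ref{Conj1}, exploiting the fact that $T(a) \subseteq a$ so that $\dG(a,b) \leq \T\dG(a,b)$ for every pair $a,b \in \mathcal{A}_{k}$. This monotonicity, together with the inequalities established in Section~\ref{3}, will do all the work; no genuinely new estimate is needed.

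For the direction (B) $\Rightarrow$ (A), I would argue as follows. Suppose there is a constant $C$ such that $\T\dG(u_k,v_k) \leq C$ for every $k$. Pick sequences $\widehat{x}_k \in T(u_k)$ and $\widehat{y}_k \in T(v_k)$ realizing (or almost realizing) this bound. Since $T(u_k) \subseteq u_k$ and $T(v_k) \subseteq v_k$, these sequences also witness hypothesis (B) of Theorem~\ref{Conj1}. Applying the (B) $\Rightarrow$ (A) direction of that theorem yields $u\pi_h = v\pi_h$.

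For the harder direction (A) $\Rightarrow$ (B), I would revisit the proof of Lemma~\ref{Conj2} and observe that the auxiliary vertices $x \in u_k$ and $y \in v_k$ are chosen exactly so that $\dG(B_k, u_k) = \dG(\bar{x}, x)$ and $\dG(B_k, v_k) = \dG(\bar{y}, y)$. By the definition of tips, this means $x \in T(u_k)$ and $y \in T(v_k)$. Therefore the bound $\dG(u_k,v_k) \leq \dH(u_k,v_k) + 2\lambda_a$ produced in Lemma~\ref{Conj2} can be upgraded at no cost to
$$\T\dG(u_k,v_k) \leq \dH(u_k,v_k) + 2\lambda_a.$$
Combining this with $\dH(u_k,v_k) \leq \dH(u,v) < \lambda_h$ (Proposition~\ref{Prop4.4}) yields $\T\dG(u_k,v_k) < \lambda_h + 2\lambda_a$ uniformly in $k$, which is the required bound, with $C = \lambda_h + 2\lambda_a$.

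The only step requiring any care is the observation that the witnesses in the proof of Lemma~\ref{Conj2} already lie in the tips; once this is noted, the argument is a clean reduction to Theorem~\ref{Conj1} via the trivial inequality $\dG \leq \T\dG$, so no substantial obstacle arises.
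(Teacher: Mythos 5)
Your argument is correct and is essentially the one the paper gives in Remark~\ref{Conj1_tips}: the witnesses produced in the proof of Lemma~\ref{Conj2} already lie in the tips (giving (A) $\Rightarrow$ (B) for $\T\dG$), and the converse follows from $T(a)\subseteq a$ together with the (B) $\Rightarrow$ (A) direction of Theorem~\ref{Conj1}. No gap.
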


In fact, we can prove a stronger result about the relation between $\dG$ and $\T\dG$.

\begin{prop}
\label{tips_non-tips_relation}
Let $a,b \in \mathcal{A}_{k}$. Then 
$$\dG(a,b) \leq \T\dG(a,b) \leq 2\dG(a,b)+4\delta+2\lambda_{a}.$$
\end{prop}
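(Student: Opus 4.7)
The left inequality $\dG(a,b) \leq \T\dG(a,b)$ is immediate: since $T(a)\subseteq a$ and $T(b)\subseteq b$, restricting the minimum defining $\dG$ to the tips cannot produce a smaller value. For the right inequality, the plan is to combine the generalized fellow traveler property (Proposition~\ref{generalized_fellow_traveler}) applied at level $k$ with the \hyperref[Conj3]{Hooking Lemma} at the two endpoints. Pick $x\in a$ and $y\in b$ realizing $\dG(x,y)=\dG(a,b)$. By Proposition~\ref{complement-atom}, both $x$ and $y$ lie outside $B_{k-1}$, so any geodesics $[x_{0},x]$ and $[x_{0},y]$ have length at least $k$; set $\overline{a}:=[x_{0},x](k)$ and $\overline{b}:=[x_{0},y](k)$. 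These sphere points realize the minimum distance from $x$ (respectively from $y$) to $B_{k}$, so they belong to $N(x,B_{k})$ and $N(y,B_{k})$, and by Lemma~\ref{visible_proximal_c} they are nearest neighbors of every element of the corresponding atoms.

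At time $t=k$, Proposition~\ref{generalized_fellow_traveler} yields $\dG(\overline{a},\overline{b})\leq 2\dG(x,y)+4\delta=2\dG(a,b)+4\delta$. Now pick any tips $\hat{x}\in T(a)$ and $\hat{y}\in T(b)$. Since $\overline{a}$ is a nearest neighbor of $\hat{x}$ in $B_{k}$, the definition of tip combined with the \hyperref[Conj3]{Hooking Lemma} gives $\dG(\overline{a},\hat{x})=\dG(B_{k},a)\leq \lambda_{a}$, and symmetrically $\dG(\overline{b},\hat{y})\leq \lambda_{a}$. A triangle inequality along the chain $\hat{x}, \overline{a}, \overline{b}, \hat{y}$ then produces $\dG(\hat{x},\hat{y})\leq 2\dG(a,b)+4\delta+2\lambda_{a}$, and taking the minimum over the choice of tips yields the desired bound on $\T\dG(a,b)$.

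The only slightly delicate step is ensuring that the two sphere points produced by the fellow traveler argument can serve simultaneously as nearest neighbors of the tips we later select, so that both Hooking corrections of size $\lambda_{a}$ can be paid at once without further loss. This is exactly what Lemma~\ref{visible_proximal_c} provides, since it tells us that the set of nearest neighbors in $B_{k}$ depends only on the atom and not on the specific vertex chosen inside it.
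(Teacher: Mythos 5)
Your proof is correct and follows essentially the same route as the paper's: take minimizers $x\in a$, $y\in b$, locate nearest neighbors on $S_{k}$ through which geodesics $[x_{0},x]$ and $[x_{0},y]$ pass, apply Proposition~\ref{generalized_fellow_traveler} at time $k$, and finish with the \hyperref[Conj3]{Hooking Lemma} and a triangle inequality (the paper obtains the nearest neighbors via Proposition~\ref{cone_intersect} rather than as the time-$k$ points of the geodesics, but this is the same idea). In fact, like the paper, your chain of inequalities bounds $\dG(\widehat{x},\widehat{y})$ for \emph{arbitrary} tips $\widehat{x},\widehat{y}$, which is slightly stronger than what is needed for the minimum defining $\T\dG$.
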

\begin{proof}
We only need to prove the second inequality. Let $x \in a$ and $y \in b$ such that $\dG(a,b)=\dG(x,y)$. By Proposition~\ref{cone_intersect}, we have that $x \in C(\overline{x})$ for some $\overline{x} \in N(a,B_{k})$. The same is true for $y$ and $\overline{y} \in N(b,B_{k})$. Now we apply Proposition~\ref{generalized_fellow_traveler} to the geodesics $[x_{0},x]$ and $[x_{0},y]$ respectively passing through the two nearest neighbors at the time $k$, so that $\dG(\overline{x},\overline{y}) \leq 2(\dG(x,y)+2\delta)$. To finish the proof, we use the the \hyperref[Conj3]{Hooking Lemma} together with a triangle inequality
$$\dG(\widehat{x},\widehat{y}) \leq \dG(\widehat{x},\overline{x})+
\dG(\overline{x},\overline{y}) + \dG(\overline{y}, \widehat{y})
\leq 2\dG(x,y)+4\delta+2\lambda_{a}, $$
which leads to $\dG(\widehat{x},\widehat{y}) \leq 2\dG(x,y)+4\delta+2\lambda_{a}$ for any two vertices $\widehat{x} \in T(a)$ and $\widehat{y} \in T(b)$.

\end{proof}

With a bit of work, we can say something about $\T \dB$ too.
\begin{prop}
The semi-metric $\T \dB$ represents the gluing.
\end{prop}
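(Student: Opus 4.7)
The plan is to sandwich $\T\dB$ between two semi-metrics already known to represent the gluing. One direction is immediate: since $\dG \le \dB$ by Remark~\ref{length}, one has $\T\dG \le \T\dB$ on every level. Thus if $\T\dB(u_{k},v_{k}) \le C$ for all $k$, the same bound holds for $\T\dG$, and the preceding Corollary (that $\T\dG$ represents the gluing) yields $u \pi_{h} = v \pi_{h}$.

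The nontrivial direction is: if $u \pi_{h} = v \pi_{h}$, then $\T\dB(u_{k},v_{k})$ is uniformly bounded. The Corollary for $\T\dG$ supplies, for every $k$, tips $\widehat{x}_{k} \in T(u_{k})$, $\widehat{y}_{k} \in T(v_{k})$ with $\dG(\widehat{x}_{k},\widehat{y}_{k}) \le \lambda_{T}$ for a fixed constant $\lambda_{T}$. By the \hyperref[Conj3]{Hooking Lemma} the tips sit on $S_{k+i}$, $S_{k+j}$ with $0 \le i,j \le \lambda_{a}$, so $(\widehat{x}_{k} \mid \widehat{y}_{k}) \ge k - \lambda_{T}/2$. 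Standard hyperbolicity then forces any $\Gamma$-geodesic $[\widehat{x}_{k},\widehat{y}_{k}]$ to stay at distance at least $k - \lambda_{T}/2 - \delta$ from $x_{0}$; in particular it enters $B_{k-1}$ only to a depth bounded in $\lambda_{T}$ and $\delta$, independently of $k$.

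To upgrade this short $\Gamma$-geodesic to a $\Gamma - B_{k-1}$-path of bounded length, I would replace each excursion into $B_{k-1}$ by a detour. The two endpoints of such an excursion lie on $S_{k}$ at $\dG$-distance at most $\lambda_{T}$, and in the same connected component of $\Gamma - B_{k-1}$ because the shared topological end $\widehat{\iota}(u) = \widehat{\iota}(v)$ is a single element of $\Tend(\Gamma)$. Ascending from each endpoint along a geodesic ray directed at the common boundary point $u \pi_{h} = v \pi_{h}$ (whose existence follows from the surjectivity of $\pi_{h}$ and Lemma~\ref{tripod}) into a higher sphere $S_{k+N}$, with $N$ of order $\lambda_{T}+\delta$, one connects the two lifts by a geodesic on $S_{k+N}$ which, by the analogous hyperbolicity estimate, stays above $S_{k-1}$. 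Concatenating the detours with the remaining portions of the original geodesic (already outside $B_{k-1}$) produces $\dB(\widehat{x}_{k},\widehat{y}_{k}) \le C'$ for a constant $C'$ depending only on $\delta$, $\lambda_{a}$ and $\lambda_{T}$.

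The main technical obstacle is to ensure that the ascents themselves remain in $\Gamma - B_{k-1}$ and have length bounded uniformly in $k$. This rests on an argument analogous to the one showing that the geodesic from a tip to its nearest neighbor in $B_{k}$ never reenters $B_{k-1}$: any geodesic segment starting on $S_{k}$ and continuing outward into the atom containing that vertex stays in $\Gamma - B_{k-1}$, so the ascent to $S_{k+N}$ costs only $N$ steps, and $N$ is itself controlled by $\lambda_{T}$ and $\delta$.
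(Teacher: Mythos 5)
Your easy direction ($\dG \le \dB$, hence $\T\dG \le \T\dB$ levelwise) is exactly the paper's and is fine. The hard direction, however, has a genuine gap. Your surgery replaces each excursion of the short geodesic $[\widehat{x}_{k},\widehat{y}_{k}]$ into $B_{k-1}$ by an outward detour, and the construction hinges on two claims that are asserted rather than proved. First, that a geodesic ray from an excursion endpoint $p \in S_{k}$ toward $u\pi_{h}$ stays in $\Gamma - B_{k-1}$: the justification you offer (``a geodesic starting on $S_{k}$ and continuing outward into the atom containing that vertex stays outside $B_{k-1}$'') applies to outward radial geodesics inside the cone $C(p)$, not to rays aimed at a boundary point. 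Here $p$ is an arbitrary vertex of the connecting geodesic, it need not lie in $u_{k}$ or $v_{k}$ nor have $u\pi_{h}$ in the shadow of its cone, and a ray from $p$ to $u\pi_{h}$ can initially decrease the distance to $x_{0}$ by an amount of the order of the Gromov product of $x_{0}$ and $u\pi_{h}$ based at $p$ --- bounding that quantity requires a separate argument you do not give. Second, that the connector between the two lifts on $S_{k+N}$ avoids $B_{k-1}$: the ``analogous hyperbolicity estimate'' yields a Gromov product at least $(k+N)-\tfrac{1}{2}\dG(p',q')$, and since naively $\dG(p',q') \le \lambda_{T}+2N$ this reproduces the same bound $k-\lambda_{T}/2-\delta$ you started with; to gain anything from ascending you must first show the two rays fellow-travel so that $\dG(p',q')$ is bounded independently of $N$, which is never established. (A smaller slip: when the geodesic makes more than one excursion, the equality $\widehat{\iota}(u)=\widehat{\iota}(v)$ does not imply that the two endpoints of each individual excursion lie in the same component of $\Gamma-B_{k-1}$.)

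For comparison, the paper never touches the geodesic $[\widehat{x}_{k},\widehat{y}_{k}]$: it descends inside $u_{k}$ and $v_{k}$ to the first descendant atoms $u_{\tilde{k}}, v_{\tilde{k}}$ contained in $\Gamma - B_{C_{1}/2+k-1}$, where the triangle-inequality trick of Lemma~\ref{fullcont_trick} forces any geodesic between their tips to stay outside $B_{k-1}$, and then joins $\widehat{x}_{k}$ to the deep tip through a nearest neighbour $p \in N(u_{k},B_{k})$ by a geodesic inside the cone $C(p)$, which lies outside $B_{k-1}$ by the very definition of cone. That descent-into-the-atom device is precisely the missing ingredient that would make your ``ascent'' legitimate: it supplies canonical outward paths of controlled length that provably avoid the ball.
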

\begin{proof}
Since $\dG \leq \dB$, it follows that $\T \dG \leq \T \dB$, or more precisely  $\T \dG(a,b) \leq \T \dB(a,b)$ for all $a,b \in \mathcal{A}_{k}$ and for all $k$. It remains to prove that: if $u=(u_{k})_{k=1}^{\infty}$ and $v=(v_{k})_{k=1}^{\infty}$ are two horofunctions and there exists $C_{1}$ such that $\T \dG(u_{k}, v_{k}) \leq C_{1}$ for all $k \geq 1$, then there exists $C_{2}$ such that $\T \dB(u_{k},v_{k}) \leq C_{2}$ for all $k \geq 1$.\\

Let $\widehat{x} \in T(u_{k})$ and $\widehat{y} \in T(v_{k})$. We know that if $u_{\tilde{k}} \subseteq u_{k}$ and $v_{\tilde{k}} \subseteq v_{k}$ are such that $u_{\tilde{k}} \cup v_{\tilde{k}} \subseteq \Gamma-B_{\frac{C_{1}}{2}+k-1}$, then $\T \dB^{k}(u_{\tilde{k}},v_{\tilde{k}})=\T \dG(u_{\tilde{k}},v_{\tilde{k}})$ (this is the same argument from Lemma~\ref{fullcont_trick}).\\
We take $u_{\tilde{k}}$ and $v_{\tilde{k}}$ to be the first occurrences of atoms contained in $\Gamma-B_{\frac{C_{1}}{2}+k-1}$ (i.e.\ the one with minimal distance from $x_{0}$ contained in $u_{k}$ and $v_{k}$ respectively). We set $\tilde{x} \in T(u_{\tilde{k}})$ and $\tilde{y} \in T(v_{\tilde{k}})$ such that $\T \dG(u_{\tilde{k}},v_{\tilde{k}})=\dG(\tilde{x},\tilde{y})$.\\
Note that by the \hyperref[Conj3]{Hooking Lemma} we know that $T(u_{\tilde{k}}) \cup T(v_{\tilde{k}}) \subseteq \Gamma-B_{\frac{C_{1}}{2}+k-1+\lambda_{a}}$.\\

Since we want to study $\dB(\widehat{x},\widehat{y})$, we start by applying a triangle inequality and the main hyphotesis:

$$\dB(\widehat{x},\widehat{y}) \leq \dB^{k}(\widehat{x},\tilde{x})+\dB^{k}(\tilde{x},\tilde{y})+\dB^{k}(\tilde{y},\widehat{y}) \leq \dB^{k}(\widehat{x},\tilde{x})+C_{1}+\dB^{k}(\tilde{y},\widehat{y}).$$

Now $\tilde{x} \in \left(\Gamma-B_{\frac{C_{1}}{2}+k-1+\lambda_{a}}\right) \cap C(p)$ 
for every $p \in N(u_{k},B_{k})$. We have that there exists $[p,\tilde{x}]_{\Gamma} \subseteq \Gamma-B_{k-1}$ and its length is less or equal than $\frac{C_{1}}{2}-1+\lambda_{a}$. Exploiting the \hyperref[Conj3]{Hooking Lemma} again, we get
$$\dB^{k}(\widehat{x},\tilde{x}) \leq \dB^{k}(x,p)+\dB^{k}(p, \tilde{x})=\dG(x,p)+\dG(p,\tilde{x}) \leq \frac{C_{1}}{2}-1+2\lambda_{a}.$$
In an analogous way, we get  $\dB^{k}(\widehat{y},\tilde{y}) \leq \frac{C_{1}}{2}-1+2\lambda_{a}$; and hence the claim.
\end{proof}
It remains to prove the following
\begin{prop}\label{exp_prop_tdb}
The semi-metric $\T \dB$ diverges exponentially.
\end{prop}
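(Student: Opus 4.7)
The plan is to combine Proposition~\ref{exp_div_geodesics} with the tip-based version of Theorem~\ref{Conj1} (Remark~\ref{Conj1_tips}) to obtain the required exponential lower bound. Suppose $u\pi_{h} \neq v\pi_{h}$. Since $\T \dG$ represents the gluing, the sequence $\T \dG(u_{k},v_{k})$ tends to infinity, so I can fix $j$ large enough that
\[
\T \dG(u_{j},v_{j}) \; > \; E + 2(\lambda_{a}+8\delta+4),
\]
where $E$, $E_{1}$, $E_{2}$ are the constants of Proposition~\ref{exp_div_geodesics}. I will show that $\T \dB(u_{k},v_{k})$ grows exponentially in $k-j$.

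For $k > j$, pick tips $\widehat{x}_{k} \in T(u_{k})$, $\widehat{y}_{k} \in T(v_{k})$ and a path $\sigma_{k} \subseteq \Gamma - B_{k-1}$ from $\widehat{x}_{k}$ to $\widehat{y}_{k}$ whose length equals $\T \dB(u_{k},v_{k})$. Fix geodesics $\alpha_{k}:=[x_{0},\widehat{x}_{k}]$ and $\beta_{k}:=[x_{0},\widehat{y}_{k}]$; by the \hyperref[Conj3]{Hooking Lemma} their lengths lie in $[k,k+\lambda_{a}]$. Set $p_{k}:=\alpha_{k}(j)$ and $q_{k}:=\beta_{k}(j)$. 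Since $u_{k}\subseteq u_{j}$ and $v_{k}\subseteq v_{j}$, the subsegment $[p_{k},\widehat{x}_{k}]\subseteq \alpha_{k}$ is a geodesic meeting $B_{j}$ only at $p_{k}$, so $p_{k}$ is visible from $u_{j}$; similarly $q_{k}$ is visible from $v_{j}$. Combining Lemma~\ref{visible_proximal} (in particular the diameter bound on proximal points and the inclusion $V\subseteq P$) with the \hyperref[Conj3]{Hooking Lemma} yields $\dG(p_{k},T(u_{j})) \leq \lambda_{a}+8\delta+4$ and analogously for $q_{k}$, so that
\[
\dG(p_{k},q_{k}) \; \geq \; \T \dG(u_{j},v_{j}) - 2(\lambda_{a}+8\delta+4) \; > \; E.
\]

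Now concatenate the tail of $\alpha_{k}$ from $\alpha_{k}(k-1)$ to $\widehat{x}_{k}$, then $\sigma_{k}$, then the reversed tail of $\beta_{k}$ from $\widehat{y}_{k}$ back to $\beta_{k}(k-1)$. This produces a rectifiable path $c_{k}$ in $\Gamma - B_{k-1}$ whose endpoints are $\alpha_{k}(k-1)$ and $\beta_{k}(k-1)$; each tail has length at most $\lambda_{a}+1$, so $\ell(c_{k}) \leq \T \dB(u_{k},v_{k}) + 2(\lambda_{a}+1)$. Applying Proposition~\ref{exp_div_geodesics} to $\alpha_{k}$, $\beta_{k}$ with $z=x_{0}$, $\tilde{t}=j$ and $\tilde{t}+t=k-1$ (the spread hypothesis at time $\tilde{t}$ is exactly the estimate on $\dG(p_{k},q_{k})$ above) gives $\ell(c_{k}) > E_{1}e^{E_{2}(k-1-j)}$, and therefore
\[
\T \dB(u_{k},v_{k}) \; > \; E_{1}e^{E_{2}(k-1-j)} - 2(\lambda_{a}+1).
\]
Absorbing the additive constant into the multiplicative one (valid once $k-j$ is large enough) yields a bound of the form $E_{1}'e^{E_{2}(k-j)}$, which is the required exponential divergence.

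The main obstacle is the conversion step: the hypothesis $\T \dG(u_{j},v_{j})>E+O(1)$ controls distances between tips of $u_{j}$ and $v_{j}$, but exponential divergence needs a lower bound on $\dG$ between two specific points on geodesics that run all the way to the deep tips $\widehat{x}_{k},\widehat{y}_{k}$. Pushing the estimate from tips to these geodesic way-points is precisely what Lemma~\ref{visible_proximal} and the \hyperref[Conj3]{Hooking Lemma} accomplish, via visibility and proximality.
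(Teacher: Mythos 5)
Your argument is correct and follows essentially the same route as the paper's proof: choose a level $j$ at which the atoms are separated by more than the divergence threshold, run geodesics from $x_{0}$ out to the deep tips, verify (via nearest/visible/proximal points and the \hyperref[Conj3]{Hooking Lemma}) that their level-$j$ points are uniformly far apart, and then invoke Proposition~\ref{exp_div_geodesics} together with a short-tail correction to pass from the geodesic way-points to $\T\dB$ on the tips. The only differences are bookkeeping ones (you anchor at radius $k-1$ with explicit tails where the paper works at radius $k$ with nearest neighbors and cones), and both versions rely on the same endpoint convention implicit in the statement of Proposition~\ref{exp_div_geodesics}.
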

\begin{proof}
Let $u,v \in \partial_{h} \Gamma$ such that $u\pi_{h} \neq v\pi_{h}$. Since $\dG(u_{k},v_{k})$ is unbounded as $k$ tends to infinity, we can choose $j$ such that $\dG(u_{j},v_{j})$ is arbitrarily large. In fact, taken $\overline{x} \in N(u_{j},B_{j})$ and $\overline{y} \in N(v_{j},B_{j})$, we know by the \hyperref[Conj3]{Hooking Lemma} that 
$$\dG(\overline{x},\overline{y}) \geq \dG(u_{j},v_{j})-2\lambda_{a}$$
hence we can take $\dG(\overline{x},\overline{y})$ arbitrarily large.\\
Now for all $k \geq j$ we have that $x_{k} \in u_{k}$ belongs to $C(\overline{x})$, and the same holds for $y_{k} \in v_{k}$ and $C(\overline{y})$. We construct a pair of geodesics $[x_{0},x_{k}]$ and $[x_{0},y_{k}]$ passing through $\overline{x}$ and $\overline{y}$ respectively and by Proposition~\ref{exp_div_geodesics}, we know that their fellow travelers diverge exponentially. We consider $x$ to be nearest neighbor of $x_{k}$ such that it belongs to $[x_{0},x_{k}] \cap S_{k}$ and the same for $y \in [x_{0},y_{k}] \cap S_{k}$, so it holds that 
$$\dB^{k}(x,y) \geq E_{1} e^{E_{2}(k-j)}$$
for some constants $E_{1}$ and $E_{2}$. To conclude, we take $x_{k} \in T(u_{k})$ and $y_{k} \in T(v_{k})$ (with $\dB^{k}(x_{k},y_{k})=\T \dB^{k}(u_{k},v_{k})$) and we have
\begin{align*}
\T \dB^{k}(u_{k},v_{k}) =\dB^{k}(x_{k},y_{k}) & \geq \dB^{k}(x,y)-\dB^{k}(x,x_{k})-\dB^{k}(y,y_{k})\\
& \geq  E_{1} e^{E_{2}(k-j)}-\dB^{k}(x,x_{k})-\dB^{k}(y,y_{k}).
\end{align*}
By construction, the geodesic $[x,x_{k}] \subseteq \Gamma-B_{k-1}$ and hence $\dG(x,x_{k})=\dB^{k}(x,x_{k})$. Analogously, we get $\dG(y,y_{k})=\dB^{k}(y,y_{k})$.\\
Again, by virtue of the \hyperref[Conj3]{Hooking Lemma} combined with the previous inequality, we finally obtain
$$\T \dB^{k}(u_{k},v_{k}) \geq E_{1} e^{E_{2}(k-j)} -2\lambda_{a}.$$
\end{proof}

\begin{cor}
The semi-metric $\T \dB$ has the exponential property.
\end{cor}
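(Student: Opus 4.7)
The statement is a corollary whose content is precisely the conjunction of the two propositions immediately preceding it, so my plan is essentially to observe that the definition of the exponential property is literally the disjunction of ``represents the gluing'' and ``diverges exponentially,'' both of which have just been established for $\T\dB$.

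More concretely, I would fix two horofunctions $u=(u_{k})_{k=1}^{\infty}$ and $v=(v_{k})_{k=1}^{\infty}$ of $\Gamma$ and split into the two mutually exclusive cases distinguished by the definition. In the first case $u\pi_{h}=v\pi_{h}$: here I would invoke the proposition which asserts that $\T\dB$ represents the gluing, giving a constant $C$ (depending only on $\Gamma$) with $\T\dB(u_{k},v_{k}) \leq C$ for every $k \geq 1$, which is exactly clause (1) of the exponential property. In the second case $u\pi_{h} \neq v\pi_{h}$: I would apply Proposition~\ref{exp_prop_tdb}, which produces constants $E_{1}, E_{2}$ and an index $j$ (sufficiently large) such that $\T\dB^{k}(u_{k},v_{k}) \geq E_{1} e^{E_{2}(k-j)}$ for all $k \geq j$; this is exactly clause (2).

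Since the two cases exhaust all possibilities for the pair $(u,v)$, the two previous propositions together verify the definition of the exponential property verbatim. There is no real obstacle to this argument: all the hard work has already been done in the proofs that $\T\dB$ represents the gluing (which relied on the \hyperref[Conj3]{Hooking Lemma} and the trick from Lemma~\ref{fullcont_trick} to pass from $\dG$ to $\dB$ on the part of $\Gamma$ outside $B_{k-1}$) and in Proposition~\ref{exp_prop_tdb} (which invoked Proposition~\ref{exp_div_geodesics} on geodesics through nearest neighbors, plus again the \hyperref[Conj3]{Hooking Lemma} to control the gap between tips and points on the geodesics). The only thing worth double-checking in writing up the proof is that the constants in the two clauses are allowed to differ: the definition of exponential property asks for a uniform bound in clause (1) and uniform exponential divergence constants in clause (2), and these are independent, so no reconciliation between $C$ and $(E_{1},E_{2})$ is needed.
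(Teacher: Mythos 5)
Your proposal is correct and matches the paper exactly: the corollary is stated without proof precisely because it is the immediate conjunction of the two preceding propositions (that $\T\dB$ represents the gluing and that it diverges exponentially), which together verify the two clauses of the definition of the exponential property. Your remark that the constants in the two clauses need not be reconciled is also accurate.
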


There are two other important consequences that we want to briefly discuss, that comes from the tips approach. The first one involves a well-known metric.\\

\begin{prop}\label{quasi_iso_like_hausdorff}
The Hausdorff metric $\T \Haus$ represents the gluing.
\end{prop}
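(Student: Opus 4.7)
The plan is to show that $\T\Haus$ is comparable to $\T\dG$, which we already know represents the gluing. Since tips have uniformly bounded diameter by Proposition~\ref{diam}, the two distances differ by at most an additive constant, and the result follows immediately.

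First I would establish the easy direction $\T\dG(a,b) \leq \T\Haus(a,b)$. This is a general fact about Hausdorff distance: for any non-empty sets $A,B$ and any fixed $x_0 \in A$, we have $\inf_{y \in B} \dG(x_0,y) \leq \sup_{x \in A} \inf_{y \in B} \dG(x,y) \leq \Haus(A,B)$, so taking the infimum over $x_0 \in A$ gives $\inf_{x \in A, y \in B} \dG(x,y) \leq \Haus(A,B)$. Applied to $A=T(a)$ and $B=T(b)$ this gives the claim. Consequently, if $\T\Haus(u_k,v_k) \leq C$ for all $k$, then $\T\dG(u_k,v_k) \leq C$ for all $k$, and by the corollary to Theorem~\ref{Conj1} (i.e.\ that $\T\dG$ represents the gluing, see Remark~\ref{Conj1_tips}) we conclude $u\pi_h = v\pi_h$.

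For the converse direction, I would show $\T\Haus(a,b) \leq \T\dG(a,b) + 2\lambda_a$. Pick $\widehat{x} \in T(a)$ and $\widehat{y} \in T(b)$ realizing $\T\dG(a,b) = \dG(\widehat{x},\widehat{y})$. For any $x \in T(a)$, the triangle inequality together with Proposition~\ref{diam} gives
\begin{equation*}
\dG(x,T(b)) \leq \dG(x,\widehat{y}) \leq \dG(x,\widehat{x}) + \dG(\widehat{x},\widehat{y}) \leq 2\lambda_a + \T\dG(a,b).
\end{equation*}
Taking the supremum over $x \in T(a)$ bounds one of the two terms in the Hausdorff maximum; the symmetric argument bounds the other. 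Now if $u\pi_h = v\pi_h$, then by the gluing constant $\lambda$ from Theorem~\ref{Conj1} (in the tips formulation) we have $\T\dG(u_k,v_k) \leq \lambda$ for all $k$, so $\T\Haus(u_k,v_k) \leq \lambda + 2\lambda_a$.

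There is no real obstacle here: the result is essentially a packaging of earlier work, with the uniform bound on $\diam T(a)$ (Proposition~\ref{diam}) doing all the heavy lifting by converting the comparison between $\min$-type and Hausdorff-type distances into an additive perturbation. The only subtlety worth stating cleanly is the elementary inequality $\inf d \leq \Haus$, which avoids pathologies when the two tips happen to overlap.
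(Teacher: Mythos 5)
Your proposal is correct and follows essentially the same route as the paper: both directions compare $\T\Haus$ with $\T\dG$, using the elementary inequality $\min\leq\Haus$ for one side and Proposition~\ref{diam} (the uniform bound $2\lambda_a$ on tip diameters) for the other, then invoke the fact that $\T\dG$ represents the gluing. Your write-up is slightly more explicit than the paper's in spelling out the triangle inequality and the final gluing conclusion, but the argument is the same.
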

\begin{proof}
Let $a,b \in \mathcal{A}_{k}$. Since $\T \dG$ is the minimum over all pairs of elements $(\widehat{x},\widehat{y})$ that belong to the tips $T(a)$ and $T(b)$ respectively, it is straightforward that 
$$\T \dG(a,b) \leq \min_{\widehat{y} \in T(b)} \dG(\widehat{x},\widehat{y}) \leq \max_{\widehat{x} \in T(a)} \min_{\widehat{y} \in T(b)} \dG(\widehat{x},\widehat{y})$$
and that the same holds in the other way (with $a$ and $b$ switched). So $\T \dG (a,b) \leq \T \Haus(a,b)$.\\

For the other direction, we need to use Proposition~\ref{diam}, that leads to
$$\max_{\widehat{x} \in T(a)} \min_{\widehat{y} \in T(b)} \dG(\widehat{x},\widehat{y}) \leq \T \dG(a,b) + 2\lambda_{a}$$
and hence $\T \Haus(a,b) \leq \T \dG(a,b)+2\lambda_{a}$ as before.
\end{proof}

The last consequence is about the fibers of $\pi_{h}$. We have already discussed some properties, in particular Theorem~\ref{Conj1} tells us when two elements belong to the same fiber. But now, we can prove the following
\begin{thm}\label{finite_to_one}
The map $\pi_{h}: \partial_{h} \Gamma \twoheadrightarrow \partial \Gamma$ is finite-to-one. Moreover, the number of elements in a fiber is bounded by a constant that depends only on $\lambda_{a}$ and $\delta$.
\end{thm}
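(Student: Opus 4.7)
The plan is to fix $z_\infty \in \partial \Gamma$ and control the fiber $F := \pi_{h}^{-1}(z_\infty)$ through the tree structure of $\mathcal{A}(\Gamma)$. For each level $k \geq 1$ I set $F_k := \{ u_k : u \in F \} \subseteq \mathcal{A}_k$. I would first bound $|F_k|$ by a constant $N$ independent of $k$ and of $z_\infty$, and then deduce $|F| \leq N$.

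To bound $|F_k|$, I would apply Theorem~\ref{Conj1} to any pair $u, v \in F$: this yields $\dG(u_k, v_k) \leq \lambda$ for every $k$, and Proposition~\ref{tips_non-tips_relation} upgrades this to $\T \dG(a, a') \leq 2\lambda + 4\delta + 2\lambda_{a}$ for all $a, a' \in F_k$. Fixing a base atom $a_0 \in F_k$ together with a base vertex $\widehat{x}_0 \in T(a_0)$, and choosing for each $a \in F_k$ any vertex $\widehat{x}_a \in T(a)$, a triangle inequality combined with the tip-diameter estimate of Proposition~\ref{diam} forces every $\widehat{x}_a$ into the ball of radius $R := 2\lambda + 4\delta + 6\lambda_{a}$ centered at $\widehat{x}_0$. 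Since the atoms of $\mathcal{A}_k$ partition the vertex set of $\Gamma$ and each tip lies inside its atom, the vertices $\{\widehat{x}_a\}_{a \in F_k}$ are pairwise distinct, so $|F_k| \leq |B_R(\widehat{x}_0)|$, a finite quantity depending only on $\delta$, $\lambda_{a}$ and the local structure of $\Gamma$.

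From the uniform bound $|F_k| \leq N$ I would conclude $|F| \leq N$ by a K\"onig-style argument based on Theorem~\ref{horofunction-boundary-tree-atoms}: if $u \neq v$ in $F$ they disagree at some level $k$, and by nesting in the tree they continue to disagree at every subsequent level. Given $N+1$ hypothetical distinct horofunctions in $F$, taking the maximum over the finitely many pairwise disagreement levels produces a single level $K$ at which all $N+1$ elements yield distinct atoms, contradicting $|F_K| \leq N$.

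The main obstacle I anticipate is precisely the injectivity of $a \mapsto \widehat{x}_a$: were distinct atoms of $F_k$ allowed to share a tip vertex, the ball-counting step would collapse. This is exactly the phenomenon flagged by Remark~\ref{equal_tips}, but its scenario concerns shared tips between an atom and one of its children (hence across different levels), whereas the argument above only compares atoms at a single level $k$, where tip disjointness is automatic from the partition property of $\mathcal{A}_k$.
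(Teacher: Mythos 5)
Your proposal is correct and follows essentially the same route as the paper: both use Theorem~\ref{Conj1} (via tips, cf.\ Remark~\ref{Conj1_tips}) together with the tip-diameter bound of Proposition~\ref{diam} to trap all level-$k$ tips of a fiber inside a ball of radius depending only on $\lambda_{a}$ and $\delta$, and then invoke the disjointness of same-level atoms to count. The only difference is that you spell out the final passage from a uniform per-level bound to a bound on the whole fiber via a K\"onig-style argument, a step the paper leaves implicit.
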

\begin{proof}
We recall our assumption on the graph $\Gamma$: it is locally finite and hence the balls are finite. We consider the atom-coding of a horofunction $u=(u_{k})_{k=1}^{\infty}$ and we know that the tip $T(u_{k})$ has a finite diameter due to Proposition~\ref{diam} for all $k$. In particular, it is bounded above by $2\lambda_{a}$. We choose an element in $T(u_{k})$ and we consider a ball $B$ of radius $2(\lambda_{a}+2\delta)+2\lambda_{a}$ centered at that element. By Theorem~\ref{Conj1} and Remark~\ref{Conj1_tips},
we get that two horofunctions map onto the same point in the Gromov boundary if the distance of their tips at each level is less or equal than $2(\lambda_{a}+\delta)$. So by definition of the ball $B$, the $k$-level tip of every horofunction that is contained in the same fiber of $u$ must intersect $B$. Since it holds for all $k$ and the constants do not depend on $k$, we have the claim.
\end{proof}
Despite the differences, the way of thinking of the part of the section shares many points with \cite{CP}, in particular this proof uses the same technique provided there.\\

It is worth noting that this result can be achieved in a different way. In their work \cite{CP,CP2}, Coornaert and Papadopoulos use a different notion of horofunction (this defintion was provided by Gromov in \cite{G}). 
\begin{defn} \label{CP_function}
Let $\Gamma$ a hyperbolic graph and let $x_{0}$ be a distinguished vertex in $\Gamma$. A map $f:|\Gamma| \rightarrow \mathbb{R}$ with $f(x_{0})=0$ is called a $CP$-function if it satisfies the following two conditions:
\begin{itemize}
\item[(1)]There exists $\epsilon >0$ such that 
$$f(l\gamma(t)) \leq (1-t)f(l\gamma(0))+tf(l\gamma(1))+\epsilon$$ 
with $\gamma:[0,l] \rightarrow |\Gamma|$ geodesic and $l:[0,1] \rightarrow [0,l]$ that maps $t \in [0,1]$ to $lt \in [0,l]$;
\item[(2)]$f(x)=\tilde{t}+\dG(x,f^{-1}(\tilde{t}))$ for every $x \in |\Gamma|$ and every $\tilde{t} \in ]-\infty, f(x)]$.
\end{itemize}
\end{defn}
They then managed to prove that the space of $CP$-functions that assume only integer values on the vertices of $\Gamma$ projects onto $\partial \Gamma$ and the quotient map is finite-to-one (see \cite[Proposition 4.5]{CP2}). 
So all we need to conclude that the fibers are finite is the following
\begin{prop}[\cite{mathoverflow}]
Let $\Gamma$ be a hyperbolic graph. Then a horofunction is a $CP$-function.
\end{prop}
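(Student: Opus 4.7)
The plan is to verify both axioms of Definition~\ref{CP_function} for the canonical representative $f_u$ (normalized by $f_u(x_0)=0$) of a horofunction $u=\lim_k \overline{d}_{x_k}$, extended to $|\Gamma|$ as the pointwise limit of the functions $\dG(\cdot,x_k)-\dG(x_0,x_k)$. These approximants are uniformly $1$-Lipschitz on $|\Gamma|$, so $f_u$ inherits the $1$-Lipschitz property and the convergence is uniform on every compact subset of $|\Gamma|$; concretely, on an edge $[u,v]$ parametrised by $\alpha\in[0,1]$ the extension takes the tent shape $f_u(y)=\min\bigl(f_u(u)+\alpha,\,f_u(v)+1-\alpha\bigr)$.

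For condition~(1), I would first prove a quasi-convexity estimate for the ordinary distance function in a $\delta$-hyperbolic graph: for any $p\in|\Gamma|$ and any geodesic $\gamma:[0,l]\to|\Gamma|$,
\[
\dG(\gamma(lt),p)\leq (1-t)\,\dG(\gamma(0),p)+t\,\dG(\gamma(l),p)+2\delta,\qquad t\in[0,1].
\]
This is a short consequence of $\delta$-thinness applied to the triangle with vertices $\gamma(0),\gamma(l),p$: the point $\gamma(lt)$ lies within $\delta$ of a point on $[\gamma(0),p]\cup[\gamma(l),p]$, and a triangle-inequality computation on the resulting sub-segments produces the additive $2\delta$. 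Substituting $p=x_k$ and subtracting $\dG(x_0,x_k)$ from both sides leaves the inequality invariant for each $f_{x_k}$; uniform convergence on the image of $\gamma$ transmits the estimate to $f_u$, giving condition~(1) with $\epsilon:=2\delta$.

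For condition~(2), I construct an explicit descending geodesic ray from an arbitrary vertex $x$. Given $f_u(x)=n\in\mathbb{Z}$, choose a subsequence along which $f_{x_k}(x)=n$; the first vertex $y_k$ on a geodesic $[x,x_k]$ then satisfies $f_{x_k}(y_k)=n-1$, and by local finiteness of $\Gamma$ together with a diagonal argument there is a neighbour $y$ of $x$ with $f_u(y)=n-1$. Iterating yields a sequence $x=y_0,y_1,y_2,\dots$ of consecutively adjacent vertices with $f_u(y_i)=n-i$, whose concatenated edges form an isometric embedding of $[0,\infty)$ into $|\Gamma|$. On each such edge the $1$-Lipschitz bound applied from both endpoints forces $f_u$ to be affine of slope $-1$, so every $\tilde{t}\leq n$ is attained along the ray at distance exactly $n-\tilde{t}$ from $x$. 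Combined with the automatic lower bound $\dG(x,f_u^{-1}(\tilde{t}))\geq f_u(x)-\tilde{t}$ from the Lipschitz property, this proves~(2) at vertices; for a point in the interior of an edge, the tent-shape formula identifies a unit-rate descent to one endpoint, after which the vertex case applies.

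The main subtlety is the choice of extension of $f_u$ from the vertex set to $|\Gamma|$: the naive piecewise-linear extension fails condition~(2) at edge-interior points whenever $f_u(u)=f_u(v)$ for adjacent vertices $u,v$, whereas the limit extension obtained from $\dG(\cdot,x_k)-\dG(x_0,x_k)$ has the tent shape precisely tailored to support unit-rate descent out of every point. With this choice fixed, both verifications above are short and self-contained.
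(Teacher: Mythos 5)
The paper does not actually prove this proposition: it is stated with a citation to an external MathOverflow answer and no argument is given in the text, so there is no ``paper proof'' to compare against. Your argument is therefore a genuine, self-contained filling of that gap, and I believe it is correct. The two key points both check out. For condition (1), your thin-triangle computation is right: if $m=\gamma(lt)$ is $\delta$-close to a point $q$ on, say, $[\gamma(0),p]$, then $\dG(m,p)\leq \dG(\gamma(0),p)-lt+2\delta$, and the passage to the convex combination $(1-t)\dG(\gamma(0),p)+t\dG(\gamma(l),p)$ is exactly the triangle inequality $\dG(\gamma(0),p)\leq \dG(\gamma(l),p)+l$; subtracting the constant $\dG(x_0,x_k)$ preserves the inequality because the coefficients sum to $1$. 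For condition (2), the gradient-ray construction works because the values of $f_{x_k}$ at any fixed vertex are integers converging pointwise, hence eventually stationary, and because $\dG(x_0,x_k)\to\infty$ guarantees the geodesics $[x,x_k]$ are nondegenerate so that the ``first vertex'' $y_k$ exists; local finiteness then yields the neighbour $y$ with $f_u(y)=f_u(x)-1$, and the $1$-Lipschitz bound turns the resulting edge path into an isometrically embedded ray along which $f_u$ descends at unit rate. Your observation about the choice of extension to $|\Gamma|$ is the one genuinely delicate point of the statement, and you resolve it correctly: only the ``tent'' extension inherited from the distance functions $\dG(\cdot,x_k)$ supports unit-rate descent out of edge-interior points, which is what condition (2) demands. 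The only cosmetic imprecision is the reference to ``a diagonal argument'': no diagonalisation is needed, since each vertex $y_{i+1}$ is extracted by a single pigeonhole step from the tail of the original sequence, independently of the previous steps.
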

Note that the converse is false. Since there are hyperbolic graphs such that the two notions do not coincide (see \cite{mathoverflow} for details). \\

Theorem~\ref{finite_to_one} gives us a tool to bound the \textit{topological dimension} of $\partial \Gamma$. 

\begin{thm}[Hurewicz] \label{Hurewicz}
Let $X$ be a compact metrizable space that is a continuous image of a Cantor set. If the fibers of the map are bounded above by an integer $n>0$, then the topological dimension of $X$ is less than $n-1$.
\end{thm}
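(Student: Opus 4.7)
The plan is to deduce the statement from the standard covering-dimension criterion: a compact metrizable space $X$ has $\dim X\leq m$ exactly when every finite open cover admits a finite open refinement of order at most $m+1$ (i.e.\ no point lies in more than $m+1$ members). Fixing a continuous surjection $f\colon C\twoheadrightarrow X$ from the Cantor set with fibers of cardinality bounded by $n$, the goal is to exhibit, for every finite open cover $\mathcal{U}$ of $X$, an open refinement of order strictly smaller than $n$, giving the claimed Hurewicz bound.

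First I would exploit the zero-dimensionality of $C$: pull $\mathcal{U}$ back to $C$ and refine the preimage cover by a finite clopen partition $\{C_{1},\dots,C_{s}\}$ of $C$, each $C_{j}$ contained in some $f^{-1}(U_{j})$ with $U_{j}\in\mathcal{U}$. The natural candidate refinement on $X$ is the family of images $V_{j}:=f(C_{j})$. Each $V_{j}$ is closed in $X$ (compact image in a Hausdorff space), contained in $U_{j}$, and together they cover $X$. The finite-to-one hypothesis controls the multiplicity: if $y\in V_{j_{1}}\cap\cdots\cap V_{j_{r}}$ with distinct indices, then $f^{-1}(y)$ intersects the pairwise disjoint clopen sets $C_{j_{1}},\dots,C_{j_{r}}$ in at least one point each, forcing $r\leq|f^{-1}(y)|\leq n$.

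What remains is to promote this closed cover of order $\leq n$ to an open refinement of the same order, since the dimension criterion requires open refinements. To bridge this gap I would invoke the classical swelling lemma for normal spaces: given a finite closed cover of a normal space of order $\leq n$, one can enlarge each member to an open set, preserving both the order of the family and the containment in the prescribed members of $\mathcal{U}$. Applied to $\{V_{j}\}$, this yields a finite open refinement of $\mathcal{U}$ of order $\leq n$, and the dimension criterion then gives the Hurewicz bound asserted by the statement. To sharpen the bound below $n-1$, I would refine the partition further so that along each fiber $f^{-1}(y)$ at least two preimages fall in the same $C_{j}$ only in a controlled way (using compactness of the fibers and continuity of $f$ to merge clopen pieces that share a fiber point), thereby reducing the worst-case intersection count by one before swelling.

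The principal obstacle is precisely the swelling (and the accompanying refinement argument used to gain the extra unit of multiplicity): turning the tautological closed decomposition $\{f(C_{j})\}$ into an open family without increasing the order, while still refining $\mathcal{U}$. Everything else—the clopen partition of $C$, the bound on the order of $\{V_{j}\}$, and the passage from a good open refinement to the dimension estimate—is a direct consequence of zero-dimensionality of the Cantor set combined with the finite-to-one hypothesis, so the combinatorial heart of Hurewicz's theorem reduces entirely to controlling multiplicity under the swelling step.
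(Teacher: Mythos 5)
Your core construction is sound, and it is essentially the standard textbook proof of the bound the paper actually uses: pull the cover back to the Cantor set, refine by a finite clopen partition, push forward to a closed cover whose order is controlled by the fiber bound, then apply the swelling lemma for normal spaces to obtain an open refinement of order at most $n$, whence $\dim X \leq n-1$ by the covering-dimension criterion. Note that the paper gives no proof at all here --- it only cites Kuratowski --- so up to and including the swelling step your argument is a legitimate, self-contained route to the classical Hurewicz estimate $\dim X \leq n-1$, and each ingredient (clopen refinement in a zero-dimensional compactum, closedness of the images, the multiplicity count via disjointness of the $C_{j}$, swelling while staying inside the prescribed $U_{j}$) is correctly deployed.

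The genuine gap is the final ``sharpening'' paragraph, and it cannot be repaired: the literal statement $\dim X < n-1$ (i.e.\ $\leq n-2$) is false as written. The standard at-most-two-to-one surjection of the Cantor set onto $[0,1]$ (identify the two endpoints of each removed middle third, as the Cantor function does) has all fibers of cardinality at most $2$, yet $\dim [0,1]=1=n-1$. So no further refinement of the clopen partition can force the multiplicity below $n$ in general; moreover, the proposed merging of clopen pieces that meet a common fiber destroys exactly the pairwise disjointness on which your multiplicity count rests, and the merged pieces need no longer sit inside a single member of $\mathcal{U}$, so the pushed-forward family would no longer refine the cover. The correct reading of the theorem (and what Kuratowski proves, and what the paper's application to bounding $\dim \partial \Gamma$ actually requires) is $\dim X \leq n-1$, equivalently ``less than $n$''; the statement in the paper is off by one, and your argument without the last paragraph proves precisely the correct version.
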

\noindent See \cite[Chapter XIX]{Ku}.\\
Applications of this theorem are common in geometric group theory, see e.g.\ the bound for limit sets of contracting self-similar groups in \cite[Proposition 5.7]{N2} and the bound for hyperbolic graphs in two different versions, namely Proposition 3.7 and Proposition 4.2 followed by Corollary 5.2 in \cite{CP}.

\section{Using geodesics and geodesic rays} \label{5}
This section is based on the following strategy: first we will associate quasi-geodesic rays to atom-codings, then finite geodesics and in the end also geodesic rays. This approach allows us to put together and generalize the previous sections and gives a first approximation of the Gromov boundary via atoms. To start, we need to iterate the hooking lemma, that is
\begin{prop}
\label{iterated_hooking}
Let $u_{k} \supseteq u_{k+1}$ two atoms respectively of level $k$ and $k+1$. Then
$$\max_{\substack{x \in T(u_{k}) \\ y \in T(u_{k+1})}} \dG(x,y) \leq 2\lambda_{a}+1.$$ 
\end{prop}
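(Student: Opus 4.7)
The plan is to exploit the inclusion $u_{k+1}\subseteq u_k$ together with the rigidity provided by Lemma~\ref{visible_proximal}\ref{visible_proximal_c}, which says that any nearest neighbor of one point of a $k$-atom in $B_k$ is a nearest neighbor of every point of that atom. This lets me measure $x\in T(u_k)$ and $y\in T(u_{k+1})$ against a common vertex in $B_k$ and close the triangle.

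Fix arbitrary $x\in T(u_k)$ and $y\in T(u_{k+1})$. Since $u_{k+1}$ is a $(k+1)$-level atom, Proposition~\ref{complement-atom} gives $y\notin B_k$, while the \hyperref[Conj3]{Hooking Lemma} applied at level $k+1$ yields $\dG(x_0,y)\leq k+1+\lambda_a$; combining, $\dG(B_k,y)\leq 1+\lambda_a$. Pick a nearest neighbor $\overline{y}\in B_k$ of $y$. Because $y\in u_{k+1}\subseteq u_k$, Lemma~\ref{visible_proximal}\ref{visible_proximal_c} promotes $\overline{y}$ to a nearest neighbor in $B_k$ of every element of $u_k$, and in particular of $x$. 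Hence $\dG(\overline{y},x)=\dG(B_k,x)\leq\lambda_a$, using once more the \hyperref[Conj3]{Hooking Lemma} together with $x\in T(u_k)$. A single triangle inequality then gives
$$\dG(x,y)\leq \dG(x,\overline{y})+\dG(\overline{y},y)\leq \lambda_a+(1+\lambda_a)=2\lambda_a+1,$$
and taking the maximum over $x$ and $y$ yields the stated bound.

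The one subtlety, which I expect to be the only point requiring care, is the legitimacy of using the same vertex $\overline{y}$ as a nearest neighbor for both $y$ and $x$: $y$ naturally comes with nearest neighbors in $B_{k+1}$, not in $B_k$, and one must pass to the coarser ball to invoke the atom-invariance of Lemma~\ref{visible_proximal}\ref{visible_proximal_c}. The argument above avoids any issue by choosing $\overline{y}$ directly in $B_k$ from the outset and controlling $\dG(\overline{y},y)$ via the sphere-level bound $\dG(B_k,y)\leq 1+\lambda_a$ derived from the Hooking Lemma at level $k+1$; no information about $N(y,B_{k+1})$ is needed.
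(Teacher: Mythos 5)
Your proof is correct and follows essentially the same route as the paper: both arguments triangulate through a common nearest neighbor in $B_{k}$ (made common to both tips via Lemma~\ref{visible_proximal_c}) and bound the two legs by $\lambda_{a}$ and $\lambda_{a}+1$ using the \hyperref[Conj3]{Hooking Lemma}. The only cosmetic difference is that the paper bounds the leg to the $(k+1)$-tip by routing a geodesic through the nearest neighbor via Proposition~\ref{cone_intersect}, whereas you obtain the same bound directly from $\dG(B_{k},y)\leq \lambda_{a}+1$ and then choose the nearest neighbor realizing it.
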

\begin{proof}
Let $\widehat{x}_{k+1} \in T(u_{k+1})$. By the \hyperref[Conj3]{Hooking Lemma}, we know that $\dG(x_{0},\widehat{x}_{k+1})$ is less than or equal to $k+1+\lambda_{a}$. 
And so it is the length of a geodesic starting from $x_{0}$ and passing through any nearest neighbor $\overline{x} \in S_{k}$ of $u_{k}$ (recall that we can take any of them due to Lemma~\ref{visible_proximal_c}).
We know that such a geodesic exists because $\widehat{x}_{k+1} \in u_{k}$ and by Proposition~\ref{cone_intersect}. But this means that $\dG(\overline{x},\widehat{x}_{k+1}) \leq \lambda_{a}+1$ and again by the \hyperref[Conj3]{Hooking Lemma}, we have $\dG(\overline{x},\widehat{x}_{k}) \leq \lambda_{a}$ with $\widehat{x}_{k}$ any element in the tip of $u_{k}$. Combining these two facts in a triangle inequality we get the claim.
\end{proof}

Note that if we look at the minimum, namely 
$$\min_{\substack{x \in T(u_{k}) \\ y \in T(u_{k+1})}} \dG(x,y), $$
then maybe the estimate provided is naive. 
Indeed, it can happen that two elements in two consecutive tips are one the successor of the other. 
\begin{exm}
Looking at Figure~\ref{atoms_tiling}.(\subref{atoms_tiling_A}), we get an example of two consecutive tips. Indeed, every element in the sphere of radius 2 which is an element in the tip of a $2$-level atom is a successor  of an element in the tip of the corresponding $1$-level atom.
\end{exm}
Despite this aspect, we are able to construct a quasi-geodesic ray

\begin{prop}
Let $u=(u_{k})_{k=1}^{\infty}$ be an element of $\partial_{h} \Gamma$ described by its atom-coding. Then any sequence of points $\{\widehat{x}_{k}\}_{k=1}^{\infty}$ such that $\widehat{x}_{k} \in T(u_{k})$ has a subsequence that is a quasi-geodesic ray.
\end{prop}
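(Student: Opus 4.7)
The plan relies on two facts already established: the Hooking Lemma places each tip $T(u_k)$ within $\lambda_a$ of the sphere $S_k$, and Proposition~\ref{iterated_hooking} bounds the distance between any two elements of consecutive tips.

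First I would observe that if $\widehat{x}_k \in T(u_k)$ then $\widehat{x}_k$ lies on a sphere $S_{k+i_k}$ with $0 \leq i_k \leq \lambda_a$, so $\dG(x_0, \widehat{x}_k) = k + i_k$. In particular $\dG(x_0, \widehat{x}_k) \to \infty$, so I may extract a subsequence $k_1 < k_2 < \cdots$ for which the integer sequence $d_n := \dG(x_0, \widehat{x}_{k_n})$ is strictly increasing; concretely, given $k_n$, let $k_{n+1}$ be the smallest index $k > k_n$ with $\dG(x_0, \widehat{x}_k) > d_n$. This extraction is needed to rule out the degenerate situation described in Remark~\ref{equal_tips}, where consecutive tips may coincide.

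Next I would check the two quasi-geodesic inequalities for the reparameterization $d_n \mapsto \widehat{x}_{k_n}$. The lower bound is immediate from the reverse triangle inequality:
\[
\dG(\widehat{x}_{k_n}, \widehat{x}_{k_m}) \;\geq\; |d_n - d_m|.
\]
For the upper bound, iterating Proposition~\ref{iterated_hooking} along the nested chain $u_{k_n} \supseteq u_{k_n+1} \supseteq \cdots \supseteq u_{k_m}$, where each consecutive pair of tips contributes at most $2\lambda_a+1$, gives
\[
\dG(\widehat{x}_{k_n}, \widehat{x}_{k_m}) \;\leq\; (k_m - k_n)(2\lambda_a + 1).
\]
Since $0 \leq i_k \leq \lambda_a$ for every $k$, we have the arithmetic comparison $k_m - k_n \leq (d_m - d_n) + \lambda_a$, and therefore
\[
\dG(\widehat{x}_{k_n}, \widehat{x}_{k_m}) \;\leq\; (2\lambda_a + 1)\,|d_m - d_n| + \lambda_a(2\lambda_a + 1).
\]
These two inequalities, with constants depending only on $\lambda_a$, give that $\{\widehat{x}_{k_n}\}$ parameterized by $d_n$ is a quasi-geodesic; to produce an actual continuous quasi-geodesic ray one fills in geodesic segments $[\widehat{x}_{k_n}, \widehat{x}_{k_{n+1}}]$ in $\Gamma$, whose lengths are controlled by the same iterated hooking estimate.

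The main technical content is the iterated use of Proposition~\ref{iterated_hooking} and the accompanying translation between the level index $k$ and the true radial distance $d_n$. Everything else is bookkeeping, and the only subtlety that forces passage to a subsequence rather than using the raw sequence is ensuring injective radial growth so the lower quasi-isometry bound survives without the tips stalling.
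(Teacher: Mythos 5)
Your proposal is correct and follows essentially the same route as the paper: the upper bound comes from iterating Proposition~\ref{iterated_hooking}, the lower bound from the fact that each tip $T(u_{k})$ sits between $S_{k}$ and $S_{k+\lambda_{a}}$ (Proposition~\ref{complement-atom} plus the \hyperref[Conj3]{Hooking Lemma}), and a subsequence extraction disposes of stalling tips. The only cosmetic difference is that you reparameterize by the radial distance $d_{n}$, whereas the paper simply discards duplicate points and keeps the level index.
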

\begin{proof}
Take $\{\widehat{x}_{k}\}_{k=1}^{\infty}$ as in the statement. It may occur that some $\widehat{x}_{k}$ are equal so, for each $k$, we remove all the redundant copies of the same $\widehat{x}_{k}$ to get a new sequence $\{\widehat{x}_{n}\}_{n=1}^{\infty}$. We claim that $\{\widehat{x}_{n}\}_{n=1}^{\infty}$ is a quasi-isometric embedding of $\mathbb{N}$ in $\Gamma$.\\
By Proposition~\ref{iterated_hooking} we know that $\dG(\widehat{x}_{n},\widehat{x}_{n+1}) \leq D$ for some constant $D$ depending only on $\lambda_{a}$. To conclude, we know that if $\widehat{x}_{n}, \widehat{x}_{m} \in \{\widehat{x}_{n}\}_{n=1}^{\infty}$  and $n \leq m$, then $\dG(\widehat{x}_{n},\widehat{x}_{m}) \leq D(m-n)$ by iterations of the triangle inequality.\\
On the other hand, $\widehat{x}_{n} \in B_{n-1+\lambda_{a}}$ by virtue of the \hyperref[Conj3]{Hooking Lemma} and $\widehat{x}_{n} \in \Gamma-B_{m-1}$ by Proposition~\ref{complement-atom}, hence $\dG(\widehat{x}_{n},\widehat{x}_{m}) \geq m-n -\lambda_{a}$.
\end{proof}
An interesting fact to remark is that if such a quasi-geodesic ray is a geodesic ray, then by definition the horofunction is a Busemann point.\\

\begin{oss}
Note that two horofunctions $u$ and $v$ are mapped into the same point in $\partial \Gamma$ if and only if the Hausdorff distance between the two geodesic rays constructed using the previous proposition is bounded (see \cite[Lemma H.3.1]{BH} for a different, but equivalent, definition of $\partial \Gamma$ that leads to this fact).
\end{oss}

The technique used in the following Remark will not only improve the structure of the quasi-geodesic ray, but it will also be the key ingredient for most of the incoming proofs:
\begin{oss}
\label{proximal_trick}
Let $a_{n} \in \mathcal{A}_{n}$ and take $p_{n}$ a proximal point of $a_{n}$. By Lemma~\ref{visible_proximal_b}, we can consider a combinatorial geodesic $[p_{0},p_{n}]=(p_{0},p_{1}, \ldots, p_{n})$ such that $p_{0}=x_{0}$ and $p_{i} \in P(x,S_{i})$ for some (and hence all) points $x \in a_{n}$ and for all $i \leq n$.
But we can say more, we actually find a geodesic such that $p_{i} \in P(a_{i}, S_{i})$ with $a_{i} \in \mathcal{A}_{i}$ and $a_{i} \supseteq a_{n}$ for all $i \leq n$, as $x \in a_{i}$.
\end{oss}

Following Definition~\ref{visual_metric} and the discussion right after it, we look at $\partial \mathcal{A}$ as a metric space with respect to $\curlyvee_{A}$, which is nothing more than the standard visual metric on the boundary of a rooted tree. Explicitly, we define $\curlyvee_{A}(u,v):=\beta^{-k}$ where $k$ is such that $u_{k}=v_{k}$ (this implies $u_{i}=v_{i}$ for every $i \le k$)
and $u_{k+1} \neq v_{k+1}$. As for other tree structures that are related to $\partial \Gamma$ (see e.g. \cite[Proposition 2.3]{CP}), we have the following
\begin{prop}
\label{lipschitz}
The map $\pi_{h}: ( \partial \mathcal{A}, \curlyvee_{A}) \twoheadrightarrow (\partial \Gamma, \curlyvee)$ is Lipschitz. 
\end{prop}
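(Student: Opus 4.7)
The plan is to bound the visual distance on the Gromov boundary by the visual distance on $\partial \mathcal{A}$ up to a multiplicative constant, using Lemma~\ref{Gromov_atoms} to pass from the tree-level Gromov product of two horofunctions to the Gromov product of their projections, and Lemma~\ref{visual_notation}(d) to translate these products into visual distances. Let $u=(u_{j})_{j=1}^{\infty}$ and $v=(v_{j})_{j=1}^{\infty}$ be two distinct elements of $\partial \mathcal{A}$, and let $k \geq 0$ be the largest integer such that $u_{k}=v_{k}$, so that $\curlyvee_{A}(u,v)=\beta^{-k}$ by definition of the tree visual metric.

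The key step is to prove that $(u_{j} \mid v_{j}) \geq k$ for every $j \geq k$. For $j > k$, the tips $T(u_{j})$ and $T(v_{j})$ are both contained in the common $k$-level atom $u_{k}=v_{k}$, so Proposition~\ref{cone_intersect} places them inside the cone $C(p)$ of any fixed nearest neighbor $p \in N(u_{k},B_{k}) \subseteq S_{k}$. For $\widehat{x} \in T(u_{j})$ and $\widehat{y} \in T(v_{j})$ there exist geodesics $[x_{0},\widehat{x}]$ and $[x_{0},\widehat{y}]$ through $p$, whence $\dG(x_{0},\widehat{x})=k+\dG(p,\widehat{x})$ and $\dG(x_{0},\widehat{y})=k+\dG(p,\widehat{y})$. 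Combined with the triangle inequality $\dG(\widehat{x},\widehat{y}) \leq \dG(\widehat{x},p)+\dG(p,\widehat{y})$ this gives $(\widehat{x} \mid \widehat{y}) \geq k$, and taking the maximum over tips yields $(u_{j} \mid v_{j}) \geq k$. The case $j=k$ is covered by the same argument (setting $\widehat{x}=\widehat{y}$ shows the product is at least $\dG(x_{0},\widehat{x}) \geq k$).

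Passing to the liminf in $j$ gives $(u \mid v) \geq k$, and Lemma~\ref{Gromov_atoms} then yields $(u\pi_{h} \mid v\pi_{h}) \geq k - 2\tilde{\delta}$. The upper estimate of Lemma~\ref{visual_notation}(d) produces
$$\curlyvee(u\pi_{h},v\pi_{h}) \leq \beta^{-(u\pi_{h} \mid v\pi_{h})} \leq \beta^{2\tilde{\delta}} \beta^{-k} = \beta^{2\tilde{\delta}}\,\curlyvee_{A}(u,v),$$
so $\pi_{h}$ is Lipschitz with constant $\beta^{2\tilde{\delta}}$. I do not anticipate any genuine obstacle: the whole argument is a reassembly of tools already present in the excerpt, and the only subtlety worth double-checking is that tips of arbitrarily deep sub-atoms of $u_{k}$ remain in the cone of any fixed nearest neighbor of $u_{k}$, which is precisely Proposition~\ref{cone_intersect} applied at level $k$.
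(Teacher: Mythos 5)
Your proof is correct, and it reaches the conclusion by a mechanism that differs from (and improves on) the paper's at the key step. Both arguments share the same skeleton: show that for $m>k$ the Gromov product of any pair of tips $\widehat{x}_{m}\in T(u_{m})$, $\widehat{y}_{m}\in T(v_{m})$ is at least $k$ minus a constant, pass to the boundary product at the cost of $2\tilde{\delta}$, and exponentiate via Lemma~\ref{visual_notation}(d). Where you differ is in how the lower bound on $(\widehat{x}_{m}\mid\widehat{y}_{m})$ is obtained. The paper bounds $\dG(\widehat{x}_{m},\widehat{y}_{m})$ from above by routing both points through the common tip element $\widehat{x}_{k}=\widehat{y}_{k}$ and controlling $\dG(\widehat{x}_{m},\widehat{x}_{k})\leq m-k+D$ via proximal points, the \hyperref[Conj3]{Hooking Lemma} and Lemma~\ref{visible_proximal_b}; this costs an additive constant $D=2(\lambda_{a}+4\delta+2)$ and yields the Lipschitz constant $\beta^{D+2\tilde{\delta}}$. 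You instead observe that $T(u_{m})\cup T(v_{m})\subseteq u_{m}\cup v_{m}\subseteq u_{k}=v_{k}\subseteq C(p)$ for any $p\in N(u_{k},B_{k})\subseteq S_{k}$ by Proposition~\ref{cone_intersect} and nestedness, and then invoke the remark that the Gromov product on a cone over a point of $S_{k}$ is at least $k$; this gives $(u_{m}\mid v_{m})\geq k$ with no additive loss, and the sharper constant $\beta^{2\tilde{\delta}}$ after applying Lemma~\ref{Gromov_atoms} (which packages the $\liminf$ comparison that the paper instead carries out by hand through Remark~\ref{Vaisala_Gromov_product_c}). Your route is shorter, avoids the proximal-point machinery entirely, and produces a better constant; the paper's more laborious estimate has the side benefit of being the template reused verbatim in Theorem~\ref{SConj1}, which may be why it is presented that way.
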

\begin{proof}
Let $u=(u_{n})_{n=1}^{\infty}$ and $v=(v_{n})_{n=1}^{\infty}$ be two horofunctions with $\curlyvee_{A}(u,v)=\beta^{-k}$. Pick $\{\widehat{x}_{n}\}_{n=1}^{\infty}$ and $\{\widehat{y}_{n}\}_{n=1}^{\infty}$ such that $\widehat{x}_{n} \in T(u_{n})$ and $\widehat{y}_{n} \in T(v_{n})$ for all $n$ and with $\widehat{x}_{n}=\widehat{y}_{n}$ for all $n \leq k$, now Remark~\ref{Vaisala_Gromov_product_c}  leads to  
$$2\tilde{\delta}+(u\pi_{h} \mid v\pi_{h}) \geq \sup (\sup_{n\geq 0} \inf_{m \geq n}(x_{m} \mid y_{m})) \geq \sup_{n\geq 0} \inf_{m \geq n}(\widehat{x}_{m} \mid \widehat{y}_{m}) $$
where the first $\sup$ is over all the sequences $\{x_{n}\}_{n=1}^{\infty}$ and $\{y_{n}\}_{n=1}^{\infty}$ such that $x_{n}$ converges to $u\pi_{h}$ and  $y_{n}$ converges to $v \pi_{h}$.\\
Since $\widehat{x}_{m}=\widehat{y}_{m}$ for all $m \le k$, then $(\widehat{x}_{m} \mid \widehat{y}_{m})=d(\widehat{x}_{m},x_{0}) \geq m$, while if $m > k$ then we have $\dG(\widehat{x}_{m},\widehat{y}_{m}) \leq \dG(\widehat{x}_{m},\widehat{x}_{k})+\dG(\widehat{y}_{m},\widehat{y}_{k})$ and 
$$\dG(\widehat{x}_{m},\widehat{x}_{k}) \leq \dG(\widehat{x}_{m},\overline{x}_{m})+\dG(\overline{x}_{m},p_{m})+\dG(p_{m},p_{k})+\dG(p_{k},\overline{x}_{k})+\dG(\widehat{x}_{k},\overline{x}_{k})$$
with $\overline{x}_{m}$ and $\overline{x}_{k}$ nearest neighbors of $\widehat{x}_{m}$ and $\widehat{x}_{k}$ respectively; and $p_{m}$ and $p_{k}$ proximal points of $\widehat{x}_{m}$ and $\widehat{x}_{k}$ on the same geodesic (see Remark~\ref{proximal_trick}). So that $\dG(\widehat{x}_{i},\overline{x}_{i}) \leq \lambda_{a}$ by the \hyperref[Conj3]{Hooking Lemma} and $\dG(\overline{x}_{i},p_{i}) \leq 4\delta+2$ by Lemma~\ref{visible_proximal_b} for $i=k,m$. Moreover, $\dG(p_{m},p_{k})=m-k$ and so we can conclude that 
$$\dG(\widehat{x}_{m},\widehat{x}_{k}) \leq m-k+D \text{ with } D=2(\lambda_{a}+4\delta+2).$$
The same holds for $\widehat{y}_{m}$ and $\widehat{y}_{k}$.\\
Returning to the Gromov product, we can say that
$$\dG(\widehat{x}_{m},x_{0})+\dG(\widehat{y}_{m},x_{0})-\dG(\widehat{x}_{m},\widehat{y}_{m}) \geq 2m-2(m-k+D) =2(k-D)$$
with $D$ a constant only depending on $\lambda_{a}$ and $\delta$. Hence $\curlyvee(u \pi_{h},v\pi_{h}) \leq \tilde{D}\beta^{-k}$ with $\tilde{D}=\beta^{D+2\tilde{\delta}}$ as desired.
\end{proof}

\begin{quest*}
Is there a connection between the visual metric and the uniform metric on $\partial_{h} \Gamma$ so that we can say the map $(\partial_{h} \Gamma,|\cdot |_{\infty}) \twoheadrightarrow (\partial \Gamma, \curlyvee)$ is Lipschitz?
\end{quest*}

Starting from an atom-coding, we want to find a geodesic ray that represents the same element of $\partial \Gamma$ as the horofunction associated to the atom-coding, more formally
\begin{prop}
\label{proximal_ray_prop}
Let $u$ be a horofunction. Then there exists a geodesic ray $\gamma_{u}$ such that $\gamma_{u}(0)=x_{0}$ and $\gamma_{u}(k)$ is proximal to $u_{k}$ for all $k \in \mathbb{N}$. Furthermore $[\gamma_{u}]_{\partial \Gamma}=u \pi_{h}$.
\end{prop}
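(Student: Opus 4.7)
The plan is to construct $\gamma_u$ by a standard compactness (König's lemma) argument based on Remark~\ref{proximal_trick}, and then to identify its Gromov boundary class with $u\pi_h$ using a uniform bound between $\gamma_u(k)$ and a tip point of $u_k$.

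First, for each $n \ge 1$ I apply Remark~\ref{proximal_trick} to the atom $u_n$. Since $u = (u_k)_{k=1}^{\infty}$ is a nested sequence, this produces a combinatorial geodesic $\gamma^{(n)} = (p_0^{(n)}, p_1^{(n)}, \ldots, p_n^{(n)})$ from $x_0 = p_0^{(n)}$ whose $i$-th vertex lies in $P(u_i, S_i)$ for every $i \le n$. Now consider the rooted tree whose vertices at depth $n$ are the finite geodesics $(x_0, q_1, \ldots, q_n)$ with $q_i \in P(u_i, S_i)$ for all $i \le n$, and where a geodesic of length $n$ is the child of its length-$(n-1)$ prefix. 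This tree has a vertex at every depth $n$ (witnessed by $\gamma^{(n)}$), and because $\Gamma$ is locally finite each sphere $S_i$ is finite, so in particular each set $P(u_i, S_i) \subseteq S_i$ is finite and the tree is finitely branching. König's lemma therefore produces an infinite branch, and this branch defines a geodesic ray $\gamma_u$ with $\gamma_u(0) = x_0$ and $\gamma_u(i) \in P(u_i, S_i)$ for every $i \ge 0$, which handles the first conclusion.

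To verify that $[\gamma_u]_{\partial\Gamma} = u\pi_h$, I pick for each $k$ a tip element $\widehat{x}_k \in T(u_k)$. The sequence $\{\widehat{x}_k\}_{k=1}^{\infty}$ converges to $u$ in $\partial_h \Gamma$ (the tips exhaust $u$ at infinity), hence it goes to infinity in the sense of Gromov and represents $u\pi_h$ in $\partial \Gamma$. It therefore suffices to show $\dG(\gamma_u(k), \widehat{x}_k)$ is bounded independently of $k$. By Proposition~\ref{cone_intersect} there is a geodesic from $x_0$ to $\widehat{x}_k$ passing through some nearest neighbor $\overline{x}_k \in N(u_k, B_k)$, so $\overline{x}_k$ is a visible point of any element of $u_k$ (Lemma~\ref{visible_proximal_a}). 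Since $\gamma_u(k) \in P(u_k, S_k)$, Lemma~\ref{visible_proximal_b} yields $\dG(\gamma_u(k), \overline{x}_k) \le 4\delta + 2$, while the Hooking Lemma gives $\dG(\overline{x}_k, \widehat{x}_k) \le \lambda_a$. Combining the two by the triangle inequality,
\[
\dG(\gamma_u(k), \widehat{x}_k) \le 4\delta + 2 + \lambda_a,
\]
uniformly in $k$. Consequently $\{\gamma_u(k)\}$ and $\{\widehat{x}_k\}$ are equivalent Gromov sequences and define the same boundary point, namely $u\pi_h$.

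The argument is essentially bookkeeping: the only place where something could go wrong is the application of König's lemma, and that is immediate once one observes the finiteness of $P(u_i, S_i)$. The quantitative bound in the second half is a direct consequence of the previously established machinery (Proposition~\ref{cone_intersect}, Lemma~\ref{visible_proximal}, and the Hooking Lemma), so I do not anticipate a serious obstacle; the main thing to be careful about is aligning the levels of the atoms with the levels of the geodesic, which is exactly what Remark~\ref{proximal_trick} was designed for.
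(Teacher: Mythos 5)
Your proposal is correct, and the second half (bounding $\dG(\gamma_u(k),\widehat{x}_k)$ by $4\delta+2+\lambda_a$ via a nearest neighbor, Lemma~\ref{visible_proximal_b} and the Hooking Lemma) is exactly the paper's argument. The only genuine difference is in the construction of $\gamma_u$: the paper extends each finite proximal geodesic from Remark~\ref{proximal_trick} by a constant map and invokes Arzel\`a--Ascoli (Theorem~\ref{AA}), after which it must argue that uniform convergence on compacts in a discrete graph forces the limit vertices to coincide with proximal points ``for $\epsilon$ small enough.'' Your K\"onig's lemma argument on the finitely branching tree of finite proximal geodesics is a discrete substitute for that compactness step; it reaches the same conclusion more directly, since membership of $\gamma_u(i)$ in $P(u_i,S_i)$ is built into the tree rather than recovered from a limit. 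One small point worth making explicit in your write-up: the infinite branch is a geodesic ray because every finite subsegment sits inside some finite prefix, which is a geodesic by construction. Both routes are valid and of comparable length.
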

Note that we are not saying that every horofunction is a Busemann point, this is false in general for hyperbolic graphs, as we already mentioned. What is true is that the horofunction $u$ is a Busemann point if and only if it coincides with the horofunction defined by $\gamma_{u}$.\\

The proof of the Proposition involves the following well-known result, together with the proximal points technique mentioned before.

\begin{thm}[Arzelà-Ascoli]
\label{AA}
Let $X$ be a proper geodesic space with a distinguished point $x_{0}$. Let $\{ \gamma_{k} \}_{k \in \mathbb{N}}$ be a sequence of functions $\gamma_{k}:[0, \infty[ \rightarrow X$ such that 
\begin{itemize}
\item[(a)]$\gamma_{k}(0)=x_{0}$ for all $k \in \mathbb{N}$,
\item[(b)]$\gamma_{k}$ is a geodesic on $[0,k]$.
\end{itemize}
Then there exists a subsequence $\{\tilde{\gamma}_{n}\}_{n \in \mathbb{N}}$ that converges uniformly on compacts to a geodesic ray $\gamma:[0,\infty[ \rightarrow X$.
\end{thm}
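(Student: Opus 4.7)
The plan is to prove this by the standard diagonal subsequence argument, using properness to get compactness on bounded sets and the Lipschitz property of geodesics to get equicontinuity.

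First, I would observe that for each fixed $n \in \mathbb{N}$ and every $k \geq n$, the function $\gamma_k$ restricted to $[0,n]$ is an isometric embedding (so in particular $1$-Lipschitz), and its image lies in the closed ball $\overline{B_n(x_0)}$. Since $X$ is proper, $\overline{B_n(x_0)}$ is compact. Hence the family $\{ \gamma_k|_{[0,n]} : k \geq n \}$ is uniformly bounded and uniformly equicontinuous in a compact target, so the classical Arzelà–Ascoli theorem (for continuous maps into a compact metric space) yields a subsequence that converges uniformly on $[0,n]$.

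Second, I would run a diagonal extraction. Apply the first step with $n=1$ to get a subsequence $(\gamma^{(1)}_k)_k$ converging uniformly on $[0,1]$. From this subsequence, apply the first step with $n=2$ to extract $(\gamma^{(2)}_k)_k$ converging uniformly on $[0,2]$, and so on. Set $\tilde{\gamma}_n := \gamma^{(n)}_n$. For any fixed $m$, the tail $(\tilde{\gamma}_n)_{n \geq m}$ is (eventually) a subsequence of $(\gamma^{(m)}_k)_k$, hence converges uniformly on $[0,m]$. This defines a limit map $\gamma : [0,\infty[ \to X$ given pointwise by $\gamma(t) = \lim_n \tilde{\gamma}_n(t)$, with uniform convergence on every compact $[0,m]$.

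Finally, I would verify that $\gamma$ is a geodesic ray. Clearly $\gamma(0) = x_0$. For any $s, t \in [0,\infty[$, pick $m > \max\{s,t\}$. Then for all $n$ large enough, $\tilde{\gamma}_n$ is a geodesic on $[0,m]$, so
\[
\dist(\tilde{\gamma}_n(s), \tilde{\gamma}_n(t)) = |t-s|.
\]
Passing to the limit using continuity of $\dist$ and uniform convergence on $[0,m]$ gives $\dist(\gamma(s),\gamma(t)) = |t-s|$, so $\gamma$ is a geodesic ray.

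The only delicate point is being careful with the diagonal step — making sure the bookkeeping of nested subsequences genuinely yields uniform convergence on every $[0,m]$, not merely pointwise convergence. This is a standard maneuver, however, and the properness hypothesis is exactly what makes the compactness-equicontinuity argument go through on each bounded interval.
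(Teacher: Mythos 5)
Your argument is correct and complete: the restriction to $[0,n]$ of each $\gamma_k$ with $k\geq n$ is $1$-Lipschitz with image in the compact ball $\overline{B_n(x_0)}$ (by properness), the classical Arzelà--Ascoli theorem plus a diagonal extraction gives uniform convergence on every $[0,m]$, and the geodesic identity $\dist(\gamma(s),\gamma(t))=|t-s|$ passes to the limit since the original index of the $n$-th diagonal term is at least $n$. The paper itself offers no proof of Theorem~\ref{AA} (it is invoked as a well-known result), and your diagonal argument is precisely the standard one, so there is nothing to reconcile.
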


\begin{proof}[Proof(Proposition~\ref{proximal_ray_prop}).]
Let $(u_{k})_{k=1}^{\infty}$ be the sequence of atoms representing $u$. For each $k$, we exploit Remark~\ref{proximal_trick} to get a geodesic $\gamma_{k}$ such that $\gamma_{k}(n) \in P(u_{n},S_{n})$ for all $n \leq k$. Then we apply Theorem~\ref{AA} to the sequence $\{\gamma_{k}\}_{k \in \mathbb{N}}^{\infty}$ by saying that $\gamma_{k}(t):=\gamma_{k}(k)$ for all $t \geq k$ and we obtain a geodesic ray $\gamma$. We claim that every $n$-vertex of $\gamma$ (i.e.\ $\gamma(n)$ with $n \in \mathbb{N}$) is a proximal point of $u_{k}$. Indeed, we are dealing with uniform convergence on compacts, that is
$$\lim_{k} \max_{t \in [0,n]}\dG(\tilde{\gamma}_{k}(t),\gamma(t))=0$$
or in other words
$$\forall \epsilon >0 \ \ \exists j \text{ s.t. } \forall k \geq j \text{ and } \forall t \in [0,n], \ \ \dG(\tilde{\gamma}_{k}(t), \gamma(t))< \epsilon.$$
This means that any vertex $\gamma(n)$ is arbitrarily close to a proximal point, but since we are in a graph, taking $\epsilon$ small enough means that the proximal point and $\gamma(n)$ are in fact the same vertex.\\

To prove that $\gamma$ represents the same point of $u\pi_{h}$, we argue as before: we use the \hyperref[Conj3]{Hooking Lemma} to bound the distance between an element $\widehat{x}_{k} \in \T(u_{k})$ and a nearest neighbor $\overline{x}_{k}$ of $u_{k}$; then we apply Lemma~\ref{visible_proximal_b} to say that $\overline{x}_{k}$ and $\gamma(k)$ are $(4\delta+2)$-near. And we conclude with a triangle inequality that yields $\dG(\gamma(k), \widehat{x}_{k}) \leq 4\delta+2+\lambda_{a}$.
\end{proof}

\begin{defn}
\label{proximal_ray_def}
Let $u$ be a horofunction. We call the geodesic ray $\gamma_{u}$ defined by Proposition~\ref{proximal_ray_prop} a \textbf{proximal ray} with respect to $u$.
\end{defn}

We know that, in some sense, the Gromov product of two points in $\partial \Gamma$ measures how long two geodesic rays representing these points fellow travel. In the following result, we will provide an atom-coding version of this fact.
\begin{thm}
\label{SConj1}
Let $u=(u_{k})_{k=1}^{\infty}$ and $v=(v_{k})_{k=1}^{\infty}$ be two horofunctions coded by atoms. Then the following holds.
\begin{enumerate}[label=(\alph*),ref=\ref{SConj1}(\alph*)]
\item If $\T\dG(u_{i},v_{i}) \leq C$ for all $i \leq k$, then $(u \pi_{h} \mid v \pi_{h}) \geq k-C'$,\label{SConj1_a}
\item if $(u \pi_{h} \mid v \pi_{h}) \geq k$, then  $\T \dG(u_{i},v_{i}) \leq C''$ for all $i \leq k$, \label{SConj1_b}
\end{enumerate} 
where $C$, $C'$ and $C''$ are constants. Furthermore, if $C$
 depends only on $\lambda_{a}$ and $\delta$, so do $C'$ and $C''$.
\end{thm}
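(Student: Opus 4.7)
The plan is to pass from tip-based distances on atoms to geodesic-ray geometry via the proximal rays of Definition~\ref{proximal_ray_def}. Let $\gamma_{u}$ and $\gamma_{v}$ be proximal rays associated with $u$ and $v$. Two ingredients extracted from the proof of Proposition~\ref{proximal_ray_prop} will be used throughout. First, setting $K:=\lambda_{a}+4\delta+2$, for every $i$ and every $\widehat{x}\in T(u_{i})$ one has $\dG(\gamma_{u}(i),\widehat{x})\le K$ (and symmetrically for $\gamma_{v}$ and $T(v_{i})$). Second, $\gamma_{u}$ is a geodesic ray from $x_{0}$ with $[\gamma_{u}]=u\pi_{h}$, so $\dG(x_{0},\gamma_{u}(i))=i$ and, by testing $(\gamma_{u}(i)\mid u\pi_{h})$ on the Gromov sequence $\{\gamma_{u}(n)\}_{n}$ and invoking Remark~\ref{Vaisala_Gromov_product_c}, $(\gamma_{u}(i)\mid u\pi_{h})\ge i-2\tilde{\delta}$; the same bound holds for $\gamma_{v}$ and $v\pi_{h}$.

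For~\ref{SConj1_a} only the hypothesis at level $k$ is needed. Pick $\widehat{x}_{k}\in T(u_{k})$ and $\widehat{y}_{k}\in T(v_{k})$ realising $\dG(\widehat{x}_{k},\widehat{y}_{k})\le C$. The triangle inequality gives $\dG(\gamma_{u}(k),\gamma_{v}(k))\le C+2K$, and since both vertices lie on $S_{k}$ we obtain $(\gamma_{u}(k)\mid\gamma_{v}(k))\ge k-(C+2K)/2$. Applying the hyperbolic inequality~(\ref{hypineq}) twice, in its extension to $\Gamma\cup\partial\Gamma$ (Remark~\ref{Vaisala_Gromov_product}), to the four points $u\pi_{h},\gamma_{u}(k),\gamma_{v}(k),v\pi_{h}$ yields
\[
(u\pi_{h}\mid v\pi_{h})\ge\min\bigl\{(u\pi_{h}\mid\gamma_{u}(k)),(\gamma_{u}(k)\mid\gamma_{v}(k)),(\gamma_{v}(k)\mid v\pi_{h})\bigr\}-2\tilde{\delta},
\]
and each of the three inner quantities is at least $k-(C+2K)/2-2\tilde{\delta}$. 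So $C':=(C+2K)/2+4\tilde{\delta}$ works.

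For~\ref{SConj1_b} the argument reverses. Fix any $i\le k$ and apply the same four-point inequality to $\gamma_{u}(i),u\pi_{h},v\pi_{h},\gamma_{v}(i)$:
\[
(\gamma_{u}(i)\mid\gamma_{v}(i))\ge\min\bigl\{(\gamma_{u}(i)\mid u\pi_{h}),(u\pi_{h}\mid v\pi_{h}),(v\pi_{h}\mid\gamma_{v}(i))\bigr\}-2\tilde{\delta}\ge i-4\tilde{\delta},
\]
where the last step combines $i\le k$ with the lower bounds from the first paragraph. Since $\gamma_{u}(i),\gamma_{v}(i)\in S_{i}$, this forces $\dG(\gamma_{u}(i),\gamma_{v}(i))\le 8\tilde{\delta}$, and a final triangle inequality with the $K$-bound on tips produces $\T\dG(u_{i},v_{i})\le 2K+8\tilde{\delta}=:C''$ for every $i\le k$.

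The only delicate point is the careful use of the hyperbolic inequality on mixed interior/boundary quadruples, together with the estimate $(\gamma(i)\mid p)\ge i-2\tilde{\delta}$ for a geodesic ray $\gamma$ ending at $p$; both rely on the relations among the four variants of the Gromov product on $\partial\Gamma$ (Remark~\ref{Vaisala_Gromov_product_c}) and on the extension of~(\ref{hypineq}) to $\Gamma\cup\partial\Gamma$, and the $\inf\liminf$ convention of Definition~\ref{Gromov-product-boundary} has to be tracked so the $2\tilde{\delta}$ slack is inserted in the right places. All constants introduced ($K=\lambda_{a}+4\delta+2$, $\tilde{\delta}$, multiples thereof) depend only on $\lambda_{a}$ and $\delta$, so the dependency claim on $C'$ and $C''$ follows automatically.
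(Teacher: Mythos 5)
Your proof is correct, but it is not the route the paper takes, particularly for part~\ref{SConj1_a}. The paper's argument for (a) stays inside the atom framework: it bounds $\T\dG(u_{l},v_{l})$ for every level $l>k$ by $2(l-k)+4(\lambda_{a}+4\delta+2)+C$ using chains of proximal points (Remark~\ref{proximal_trick}), deduces a lower bound on each atom-level product $(u_{l}\mid v_{l})$, computes the $\liminf$ to get $(u\mid v)\ge k-C'$, and only then transfers to $\partial\Gamma$ via Lemma~\ref{Gromov_atoms} --- mirroring the proof of Proposition~\ref{lipschitz}. You instead work at the single level $k$, push everything onto the proximal rays, and apply the four-point inequality (\ref{hypineq}) on the mixed quadruple $u\pi_{h},\gamma_{u}(k),\gamma_{v}(k),v\pi_{h}$, together with the estimate $(\gamma_{u}(i)\mid u\pi_{h})\ge i-2\tilde{\delta}$ extracted from Remark~\ref{Vaisala_Gromov_product_c}. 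This is shorter and makes visible that only the hypothesis at level $k$ is actually used in (a); the paper's version has the advantage of reusing machinery ($\T\dG$ estimates across levels, the product $(u\mid v)$ on atom-codings) that it needs again elsewhere. For part~\ref{SConj1_b} the two arguments share the same skeleton (proximal rays plus the $K=\lambda_{a}+4\delta+2$ tip-to-ray bound); the only difference is that the paper controls $\dG(\gamma_{u}(i),\gamma_{v}(i))$ with the tripod quasi-isometry of Lemma~\ref{tripod}, where you use the extended hyperbolic inequality again, trading a $5\delta$ bound for an $8\tilde{\delta}$ one. Both are legitimate, and since $\tilde{\delta}$ is controlled by $\delta$, your dependency claim for $C'$ and $C''$ stands.
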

In particular, we obtain a new proof of Theorem~\ref{Conj1} when the hypotheses hold for each $k$.
\begin{proof}
For the first assertion, we proceed as in the proof that $\pi_{h}$ is Lipschitz (see Proposition~\ref{lipschitz}). When $l \leq k$, we have $(u_{l} \mid v_{l} ) \geq l-C$ by hyphotesis. If $l > k$, we consider the following triangle inequality
$$\dG(\widehat{x}_{l},\widehat{y}_{l}) \leq \dG(\widehat{x}_{l},\widehat{x}_{k})+\dG(\widehat{x}_{k},\widehat{y}_{k})+\dG(\widehat{y}_{k},\widehat{y}_{l})$$
with $\widehat{x}_{i} \in T(u_{i})$ and $\widehat{y}_{i} \in T(v_{i})$ for $i=l,k$.\\
Then we use the argument in Remark~\ref{proximal_trick} to get two geodesics of proximal points and to give an estimate of $\dG(\widehat{x}_{l},\widehat{x}_{k})$ and $\dG(\widehat{y}_{l},\widehat{y}_{k})$, so that we have
$$\T \dG(u_{l},v_{l}) \leq 2(l-k)+4(\lambda_{a}+4\delta+2)+\dG(\widehat{x}_{k},\widehat{y}_{k}).$$
To conclude, we use again the hypothesis that $\dG(\widehat{x}_{k},\widehat{y}_{k}) \leq C$ and hence $ \T \dG(u_{l},v_{l}) \leq
 2(l-k)+4(\lambda_{a}+4\delta+2)+C$. In this way, we found a lower bound for each Gromov product of the type $(u_{l} \mid v_{l})$ (we implicitly use the \hyperref[Conj3]{Hooking Lemma} and the claim follows).\\
What is left to point out is that $(u \mid v) \leq (u \pi_{h} \mid v \pi_{h}) +2\tilde{\delta}$ due to Remark~\ref{Vaisala_Gromov_product_c}.
\\

For the second assertion, we set $\gamma_{u}$ and $\gamma_{v}$ to be to proximal rays with respect to $u$ and $v$ (see Definition~\ref{proximal_ray_def} and Proposition~\ref{proximal_ray_prop}). We consider
$$\T \dG(u_{k},v_{k}) \leq \dG(T(u_{k}),\gamma_{u}(k))+\dG(\gamma_{u}(k),\gamma_{v}(k))+\dG(\gamma_{v}(k),T(v_{k})).$$
The distance $\dG(T(u_{k}),\gamma_{u}(k))$ is bounded by $4\delta+2+\lambda_{a}$ for the same argument as before, that is taking a nearest neighbor and exploiting Lemma~\ref{visible_proximal_b} together with the \hyperref[Conj3]{Hooking Lemma}. The same occurs to $\dG(T(v_{k}),\gamma_{v}(k))$. The distance $\dG(\gamma_{u}(k),\gamma_{v}(k))$ is bounded by virtue of Lemma~\ref{tripod}. More clearly if $(u \pi_{h} \mid v \pi_{h}) \geq k$ it means that the distance between $\dG(\gamma_{u}(i),\gamma_{v}(i))$ for all $i \leq k$ is bounded by $5\delta$ due to the quasi-isometry.
\end{proof}

This version of the theorem gives a chance of characterizing the Gromov boundary via horofunctions through a metric viewpoint.

\begin{cor}
The function $\beta^{-(\cdot \mid \cdot )}$ on $\partial_{h} \Gamma$ is a distance and if $\curlyvee_{h}$ is the pseudo-metric computed using the First Move, then the quotient $\partial_{h} \Gamma / \curlyvee_{h}$ defined by the Second Move is $\partial \Gamma$.
\end{cor}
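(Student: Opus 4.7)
The plan is to verify the two assertions separately: that $\beta^{-(\cdot \mid \cdot)}$ is a distance in the sense of Section~\ref{1}, and that the First--Second Move construction identifies the quotient with $\partial\Gamma$. For the first assertion, symmetry of $\beta^{-(u \mid v)}$ is immediate from symmetry of the underlying Gromov product. The only non-trivial point is $\beta^{-(u \mid u)}=0$, i.e.\ $(u \mid u)=\infty$. The \hyperref[Conj3]{Hooking Lemma} forces $k \leq \dG(\widehat{x},x_{0}) \leq k+\lambda_{a}$ for every $\widehat{x} \in T(u_{k})$, while Proposition~\ref{diam} bounds $\diam T(u_{k})$ by $2\lambda_{a}$; together these give $(\widehat{x} \mid \widehat{y}) \geq k-\lambda_{a}$ for any two elements of the tip, so $(u_{k} \mid u_{k}) \to \infty$ and hence $(u \mid u) = \infty$.

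For the quotient part, identifying the Second Move quotient with $\partial\Gamma$ via $\pi_{h}$ reduces to showing $\curlyvee_{h}(u,v)=0 \iff u\pi_{h}=v\pi_{h}$. The direction $(\Leftarrow)$ is short: if $u\pi_{h}=v\pi_{h}$, then any two Gromov sequences converging to this common point have product tending to infinity, so $(u\pi_{h} \mid v\pi_{h})=\infty$; Lemma~\ref{Gromov_atoms} then forces $(u \mid v)=\infty$, and the trivial chain already shows $\curlyvee_{h}(u,v)=0$.

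For $(\Rightarrow)$, fix $\epsilon>0$ and a chain $u=z_{0},z_{1},\ldots,z_{N}=v$ in $\partial_{h}\Gamma$ with $\sum_{i}\beta^{-(z_{i}\mid z_{i+1})}<\epsilon$. By Lemma~\ref{Gromov_atoms}, $\beta^{-(z_{i}\pi_{h} \mid z_{i+1}\pi_{h})} \leq \beta^{2\tilde{\delta}}\beta^{-(z_{i}\mid z_{i+1})}$, and Lemma~\ref{visual_notation}(d) bounds $\curlyvee(z_{i}\pi_{h},z_{i+1}\pi_{h})$ above by $\beta^{-(z_{i}\pi_{h} \mid z_{i+1}\pi_{h})}$. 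The triangle inequality for the metric $\curlyvee$ on $\partial\Gamma$ applied to the projected chain then yields
\[
\curlyvee(u\pi_{h},v\pi_{h}) \leq \sum_{i=0}^{N-1} \curlyvee(z_{i}\pi_{h},z_{i+1}\pi_{h}) \leq \beta^{2\tilde{\delta}}\sum_{i=0}^{N-1}\beta^{-(z_{i}\mid z_{i+1})} < \beta^{2\tilde{\delta}}\epsilon,
\]
and letting $\epsilon\to 0$ gives $u\pi_{h}=v\pi_{h}$.

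The continuous surjection $\pi_{h}$ therefore descends to a bijection $\partial_{h}\Gamma/\curlyvee_{h} \to \partial\Gamma$; combined with the reverse bound $\curlyvee_{h}(u,v) \leq \beta^{-(u \mid v)} \leq \beta^{-(u\pi_{h} \mid v\pi_{h})}$ (again Lemma~\ref{Gromov_atoms}) and Lemma~\ref{visual_notation}(d), this bijection is in fact bi-Lipschitz, so the identification with $\partial\Gamma$ is canonical. The main obstacle I anticipate is purely the bookkeeping of the additive slack $2\tilde{\delta}$ from Lemma~\ref{Gromov_atoms} and the factor-of-two slack from Lemma~\ref{visual_notation}(d) when telescoping along chains; once these are tracked, the rest of the argument is formal.
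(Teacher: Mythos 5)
Your proof is correct, and on the quotient part it is actually more careful than the paper's. The paper's own argument is shorter: for the distance axiom it invokes the computation in the proof of Lemma~\ref{Gromov_atoms} to get $(u\mid u)=\infty$ (you instead compute $(\widehat{x}\mid\widehat{y})\geq k-\lambda_{a}$ directly from the \hyperref[Conj3]{Hooking Lemma} and Proposition~\ref{diam}, which is equivalent), and for the quotient it only establishes the equivalence $\beta^{-(u\mid v)}=0 \Leftrightarrow u\pi_{h}=v\pi_{h}$, passing over the fact that the Second Move identifies $u$ with $v$ when $\curlyvee_{h}(u,v)=0$, where $\curlyvee_{h}$ is the chain-infimum and hence a priori smaller than $\beta^{-(u\mid v)}$. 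Your telescoping argument --- projecting a chain $u=z_{0},\ldots,z_{N}=v$ to $\partial\Gamma$, controlling each link via Lemma~\ref{Gromov_atoms} and Lemma~\ref{visual_notation}(d), and using the triangle inequality for the genuine metric $\curlyvee$ on $\partial\Gamma$ --- is exactly what is needed to rule out collapse under the infimum over chains, and it buys the bi-Lipschitz comparison between $\curlyvee_{h}$ and $\curlyvee$ as a bonus. In short: same skeleton as the paper, but you supply the chain estimate that the paper leaves implicit.
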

\noindent We recall that the two moves are explained after the Definition~\ref{metric} at the very beginning of the dissertation.
\begin{proof}
We start by proving that the function is a distance. Symmetry is obvious. Taken $u \in \partial_{h} \Gamma$, then 
$$\max_{x_{k} \in T(u_{k}), y_{k} \in T(v_{k})} \liminf_{k } (x_{k} | y_{k})= \infty$$
and so is $(u \mid u)$, by virtue of the proof of Lemma~\ref{Gromov_atoms}. Hence $\beta^{-(u,u)}=0$.\ 
Using the First Move, we get the pseudo-metric $\curlyvee$.\
It remains to prove that the metric quotient is the Gromov boundary. We point out that two horofunctions $u \neq v$ can satisfy $\beta^{-(u,v)}=0$ and indeed this happens if they glue (they are in the same fiber of $\pi_{h}$). We need to show that this is the only possible case, which means that if $\beta^{-(u,v)}=0$, then $u$ and $v$ glue. But now $(u \mid v)= \infty$, 
and so there exists a couple of Gromov sequences converging to $u$ and $v$ such that their Gromov product is infinite, hence these two sequences are the same element in $\partial \Gamma$. 
\end{proof}

In literature, there are many examples of metric spaces (or similar structures) that in some way converge to the Gromov boundary of a hyperbolic group. We cite as an example, the work of Pawlik \cite{P} and Lemma 3.8 of \cite{GMS} which says that spheres with center in a distinguished point $x_{0}$ and endowed with the visual metric \textit{weakly converge to $\partial \Gamma$ in the sense of Gromov-Hausdorff}. We can look at the tips as a coarse version of spheres and  so our aim now is to provide a tip-version of this convergence.\\
In the following discussion, we will adopt the notation  $\curlyvee_{k}(u_{k},v_{k})=\beta^{-(u_{k}\mid v_{k})}$ in which $u_{k}$ and $v_{k}$ are atoms of the same level. We recall that the Gromov product is the one defined right before Lemma~\ref{Gromov_atoms} and that $\curlyvee_{k}$ is not a metric (not even a distance), but still plays an important role in the theory.  With this in mind, we will consider the weak Gromov-Hausdorff limit of $(\mathcal{A}_{k}, \curlyvee_{k})$ as $k$ goes to infinity even if they are not metric spaces. \\

A formal definition for the limit we discussed is the following

\begin{defn}\label{wGH}
Let $\{ \Gamma_{i}, \dist_{i} \}_{i \in \mathbb{N}}$ be a sequence of graphs endowed with the standard metric. We say that the graph $(\Gamma,\dist)$ is the \textbf{weak Gromov-Hausdorff limit}, or that the sequence weakly converges in the sense of Gromov-Hausdorff, if for all $i \in \mathbb{N}$ there exists a quasi-isometry $\phi_{i}: \Gamma \rightarrow \Gamma_{i}$ with $L_{1}^{i}$ not depending on $i$ and $L_{2}^{i}$ that goes to zero as $i$ tends to infinity, where $L_{1}^{i}$ is the multiplicative constant and $L_{2}^{i}$ is the additive constant of the quasi-isometric embedding. 
\end{defn}
\noindent Note that this is a coarse version of the standard notion of Gromov-Hausdorff convergence in metric geometry (see e.g.\ \cite{BBI}).\\

Before proving the result, we need a technical lemma that links the Gromov product between two atoms with the one between two proximal points:

\begin{lem}
\label{CGP}
Let $u_{k}$ and $v_{k}$ be two $k$-level atoms. Let $p_{k} \in P(u_{k},S_{k})$ and $q_{k} \in P(v_{k}, S_{k})$. Then 
$$|\T \dG(u_{k},v_{k})-\dG(p_{k},q_{k})|\leq 8\delta+4+2\lambda_{a} \text{ and } |(u_{k}|v_{k})-(p_{k}|q_{k})|\leq 4\delta+2+2\lambda_{a} .$$
\end{lem}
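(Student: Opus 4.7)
The plan is to reduce everything to one single geometric estimate: the distance between any tip point and the chosen proximal point is at most $\lambda_a + 4\delta + 2$. Both claims then follow by triangle inequalities and a short computation of Gromov products.

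First I would establish the key estimate: for any $\widehat{x} \in T(u_k)$, we have $\dG(\widehat{x}, p_k) \leq \lambda_a + 4\delta + 2$. Indeed, pick any nearest neighbor $\overline{x} \in N(u_k, B_k)$; the \hyperref[Conj3]{Hooking Lemma} gives $\dG(\widehat{x}, \overline{x}) \leq \lambda_a$, and since $\overline{x}$ is visible by Lemma~\ref{visible_proximal_a} while $p_k$ is proximal, Lemma~\ref{visible_proximal_b} yields $\dG(\overline{x}, p_k) \leq 4\delta + 2$. A triangle inequality closes the estimate, and the analogous bound $\dG(\widehat{y}, q_k) \leq \lambda_a + 4\delta + 2$ holds for any $\widehat{y} \in T(v_k)$.

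For the first claim, a double triangle inequality now gives, for any tip pair $(\widehat{x},\widehat{y})$,
$$\bigl|\dG(\widehat{x},\widehat{y}) - \dG(p_k,q_k)\bigr| \leq \dG(\widehat{x},p_k) + \dG(\widehat{y},q_k) \leq 2\lambda_a + 8\delta + 4.$$
Applying this to the tip pair that realises $\T\dG(u_k,v_k)$ in one direction, and to an arbitrary tip pair in the other, yields $|\T\dG(u_k,v_k) - \dG(p_k,q_k)| \leq 8\delta + 4 + 2\lambda_a$.

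For the Gromov-product claim, I would expand both products. Since $p_k,q_k \in S_k$, one has $(p_k \mid q_k) = k - \tfrac{1}{2}\dG(p_k,q_k)$. By Definition~\ref{tips}, all elements of $T(u_k)$ lie on a common sphere $S_{k+i_x}$ with $0 \leq i_x \leq \lambda_a$ (and similarly $i_y$ for $T(v_k)$), so for any tip pair
$$(\widehat{x} \mid \widehat{y}) - (p_k \mid q_k) = \tfrac{1}{2}\bigl[i_x + i_y + \dG(p_k,q_k) - \dG(\widehat{x},\widehat{y})\bigr].$$
Combining $i_x + i_y \leq 2\lambda_a$ with the earlier estimate on $|\dG(\widehat{x},\widehat{y}) - \dG(p_k,q_k)|$ gives $|(\widehat{x} \mid \widehat{y}) - (p_k \mid q_k)| \leq 2\lambda_a + 4\delta + 2$ uniformly over tip pairs, and taking the maximum over tips — which only shifts both sides by the same constant — passes the bound to $(u_k \mid v_k)$.

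I do not expect any real obstacle beyond careful bookkeeping: the only geometric content is the single estimate $\dG(\widehat{x},p_k) \leq \lambda_a + 4\delta + 2$, which is already packaged in the combination of the \hyperref[Conj3]{Hooking Lemma} with Lemma~\ref{visible_proximal}. The main thing to watch is that the constant $2\lambda_a + 4\delta + 2$ in the Gromov-product estimate comes out correctly once the factor of $1/2$ from the definition of the Gromov product is paired with the $2(\lambda_a + 4\delta + 2)$ from the double triangle inequality.
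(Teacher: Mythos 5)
Your proposal is correct and follows essentially the same route as the paper: the single geometric input $\dG(\widehat{x},p_k)\leq \lambda_a+4\delta+2$ (Hooking Lemma plus Lemma~\ref{visible_proximal_b}), a double triangle inequality for the statement on $\T\dG$, and the expansion of the Gromov products using the fact that tips lie between $S_k$ and $S_{k+\lambda_a}$. The constants come out exactly as in the paper, so nothing further is needed.
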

\begin{proof}
The first part is the usual consequence of the \hyperref[Conj3]{Hooking Lemma} together with Lemma~\ref{visible_proximal_b} applied to $$\T \dG(u_{k},v_{k}) \leq \dG(T(u_{k}),p_{k})+ \dG(p_{k},q_{k})+\dG(q_{k},T(v_{k}))$$ and the triangle inequality where proximal points and atoms are switched.\\

For the second part, we use the first part as follows
$$2(u_{k} \mid v_{k}) \geq 2k-\dG(p_{k},q_{k})-8\delta-4-2\lambda_{a},$$
and
$$2(u_{k} \mid v_{k}) \leq 2k+2\lambda_{a}-\dG(p_{k},q_{k})+8\delta+4+2\lambda_{a}.$$
All is left is to notice that $2k-\dG(p_{k},q_{k})=2(p_{k} \mid q_{k}) $.
\end{proof} 

\begin{prop}
The metric space $(\partial \Gamma, \curlyvee)$ is the weak Gromov-Hausdorff limit of the sequence $(\mathcal{A}_{k}, \curlyvee_{k})$ as $k \rightarrow \infty$.
\end{prop}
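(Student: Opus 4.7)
The plan is to construct, for each level $k$, a quasi-isometry $\phi_k : (\partial \Gamma, \curlyvee) \to (\mathcal{A}_k, \curlyvee_k)$ whose parameters satisfy Definition~\ref{wGH}. Fix a set-theoretic section of $\pi_h$: for each $z \in \partial \Gamma$ pick a horofunction $u^z \in \partial_h \Gamma$ with $u^z \pi_h = z$ and set $\phi_k(z) := u^z_k$, the $k$-th atom in its coding. Since every atom contains infinitely many vertices and hence has descendants at every deeper level, every $k$-level atom is the $k$-th term of some horofunction's coding, so $\phi_k$ is surjective; in particular no quasi-surjectivity estimate is needed.

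The geometric heart of the proof is the uniform estimate
\[
\bigl|(u_k \mid v_k)_k \;-\; \min\bigl\{k,\,(z \mid w)\bigr\}\bigr| \;\le\; M,
\]
where $u = u^z$, $v = u^w$, and $M$ depends only on $\delta$ and $\lambda_a$. I would obtain this in three moves. First, Lemma~\ref{CGP} replaces the atom-level Gromov product by one between proximal points: $(u_k\mid v_k)_k = (p_k \mid q_k) + O(\lambda_a + \delta)$ for any $p_k \in P(u_k, S_k)$ and $q_k \in P(v_k, S_k)$. Second, by Remark~\ref{proximal_trick} together with Proposition~\ref{proximal_ray_prop}, such proximal points can be chosen coherently along geodesic rays $\gamma_u, \gamma_v$ with $\gamma_u(k) = p_k$, $\gamma_v(k) = q_k$ for every $k$, and satisfying $[\gamma_u]_{\partial \Gamma} = z$, $[\gamma_v]_{\partial \Gamma} = w$. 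Third, apply Lemma~\ref{tripod} to $\gamma_u \cup \gamma_v$: it is $(1, 5\delta)$-quasi-isometric to an ideal tripod whose meeting segment has length exactly $(z \mid w)$, and on such a tripod the Gromov product of the two points at parameter $k$ equals $\min\{k, (z \mid w)\}$ on the nose. Translating back to $\Gamma$ introduces only a $5\delta$ error, completing the estimate.

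Converting Gromov products to visual distances is then routine via Lemma~\ref{visual_notation}(d). One has $\curlyvee_k(\phi_k(z), \phi_k(w)) = \beta^{-(u_k\mid v_k)_k}$ and $\tfrac{1}{2}\beta^{-(z\mid w)} \le \curlyvee(z, w) \le \beta^{-(z\mid w)}$, so the estimate above yields constants $c_1, c_2 > 0$ depending only on $\delta, \lambda_a, \beta$ with
\[
c_1\,\max\bigl\{\beta^{-k},\,\curlyvee(z,w)\bigr\} \;\le\; \curlyvee_k\bigl(\phi_k(z),\phi_k(w)\bigr) \;\le\; c_2\,\max\bigl\{\beta^{-k},\,\curlyvee(z,w)\bigr\}.
\]
Using $\max\{a,b\} \le a+b \le 2\max\{a,b\}$, this rewrites as a quasi-isometric bound with multiplicative constant $L_1 = 2c_2/c_1$ (independent of $k$) and additive constant $L_2^k = c_2 \beta^{-k}$, which tends to $0$ as $k \to \infty$. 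This is precisely the quasi-isometry data required by Definition~\ref{wGH}.

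The main obstacle is the core Gromov-product estimate: one must show that the tips of the $k$-th atoms in the codings of any two horofunctions representing $z$ and $w$ behave like two points at distance $k$ along two geodesic rays converging to $z$ and $w$, uniformly in the choice of horofunction representatives. Lemma~\ref{CGP} handles the passage from tips to sphere points, but it is crucial that proximal points can be organized along a ray representing the correct boundary class, for which Proposition~\ref{proximal_ray_prop} is indispensable; once this is set up, Lemma~\ref{tripod} delivers the exact tripod model and the rest is bookkeeping.
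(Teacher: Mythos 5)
Your construction of the map (a section of $\pi_h$ followed by taking the $k$-th atom), the use of Lemma~\ref{CGP} to pass from tips to proximal points, of Proposition~\ref{proximal_ray_prop} to organize those points along proximal rays, and of Lemma~\ref{tripod} to read off $\min\{k,(z\mid w)\}$ are exactly the ingredients of the paper's proof; the only cosmetic difference is that the paper splits the embedding estimate into the cases $(x_\infty\mid y_\infty)\ge k$ (handled via Theorem~\ref{SConj1}) and $(x_\infty\mid y_\infty)\le k$ (handled via the tripod), whereas you treat both uniformly through the tripod model, which is fine.

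There is, however, one genuine error: the claim that $\phi_k$ is surjective, ``so no quasi-surjectivity estimate is needed.'' It is true that every $k$-level atom $a$ is the $k$-th term of \emph{some} horofunction's coding, but $\phi_k$ is built from a \emph{fixed} section $z\mapsto u^z$ of $\pi_h$; if $u$ is a horofunction through $a$ and $z=u\pi_h$, nothing forces the chosen representative $u^z$ to satisfy $u^z_k=a$ (the fiber of $\pi_h$ over $z$ may contain several horofunctions whose $k$-th atoms differ), so $a$ need not lie in the image of $\phi_k$. Since Definition~\ref{wGH} asks for a quasi-isometry, the quasi-density of the image must still be proved. The fix is short and is what the paper does: for $a\in\mathcal{A}_k$ take any $z\in\partial a\,\pi_h$; the horofunction through $a$ and the chosen $u^z$ are glued, so by Theorem~\ref{Conj1} (in its tip form, Remark~\ref{Conj1_tips}) the tips of $a$ and of $\phi_k(z)$ stay within $2(\lambda_a+\delta)$, whence $(a\mid \phi_k(z))_k\ge k-\mathrm{const}$ and $\curlyvee_k(a,\phi_k(z))\le\beta^{-k+\mathrm{const}}\to 0$. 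With that repair your argument is complete.
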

\begin{proof}
First, we introduce the map we claim induces the quasi-isometry. 
\begin{align*}
\Phi: \partial \Gamma \rightarrow T(\mathcal{A}_{k}) \\
     x_{\infty} \mapsto u \mapsto T(u_{k})
\end{align*}

We consider an element $x_{\infty} \in \partial \Gamma$ and a section $\mathcal{S}_{h}: \partial \Gamma \rightarrow \partial_{h} \Gamma$ of the projection $\pi_{h}$. Now $x_{\infty} \mathcal{S}_{h}=u$ and $u_{k}$ is the $k$-level atom of the atom-coding. So $x_{\infty} \Phi:=T(u_{k})$.\\

\textit{Quasi-dense image.} Let $a \in \mathcal{A}_{k}$. Take $x_{\infty} \in \partial a \pi_{h}$
and evaluate $T^{*} \dG(x_{\infty}\Phi,a)$. Since there exists a horofunction that passes through $a$ and projects onto $x_{\infty}$, we know that such a horofunction and $x_{\infty}\mathcal{S}_{h}$ identify on $\partial \Gamma$. Hence by Theorem~\ref{Conj1}, the tips of their $k$-th terms of the atom-codings (which are $a$ and $x_{\infty}\Phi$) stay within $2(\lambda_{a}+\delta)$ .\\

\textit{Quasi-isometric embedding.} We will proceed by cases.\\
If $(x_{\infty}\mid y_{\infty}) \geq k$, we apply Theorem~\ref{SConj1_b} and we have $\T \dG(u_{i},v_{i}) \leq C$ for all $i \leq k$ and hence $(u_{k}|v_{k}) \geq k-C$ by applying Theorem~\ref{SConj1_a}. To conclude then that $|\curlyvee_{k}(u_{k},v_{k})-\curlyvee(x_{\infty},y_{\infty})|\leq \curlyvee_{k}(u_{k},v_{k}) \leq \beta^{-k+C}$.\\
If $(x_{\infty}\mid y_{\infty}) \leq k$, then we consider two proximal rays $\gamma \in x_{\infty}$ and $\eta \in y_{\infty}$ and by virtue of Lemma~\ref{tripod}, we have $$|(\gamma(k)|\eta(k))-(x_{\infty}|y_{\infty})| \leq \dfrac{5}{2} \delta.$$ 
All that is left to do is combine it with Lemma~\ref{CGP} and get 
\begin{align*}|(u_{k}|v_{k})-(x_{\infty}|y_{\infty})| \leq & |(u_{k}|v_{k})-(\gamma(k)|\eta(k))|+ \\
& |(\gamma(k)|\eta(k))-(x_{\infty}|y_{\infty})| \leq 4\delta+2+2\lambda_{a}+\dfrac{5}{2} \delta.\end{align*}
Hence $D^{-1}\curlyvee(x_{\infty},y_{\infty}) \leq \curlyvee_{k}(u_{k},v_{k})\leq D \curlyvee(x_{\infty},y_{\infty})$ for a suitable constant $D$ as desired.
\end{proof}

\section{Quasi-isometries} \label{6}
We now continue our parallelism between the graph $\Gamma$, its spheres, its geodesic rays and the atom-coding tree $\mathcal{A}$, the tips and the horofunctions; we will now present a couple of quasi-isometries between the set of tips and the graph $\Gamma$.\\

The following result will help us restricting our attention to elements with infinite cones in both the quasi-isometries we are going to describe.
\begin{lem}
\label{bounded_finite_cone}
Let $\Gamma$ be a hyperbolic graph quasi-isometric to the Cayley graph of some hyperbolic group. Then there exists a constant $\lambda_{\infty}$ such that every element $x \in \Gamma$ with a finite cone is in a ball of radius $\lambda_{\infty}$ centered at an element with an infinite cone.
\end{lem}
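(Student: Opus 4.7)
The plan is to first establish the statement in the Cayley graph of $G$ using Cannon's finite cone types theorem, and then transfer the result to $\Gamma$ through the quasi-isometry together with the Morse stability of quasi-geodesics in hyperbolic spaces.

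First I would handle the case of the Cayley graph $\Gamma_G$ of $G$ itself, with basepoint $id$. If $G$ is finite, then $\Gamma_G$ (and hence $\Gamma$, via the quasi-isometry) is bounded and any constant larger than the diameter works. Assume $G$ is infinite. By Proposition~\ref{cone_type_finite} there are only finitely many cone types, and in particular only finitely many isomorphism classes of finite cones; let $M$ denote the maximum cardinality among them. Given $x \in G$, pick a geodesic $(g_0 = id, g_1, \ldots, g_n = x)$ from $id$ to $x$. Since $C(id) = G$ is infinite, $g_0$ has infinite cone, so the set of indices $j$ for which $g_j$ has infinite cone is non-empty; let $j^{*}$ be its maximum. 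If $j^{*} = n$ we are done; otherwise $C(g_{j^{*}+1})$ is finite, and since $x \in C(g_{j^{*}+1})$ we get $\dG(g_{j^{*}+1}, x) \leq M-1$, so $x$ lies within distance $M$ of $g_{j^{*}}$, which has infinite cone.

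Next I would transfer this to $\Gamma$. Let $\phi \colon \Gamma \to \Gamma_G$ be the $(K,L)$-quasi-isometry given by hypothesis, with coarse inverse $\psi$, and assume (after translating in $G$, which only changes the estimate by a bounded amount) that $\phi(x_0)$ lies within bounded distance of $id$. Given $x \in \Gamma$ with finite cone, the Cayley-graph step gives some $y' \in \Gamma_G$ with $\dG(\phi(x), y') \leq M$ and an infinite cone; pick a geodesic ray $\gamma'\colon [0,\infty) \to \Gamma_G$ from $id$ through $y'$. Pulling $\gamma'$ back by $\psi$ produces a quasi-geodesic ray in $\Gamma$ from a bounded neighbourhood of $x_0$ that passes within bounded distance of $y := \psi(y')$. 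Since $\Gamma$ is hyperbolic (being quasi-isometric to a hyperbolic group), the Morse lemma furnishes a genuine geodesic ray $\gamma$ in $\Gamma$ based at $x_0$ and staying within a Morse-constant distance of this quasi-geodesic ray. Pick a vertex $z$ on $\gamma$ close to $y$: then $z$ has infinite cone in $\Gamma$ (its cone contains the tail of $\gamma$), and by combining the various bounded estimates we obtain $\dG(x, z) \leq \lambda_\infty$ for a constant $\lambda_\infty$ depending only on $K$, $L$, the hyperbolicity constant and $M$.

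The main obstacle is precisely the transfer step: cone types, and in particular the property of having an infinite cone, are neither defined for arbitrary hyperbolic graphs nor preserved under quasi-isometries. The key trick is to replace the algebraic notion "infinite cone" by the geometric surrogate "lies on a geodesic ray from the basepoint" (which for locally finite graphs coincides with having infinite cone via K\"onig's lemma) and then use Morse stability to convert the image of $\gamma'$ under $\psi$ from a quasi-geodesic ray into an honest geodesic ray in $\Gamma$. Everything else is bookkeeping of uniform constants.
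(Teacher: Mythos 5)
Your proof is correct, and its first half (the Cayley-graph case) is the same argument as the paper's: use Cannon's theorem to bound the cardinality of finite cones by a constant $M$, then observe that a geodesic segment of length $M$ ending at $x$ cannot consist entirely of points with finite cones, since it would be contained in the (finite) cone of its first vertex. Where you diverge is the second half: the paper applies this cone-type argument directly to $\Gamma$ and stops there, implicitly treating $\Gamma$ as the Cayley graph itself (cone types, and hence the bound $M$, are only defined in the paper for Cayley graphs), whereas you add a genuine transfer step — replacing ``infinite cone'' by ``lies on a geodesic ray from the basepoint'' via K\"onig's lemma, pushing a geodesic ray through the quasi-isometry, and straightening the resulting quasi-geodesic ray with the Morse lemma to produce a vertex of infinite cone near $x$ in $\Gamma$. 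This buys you the full generality of the hypothesis as literally stated (``quasi-isometric to a Cayley graph''), at the cost of extra machinery and a constant that now also depends on the quasi-isometry and Morse constants; in the paper's actual applications $\Gamma$ is the Cayley graph, so the shorter direct argument suffices there, but your version is the more careful one for the lemma as written.
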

From now on $\lambda_{\infty}$ will be such a constant.
\begin{proof}
Let us start by determining $\lambda_{\infty}$. Since two finite cones with the same type have the same number of points, we can consider $\lambda_{\infty}$ to be the maximum of the cardinalities among all types of finite cones (they are finite by Proposition~\ref{cone_type_finite}). This allows us to find a predecessor $x^{c}$ of $x$ (i.e.\ an element that belongs to a geodesic between $x_{0}$ and $x$), such that $\dG(x^{c},x) \leq \lambda_{\infty}$ that has an infinite cone. Indeed, suppose that every element that belongs to a geodesic $[x^{c},x]$ with $x^{c}$ a predecessor and $\dG(x^{c},x) = \lambda_{\infty}$ has a finite cone. This means that the geodesic is fully contained in the cone $C(x^{c})$ but exceeds the number of possible elements in the cone. Hence we have a contradiction. This implies that the cone $C(x^{c})$ has to be infinite.
\end{proof}

The following is useful for proving the quasi-density in both cases.
\begin{lem}
\label{almost_all_visual}
Let $\Gamma$ be a hyperbolic graph. If $S_{n}^{\infty}$ is the subset of all elements in $S_{n}$ such that their cones are infinite, then $S_{n}^{\infty} \subseteq \displaystyle \bigcup_{a \in \mathcal{A}_{n}} V(a,B_{n})$.
\end{lem}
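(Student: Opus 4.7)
The plan is, given $p \in S_n^\infty$, to produce an $n$-level atom $a$ and a vertex $x \in a$ from which $p$ is visible; the inclusion $p \in V(a, B_n)$ will then follow from Lemma~\ref{visible_proximal_c}. Recall from the construction of the tree of atoms that the level-$n$ equivalence relation partitions $\Gamma$ into \emph{finitely} many classes, because there are only finitely many restrictions $f_x|_{B_n}$; the infinite classes are, by definition, the elements of $\mathcal{A}_n$. Since $C(p)$ is infinite by hypothesis, a pigeonhole argument yields a level-$n$ class $a$ with $a \cap C(p)$ infinite, so $a$ itself is infinite and thus $a \in \mathcal{A}_n$. I would then pick $x \in a \cap C(p)$ with $x \neq p$ (possible because $a \cap C(p)$ is infinite); the defining identity $\dG(x_0, x) = n + \dG(p, x) > n$ of points of $C(p) \setminus \{p\}$ places $x \in \Gamma - B_n$, so that the notation $V(x, B_n)$ is meaningful.

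To verify that $p \in V(x, B_n)$ I would argue by contradiction. If some geodesic $[p, x]$ contained $q \in B_n$ with $q \neq p$, then $\dG(p, q) + \dG(q, x) = \dG(p, x)$ combined with $\dG(x_0, q) \leq n$ and with the characterization $\dG(x_0, x) = n + \dG(p, x)$ of $x \in C(p)$ would yield
$$n + \dG(p, x) = \dG(x_0, x) \leq \dG(x_0, q) + \dG(q, x) \leq n + \dG(p, x) - \dG(p, q),$$
forcing $\dG(p, q) \leq 0$, a contradiction. Hence every geodesic from $p$ to $x$ meets $B_n$ only at $p$, so $p \in V(x, B_n)$, and Lemma~\ref{visible_proximal_c} upgrades this to $p \in V(a, B_n)$, finishing the proof.

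I do not foresee a serious obstacle: the whole argument rests on two elementary observations, namely the finiteness of the level-$n$ partition (inherited from the finiteness of the possible restrictions $f_x|_{B_n}$) and the fact that a geodesic emanating from $p \in S_n$ and terminating in $C(p)$ cannot re-enter $B_n$ except at $p$ itself. The only slightly delicate point is ensuring that one can choose $x \in a \cap C(p)$ with $x \neq p$, but this is automatic once one has arranged $a \cap C(p)$ to be infinite via pigeonhole.
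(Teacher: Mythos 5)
Your proof is correct and follows essentially the same route as the paper's: use finiteness of the level-$n$ partition to find an atom $a$ meeting $C(p)$, pick $x \in a \cap C(p)$, observe $p \in V(x,B_n)$, and conclude via Lemma~\ref{visible_proximal_c}. You merely spell out two steps the paper treats as immediate (that the intersected class is genuinely infinite, hence an atom, and that a geodesic from $p$ into $C(p)$ cannot re-enter $B_n$), which is a welcome amount of extra care but not a different argument.
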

\begin{proof}
Let $x'$ be an element of $S_{n}^{\infty}$. Now $C(x')$ is infinite and there are finitely many atoms of level $n$, so there exists an atom  $a \in \mathcal{A}_{n}$ such that $C(x') \cap a \neq \emptyset$. We take $y \in C(x') \cap a$ so that $[x',y] \cap B_{n}=x'$ and by definition $x' \in V(y,B_{n})$. By the property of visible points (see Lemma~\ref{visible_proximal_c}), we have $V(y,B_{n})=V(a,B_{n})$.
\end{proof}

Throughout Section~\ref{3}, we were dealing with many distances 
(almost all of them were not metrics) and we studied the connections between them. We then proved two quasi-isometry like results (Proposition~\ref{tips_non-tips_relation} and Proposition~\ref{quasi_iso_like_hausdorff}). As a first step, we now want to formally prove what these results naturally suggest.

\begin{prop}\label{quasi_iso_haus}
Let $(\mathbf{T},\Haus)$ be the set of tips without repetitions (that means that if two atoms share the same tip, we count it once) endowed with the usual Hausdorff metric. Then $\mathbf{T}$ is quasi-isometric to $\Gamma$. 
\end{prop}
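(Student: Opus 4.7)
The plan is to construct a map $\phi: \Gamma \to \mathbf{T}$ sending each vertex with infinite cone to the tip of a suitable atom at its level, and to verify that $\phi$ is a $(1, C)$-quasi-isometry. For $x \in \Gamma$ with $C(x)$ infinite and $n = \dG(x_0, x)$, the cone $C(x)$ decomposes as a finite disjoint union of $n$-level atoms together with a possibly non-empty finite remainder of vertices lying in finite equivalence classes; since $C(x)$ is infinite, we may choose some $a_x \in \mathcal{A}_n$ with $a_x \subseteq C(x)$ and set $\phi(x) := T(a_x)$. Note that $x$ is then automatically a nearest neighbor of $a_x$ in $B_n$. For $x$ with finite cone, Lemma~\ref{bounded_finite_cone} supplies a predecessor $x^c$ with infinite cone at distance at most $\lambda_\infty$, and we set $\phi(x) := \phi(x^c)$. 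Tips are finite subsets of $\Gamma$ by Proposition~\ref{diam} and local finiteness, so $\Haus$ takes finite values on $\mathbf{T}$.

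The core estimate is that every $\hat x \in T(a_x)$ lies within $3\lambda_a$ of $x$. By the \hyperref[Conj3]{Hooking Lemma} applied to $x \in N(a_x, B_n)$, there exists $y \in a_x$ with $\dG(x, y) \leq \lambda_a$; such a $y$ realizes $\dG(B_n, a_x) = \dG(B_n, T(a_x))$, hence lies in $T(a_x)$. Combined with the diameter bound $\diam T(a_x) \leq 2\lambda_a$ from Proposition~\ref{diam}, this yields $\dG(x, \hat x) \leq 3\lambda_a$. Applying this estimate twice through straightforward triangle inequalities gives
\[
|\dG(x, y) - \Haus(T(a_x), T(a_y))| \leq 6\lambda_a
\]
whenever both $x$ and $y$ have infinite cones. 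Extending via the finite-cone correction of Lemma~\ref{bounded_finite_cone} adds at most $2\lambda_\infty$ to the additive constant, so $\phi$ is a $(1, 6\lambda_a + 2\lambda_\infty)$-quasi-isometric embedding.

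For quasi-density, given any $T(a) \in \mathbf{T}$ with $a \in \mathcal{A}_n$, pick $p \in N(a, B_n) \subseteq S_n$. Since $a \subseteq C(p)$, the cone of $p$ is infinite, so $\phi(p) = T(a_p)$ is defined for some $a_p \subseteq C(p)$. Both $a$ and $a_p$ have $p$ as nearest neighbor, hence by the core estimate both $T(a)$ and $T(a_p)$ lie within $3\lambda_a$ of $p$, giving $\Haus(T(a), T(a_p)) \leq 6\lambda_a$. Thus every tip in $\mathbf{T}$ is within $6\lambda_a$ of the image of $\phi$.

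The main technical step is justifying that any nearest neighbor $p \in N(a, B_n)$ has infinite cone. This follows from the alternative characterization of nearest neighbors (any $p \in N(x, B_n)$ lies on some geodesic $[x_0, x]$, so $x \in C(p)$) combined with Lemma~\ref{visible_proximal_c}, which propagates the nearest-neighbor property from a single vertex of $a$ to all of $a$, yielding $a \subseteq C(p)$ and forcing $C(p)$ to be infinite. Once this is in place, the rest of the argument is careful constant-bookkeeping via the \hyperref[Conj3]{Hooking Lemma} and the tip diameter bound, with no further hyperbolic geometry required.
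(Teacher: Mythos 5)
Your proof is correct, but it runs in the opposite direction from the paper's and uses a different mechanism for quasi-density. The paper defines $\mathcal{S}_{\mathbf{T}}:\mathbf{T}\rightarrow\Gamma$ by choosing a point in each tip; for quasi-density it passes from $x$ to a successor $x'$ with infinite cone, invokes Lemma~\ref{almost_all_visual} to make $x'$ a visible point of some atom, and then uses the $4\delta+2$ bound of Lemma~\ref{visible_proximal_b}, so its additive constant carries an explicit $\delta$. You instead build $\phi:\Gamma\rightarrow\mathbf{T}$ by decomposing $C(x)$ into $n$-level atoms (correctly noting the finite remainder that the paper's remark glosses over) and selecting one; the observation that $x$ is then automatically a nearest neighbor of $a_x$, combined with the \hyperref[Conj3]{Hooking Lemma}, Lemma~\ref{visible_proximal_c} and Proposition~\ref{diam}, gives your core estimate $\dG(x,T(a_x))\leq 3\lambda_a$, and quasi-density of the image follows from Proposition~\ref{cone_intersect} applied to a nearest neighbor of the given atom. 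What your route buys is the elimination of visible points and of any explicit $\delta$-dependence in the constants (hyperbolicity enters only through $\lambda_a$ and $\lambda_\infty$); what the paper's route buys is that its quasi-density argument is shared verbatim with the proof of Theorem~\ref{quasi_isometry}, where the same estimate is reused. The embedding halves of the two proofs are essentially the same bookkeeping, yours via the core estimate and the paper's via Proposition~\ref{quasi_iso_like_hausdorff}.
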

\noindent We recall the discussion made in Remark~\ref{equal_tips} to better understand what ``without repetitions'' means.
\begin{proof}
The map $\mathcal{S}_{\mathbf{T}}: \mathbf{T} \rightarrow \Gamma$ we want to show is a quasi isometry is defined as $T(a) \mathcal{S}_{\mathbf{T}}:=\widehat{x}$ with $\widehat{x}$ some fixed element in $ T(a)$. \

\textit{Quasi-dense image.} Let $x \in \Gamma$. We consider $n$ such that $x \in B_{n-1}$. \\
First, suppose that $ x \in S_{n-1}^{\infty}$, then there exists at least one of its successor $x'$ that belongs to $S_{n}^{\infty}$. By Lemma~\ref{almost_all_visual}, we know that  $x'$ is a visible point for some atom $a \in \mathcal{A}_{n}$. Now we take $\overline{x} \in N(a, B_{n})$ and we know that $\dG(x',\overline{x}) \leq 4\delta+2$ by a property of visible points (by combining Lemma~\ref{visible_proximal_a} with Lemma~\ref{visible_proximal_b}). So $\dG(x,\overline{x}) \leq 4\delta+3$ and by the \hyperref[Conj3]{Hooking Lemma} we get $\dG(x,\widehat{x}) \leq 4\delta+3+\lambda_{a}$ for any point $\widehat{x} \in T(a)$.\\
Now, suppose that the cone of $x$ is finite. Combining Lemma~\ref{bounded_finite_cone} with the first case, we get $\dG(x,T(a)) \leq \lambda_{\infty}+4\delta+3\lambda_{a}$.\

\textit{Quasi-isometric embedding.} We argue as in the proof of Proposition~\ref{quasi_iso_like_hausdorff}.\\
Indeed, we already know that $\Haus(T(a),T(b)) \leq \displaystyle \min_{\widehat{x} \in T(a), \widehat{y} \in T(b)} \dG(\widehat{x}, \widehat{y})+2\lambda_{a}$ and by Proposition~\ref{diam} we get $$\Haus(T(a),T(b)) \leq \dG(T(a)\mathcal{S}_{\mathbf{T}},T(b)\mathcal{S}_{\mathbf{T}})+4\lambda_{a}.$$ 

\noindent On the other side, Proposition~\ref{quasi_iso_like_hausdorff} provides $$\displaystyle \min_{\widehat{x} \in T(a), \widehat{y} \in T(b)} \dG(\widehat{x}, \widehat{y}) \leq \Haus(T(a),T(b)).$$ 
\noindent All we have to do is combine it with $$\dG(T(a)\mathcal{S}_{\mathbf{T}},T(b)\mathcal{S}_{\mathbf{T}}) \leq \displaystyle \min_{\widehat{x} \in T(a), \widehat{y} \in T(b)} \dG(\widehat{x}, \widehat{y})+\lambda_{a}$$ coming from Proposition~\ref{diam}. 
\end{proof}

The second step of the section is inspired by the work of Kaimanovich (\cite{K}) on fractals and of Nekrashevych (\cite{N}, \cite{BGN}) on limit spaces of contracting self-similar groups. See also \cite{LW} for an application in dynamical systems.\\

The main object of the discussion is:
\begin{defn}\label{graph_of_atoms}
Let $\Gamma$ be a hyperbolic graph and $\mathcal{A}$ its tree of atoms. We define $\Gamma_{\mathcal{A}}$ and we call it the \textbf{graph of atoms} in the following way:
\begin{description}
\item[Vertices] all elements of $\mathcal{A}$;
\item[Vertical Edges] given two vertices $a_{n} \in \mathcal{A}_{n}$ and $a_{n+1} \in \mathcal{A}_{n+1}$, there exists an edge if and only if $a_{n} \supseteq a_{n+1}$;
\item[Horizontal Edges] given two vertices $a_{n}, b_{n} \in \mathcal{A}_{n}$ there exists an edge if and only if $\dG(a_{n}, b_{n}) \leq 2(\lambda_{\infty}+4\delta+\lambda_{a})+7$ and define this as $\lambda_{e}$.
\end{description}
\end{defn}
As before, $\lambda_{e}$ will be such a constant. Its peculiar definition will be clarified during the proof of the quasi-isometry result.

\begin{defn}
We denote by $T(\Gamma_{\mathcal{A}})$ the same construction as before, but using the distance $\T\dG$ and the constant $ 2\lambda_{e}+4\delta+2\lambda_{a}$ for horizontal edges. We call it the \textbf{graph of tips}. Please note that the vertices of the graph are still atoms.
\end{defn}

Despite the choice of the constant looking strange, it is related to the fact that $\T\dG \leq 2\dG +4\delta+2\lambda_{a}$ (see Proposition~\ref{tips_non-tips_relation}) and will be fully explained in the following.
\begin{oss}\label{properties_graph_atoms}
Some straightforward properties of $\Gamma_{\mathcal{A}}$ are the following
\begin{enumerate}[label=(\alph*),ref=(\alph*)]
\item the tree $\mathcal{A}$ is a spanning tree for the graph;
\item the vertices of the $n$-sphere $(\Gamma_{\mathcal{A}})_{n}$ are in bijection with $\mathcal{A}_{n}$;
\item the projection $\pi_{n}: (\Gamma_{\mathcal{A}})_{n} \twoheadrightarrow (\Gamma_{\mathcal{A}})_{n-1}$ is well-defined;
\item the graph is locally finite, indeed $\mathcal{A}$ is locally finite and horizontal edges starting from a vertex are finite due to the same argument that proves the fibers of $\pi_{h}$ are finite (see Theorem~\ref{finite_to_one}).
\item \label{properties_graph_atoms_5} the graph of atoms is a subgraph of the graph of tips, since $\dG(a,b) \leq \lambda_{e}$ implies $\T\dG(a,b) \leq 2\lambda_{e} +4\delta+2\lambda_{a}$. In particular, we have $\dist_{T(\Gamma_{\mathcal{A}})}(a,b) \leq \dist_{\Gamma_{\mathcal{A}}}(a,b)$.   
\item $\Gamma_{\mathcal{A}}$ is an \textit{augmented tree} in the sense of \cite{K}. Roughly speaking, an augmented tree is a graph constructed starting from a tree where we add edges between some vertices on the same level with the condition that if $x$ and $y$ are two vertices that share such an edge and $\tilde{x} \in [x_{0},x]$, $\tilde{y} \in [x_{0},y]$ are at the same level, then $\tilde{x}=\tilde{y}$ or they share an edge  (recall that $x_{0}$  is the root). 
\end{enumerate}
\end{oss}

\begin{thm}
\label{quasi_isometry}
Let $\Gamma$ be a hyperbolic graph. Then the graph of atoms $\Gamma_{\mathcal{A}}$ is quasi-isometric to $\Gamma$. In particular, it is hyperbolic and its boundary is homeomorphic to $\partial \Gamma$.
\end{thm}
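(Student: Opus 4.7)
The plan is to verify directly that the map $\Phi\colon \Gamma_{\mathcal{A}} \to \Gamma$ defined by sending each atom $a$ to a fixed choice of $\widehat{x}_{a} \in T(a)$ is a quasi-isometry; once this is done, hyperbolicity of $\Gamma_{\mathcal{A}}$ and the homeomorphism of Gromov boundaries will follow from the standard quasi-isometric invariance for proper geodesic spaces. Quasi-density of $\Phi$ is essentially the argument already carried out in the proof of Proposition~\ref{quasi_iso_haus}: given $x \in \Gamma$, one passes if necessary to a nearby infinite-cone vertex by Lemma~\ref{bounded_finite_cone}, applies Lemma~\ref{almost_all_visual} to produce an atom $a$ for which $x$ is near a visible point of $a$, and then invokes the \hyperref[Conj3]{Hooking Lemma} to move from that visible point to $T(a)$, paying only a constant depending on $\delta$, $\lambda_{a}$ and $\lambda_{\infty}$.

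For the upper bound $\dG(\widehat{x}_{a},\widehat{x}_{b}) \leq L_{1}\,\dist_{\Gamma_{\mathcal{A}}}(a,b) + K_{1}$, it suffices to control each edge along a geodesic in $\Gamma_{\mathcal{A}}$: a vertical edge between consecutive ancestors contributes at most $2\lambda_{a}+1$ to $\dG$ by Proposition~\ref{iterated_hooking}, while a horizontal edge (where $\dG(a,b)\leq \lambda_{e}$) contributes at most $\T\dG(a,b)+4\lambda_{a} \leq 2\lambda_{e}+4\delta+6\lambda_{a}$ by combining Proposition~\ref{tips_non-tips_relation} with the tip-diameter bound of Proposition~\ref{diam}. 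Summing these contributions along a geodesic in $\Gamma_{\mathcal{A}}$ yields the required linear upper bound.

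The main obstacle is the reverse inequality $\dist_{\Gamma_{\mathcal{A}}}(a,b) \leq L_{2}\,\dG(\widehat{x}_{a},\widehat{x}_{b}) + K_{2}$. Set $d=\dG(\widehat{x}_{a},\widehat{x}_{b})$ and assume $a\in\mathcal{A}_{n}$, $b\in\mathcal{A}_{m}$ with $n\leq m$; the \hyperref[Conj3]{Hooking Lemma} yields $m-n \leq d+2\lambda_{a}$ and $(\widehat{x}_{a} \mid \widehat{x}_{b}) \geq (n+m-d)/2 - \lambda_{a}$. The strategy is to climb the tree from both vertices to a level $k$ at which the ancestors $a^{(k)}$ and $b^{(k)}$ are joined by a single horizontal edge, and then descend to $b$. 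Using Remark~\ref{proximal_trick}, choose proximal geodesics $\gamma_{a}, \gamma_{b}$ from $x_{0}$ to $\widehat{x}_{a}, \widehat{x}_{b}$ whose level-$i$ vertices are proximal for $a^{(i)}, b^{(i)}$ respectively; by $\delta$-thinness of hyperbolic triangles these geodesics fellow-travel within $O(\delta)$ up to level $\lfloor(\widehat{x}_{a} \mid \widehat{x}_{b})\rfloor - O(\delta)$. Combining this fellow-traveling with the bounds furnished by Lemma~\ref{visible_proximal} (between proximal points and their atoms) and the \hyperref[Conj3]{Hooking Lemma} yields $\dG(a^{(k)},b^{(k)}) \leq \lambda_{e}$ for each such $k$; this is precisely the role of the constant $\lambda_{e} = 2(\lambda_{\infty}+4\delta+\lambda_{a})+7$ in Definition~\ref{graph_of_atoms}, and this constant-tracking is the delicate step. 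Choosing $k$ at the top of that range, we obtain $\dist_{\Gamma_{\mathcal{A}}}(a,b) \leq (n-k) + 1 + (m-k) \leq d + O(1)$, completing the lower bound.

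Finally, $\Gamma_{\mathcal{A}}$ is a connected, locally finite graph (see Remark~\ref{properties_graph_atoms}) and hence a proper geodesic metric space; its quasi-isometry with the hyperbolic graph $\Gamma$ therefore forces it to be hyperbolic, and the fact that any quasi-isometry between proper hyperbolic geodesic spaces induces a canonical homeomorphism of Gromov boundaries gives $\partial \Gamma_{\mathcal{A}} \cong \partial \Gamma$.
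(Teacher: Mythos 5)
Your map, your quasi-density argument, and your upper bound $\dG(\widehat{x}_{a},\widehat{x}_{b})\leq L_{1}\dist_{\Gamma_{\mathcal{A}}}(a,b)+K_{1}$ all match the paper (the paper routes the upper bound through the graph of tips $T(\Gamma_{\mathcal{A}})$ and then uses that $\Gamma_{\mathcal{A}}$ is a subgraph of it, but the content is the same edge-by-edge estimate you give). For the reverse inequality, however, you take a genuinely different route: the paper walks along a geodesic $y_{0}=\widehat{x}_{a},\dots,y_{l}=\widehat{x}_{b}$ in $\Gamma$, attaches to each $y_{i}$ a nearby atom via the quasi-density argument, and checks that consecutive attached atoms are within $\lambda_{e}$ of each other and within $\lambda_{\infty}+1$ levels, giving $\dist_{\Gamma_{\mathcal{A}}}(a,b)\leq(\lambda_{\infty}+2)\dG(\widehat{x}_{a},\widehat{x}_{b})$; you instead climb the tree to the level of the Gromov product and cross a single horizontal edge. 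Your route, if it worked, would even give a better (additive rather than multiplicative) estimate in that direction.

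The gap is in the step you yourself flag as delicate: the claim that $\dG(a^{(k)},b^{(k)})\leq\lambda_{e}$ for $k$ up to roughly $(\widehat{x}_{a}\mid\widehat{x}_{b})$. Chasing the constants you cite, the bound you actually obtain is
$$\dG(a^{(k)},b^{(k)})\;\leq\;\T\dG(a^{(k)},b^{(k)})\;\leq\;2\bigl(4\delta+2+\lambda_{a}\bigr)+F(\delta),$$
where $F(\delta)$ is the fellow-travelling constant for two geodesics from $x_{0}$ below their Gromov product (of order $4\delta$). This is about $12\delta+2\lambda_{a}+4$, whereas $\lambda_{e}=2\lambda_{\infty}+8\delta+2\lambda_{a}+7$; the required inequality reduces to $4\delta\leq 2\lambda_{\infty}+3$, which has no reason to hold when $\delta$ is large compared to $\lambda_{\infty}$. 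Your parenthetical that "this is precisely the role of the constant $\lambda_{e}$" is a misreading: $\lambda_{e}$ is calibrated for the paper's argument, namely so that the atoms attached to two \emph{adjacent} vertices of a geodesic in $\Gamma$ (each within $\lambda_{\infty}+4\delta+3+\lambda_{a}$ of its vertex) are within $\lambda_{e}$, not so that ancestors at the Gromov-product level are joined by one edge. Since the theorem concerns the graph with this specific $\lambda_{e}$, you cannot simply enlarge the constant; to repair your argument you would need to replace the single horizontal edge at level $k$ by a path in $\Gamma_{\mathcal{A}}$ of uniformly bounded length between $a^{(k)}$ and $b^{(k)}$, and the natural way to produce such a path from the bound $\dG(a^{(k)},b^{(k)})\leq 12\delta+2\lambda_{a}+4$ is exactly the paper's geodesic-walking argument applied to a geodesic of bounded length. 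So either supply that bridging step explicitly, or fall back on the paper's construction for the whole reverse inequality.
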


\begin{proof}
Our quasi-isometry candidate map $\mathcal{S}_{\mathcal{A}}: \Gamma_{\mathcal{A}} \rightarrow \Gamma$ is defined as $a \mathcal{S}_{\mathcal{A}}:= \widehat{x}$ with $\widehat{x} $ a fixed element in $T(a)$.\

\textit{Quasi-dense image.} This argument is exactly the same as Proposition~\ref{quasi_iso_haus} as both maps are defined from tips to element in $\Gamma$.

\textit{Quasi-isometric embedding.} Let $a \mathcal{S}_{\mathcal{A}}=\widehat{x}$ and $b\mathcal{S}_{\mathcal{A}}=\widehat{y}$.\\
We start by proving that $\dG(\widehat{x},\widehat{y}) \leq M \dist_{\Gamma_{\mathcal{A}}}(a,b)$ for some constant $M$.

Let $a=a_{0},a_{1}, \ldots, a_{l}=b$ be a geodesic between $a$ and $b$ in $T(\Gamma_{\mathcal{A}})$. If $\{ a_{i},a_{i+1} \}$ is vertical, then we recall the bound on consecutive tips (see Proposition~\ref{iterated_hooking}). If  $\{ a_{i},a_{i+1} \}$ is horizontal, by definition we have a bound between two of their elements. We need to pay attention: we have a bound for every distance outside the atoms, but we also need a bound for what happens inside the tips, so that an element involved in the bound for the edge $\{a_{i-1},a_{i}\}$ is at a reasonable distance from an element involved in $\{a_{i}, a_{i+1} \}$. This internal bound follows from the fact that a tip has diameter at most $2\lambda_{a}$ (see Proposition~\ref{diam}). Let $D$ be the maximum between the two external bounds, more explicitly if $D'$ is the bound provided for vertical edges by Proposition~\ref{iterated_hooking} and $D''$ is the bound coming from the definition of horizontal edge, then $D= \max \{ D',D''\}$.
If we put all together, we have

$$\dG(\widehat{x},\widehat{y}) \leq \sum_{i=0}^{l-1} \dG(\widehat{x}_{i},\widehat{x}_{i+1})+ \sum_{i=1}^{l-1} \diam T(a_{i}) \leq  Dl+2\lambda_{a}(l-1) \leq M\dist_{T(\Gamma_{\mathcal{A}})}(a,b)$$

with $\widehat{x}_{0}=\widehat{x}$, $\widehat{x}_{l}=\widehat{y}$ and $\widehat{x}_{i} \in T(a_{i})$  one of the two elements of the tips involved in the bound for the left and for the right edges. And $M=D+2\lambda_{a}$.

By part~\ref{properties_graph_atoms_5} of Remark~\ref{properties_graph_atoms}, we get the claim. \\
We now prove the other part, namely $\dist_{\Gamma_{\mathcal{A}}}(a,b) \leq W \dG(\widehat{x},\widehat{y})$ for some constant $W$. We proceed in the same way as before. We take a geodesic in $\Gamma$, explicitly $y_{0}=\widehat{x}, y_{1},y_{2}, \ldots, \widehat{y}=y_{l}$, between $\widehat{x}$ and $\widehat{y}$. By the quasi-density, we know that  for every point $y_{i}$ there exists a $n_{i}$-atom $a_{i}$ such that $\dG(y_{i},a_{i}) \leq \lambda_{\infty}+4\delta+3+\lambda_{a}$ and $\max \{0, \dG(x_{0},y_{i})-\lambda_{\infty} \} \leq n_{i} \leq \dG(x_{0},y_{i})$. This is due to the fact that either $y_{i}$ has a infinite cone, hence it is a visible point and the atom $a_{i}$ is at level $\dG(x_{0},y_{i})$ (see Lemma~\ref{almost_all_visual}) or $y_{i}$ has a finite cone, but there exists another element $y \in [x_{0},y_{i}]$ at a distance at most $\lambda_{\infty}$ (see Lemma~\ref{bounded_finite_cone}) that has an infinite cone and the associated atom $a_{i}$ is at level $\dG(x_{0},y)$.
Note that, in this way, two consecutive atoms are at a distance $\dG$ less than $\lambda_{e}$. \\
We want to prove that two consecutive atoms $a_{i}$ and $a_{i+1}$ have a distance in $\Gamma_{\mathcal{A}}$ bounded by some constant. So if they are at the same level, they are adjacent by the definition of horizontal edges. If they are on two different levels $n$ and $m$, then $|n-m| \leq \lambda_{\infty}+1$. Indeed, we combine the fact that  two consecutive vertices $y_{i}$ and $y_{i+1}$ are such that $|\dG(x_{0},y_{i})-\dG(x_{0},y_{i+1})| \leq 1$ (they are two consecutive points of a geodesic) and $\max \{0, \dG(x_{0},y_{i})-\lambda_{\infty} \} \leq n_{i} \leq \dG(x_{0},y_{i})$.
Now, we can assume without loss of generality that $m<n$. We denote with $a_{i}^{m}$ the $m$-atom such that $a_{i} \subseteq a_{i}^{m}$ and we have
$$\dG(a_{i}^{m},a_{i+1})=\min_{x \in a_{i}^{m}, y \in a_{i+1}} \dG(x,y) \leq \min_{z \in a_{i},y \in a_{i+1}} \dG(z,y) \leq \lambda_{e}.$$
This means that $a_{i}^{m}$ and $a_{i+1}$ are adjacent and so $\dist_{\Gamma_{\mathcal{A}}}(a_{i},a_{i+1}) \leq \dG(a_{i},a_{i}^{m}) + \dG(a_{i}^{m},a_{i+1}) \leq \lambda_{\infty}+1+1= \lambda_{\infty}+2$.
To conclude, for each edge of the geodesic in $\Gamma$, we have constructed a geodesic in $\Gamma_{\mathcal{A}}$ of length at most $\lambda_{\infty}+2$, hence $\dist_{\Gamma_{\mathcal{A}}}(a,b) \leq W \dG(\widehat{x},\widehat{y})$ with $W=\lambda_{\infty}+2$.
\end{proof}

\section{Rational gluing of horofunctions} \label{7}
The goal of this section is to construct a machine that can tell if two elements of $\partial_{h} G$, represented by their atom-codings, are in the same $\pi_{h}$-fiber.\\

We start by setting $\Sigma$ as our finite \textbf{alphabet}, that is a finite collection of symbols $\sigma \in \Sigma$. Since we need more alphabets at the same time, we will use also $\tilde{\Sigma}$, $\Xi$ and $\tilde{\Xi}$. From an alphabet $\Sigma$, we can construct two different objects: the collection of all finite strings $\Sigma^{*}$ and the collection of all infinite strings $\Sigma^{\omega}$. A language is a subcollection of $\Sigma^{*}$ or of $\Sigma^{\omega}$. We will usually deal with infinite strings, the reason is that $\Sigma^{\omega}$ is a Cantor set.\\

We also need to set some notations for machines. We recall that a \textit{partial function} is a binary relation between two sets that associates to every element of the first set at most one element of the second. 

\begin{defn}
A \textbf{synchronous deterministic finite state automaton} is a quadruple $(\Sigma, \Theta, \rightarrow, \theta_{0})$ with $\Theta$ a finite set called \textit{states}, $\theta_{0} \in \Theta $ called \textit{initial state} and a partial function between $\Theta \times \Sigma $ and $\Theta$ which is called \textit{transition function}.
\end{defn}

We recall that deterministic means $\rightarrow$ is an actual (partial) function, or that there cannot be two different transitions starting from a state and processing the same element of the alphabet, and it is synchronous because it processes one element of $\Sigma$ at each step. We will drop all the adjectives and we will simply call it an automata, this because it will be the only machine of this type.\\
We say that an automata \textbf{recognizes} a language when a string belong to the language if and only if it is processed by the automata. 

\begin{defn}\label{rational_subset}
If a language of infinite strings is recognized by an automata, it is called a \textbf{rational subset} of $\Sigma^{\omega}$.
\end{defn}

\begin{defn}
An \textbf{asynchronous deterministic finite state transducer} is a quintuple $(\Sigma, \Xi \cup \{ \varepsilon \}, \Theta, \rightarrow, out,  \theta_{0})$ with $\Theta$ a finite set called \textit{states}, $\theta_{0} \in \Theta $ called \textit{initial state}, a \textit{transition function} defined as $(\theta_{1},\sigma) \rightarrow \theta_{2}$ with $\theta_{1},\theta_{2} \in \Theta$ and $\sigma \in \Sigma$ and the \textit{output function} defined as $(\theta, \sigma)out=\xi_{1}\ldots\xi_{n}$ with $\theta \in \Theta$, $\sigma \in \Sigma$ and $\xi_{i} \in \Xi$ or $\xi_{i}= \varepsilon$ the \textbf{empty string}.
\end{defn}
\noindent As before, we will drop all the adjectives and we will simply call it a transducer.

We see the set of finite strings $\Sigma^{*}$ as a tree in the usual way (see Figure \ref{tree_language}). It follows immediately that the boundary of the tree is $\Sigma^{\omega}$ and hence the latter is a Cantor set.

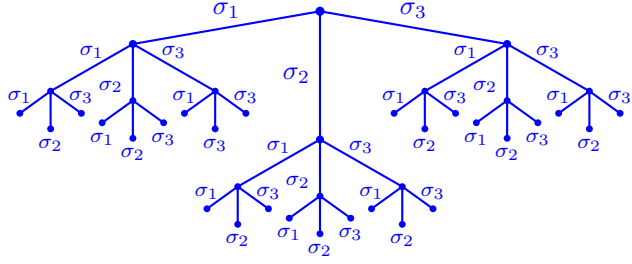
\begin{figure}
\centering
\begin{tikzpicture}[thick]

   \fill[color=blue] (0,0) circle (0.4ex);
   \draw[  blue] (0,0) -- (190:2.5) node[midway, above] {\textcolor{blue}{$\sigma_{1}$}} ;

   \draw[  blue] (0,0) -- (270:1.7) node[midway, left] {\textcolor{blue}{$\sigma_{2}$}};
   \draw[  blue] (0,0) -- (350:2.5) node[midway, above] {\textcolor{blue}{$\sigma_{3}$}};
   
   \fill[color=blue] (190:2.5) circle (0.35ex);
   \draw[  blue] (190:2.5) -- +(210:1.25)node[midway, above] {\footnotesize\textcolor{blue}{$\sigma_{1}$}};
   \draw[  blue] (190:2.5) -- +(270:0.75)node[near end, left] {\footnotesize\textcolor{blue}{$\sigma_{2}$}};
   \draw[  blue] (190:2.5) -- +(330:1.25)node[midway, above] {\footnotesize\textcolor{blue}{$\sigma_{3}$}};
   
   \fill[color=blue] (190:2.5)++(210:1.25) circle (0.3ex);
   \draw[  blue] (190:2.5)++(210:1.25) -- +(324:0.5) node[above] {\footnotesize\textcolor{blue}{$\sigma_{3}$}};
   \draw[  blue] (190:2.5)++(210:1.25) -- +(270:0.5)node[below] {\footnotesize\textcolor{blue}{$\sigma_{2}$}};
   \draw[  blue] (190:2.5)++(210:1.25) -- +(216:0.5) node[above] {\footnotesize\textcolor{blue}{$\sigma_{1}$}};
   \fill[color=blue] (190:2.5)++(210:1.25)++(324:0.5) circle (0.3ex);
   \fill[color=blue] (190:2.5)++(210:1.25)++(270:0.5) circle (0.3ex);
   \fill[color=blue] (190:2.5)++(210:1.25)++(216:0.5) circle (0.3ex);
   
   \fill[color=blue] (190:2.5)++(270:0.75) circle (0.3ex);
   \draw[  blue] (190:2.5)++(270:0.75) -- +(324:0.5) node[below] {\footnotesize\textcolor{blue}{$\sigma_{3}$}};
   \draw[  blue] (190:2.5)++(270:0.75) -- +(270:0.5)node[below] {\footnotesize\textcolor{blue}{$\sigma_{2}$}};
   \draw[  blue] (190:2.5)++(270:0.75) -- +(216:0.5)node[below] {\footnotesize\textcolor{blue}{$\sigma_{1}$}};
   \fill[color=blue] (190:2.5)++(270:0.75)++(324:0.5) circle (0.3ex);
   \fill[color=blue] (190:2.5)++(270:0.75)++(270:0.5) circle (0.3ex);
   \fill[color=blue] (190:2.5)++(270:0.75)++(216:0.5) circle (0.3ex);
  
  \fill[color=blue] (190:2.5)++(330:1.25) circle (0.3ex);
  \draw[  blue] (190:2.5)++(330:1.25) -- +(324:0.5)node[above] {\footnotesize\textcolor{blue}{$\sigma_{3}$}};
   \draw[  blue] (190:2.5)++(330:1.25) -- +(270:0.5)node[below] {\footnotesize\textcolor{blue}{$\sigma_{3}$}};
   \draw[  blue] (190:2.5)++(330:1.25) -- +(216:0.5)node[above] {\footnotesize\textcolor{blue}{$\sigma_{1}$}};
   \fill[color=blue] (190:2.5)++(330:1.25)++(324:0.5) circle (0.3ex);
   \fill[color=blue] (190:2.5)++(330:1.25)++(270:0.5) circle (0.3ex);
   \fill[color=blue] (190:2.5)++(330:1.25)++(216:0.5) circle (0.3ex);

  \fill[color=blue] (350:2.5) circle (0.35ex);
   \draw[  blue] (350:2.5) -- +(210:1.25)node[midway, above] {\footnotesize\textcolor{blue}{$\sigma_{1}$}};
   \draw[  blue] (350:2.5) -- +(270:0.75)node[near end, left] {\footnotesize\textcolor{blue}{$\sigma_{2}$}};
   \draw[  blue] (350:2.5) -- +(330:1.25)node[midway, above] {\footnotesize\textcolor{blue}{$\sigma_{3}$}};
   
   \fill[color=blue] (350:2.5)++(210:1.25) circle (0.3ex);
   \draw[  blue] (350:2.5)++(210:1.25) -- +(324:0.5)node[above] {\footnotesize\textcolor{blue}{$\sigma_{3}$}};
   \draw[  blue] (350:2.5)++(210:1.25) -- +(270:0.5)node[below] {\footnotesize\textcolor{blue}{$\sigma_{2}$}};
   \draw[  blue] (350:2.5)++(210:1.25) -- +(216:0.5)node[above] {\footnotesize\textcolor{blue}{$\sigma_{1}$}};
   \fill[color=blue] (350:2.5)++(210:1.25)++(324:0.5) circle (0.3ex);
   \fill[color=blue] (350:2.5)++(210:1.25)++(270:0.5) circle (0.3ex);
   \fill[color=blue] (350:2.5)++(210:1.25)++(216:0.5) circle (0.3ex);
   
   \fill[color=blue] (350:2.5)++(270:0.75) circle (0.3ex);
   \draw[  blue] (350:2.5)++(270:0.75) -- +(324:0.5)node[below] {\footnotesize\textcolor{blue}{$\sigma_{3}$}};
   \draw[  blue] (350:2.5)++(270:0.75) -- +(270:0.5)node[below] {\footnotesize\textcolor{blue}{$\sigma_{2}$}};
   \draw[  blue] (350:2.5)++(270:0.75) -- +(216:0.5)node[below] {\footnotesize\textcolor{blue}{$\sigma_{1}$}};
   \fill[color=blue] (350:2.5)++(270:0.75)++(324:0.5) circle (0.3ex);
   \fill[color=blue] (350:2.5)++(270:0.75)++(270:0.5) circle (0.3ex);
   \fill[color=blue] (350:2.5)++(270:0.75)++(216:0.5) circle (0.3ex);
  
  \fill[color=blue] (350:2.5)++(330:1.25) circle (0.3ex);
  \draw[  blue] (350:2.5)++(330:1.25) -- +(324:0.5)node[above] {\footnotesize\textcolor{blue}{$\sigma_{3}$}};
   \draw[  blue] (350:2.5)++(330:1.25) -- +(270:0.5)node[below] {\footnotesize\textcolor{blue}{$\sigma_{2}$}};
   \draw[  blue] (350:2.5)++(330:1.25) -- +(216:0.5)node[above] {\footnotesize\textcolor{blue}{$\sigma_{1}$}};
   \fill[color=blue] (350:2.5)++(330:1.25)++(324:0.5) circle (0.3ex);
   \fill[color=blue] (350:2.5)++(330:1.25)++(270:0.5) circle (0.3ex);
   \fill[color=blue] (350:2.5)++(330:1.25)++(216:0.5) circle (0.3ex);

   \fill[color=blue] (270:1.7) circle (0.35ex);
   \draw[  blue] (270:1.7) -- +(210:1.25)node[midway, above] {\footnotesize\textcolor{blue}{$\sigma_{1}$}};
   \draw[  blue] (270:1.7) -- +(270:0.75)node[near end, left] {\footnotesize\textcolor{blue}{$\sigma_{2}$}};
   \draw[  blue] (270:1.7) -- +(330:1.25)node[midway, above] {\footnotesize\textcolor{blue}{$\sigma_{3}$}};
   
   \fill[color=blue] (270:1.7)++(210:1.25) circle (0.3ex);
   \draw[  blue] (270:1.7)++(210:1.25) -- +(324:0.5)node[above] {\footnotesize\textcolor{blue}{$\sigma_{3}$}};
   \draw[  blue] (270:1.7)++(210:1.25) -- +(270:0.5)node[below] {\footnotesize\textcolor{blue}{$\sigma_{2}$}};
   \draw[  blue] (270:1.7)++(210:1.25) -- +(216:0.5)node[above] {\footnotesize\textcolor{blue}{$\sigma_{1}$}};
   \fill[color=blue] (270:1.7)++(210:1.25)++(324:0.5) circle (0.3ex);
   \fill[color=blue] (270:1.7)++(210:1.25)++(270:0.5) circle (0.3ex);
   \fill[color=blue] (270:1.7)++(210:1.25)++(216:0.5) circle (0.3ex);
   
   \fill[color=blue] (270:1.7)++(270:0.75) circle (0.3ex);
   \draw[  blue] (270:1.7)++(270:0.75) -- +(324:0.5)node[below] {\footnotesize\textcolor{blue}{$\sigma_{3}$}};
   \draw[  blue] (270:1.7)++(270:0.75) -- +(270:0.5)node[below] {\footnotesize\textcolor{blue}{$\sigma_{2}$}};
   \draw[  blue] (270:1.7)++(270:0.75) -- +(216:0.5)node[below] {\footnotesize\textcolor{blue}{$\sigma_{1}$}};
   \fill[color=blue] (270:1.7)++(270:0.75)++(324:0.5) circle (0.3ex);
   \fill[color=blue] (270:1.7)++(270:0.75)++(270:0.5) circle (0.3ex);
   \fill[color=blue] (270:1.7)++(270:0.75)++(216:0.5) circle (0.3ex);
  
  \fill[color=blue] (270:1.7)++(330:1.25) circle (0.3ex);
  \draw[  blue] (270:1.7)++(330:1.25) -- +(324:0.5)node[above] {\footnotesize\textcolor{blue}{$\sigma_{3}$}};
   \draw[  blue] (270:1.7)++(330:1.25) -- +(270:0.5)node[below] {\footnotesize\textcolor{blue}{$\sigma_{2}$}};
   \draw[  blue] (270:1.7)++(330:1.25) -- +(216:0.5)node[above] {\footnotesize\textcolor{blue}{$\sigma_{1}$}};
   \fill[color=blue] (270:1.7)++(330:1.25)++(324:0.5) circle (0.3ex);
   \fill[color=blue] (270:1.7)++(330:1.25)++(270:0.5) circle (0.3ex);
   \fill[color=blue] (270:1.7)++(330:1.25)++(216:0.5) circle (0.3ex);
 
\end{tikzpicture}
\caption[Example of a language as a tree]{Strings of length $3$ based on the alphabet \\ $\Sigma=\{\sigma_{1},\sigma_{2},\sigma_{3}\}$ in their geometric representation.}
\label{tree_language}
\end{figure}

 We now want to define maps between Cantor sets by using transducers:
\begin{defn}
A map $\phi$ between two Cantor sets $\Sigma^{\omega}$ and $\Xi^{\omega}$ is called \textbf{rational} if there exists a transducer such that $w\phi=(\theta_{0},w)out$ for all $w \in \Sigma^{\omega}$.
\end{defn}
One can show that these maps form a category. Moreover, they are continuous with respect to the product topology and if they are bijective, then they are homeomorphisms (see \cite[Subsection 2.3]{GNS}).

\begin{defn}
A bijective rational map is called a \textbf{rational homeomorphism}.
\end{defn}
It is also true that the inverse of a rational homeomorphism is a rational homeomorphism. So, fixing an alphabet $\Sigma$, we can define the \textit{group of rational homeomorphisms} $\mathcal{R}$ over $\Sigma$. Since two different alphabets (with at least two elements) give two isomorphic groups, we refer to one of them simply as $\mathcal{R}$, without mentioning the underlying alphabet.\\

For a generalization of this setting to non-finite state machines and to better understand the topic see \cite{GNS}. Here, we just mention the following result as it will be useful later.

\begin{prop}[\cite{GNS}{, Proposition 2.11}]\label{characterization_rational}
A set $\mathcal{L} \subseteq \Xi^{\omega}$ is rational if and only if it is the image of a rational map $\phi: \overline{\Xi}^{\omega} \rightarrow \Xi^{\omega}$ with $\overline{\Xi}$ a finite alphabet. 
\end{prop}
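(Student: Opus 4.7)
The proposition splits into two implications, both translating to natural machine constructions, and the bulk of the argument will sit in the harder $(\Leftarrow)$ direction, which requires turning a non-deterministic reconstruction of the transducer's output behaviour into a deterministic automaton. For the easy $(\Rightarrow)$ direction, assume $\mathcal{L}$ is recognized by an automaton $A = (\Xi, \Theta, \rightarrow, \theta_{0})$. Setting $\overline{\Xi} := \Xi$, I would construct a transducer on essentially the same state graph: each transition of $A$ is kept and made to emit its input letter, while an extra sink state handles transitions that $A$ rejects and emits (letter by letter, under any input) a fixed witness $v_{0} \in \mathcal{L}$. The resulting rational map $\phi$ acts as the identity on $\mathcal{L}$ and as the constant $v_{0}$ elsewhere, so its image is exactly $\mathcal{L}$. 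The corner case $\mathcal{L} = \emptyset$ is handled separately by taking, for instance, a transducer whose transition function is nowhere defined.

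For the converse, fix a transducer $\mathcal{T} = (\overline{\Xi}, \Xi \cup \{\varepsilon\}, \Theta, \rightarrow, out, \theta_{0})$ realizing $\phi$, and build an automaton recognizing $\phi(\overline{\Xi}^{\omega})$ in two stages. I would first construct a non-deterministic machine $N$ that reads a candidate output $v \in \Xi^{\omega}$ while simulating $\mathcal{T}$: a state of $N$ is a pair $(\theta, s)$ where $\theta \in \Theta$ is the current transducer state and $s$ is the still-unconsumed tail of the finite word produced by the last simulated transducer step. There are only finitely many such pairs because the range of $out$ is a finite set of finite words. On reading a letter $\xi$ of $v$, the machine $N$ either consumes the leading letter of $s$ when it matches $\xi$, or, when $s$ is empty, non-deterministically guesses the next transducer input letter $\sigma \in \overline{\Xi}$, applies the transducer step from $(\theta, \sigma)$, and then consumes $\xi$. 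By design, $v \in \phi(\overline{\Xi}^{\omega})$ if and only if $N$ admits an infinite run on $v$.

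The principal obstacle is that Definition~\ref{rational_subset} requires a \emph{deterministic} automaton, so I would then collapse $N$ via the usual subset construction into a deterministic $D$ whose states are finite subsets of $N$-states. Since the paper's acceptance condition is purely safety-style (a string is recognized exactly when all successive transitions remain defined), $D$ recognizes $v$ precisely when the subset of live $N$-states remains non-empty forever; K\"onig's lemma, applied to the finitely-branching computation tree of $N$ on $v$, shows that this is equivalent to the existence of an infinite run of $N$ on $v$, hence to $v \in \phi(\overline{\Xi}^{\omega})$. A subtlety left to check is that runs which emit only finitely many non-$\varepsilon$ output letters do not pollute the image: these correspond to finite output strings, which are not elements of $\Xi^{\omega}$, and they are automatically pruned by $D$ because the subset of reachable $N$-states becomes and remains empty after the final emission.
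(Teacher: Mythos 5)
The paper offers no proof of this statement---it is quoted directly from \cite{GNS}---so your argument can only be judged on its own terms. Your $(\Leftarrow)$ direction is essentially sound: the image of $\phi$ is a closed subset of $\Xi^{\omega}$ (continuous image of a compact set), the paper's acceptance condition is a pure safety condition, and for safety conditions the subset construction together with K\"onig's lemma does legitimately convert the nondeterministic simulation into an equivalent deterministic automaton. One detail needs repair: when the buffer $s$ is empty, a single guessed input letter may produce output $\varepsilon$, leaving you still unable to consume $\xi$; you must therefore close under arbitrarily long chains of $\varepsilon$-producing transducer steps before matching $\xi$ (an $\varepsilon$-closure), which is harmless since the state set stays finite.

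The $(\Rightarrow)$ direction has a genuine gap. Your transducer emits each input letter as it reads it and only diverts to the sink when $A$'s transition becomes undefined; by that time it has already emitted the accepted prefix $u=w_{1}\cdots w_{k}$, and the sink then appends the fixed witness, so the total output is $uv_{0}$, not $v_{0}$, and $uv_{0}$ need not lie in $\mathcal{L}$. Concretely, take $\mathcal{L}=\{0^{\omega},1^{\omega}\}$ and $v_{0}=0^{\omega}$: the input $110^{\omega}$ is mapped to $110^{\omega}\notin\mathcal{L}$, so the image of your $\phi$ is strictly larger than $\mathcal{L}$. The repair is to make the fallback state-dependent: first restrict $A$ to its live states (those from which an infinite run exists), choose for each live state $\theta$ an eventually periodic accepted continuation $t_{\theta}$ from $\theta$ (eventual periodicity is forced anyway, since a finite gadget can only emit an eventually periodic tail), and, upon reading a letter whose transition is undefined or leads to a dead state, switch to a gadget emitting $t_{\theta}$, so that the output $u\,t_{\theta}$ genuinely lies in $\mathcal{L}$. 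Finally, your treatment of $\mathcal{L}=\emptyset$ does not work either: a transducer with nowhere-defined transitions does not define a map $\overline{\Xi}^{\omega}\rightarrow\Xi^{\omega}$ at all, and indeed no map from the nonempty set $\overline{\Xi}^{\omega}$ can have empty image, so this degenerate case must be excluded by convention rather than ``handled''.
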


A language is any subcollection of $\Sigma^{\omega}$, but we are interested in the rational ones. And we have geometric interpretation of $\Sigma^{*}$. We want to consider rooted trees that are not necessarily regular, but still have some nice properties. \\
Given a rooted tree $\mathcal{T}$ and a vertex $x$, we can consider the rooted subtree $\mathcal{T}_{x}$ such that the root is $x$ (see Figure~\ref{sub_tree})..
\begin{figure}
\centering
\begin{tikzpicture}[thick]

   \draw[  blue] (0,0) -- (210:2.5) node[midway, above] {} ;
   \draw[  blue] (0,0) -- (330:2.5) node[above, label={[xshift=0cm, yshift=-2cm]\textcolor{ForestGreen}{$\mathcal{T}_{y}$}}] {\textcolor{ForestGreen}{$y$}};
   \fill[color=blue] (0,0) circle (0.4ex) node[label={[xshift=0cm, yshift=-1cm]\textcolor{blue}{$\mathcal{T}$}}]{};

   \draw[  blue] (210:2.5) -- +(210:1.25)node[above, label={[xshift=0cm, yshift=-1.2cm]\textcolor{purple}{$\mathcal{T}_{x}$}}] {\textcolor{purple}{$x$}};
   \draw[  blue] (210:2.5) -- +(330:1.25)node[midway, above] {};
   \fill[color=blue] (210:2.5) circle (0.35ex);
   
   \fill[color=purple] (210:2.5)++(210:1.25) circle (0.3ex);
   \draw[  purple] (210:2.5)++(210:1.25) -- +(324:0.5) node[above] {};
   \draw[  purple] (210:2.5)++(210:1.25) -- +(216:0.5) node[above] {};
   \fill[color=purple] (210:2.5)++(210:1.25)++(324:0.5) circle (0.3ex);
   \fill[color=purple] (210:2.5)++(210:1.25)++(216:0.5) circle (0.3ex);
  
  \fill[color=blue] (210:2.5)++(330:1.25) circle (0.3ex);
  \draw[  blue] (210:2.5)++(330:1.25) -- +(324:0.5)node[above] {};
   \draw[  blue] (210:2.5)++(330:1.25) -- +(270:0.5)node[below] {};
   \draw[  blue] (210:2.5)++(330:1.25) -- +(216:0.5)node[above] {};
   \fill[color=blue] (210:2.5)++(330:1.25)++(324:0.5) circle (0.3ex);
   \fill[color=blue] (210:2.5)++(330:1.25)++(270:0.5) circle (0.3ex);
   \fill[color=blue] (210:2.5)++(330:1.25)++(216:0.5) circle (0.3ex);

   \fill[color=ForestGreen] (330:2.5) circle (0.35ex);
   \draw[  ForestGreen] (330:2.5) -- +(210:1.25)node[midway, above] {};
   \draw[  ForestGreen] (330:2.5) -- +(270:0.75)node[near end, left] {};
   \draw[  ForestGreen] (330:2.5) -- +(330:1.25)node[midway, above] {};

   \fill[color=ForestGreen] (330:2.5)++(210:1.25) circle (0.3ex);
   \draw[  ForestGreen] (330:2.5)++(210:1.25) -- +(324:0.5)node[above] {};
   \draw[  ForestGreen] (330:2.5)++(210:1.25) -- +(216:0.5)node[above] {};
   \fill[color=ForestGreen] (330:2.5)++(210:1.25)++(324:0.5) circle (0.3ex);
   \fill[color=ForestGreen] (330:2.5)++(210:1.25)++(216:0.5) circle (0.3ex);
   
   \fill[color=ForestGreen] (330:2.5)++(270:0.75) circle (0.3ex);
   \draw[  ForestGreen] (330:2.5)++(270:0.75) -- +(324:0.5)node[below] {};
   \draw[  ForestGreen] (330:2.5)++(270:0.75) -- +(216:0.5)node[below] {};
   \fill[color=ForestGreen] (330:2.5)++(270:0.75)++(324:0.5) circle (0.3ex);
   \fill[color=ForestGreen] (330:2.5)++(270:0.75)++(216:0.5) circle (0.3ex);
  
  \fill[color=ForestGreen] (330:2.5)++(330:1.25) circle (0.3ex);
  \draw[  ForestGreen] (330:2.5)++(330:1.25) -- +(324:0.5)node[above] {};
   \draw[  ForestGreen] (330:2.5)++(330:1.25) -- +(216:0.5)node[above] {};
   \fill[color=ForestGreen] (330:2.5)++(330:1.25)++(324:0.5) circle (0.3ex);
   \fill[color=ForestGreen] (330:2.5)++(330:1.25)++(216:0.5) circle (0.3ex);
\end{tikzpicture}
\caption[Examples of rooted subtrees.]{Two examples of rooted subtrees.}
\label{sub_tree}
\end{figure}
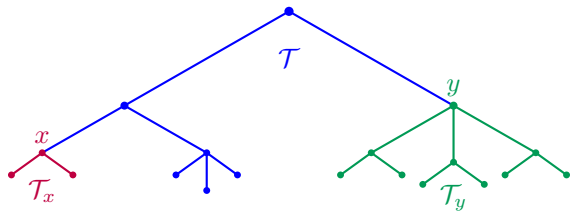

\begin{defn}
A \textbf{self-similar structure} on a rooted tree $\mathcal{T}$ is a partition of the vertices into finitely many classes together with a finite set of rooted tree isomorphisms between $\mathcal{T}_{x}$ and $\mathcal{T}_{y}$ for each couple of vertices $x$ and $y$ in the same class and where each isomorphism maps vertices of $\mathcal{T}_{x}$ to vertices of $\mathcal{T}_{y}$ in the same class.\\
Moreover, we want the isomorphisms to satisfy some natural conditions. They are closed under taking the inverse, composition and restriction. Namely, if $\varphi: \mathcal{T}_{x} \rightarrow \mathcal{T}_{y}$ and $\psi: \mathcal{T}_{y} \rightarrow \mathcal{T}_{z}$ belong to the self-similar structure, then $\varphi^{-1}$, $\varphi \psi$ and $\varphi_{|_{\mathcal{T}_{x'}}}: \mathcal{T}_{x'} \rightarrow \mathcal{T}_{x'\varphi} $ belong to the structure too. 
\end{defn}
\noindent A tree $\mathcal{T}$ with a self-similar structure is called \textbf{self-similar}. Note that the equivalence classes are originally called types, though we do not use that terminology, to avoid confusion.\

If we set $\Sigma$ to be the set of classes of a self-similar tree $\mathcal{T}$, 
it can be seen, but we will not give further details here, that there is a projection of the Gromov boundary $\partial \mathcal{T}$ 
onto a subset $\mathcal{L}$ of $\Sigma^{\omega}$. 
Furthermore, one can see that $\mathcal{L}$ is a rational subset and, in fact, that any rational subset can be characterized geometrically in this way (see e.g. \cite[Subsection 2.1]{BBM}). In order to make the projection bijective, which means that $\partial \mathcal{T}$ is a subset of $\Sigma^{\omega}$, we need to rely on a different coding that comes from \textit{rigid structures}. These are again  self-similar structures, but with some further hypothesis, and we will see that we can get a rigid one starting from a self-similar which is not rigid a priori. All the details are provided below.\\

Due to the following well-known property of Cantor sets

\begin{prop}[\cite{W}{, Theorem 30.7}]
Let $X$ be a compact metrizable space. Then there exists a continuous surjective map from a Cantor set onto $X$.
\end{prop}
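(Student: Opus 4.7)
The plan is to construct a continuous surjection $\varphi\colon \{0,1\}^{\omega}\twoheadrightarrow X$ directly, since $\{0,1\}^{\omega}$ is a Cantor set (by Brouwer's characterization: compact, metrizable, totally disconnected, perfect). The whole construction is a telescoping tree of closed covers of $X$ whose mesh shrinks to zero, reading off a point of $X$ from each infinite branch.

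First I would build, by induction on $n \geq 0$, a family of nonempty closed subsets $\{F_s \subseteq X : s \in \{0,1\}^{n}\}$ such that (i) $\operatorname{diam}(F_s) \leq 2^{-n}$, (ii) $\bigcup_{|s|=n} F_s = X$, and (iii) $F_{s0}\cup F_{s1}=F_s$ for every $s\in\{0,1\}^{n}$. At level $0$ we just take $F_{\varnothing}=X$. For the inductive step, since $X$ is compact and metrizable it is totally bounded, so any $F_s$ can be covered by finitely many closed balls of radius $2^{-(n+1)}/2$; intersecting each such ball with $F_s$ and discarding empties gives a finite closed cover of $F_s$ whose pieces have diameter $\leq 2^{-(n+1)}$. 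Padding this cover up to exactly two pieces by repetition (allowing $F_{s0}=F_{s1}$ when only one piece is needed, or grouping several pieces into two unions whose diameters remain under $2^{-(n+1)}$ after a further refinement) yields the required $F_{s0}, F_{s1}$.

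Given the tree, define $\varphi(\sigma):=\bigcap_{n\geq 0} F_{\sigma|_n}$ for each $\sigma\in\{0,1\}^{\omega}$. By (i) the intersection is a decreasing nested sequence of nonempty compact sets with diameters going to zero, so by compactness it is a single point of $X$, and $\varphi$ is well-defined. Continuity is immediate: if $\sigma,\tau$ agree on the first $n$ coordinates then $\varphi(\sigma),\varphi(\tau)\in F_{\sigma|_n}$, so $d(\varphi(\sigma),\varphi(\tau))\leq 2^{-n}$. Surjectivity follows from (ii) and (iii): given $x\in X$, recursively choose $\sigma_{n+1}\in\{0,1\}$ so that $x\in F_{\sigma|_{n+1}}$, which is possible precisely because $F_{\sigma|_n}=F_{(\sigma|_n)0}\cup F_{(\sigma|_n)1}$.

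The only genuinely delicate step is (iii), the compatibility between consecutive levels: one needs to force each cover at level $n+1$ to be a bipartition of the cover at level $n$ while keeping the diameter bound. This is a purely combinatorial matter once the metric refinement exists, since any finite closed refinement of $F_s$ into pieces of small enough diameter can always be grouped into two closed subsets of $F_s$ whose union is $F_s$. Everything else (compactness to get finite covers, completeness to get the nested intersection to be a point, continuity from the mesh bound) is entirely standard, and indeed the full argument is in Willard's \cite{W} Theorem 30.7 cited in the statement.
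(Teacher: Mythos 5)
Your overall strategy (a nested tree of closed covers with mesh tending to zero, reading a point of $X$ off each infinite branch) is one of the standard proofs, and the paper itself gives no proof at all — it only cites Willard — so there is nothing to compare against except the literature. However, as written your induction cannot be carried out: conditions (i) and (iii) are jointly unsatisfiable for most spaces. You require each $F_s$, of diameter at most $2^{-n}$, to be written as a union of exactly two closed sets $F_{s0}\cup F_{s1}=F_s$, each of diameter at most $2^{-(n+1)}$. Take $X=[0,1]^2$ with the Euclidean metric, rescaled so that (i) holds at level $0$: if $A\cup B=[0,1]^2$ with $A,B$ closed, then by pigeonhole one of them contains two of the four corners and hence has diameter at least the side length, which exceeds half the diameter of the square. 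More generally, whenever $F_s$ contains three points pairwise more than $2^{-(n+1)}$ apart, no two-set closed cover of $F_s$ can have both pieces of diameter at most $2^{-(n+1)}$, no matter how finely you refine before grouping: the two groups must still jointly exhaust $F_s$, so one of them contains two of the three far-apart points. The clause ``grouping several pieces into two unions whose diameters remain under $2^{-(n+1)}$ after a further refinement'' is exactly where the argument breaks.

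The repair is standard and cheap: drop the insistence that diameters halve at every binary level. If $F_s$ requires $k$ small pieces, spend $\lceil\log_2 k\rceil$ consecutive binary levels indexing them (padding by repetition, with the intermediate sets not shrinking), and require only that along every infinite branch the diameters tend to $0$, uniformly over branches — which you get because each refinement stage involves finitely many nodes, so you may take the maximum number of padding levels over that stage. Continuity then follows from ``for every $\varepsilon>0$ there is an $N$ with $\operatorname{diam}F_{\sigma|_N}<\varepsilon$ for all $\sigma$'' rather than from the explicit bound $2^{-n}$. Alternatively, embed $X$ in the Hilbert cube, surject $\{0,1\}^{\omega}$ onto $[0,1]^{\omega}$ by coordinatewise binary expansion, and use the fact that every nonempty closed subset of $\{0,1\}^{\omega}$ is a retract of it; this is essentially the argument of Willard's Theorem 30.7 that the paper cites.
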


Every compact metrizable space can be \textit{encoded} by a collection of infinite strings $\Sigma^{\omega}$ (it could be a proper subcollection of the all space as far as the map is still surjective). Namely, every element is represented by a (non-necessarily unique) infinite string. Such a string is called \textbf{coding}.\\

\begin{exm}
Set $\Sigma=\{ 0, 1, \ldots, 9\}$ and let $[0,1]$ be the unit interval. Then each element of $[0,1]$ has at most 2 codings due to the decimal expansion: exactly one if the number is irrational and exactly two if the number is rational. Namely, $\sigma_{1}\sigma_{2}\ldots\sigma_{n} \overline{0}$ and $\sigma_{1}\sigma_{2}\ldots(\sigma_{n}-1) \overline{9}$ where $\sigma_{n}-1$ is the difference \textit{mod 10} and $\overline{\sigma}=\sigma\sigma\ldots\sigma\ldots$.
\end{exm}

\begin{oss}
Despite the presence of a tree and choice of using the same word, to avoid ambiguity, it is worth mentioning that atom-coding are not coding in this sense. 
\end{oss}

In order to to construct a machine that can tell if two elements of $\partial_{h} G$, represented by their atom-codings, are in the same $\pi_{h}$-fiber, we give the following 
\begin{defn}
Let $\Sigma$ be a finite alphabet and $\Sigma^{\omega}$ its associate Cantor set of infinite strings. We say that an equivalence relation $\mathcal{G}$ on $\Sigma^{\omega}$ is \textbf{rational} if it is a rational subset of $\Sigma^{\omega} \times \Sigma^{\omega}$ in the sense of Definition~\ref{rational_subset}.
\end{defn}

Before proving that the gluing relation on atoms is rational, we want to present a few examples of the property and point out that the definition is in some way well-posed.

\begin{exm}
Gromov boundaries of hyperbolic groups can be seen as quotients of the Cantor set given by geodesic rays (see e.g. \cite{CP}).
The fact that the relation is rational is due to the fact that Gromov boundaries are \textit{semi-Markovian}. See \cite{CP} for definitions and for a proof in the torsion free case, and see \cite{P} for the connection between semi-Markovian and rational and for the groups with torsion.
\end{exm}

\begin{exm}
Limit spaces of contracting self similar groups (the gluing relation is given by the orbits of the action). See \cite{N2} for definitions and, in particular, Proposition 5.6 for the rationality.  
\end{exm}

\begin{exm}
Limit spaces of rearrangement groups of fractals (see \cite{BF}) seem to be natural candidates. We bring attention on the work of Donoven on the more general topic of invariant factors (\cite{D}). We are interested in Section 4.3, which is devoted to replacement systems. Even if his approach is similar, the question is still open.
\end{exm}

\begin{prop}
Let $\mathcal{G}$ be a rational equivalence relation on $\Sigma^{\omega}$. Then $\mathcal{G}$ is preserved by rational homeomorphisms of $\Sigma^{\omega}$.
\end{prop}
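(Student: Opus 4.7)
The statement asserts that for any $\phi \in \mathcal{R}$, the pushforward $(\phi \times \phi)(\mathcal{G}) = \{(u\phi, v\phi) : (u,v) \in \mathcal{G}\}$ is again a rational equivalence relation on $\Sigma^{\omega}$. Since $\phi$ is bijective, the image is automatically an equivalence relation, so the only content is the rationality of $(\phi \times \phi)(\mathcal{G})$ as a subset of $\Sigma^{\omega} \times \Sigma^{\omega}$, which we identify with $(\Sigma \times \Sigma)^{\omega}$ via coordinatewise shuffling.

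My plan is to invoke Proposition~\ref{characterization_rational} twice. Since $\mathcal{G}$ is rational, that proposition supplies a finite alphabet $\overline{\Sigma}$ and a rational map $\psi: \overline{\Sigma}^{\omega} \to (\Sigma \times \Sigma)^{\omega}$ with image $\mathcal{G}$. If we can lift the rational homeomorphism $\phi$ of $\Sigma^{\omega}$ to a rational self-map $\Phi := \phi \times \phi$ of $(\Sigma \times \Sigma)^{\omega}$, then the composition $\psi \Phi$ is rational (compositions of rational maps are rational by \cite[Subsection 2.3]{GNS}), and its image is precisely $(\phi \times \phi)(\mathcal{G})$. A second application of Proposition~\ref{characterization_rational} then concludes the argument.

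The technical heart is therefore the construction of a finite-state transducer realising $\Phi = \phi \times \phi$. Starting from transducers $\mathcal{M}$ and $\mathcal{M}'$ realising $\phi$ and $\phi^{-1}$ respectively, the idea is to run two independent copies of $\mathcal{M}$ in parallel on the two coordinates of the input stream and emit paired output letters as soon as both copies have produced at least one new symbol, parking the unmatched letters in a buffer. The main obstacle is to show that this buffer remains of bounded size so that the product machine is genuinely finite-state. This is precisely where the homeomorphism hypothesis enters: the existence of the inverse transducer $\mathcal{M}'$ constrains the output-length function of $\mathcal{M}$, and a pigeonhole argument on the joint state space of $\mathcal{M}$ and $\mathcal{M}'$ forces the cumulative output lengths produced by the two parallel copies to stay within a constant of each other, the constant depending only on $|\Theta|$. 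Once finiteness of the product machine is established, the composition argument outlined above completes the proof.
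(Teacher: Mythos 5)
Your proposal follows essentially the same route as the paper: both reduce the statement, via Proposition~\ref{characterization_rational} and the identification $\Sigma^{\omega}\times\Sigma^{\omega}=(\Sigma\times\Sigma)^{\omega}$, to showing that $\phi\times\phi$ is realized by a product transducer running two copies of the machine for $\phi$ in parallel, and then conclude by closure of rational maps under composition. The only difference is that you make explicit the asynchronous buffering issue and the role of the inverse transducer in bounding the buffer, a point the paper's proof compresses into the remark that ``an adjusted condition holds for the output function.''
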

\noindent What we are going to do is to show that this is a direct consequence of Proposition~\ref{characterization_rational}.
\begin{proof}
To start, we observe that 
 $\Sigma^{\omega} \times \Sigma^{\omega}= (\Sigma \times \Sigma)^{\omega}$. Setting $\Xi=\Sigma \times \Sigma$, by virtue of Proposition~\ref{characterization_rational} we have a rational map $\phi: \overline{\Xi}^{\omega} \rightarrow \Xi^{\omega}$with image $\mathcal{G}$.
 Let $\psi: \Sigma^{\omega} \rightarrow \tilde{\Sigma}^{\omega}$  be a rational homeomorphism. We denote by $\psi \times \psi: \Xi^{\omega} \rightarrow \tilde{\Xi}^{\omega}$ with $\tilde{\Xi}= \tilde{\Sigma} \times \tilde{\Sigma}$ the map that is $\psi$ on each component. Since the composition of two rational maps is a rational map, it remains to prove that $\psi \times \psi$ is rational, hence the composition $\phi (\psi \times \psi)$ is rational as well. 

\[
\begin{tikzcd}[ampersand replacement=\&]
\Xi^{\omega} \arrow[r, "\psi \times \psi",rightarrow] \& \tilde{\Xi}^{\omega} \\
\overline{\Xi}^{\omega} \arrow[u, "\phi",rightarrow] \arrow[ur, "\phi(\psi \times \psi)"',dashrightarrow]
\end{tikzcd}
\]

\noindent If $(\Sigma, \tilde{\Sigma},\Theta, \rightarrow, out)$ is the defining transducer for $\psi$, we just create $$(\Xi, \tilde{\Xi}, \Theta \times \Theta, \rightarrow \times \rightarrow, out \times out)$$ such that if $(\sigma_{j},\theta_{j}^{i}) \rightarrow \theta_{j}^{o}$ with $j=1,2$, then $((\sigma_{1},\sigma_{2}), (\theta_{1}^{i},\theta_{2}^{i})) \rightarrow (\theta_{1}^{o},\theta_{2}^{o})$ and an adjusted condition holds for the output function.
\end{proof}

Now that we have set a connection with the literature, we can start working on our case. But before introducing the machine, we need to fix some notation. \\
First of all, we define a slightly more rigid version of a self-similar structure.
\begin{defn}
Let $\mathcal{T}$ be a self-similar rooted tree. We define a \textbf{rigid structure} as the subcollection of rooted tree isomorphims of the given self-similar structure that satisfy the following
\begin{itemize}
    \item[(a)]for each pair of vertices $x$ and $y$ of the same class, there exists a unique isomorphism $\varphi_{x,y}$ that maps $\mathcal{T}_{x}$ on $\mathcal{T}_{y}$;
    \item[(b)]it is closed under composition, that means $\varphi_{x,y}\varphi_{y,z}=\varphi_{x,z}$ for $x$, $y$ and $z$ of the same class;
    \item[(c)]if $x'$ is a child of $x$ and $y'=x'\varphi_{x,y}$, then $\varphi_{x',y'}$ is the restriction of $\varphi_{x,y}$ onto $\mathcal{T}_{x'}$.
    \end{itemize}
\end{defn}

It is always possible to retrieve a rigid structure starting from a self-similar one
\begin{prop}\label{rigid-structures-walk-among-us}
Every self-similar tree has a rigid structure.
\end{prop}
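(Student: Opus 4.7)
The strategy is to extract a rigid structure from the given self-similar one by selecting, within each hom-set of the self-similar structure, a single distinguished isomorphism $\varphi_{x,y}$, in a way that is closed under composition and restriction. Concretely, I fix a representative vertex $x_C$ in each class $C$ of the self-similar structure and, for every $y \in C$, aim to pick a single isomorphism $\psi_y \colon \mathcal{T}_{x_C} \to \mathcal{T}_y$ from the original collection (with $\psi_{x_C} = \mathrm{id}$). Defining $\varphi_{y,z} := \psi_y^{-1}\psi_z$ then immediately yields condition (a) (one isomorphism per pair) and condition (b) (closure under composition, since $\varphi_{y,z}\varphi_{z,w} = \psi_y^{-1}\psi_w = \varphi_{y,w}$), so the entire substance of the proposition is to arrange condition (c).

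Unpacking (c) shows that it is equivalent to the following \emph{propagation rule}: for every child $x'$ of $x_C$ in class $C'$ and every $y \in C$, the isomorphism $\psi_{y'}$ at the image $y' := x'\psi_y$ must satisfy
\[
\psi_{y'} \;=\; \psi_{x'} \cdot \psi_y\big|_{\mathcal{T}_{x'}}.
\]
So once $\psi_y$ is chosen at $y$, the value of $\psi$ is determined at every descendant of $y$ via the chosen values at descendants of $x_C$. The plan is to build the assignment $y \mapsto \psi_y$ inductively following a breadth-first traversal from the root: the first vertex visited in each class is declared to be the representative with $\psi_{x_C} = \mathrm{id}$, and at every subsequent vertex $y$ the value $\psi_y$ is obtained from the propagation formula applied to its parent. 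Closure of the self-similar structure under composition and restriction guarantees that each forced value is again an isomorphism of the structure.

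The main obstacle is consistency. Two ancestral paths to a common vertex can, a priori, produce different forced values, and the forced value arriving at a vertex already declared a representative might fail to equal $\mathrm{id}$. To handle this, I would refine the class partition: split any class in which two vertices produce incompatible propagated data into subclasses indexed by the possible compatibility types. Because the original hom-sets are finite, the automorphism group $\mathrm{Iso}(x_C,x_C)$ that controls the ambiguity at each representative is finite, each refinement step produces only finitely many new classes, and the process stabilises after finitely many iterations. The refined partition, together with the propagated choices $\psi_y$, then satisfies all three conditions and gives the desired rigid structure.
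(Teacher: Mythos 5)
Your overall strategy --- fix one representative per class, assign to every vertex $y$ a single marking isomorphism relating $\mathcal{T}_y$ to the subtree at its class representative, set $\varphi_{y,z}:=\psi_y^{-1}\psi_z$ so that (a) and (b) are formal, and arrange (c) by a downward recursion --- is exactly the construction the paper uses (there the recursion is $\psi_{y'}=\psi_y\tau_o$, with the \emph{elementary markings} $\tau_o$ chosen freely at the children $o$ of the representatives; these are your free values $\psi_{x'}$). The gap is in how you resolve the consistency of the propagation, which is the actual content of the proposition. First, the worry about ``two ancestral paths to a common vertex'' is vacuous: $\mathcal{T}$ is a tree, each vertex has a unique parent, and the recursion assigns each $\psi_{y'}$ by exactly one application of the rule. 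Second, the genuine tension you identify --- the propagated value at a vertex you have declared a representative need not be the identity --- is created entirely by your extra normalization $\psi_{x_C}=\mathrm{id}$, and your proposed fix (refining the class partition) does not resolve it: a rigid structure is by definition a subcollection of the \emph{given} self-similar structure with the \emph{same} classes, so you are not free to change the partition; and even setting that aside, you give no argument that the refinement terminates, that the refined partition is again self-similar, or that the conflict actually disappears afterwards.

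The correct resolution is simpler and needs no refinement. Either drop the normalization, as the paper does: only the root carries the identity, the markings of the other representatives are allowed to be nontrivial automorphisms of their subtrees, and $\varphi_{x,y}=\psi_x\psi_y^{-1}$ still satisfies (a)--(c), since for a child $x'$ of $x$ and $y'=x'\varphi_{x,y}$ one has $y'\psi_y=x'\psi_x=o$, so $\psi_{x'}$ and $\psi_{y'}$ are built from the \emph{same} elementary marking $\tau_o$ and the $\tau_o$'s cancel. Or, if you insist on $\psi_{x_C}=\mathrm{id}$, observe that with a breadth-first choice of representatives every representative is itself a child of a representative (if the parent $p$ of a first-of-its-class vertex $v$ were not first of its class, the image of $v$ under an isomorphism $\mathcal{T}_p\to\mathcal{T}_q$ onto the earlier representative $q$ of $p$'s class would be a vertex of $v$'s class preceding $v$); since your propagation rule is vacuous at children of representatives, $\psi$ is a free choice there and may be taken to be the identity, so the conflict never arises. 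Either way, the consistency step has to be argued, and your refinement scheme does not do it.
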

\noindent We are interested in the technique involved in the proof  of Proposition~\ref{rigid-structures-walk-among-us} (see \cite[Proposition 2.18]{BBM} for the complete version). More specifically, we need to define \textit{markings}. We take a set of vertices that contains exactly one element for each class and the root. We denote it with $\widehat{\Omega}$. Let $\Omega$ be the set of vertices which are children of elements in $\widehat{\Omega}$. For each $o \in \Omega$ we choose $\tau_{o}$ to be a rooted tree isomorphism between $o$ and the only vertex in $\widehat{\Omega}$ that belongs to the same class and we call it an \textbf{elementary marking}. Now, we take any vertex $x$ and we define its \textbf{marking} $\psi_{x}$ as follows:
\begin{itemize}
    \item[(1)]if $x$ is the root, then $\psi_{x}$ is the identity isomorphism of $\mathcal{T}$;
    \item[(2)]if $x$ is a vertex of $\mathcal{T}$ with marking $\psi_{x}$ and $y$ is a child of $x$, denote by $o=y\psi_{x}$ the corresponding child of $x\psi_{x}$ inside $ \widehat{\Omega}$  and define $\psi_{y}=\psi_{x}\tau_{o}$.
\end{itemize}
\noindent Note that the composition is \textit{partial}, which means that actually we are considering $\psi_{x}\tau_{o}$ with $\psi'_{x}$ the restriction of $\psi_{x}$ onto $\mathcal{T}_{y}$.

\[
\begin{tikzcd}[ampersand replacement=\&]
\mathcal{T}_{x} \arrow[rr, "\psi_{x}",rightarrow] \& \& \mathcal{T}_{x\psi} \\
\mathcal{T}_{y} \arrow[u, "",hookrightarrow] \arrow[r, "\psi'_{x}",rightarrow] \& \mathcal{T}_{o} \arrow[ur, "\tau_{o}"',rightarrow]
\end{tikzcd}
\]
One can see that the rooted tree isomorphism defined as $\varphi_{x,y}:=\psi_{x}\psi_{y}^{-1}$ with $x$ and $y$ two vertices of the same class are in fact a rigid structure.

We now apply these notions to our context. First of all, we observe the following.
\begin{oss}
The tree of atoms is self-similar with respect to the structure given by morphisms and types.
\end{oss}
For each atom $a$ we associate a marking $\psi_{a}$ and we have a collection $\{ \tau_{i} \mid i \in I \}$ for some finite set of indices $I$. These help us defining our coding in the following sense: we take an alphabet $R$ and we call it the set of \textbf{rigid types} that is in a 1 to 1 correspondence with the set of elementary markings.  We will usually have $r_{i} \leftrightarrow \tau_{i}$. An atom $\overline{a}$ is of rigid type $r_{i}$ if its marking is of the form $\psi_{\overline{a}}=\psi_{a}\tau_{i}$ with $\overline{a}$ child of $a$. Finally, given a horofunction $(u_{n})_{n=1}^{\infty}$, we get a corresponding string based on $R$.\\
A first consequence of this coding is that we can construct the \textbf{type automaton} (see Example 2.5 in \cite{BBM} for a general treatment on the subject): the set of states are the types of atoms and the number of transitions between two types are the number of children that an atom of the first type has of the second type. It is easy to see that this number does not depend on the choice of the atoms and that can be labeled by the rigid types.\\ 

\begin{exm}
\label{type_automaton_2D}
We consider again the uniform tiling of the hyperbolic plane made of squares such that each vertex has degree 5. And the groups of isometries given by $$G= \langle g,h \mid g^{5}=1, \ h^{2}=1, \ (gh)^{4}=1 \rangle$$ with $g$ the rotation by 72° around the center and $h$ the 180° rotation around the middle point of one of the edges starting from the center. By looking at the first levels (see Figure~\ref{atoms_tiling}.(\subref{atoms_tiling_A})), we can argue that there are four types of atoms. 
\begin{itemize}
    \item The root, that is the only $0$-level atom; we call it $A$.
    \item The first type, we call it $B$, there is one of them for each edge starting from $x_{0}$, so at the first level there are five.
    \item The second type, we call it $C$, these are the intersections between two red regions; again we have five of them at the first level.
\end{itemize}  
So, we will need ten letters $\{0,1, \ldots, 9 \}$ to codify the first level. At this point, we notice that every type $B$ has three children, two of type $B$ and one of type $C$. Hence, we will use the letters $B_{0},B_{1},B_{2}$. While for the type $C$, there are three children, two of type $C$ and 
\begin{itemize}
    \item The fourth type, we call it $D$, it is the middle child of a type $C$ and it has just one child of type $B$.
\end{itemize}
To conclude, we need three letters $C_{0},C_{1},C_{2}$ and a letter $D_{0}$ to fully encode the elementary markings. See Figure~\ref{type_automata_tiling}, for the type automaton.
\end{exm}
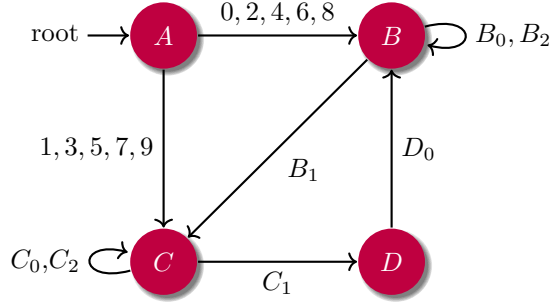
\begin{figure}
\centering
 \begin{tikzpicture}[node distance=3cm,on grid,auto, thick] 
   \node[state] (A)  [fill=purple,draw=none,circular drop shadow,text=white] {$A$}; 
   \draw[<-] (A) -- node[label={[xshift=-0.7cm, yshift=-0.2cm]root}] {} ++(-1cm,0);
   \node[state] (B) [fill=purple,draw=none,circular drop shadow,text=white] [ right=of A] {$B$};
   \node[state] (C) [fill=purple,draw=none,circular drop shadow,text=white] [ below=of A] {$C$}; 
   \node[state](D)[fill=purple,draw=none,circular drop shadow,text=white]  [ right=of C] {$D$};
  
    \path[->] 
    (A)   edge [] node {$0,2,4,6,8$} (B)
          edge [] node  [left] { $1,3,5,7,9$ } (C)
          
    (B) edge [loop right] node  {$B_{0},B_{2}$} ()
   	edge [] node  [below right] {$B_{1}$} (C)	 
    
    (C) edge  node [below]{$C_{1}$} (D)
        edge [loop left] node  {$C_{0}$,$C_{2}$} ()
    	
    (D) edge [] node  [right] {$D_{0}$} (B);
          
\end{tikzpicture}
\caption[Example of type automaton]{The type automaton for the $1$-skeleton of the hyperbolic tiling.}
\label{type_automata_tiling}
\end{figure}

\begin{defn}\label{gluing_automata_def}
We define the automaton $\mathcal{M}=(\Delta_{/ \sim}, R^2, \rightarrow , (a_{0},a_{0})_{\sim})$ in the following way:
\begin{description}
\item[States] The set $\Delta:=\{(a,b) \mid a,b \in \mathcal{A}_{n} \text{ for some } n\in \mathbb{N},  \  \dG(a,b) \leq \lambda \}$ with $\lambda$ the gluing constant and the quotient is on the relation $(a,b) \sim (c,d)$ there exists $g\in G$ such that $g=\psi_{a}\psi_{c}^{-1}=\psi_{b}\psi_{d}^{-1}$. The elements of the quotient are denoted by $(a,b)_{\sim}$ for some representative $(a,b) \in \Delta$.
\item[Alphabet] We consider the cartesian product of the set $R$ of rigid types with itself, that correspond to the cartesian product of the set of elementary markings.
\item[Transition function] We put an arrow $(r_{1},r_{2}) \in R^{2}$ from $(a,b)_{\sim}$ to $(c,d)_{\sim}$ whenever there exist two elements $(\overline{a},\overline{b}) \in (a,b)_{\sim}$ and $(\overline{c},\overline{d}) \in (c,d)_{\sim}$ such that $\overline{c}$ is a child of $\overline{a}$ and in particular $\psi_{\overline{c}}=\psi_{\overline{a}}\tau_{1}$ with $\tau_{1}$ the elementary marking associated to $r_{1}$. The same holds for $\overline{d},\overline{c}$ and $r_{2}$.
\item[Initial state] We denote with $a_{0}$ the atom of level 0. The notation follows from the description of the states.
\end{description}
\end{defn}
\begin{oss}
Note that the rigid structure does not define new types, so the requirement in the definition of $\sim$ is the same as asking for a $g \in G$ that is a morphism between $a$ and $c$, but also between $b$ and $d$.
\end{oss}
\begin{oss}
The automaton contains a copy of the type automaton. Indeed, given a type there is a state $(a,a)_{\sim}$ which collects all $a$ of that type paired with themselves. And the transitions between two of these states are labeled precisely by $(r,r)$ with $r$ ranging in the set of all transition labels of the type automaton.\\
\end{oss}

We now proceed in the following way: first we need to verify that the machine is actually doing what we expect on horofunctions and then we will prove that it is finite-state.\

\begin{prop}
Let $\mathcal{M}$ be the automaton described above and let $(u_{n})_{n=1}^{\infty}$ and $(v_{n})_{n=1}^{\infty}$ be two horofunctions described by their codings. We have that the horofunctions are the same element in $\partial G$ if and only if there exists an infinite transition through the states $\{(a_{n},b_{n})_{\sim}\}_{n=1}^{\infty}$ in $\mathcal{M}$.
\end{prop}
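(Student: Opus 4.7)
The proof is an ``if and only if'' and both directions hinge on Theorem~\ref{Conj1} together with how group morphisms interact with the rigid structure underlying the automaton. For the forward direction, I would assume $u\pi_{h}=v\pi_{h}$ and invoke Theorem~\ref{Conj1} to get sequences $x_{n}\in u_{n}$ and $y_{n}\in v_{n}$ with $\dG(x_{n},y_{n})\leq \lambda$, hence $\dG(u_{n},v_{n})\leq \lambda$ for every $n$; thus each $(u_{n},v_{n})\in\Delta$ and $(u_{n},v_{n})_{\sim}$ is a well-defined state. Using $(u_{n},v_{n})$ and $(u_{n+1},v_{n+1})$ as their own representatives, the rigid-structure identities $\psi_{u_{n+1}}=\psi_{u_{n}}\tau_{r_{1}}$ and $\psi_{v_{n+1}}=\psi_{v_{n}}\tau_{r_{2}}$ directly witness a transition labelled $(r_{1},r_{2})$ between the two states, producing the desired infinite path.

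For the backward direction, given an infinite transition with labels $\{(r_{1}^{n},r_{2}^{n})\}_{n\geq 1}$, I would first read off the unique atom sequences $(u_{n})$ and $(v_{n})$ determined by the rigid codings $(r_{1}^{n})$ and $(r_{2}^{n})$ starting from $a_{0}$; these exist at every step because each transition along the path is realised. The central claim is that the state visited at level $n$ is precisely $(u_{n},v_{n})_{\sim}$, which I would prove by induction. The base case is trivial since $(a_{0},a_{0})_{\sim}$ has only one representative. For the inductive step, the transition rule provides witnesses $(\bar{a},\bar{b})\in(u_{n},v_{n})_{\sim}$ and $(\bar{c},\bar{d})$ in the destination state, together with $g\in G$ satisfying $g=\psi_{u_{n}}\psi_{\bar{a}}^{-1}=\psi_{v_{n}}\psi_{\bar{b}}^{-1}$, as well as the marking conditions $\psi_{\bar{c}}=\psi_{\bar{a}}\tau_{r_{1}^{n+1}}$ and $\psi_{\bar{d}}=\psi_{\bar{b}}\tau_{r_{2}^{n+1}}$.

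The key step, which I expect to be the main obstacle, is verifying that $g$ preserves rigid types on children and thereby forces $\bar{c}=u_{n+1}g$ and $\bar{d}=v_{n+1}g$. Indeed, for any child $y$ of $u_{n}$ with $o=y\psi_{u_{n}}$, one has $yg\cdot\psi_{\bar{a}}=y\psi_{u_{n}}=o$, hence $\psi_{yg}=\psi_{\bar{a}}\tau_{o}$ and $y,yg$ carry the same rigid type; since each atom has at most one child of a prescribed rigid type, this identifies $\bar{c}$ with $u_{n+1}g$, and symmetrically $\bar{d}=v_{n+1}g$. A short computation then yields $\psi_{u_{n+1}}\psi_{\bar{c}}^{-1}=\psi_{u_{n}}\tau_{r_{1}^{n+1}}(\psi_{\bar{a}}\tau_{r_{1}^{n+1}})^{-1}=\psi_{u_{n}}\psi_{\bar{a}}^{-1}=g$, and the analogue for the second coordinate, so $(u_{n+1},v_{n+1})\sim(\bar{c},\bar{d})$ and the induction closes.

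To conclude, since $(u_{n},v_{n})_{\sim}$ is an actual state of $\mathcal{M}$, it admits a representative $(\bar{a},\bar{b})\in\Delta$ with $\dG(\bar{a},\bar{b})\leq\lambda$; as $G$ acts by isometries on $\Gamma$, this gives $\dG(u_{n},v_{n})=\dG(\bar{a},\bar{b})\leq\lambda$ uniformly in $n$. Applying Theorem~\ref{Conj1} once more yields $x_{n}\in u_{n}$ and $y_{n}\in v_{n}$ with $\dG(x_{n},y_{n})\leq\lambda$ for every $n$, hence $u\pi_{h}=v\pi_{h}$.
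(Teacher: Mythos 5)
Your proposal is correct and follows essentially the same route as the paper: both directions reduce to Theorem~\ref{Conj1}, the forward one by taking $(u_{n},v_{n})$ as its own representative, and the backward one by the inductive marking computation $\psi_{u_{n+1}}\psi_{a_{n+1}}^{-1}=\psi_{u_{n}}\tau\tau^{-1}\psi_{a_{n}}^{-1}=g$ and the isometry of $g$. Your explicit identification of the destination representatives as $u_{n+1}g$ and $v_{n+1}g$ via matching rigid types just spells out what the paper leaves as ``by the same token.''
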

\begin{proof}
We first suppose that two horofunctions glue together. Then by Theorem~\ref{Conj1}, we know that $\dG(u_{n},v_{n}) \leq \lambda$ holds for every $n$. Hence, by definition of the automaton, we can take $a_{n}=u_{n}$ and $b_{n}=v_{n}$. We then know that, by passing through these states, we can read the string that is the sequence of double elementary markings associated to the horofunctions.\\

On the other hand, suppose we can read the two horofunctions on $\mathcal{M}$. Suppose, also, that for $k \leq n$ we have $(u_{k},v_{k}) \in (a_{k},b_{k})_{\sim}$. In particular, $\dG(u_{k},v_{k}) = \dG(u_{k}g,v_{k}g) =\dG(a_{k},b_{k}) \leq \lambda$. We want to show that $(u_{n+1},v_{n+1}) \in (a_{n+1},b_{n+1})_{\sim}$.\\
By hypothesis we know that $g=\psi_{u_{n}}\psi_{a_{n}}^{-1}=\psi_{v_{n}}\psi_{b_{n}}^{-1}$. Since we read $(r_{1},r_{2})$ to reach the state $(a_{n+1},b_{n+1})_{\sim}$ from the state $(a_{n},b_{n})_{\sim}$, we have
$$\psi_{u_{n+1}}=\psi_{u_{n}}\tau_{1} \text{ and } \psi_{v_{n+1}}=\psi_{v_{n}}\tau_{2},$$
with $r_{i}$ the digit associated to the elementary marking $\tau_{i}$ for $i=1,2$. \\
By the same token, we can choose the representative $(a_{n+1},b_{n+1})$ such that 
$$\psi_{a_{n+1}}=\psi_{a_{n}}\tau_{1} \text{ and } \psi_{b_{n+1}}=\psi_{b_{n}}\tau_{2}.$$

To conclude, $a_{n+1}$ and $u_{n+1}$ are of the same type (they have the same marking), the same holds for $b_{n+1}$ and $v_{n+1}$. So there exist $h_{1},h_{2} \in G$ such that
$$h_{1}=\psi_{u_{n+1}}\psi_{a_{n+1}}^{-1} \text{ and } h_{2}=\psi_{v_{n+1}}\psi_{b_{n+1}}^{-1}.$$

If we put everything together, we get
$$h_{1}=\psi_{u_{n+1}}\psi_{a_{n+1}}^{-1}=\psi_{u_{n}}\tau_{1}\tau_{1}^{-1}\psi_{a_{n}}^{-1}=g=\psi_{v_{n}}\tau_{2}\tau_{2}^{-1}\psi_{b_{n}}^{-1}=\psi_{v_{n+1}}\psi_{b_{n+1}}^{-1}=h_{2}.$$

This means that $\dG(u_{n+1},v_{n+1})=\dG(a_{n+1}g,b_{n+1}g)=\dG(a_{n+1},b_{n+1})$. In particular, $(u_{n+1},v_{n+1})$ belongs to $\Delta$ and to $(a_{n+1},b_{n+1})_{\sim}$ as desired.
\end{proof}

A couple of remarks about efficiency and geometric interpretation of the machine are needed.

\begin{oss}
\begin{itemize}\
\item[(1)]Notice that the number of steps before stopping is not optimal. This occurs since we rely on a gluing constant and so we may create some states without any possible transition from there, in other words the automaton is not reduced. 
\item[(2)]The automaton roughly gives an estimate about the distance between two points in the Gromov boundary by looking at their codings. Indeed, by Theorem~\ref{SConj1} and by Definition~\ref{gluing_automata_def} 
the (discrete) amount of time at which the machine stops is not far from the Gromov product between the two elements in input.
\end{itemize}

\end{oss}

In order to leave the previous proposition as clean as possible, we collect here the properties of the automaton needed to get the rationality.

\begin{cor}
The machine $\mathcal{M}$ is deterministic and recognizes the horofunctions.
\end{cor}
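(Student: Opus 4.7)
The plan is to verify the three ingredients that together give rationality: recognition of the gluing relation, determinism of the transition function, and finiteness of the state set. Recognition is immediate from the previous Proposition, so the work is concentrated in determinism and finite-stateness.

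For determinism, the key point is that the candidate target $(c,d)_{\sim}$ obtained by reading a symbol $(r_{1},r_{2}) \in R^{2}$ from $(a,b)_{\sim}$ does not depend on the chosen representative. I would first fix a representative $(a,b)$ and use the rigid structure to identify the unique child $c$ of $a$ with $\psi_{c} = \psi_{a}\tau_{1}$ and the unique child $d$ of $b$ with $\psi_{d} = \psi_{b}\tau_{2}$, where $\tau_{i}$ is the elementary marking associated to $r_{i}$. Given a second representative $(a', b') \sim (a,b)$ with common witness $g = \psi_{a}\psi_{a'}^{-1} = \psi_{b}\psi_{b'}^{-1}$, and defining $c',d'$ analogously from $(a',b')$, the telescoping identity already exploited in the previous Proposition, namely
\[
\psi_{c}\psi_{c'}^{-1} = \psi_{a}\tau_{1}\tau_{1}^{-1}\psi_{a'}^{-1} = g = \psi_{b}\tau_{2}\tau_{2}^{-1}\psi_{b'}^{-1} = \psi_{d}\psi_{d'}^{-1},
\]
shows that $(c,d) \sim (c',d')$. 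Hence the transition is a well-defined partial function on $\Delta/\sim$, and determinism follows.

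For finite-stateness, I would invoke Theorem~\ref{finite-types}, which guarantees only finitely many types of atoms. Picking a canonical representative $a_{\tau}$ per type, every class $(a,b)_{\sim}$ admits a representative of the form $(a_{\tau}, b)$ with $b$ an atom at the same level as $a_{\tau}$ satisfying $\dG(a_{\tau}, b) \leq \lambda$. Local finiteness of $\Gamma$, combined with the \hyperref[Conj3]{Hooking Lemma}, confines such atoms $b$ to a bounded region of $\Gamma$ around the tip of $a_{\tau}$, and since each $\mathcal{A}_{k}$ is finite, only finitely many atoms $b$ meet that region. Summing over the finite set of types yields $|\Delta/\sim| < \infty$.

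The main obstacle, and the only step that is not essentially bookkeeping, is the well-definedness of the transition map on $\Delta/\sim$; once the cocycle-type identity above is in place, everything else reduces to routine appeals to the rigid structure, to local finiteness of $\Gamma$, and to the finiteness of types.
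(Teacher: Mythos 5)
Your determinism argument is correct and is essentially the paper's own: the telescoping identity $\psi_{c}\psi_{c'}^{-1}=\psi_{a}\tau_{1}\tau_{1}^{-1}\psi_{a'}^{-1}=g=\psi_{b}\tau_{2}\tau_{2}^{-1}\psi_{b'}^{-1}=\psi_{d}\psi_{d'}^{-1}$ is exactly the computation used there. The problems lie in the other two parts.

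The recognition half is not ``immediate from the previous Proposition.'' That Proposition only concerns words that are already known to be pairs of atom-codings of horofunctions: for those, gluing is equivalent to the existence of an infinite run. What still has to be checked --- and what the paper's \emph{Recognizer} paragraph actually proves --- is that every infinite word $(r_{1},s_{1})(r_{2},s_{2})\cdots$ processed by $\mathcal{M}$ is realized by a genuine nested chain of atoms in each coordinate. The subtlety is that the transition $(a_{n},b_{n})_{\sim}\rightarrow(a_{n+1},b_{n+1})_{\sim}$ is witnessed by representatives that need not agree with those witnessing the next transition, so the local witnesses must be spliced into one coherent chain $a_{0}\supseteq a_{1}\supseteq\cdots$. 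The paper does this by observing that $a_{n}$ and $\overline{a}_{n}$ have the same type, hence the rigid type $r_{n+1}$ admissible for $\overline{a}_{n}$ is also admissible for $a_{n}$. Your proposal skips this step entirely.

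Finite-stateness, on the other hand, is not part of this corollary at all --- the paper proves $|\Delta_{/\sim}|<\infty$ as a separate, later proposition --- and your argument for it would not go through as written. Knowing that $a$ has the same type as a canonical representative $a_{\tau}$ gives a $g$ with $g=\psi_{a}\psi_{a_{\tau}}^{-1}$, but the relation $\sim$ requires the \emph{same} $g$ to satisfy $g=\psi_{b}\psi_{b'}^{-1}$ for some atom $b'$. A type morphism of $a$ only controls the subtree below $a$; it need not carry a nearby atom $b$ to an atom at all, so a class $(a,b)_{\sim}$ need not admit a representative of the form $(a_{\tau},b')$. This is precisely the obstruction that forces the paper to introduce $\lambda$-types and geometric $\lambda$-equivalence.
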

\begin{proof}\

\textit{Determinism.} Suppose that the following situation occurs in the automaton: 

\[ \begin{tikzpicture}[node distance=2.5cm,on grid,auto,el/.style = {inner sep=2pt, align=left, sloped},] 
   \node[ellipse, fill=purple, text=white, circular drop shadow] (ab)   {$(a,b)_{\sim}$}; 
   \node[ellipse, fill=purple, text=white,circular drop shadow] (1) [above right=of ab,xshift=1cm] {\small{$(a_{1},b_{1})_{\sim}$}};
   \node[ellipse, fill=purple, text=white,circular drop shadow] (2) [below right=of ab, xshift=1cm] {\small{$(a_{2},b_{2})_{\sim}$}}; 
  
   \path[->] 
    (ab)  edge[bend left=20]  node[el, above] {$(r,s)$} (1)
          edge[bend right=20]  node[el, below] {$(r,s)$} (2);
\end{tikzpicture}
\]
we want to prove that $(a_{1},b_{1})_{\sim}=(a_{2},b_{2})_{\sim}$. This is a consequence of the rigid structure, namely there exist 
$$(a,b) \in (a,b)_{\sim} \text{ and } (a_{1},b_{1}) \in (a_{1},b_{1})_{\sim} \text{ with } \psi_{a_{1}}=\tau_{r}\psi_{a}, \ \psi_{b_{1}}=\tau_{s}\psi_{b}$$
and
$$(\overline{a},\overline{b}) \in (a,b)_{\sim} \text{ and } (a_{2},b_{2}) \in (a_{2},b_{2})_{\sim} \text{ with } \psi_{a_{2}}=\psi_{\overline{a}}\tau_{r}, \ \psi_{b_{2}}=\psi_{\overline{b}}\tau_{s};$$
moreover there exists $g \in G$ such that $g=\psi_{\overline{a}}\psi_{a}^{-1}=\psi_{\overline{b}}\psi_{b}^{-1}$. So we have
$$ \psi_{a_{2}}\psi_{a_{1}}^{-1}=\psi_{\overline{a}}\tau_{r}\tau_{r}^{-1}\psi_{a}^{-1}=g=\psi_{\overline{b}}\tau_{s}\tau_{s}^{-1}\psi_{b}^{-1}=\psi_{b_{2}}\psi_{b_{1}}^{-1},$$
that yields the claim.\\

\textit{Recognizer.} Suppose 
$$(a_{0},a_{0})_{\sim} \rightarrow (a_{1},b_{1})_{\sim} \rightarrow \ldots \rightarrow (a_{n},b_{n})_{\sim} \rightarrow \ldots$$
is a transition of states on $\mathcal{M}$. We recall $(a_{0},a_{0})_{\sim}$ is the initial state, related to the word
$$(r_{1},s_{1})(r_{2},s_{2}) \ldots (r_{n},s_{n}) \ldots$$
This means that there exist $a_{n-1} \xrightarrow[]{r_{n}} a_{n}$ and $\overline{a}_{n} \xrightarrow[]{r_{n+1}} \overline{a}_{n+1}$, but $a_{n}$ and $\overline{a}_{n}$ have the same type and so $r_{n+1}$ must be an allowed rigid type for $a_{n}$ too, hence we can provide $a_{n+1}$ such that $a_{n} \xrightarrow[]{r_{n+1}} a_{n+1}$. 
\end{proof}

This last part is devoted to showing that $\mathcal{M}$ is a finite state machine, namely we want a bound for the cardinality of $\Delta_{/ \sim}$. For this purpose, we need the key definition introduced in \cite{BBM} to prove that the number of types are finite.\\

Before that, we recall that if two elements $x$ and $y$ belong to the same $k$-level atom $a$, then $\overline{d}_{x}=\overline{d}_{y}$ over $B_{k}$. Hence, $\overline{d}_{a}$ is well defined over $B_{k}$.\\ 
If $\Gamma_{0}$ is a subset of vertices of $\Gamma$,  $f$ is a function from $\Gamma_{0}$ to $\mathbb{Z}$ and $g \in G$, then we define $fg$ to be the function $yfg:=yg^{-1}f$ for all $y \in \Gamma_{0}g$. Note also that if $f_{1}$ and $f_{2}$ differ by a constant, that $f_{1}g$ and $f_{1}g$ also differ by a constant. Putting these two facts together leads to the definition of $\overline{d}_{a}g$ on $B_{k}g$. 
\begin{defn}
Let $a \in \mathcal{A}_{m}$ and $b \in \mathcal{A}_{n}$. We say that an element $g \in G$ induces a \textbf{geometric equivalence} between $a$ and $b$ if
\begin{itemize}
    \item[(1)]$P(a,S_{m})g=P(b,S_{n})$;
    \item[(2)]$\overline{d}_{a}g=\overline{d}_{b}$ over $P(b,S_{n})$;
    \item[(3)]$C(p)g=C(pg)$ for all $p \in P(a,S_{m})$.
\end{itemize}
\end{defn}

\noindent The main result concerning this definition is the following
\begin{prop}\label{geometric_implies_morphism}
If $g \in G$ induces a geometric equivalence between two atoms $a$ and $b$, then it induces a morphism. Hence $a$ and $b$ are of the same type.
\end{prop}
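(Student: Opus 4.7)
The plan is to verify in turn the three defining conditions of a morphism between $a \in \mathcal{A}_m$ and $b \in \mathcal{A}_n$ under the element $g$: the set equality $ag = b$, the ball-intersection condition $(a \cap B_{m+k})g = b \cap B_{n+k}$, and the preservation of children.

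First I would show $ag = b$ using the cone characterization. By Proposition~\ref{cone_intersect} every $x \in a$ lies in $C(p)$ for some nearest neighbor $p \in N(a, B_m)$ and outside every $C(q)$ with $q \in S_m - N(a, B_m)$; since nearest neighbors are proximal by Lemma~\ref{visible_proximal_a}, condition~(3) of the geometric equivalence gives $xg \in C(p)g = C(pg)$, and the symmetric argument via $g^{-1}$ (which also defines a geometric equivalence, since the three defining conditions are reversible) shows that $xg$ avoids the complementary cones in $S_n$. To upgrade this cone-level membership to $xg \in b$, I would invoke condition~(2): for any $q \in B_n$ the value $f_{xg}(q)$ decomposes through a nearest neighbor of $b$ lying on a geodesic from $x_0$ to $xg$, and the proximal-distance data transported by $\overline{d}_a g = \overline{d}_b$ pins down $f_{xg}$ on $B_n$ to coincide with $f_b$. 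Hence $xg \in b$, and the reverse inclusion is symmetric.

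Next, the ball-intersection condition is equivalent to the translation relation $(\mathfrak{L})$, namely $\dG(x_0, xg) - n = \dG(x_0, x) - m$. This drops out of condition~(2): evaluating $\overline{d}_a g = \overline{d}_b$ at a nearest neighbor $pg \in P(b, S_n)$ (which exists by condition~(1)) and using that such $pg$ lies on a geodesic from $x_0$ to $xg$, one reads off the constant offset $n - m$ between the two normalizations, so the translation identity holds uniformly on $a$.

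Finally, for the preservation of children I would argue inductively, showing that for each atom $\tilde{a} \in \mathcal{A}_{m+k}$ contained in $a$, the restriction of $g$ satisfies the three defining conditions of a geometric equivalence between $\tilde{a}$ and the set $\tilde{a}g \subseteq b$; the first two steps applied at level $m+k$ then force $\tilde{a}g$ to be a single $(n+k)$-level atom. The main obstacle I anticipate is precisely this propagation of the geometric-equivalence data down the tree: one must verify that proximal points of $\tilde{a}$ in $S_{m+k}$ are controlled by those of $a$ in $S_m$ in a way compatible with $g$, which seems to require Lemma~\ref{visible_proximal_b} (linking proximal points across consecutive levels via the diameter bound $8\delta + 4$) together with iterated application of condition~(3) to carry cone structure into the descendant cones $C(pg)$.
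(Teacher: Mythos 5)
The paper does not actually prove this proposition: it states it and defers entirely to \cite{BBM}, remarking that ``most of Section 3.5'' of that paper consists of the proof. So there is no internal argument to compare yours against; the question is whether your sketch stands on its own, and as written it does not.

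The decisive gap is in your first step. Condition (2) of the geometric equivalence controls $\overline{d}_{a}$ only on the proximal set $P(b,S_{n})$, a subset of the sphere $S_{n}$, whereas the conclusion $xg\in b$ requires $f_{xg}$ to agree with $f_{b}$ on the \emph{entire} ball $B_{n}$. Your sentence ``the proximal-distance data transported by $\overline{d}_{a}g=\overline{d}_{b}$ pins down $f_{xg}$ on $B_{n}$'' is precisely the hard lemma being assumed: one must prove that the restriction of $f_{x}$ to $B_{n}$ is determined by the visible/proximal points of $x$, the values of $f_{x}$ there, and the cones based there (distances from an arbitrary $q\in B_{n}$ to $x$ do not factor through a single nearest neighbor on a geodesic from $x_{0}$, as your phrasing suggests, but through a minimum over a controlled set of exit points), and then that all of this data is transported by $g$. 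That is what occupies most of \cite{BBM}, Section 3.5. Note also that Proposition~\ref{cone_intersect} gives only a one-sided containment of an atom in an intersection of cones, and the paper explicitly records that the full characterization of atoms by cones is open, so cone membership together with its $g^{-1}$-symmetric version cannot by itself yield $xg\in b$; moreover condition (3) says nothing about the cones $C(q')$ for non-proximal $q'\in S_{n}$, so the ``avoids the complementary cones'' step is also unsupported. The same difficulty recurs, as you yourself acknowledge, in the inductive step for children, where the three conditions must be re-derived at level $m+k$. Your outline correctly locates the ingredients and the obstacles, but the obstacles are the proof.
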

\noindent Most of Section 3.5 of \cite{BBM} consists of a proof for this Proposition. The following proof is taken from Corollary 3.28 in \cite{BBM} and we show it here because it will be useful to understand our case.\

\begin{proof}[Proof(Theorem~\ref{finite-types})] \label{proof_finite_types}
By virtue of Proposition~\ref{geometric_implies_morphism}, it suffices to prove that geometric equivalence classes are finite.\\
Since the action of $G$ onto $\Gamma$ is cocompact, there exists a compact, hence finite, subset $K$ of vertices such that 
$$KG=\{ Kg \mid g \in G \}$$
is the whole graph. Now take $p \in P(a,S_{m})$, then there exists an element $h \in G$ such that $ph \in KG$; exploiting Proposition~\ref{visible_proximal_b} we have that $P(a,S_{n})$ is contained in a $8\delta+
2$-neighborhood of $K$. This means that there are finitely many possibilities for $P(a,S_{m})$ modulo the action of $G$. Moreover, since the action of $G$ is properly discontinuous and by Proposition~\ref{cone_type_finite} there are finitely many cone types, we have only finitely many choices for $C(p)$ for each $p \in P(a,S_{m})$, and there are only finitely many choices for the restriction of $\overline{d}_{a}$ to $P(a,S_{m})$.
\end{proof}
We are going to study a slight refinement of geometric equivalences and types. In order to do that, we consider a $\lambda$-neighborhood of an atom with respect to $\dG$. 
The fact that the collection of atoms in the neighborhood of an atom is finite is due to the fact that $\dG \leq \T\dG \leq \dG + \lambda$ with $\lambda$ the gluing constant together with the fact that the tips are finite (see Proposition~\ref{diam}).\\ 
We denote the set of all $n$-level atoms within a distance $\lambda$ to an $n$-level atom $a$ with $\mathcal{A}_{\lambda}(a)$ and we call $a$ the \textbf{center} of the neighborhood. 
\begin{defn}
Two atoms $a,b \in \mathcal{A}$ have the same \textbf{$\lambda$-type} if there exists an element $g \in G$ that induces a bijection between $\mathcal{A}_{\lambda}(a)$ and $\mathcal{A}_{\lambda}(b)$ and such that $g_{|_{a_{\lambda}}}$ 
is a morphism of types between $a_{\lambda}$ and $b_{\lambda}$ 
for all $a_{\lambda} \in \mathcal{A}_{\lambda}(a)$.
\end{defn}
Before giving the corresponding definition of geometric equivalence, we want to notice the following.
\begin{oss}
\begin{itemize}\
\item[(1)]There can be two atoms in a $\lambda$-neighborhood with the same  type. 
\item[(2)]Since $g$ is an isometry, we have that the distance between two atoms in a $\lambda$-neighborhood depends just on the $\lambda$-type of its center.
\end{itemize}
\end{oss} 

\begin{defn}
Two atoms $a,b \in \mathcal{A}$ are said to be \textbf{geometric $\lambda$-equivalent} if there exists $g \in G$ such that for all $a_{\lambda} \in \mathcal{A}_{\lambda}(a)$ and $b_{\lambda} \in \mathcal{A}_{\lambda}(b)$ the following hold
\begin{itemize}
\item[(1)]$P(a_{\lambda},S_{n})g=P(b_{\lambda},S_{m})$;
\item[(2)]$\overline{d}_{a_{\lambda}}g$ agrees with $\overline{d}_{b_{\lambda}}$ on $P(b_{\lambda},S_{m})$;
\item[(3)]$C(p)g=C(pg)$ for all $p \in P(a_{\lambda},S_{n})$; 
\end{itemize}
for suitable positive integers $n$ and $m$.
\end{defn}

\noindent We have the following version of Proposition~\ref{geometric_implies_morphism}.

\begin{lem}
\label{geometric_lambda}
If two atoms are geometric $\lambda$-equivalent, than they have the same $\lambda$-type.
\end{lem}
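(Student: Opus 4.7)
The plan is to reduce to the single-atom case already handled by Proposition~\ref{geometric_implies_morphism}. Given $g \in G$ witnessing a geometric $\lambda$-equivalence between $a$ and $b$, the key observation is that conditions (1)--(3), restricted to a single corresponding pair $(a_{\lambda}, b_{\lambda})$ with $b_{\lambda} = a_{\lambda}g$, are literally the hypotheses of ordinary geometric equivalence. So I would first pick an arbitrary $a_{\lambda} \in \mathcal{A}_{\lambda}(a)$, set $b_{\lambda}$ to be its image under $g$, and apply Proposition~\ref{geometric_implies_morphism} to conclude that $g$ induces a morphism between $a_{\lambda}$ and $b_{\lambda}$. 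In particular $b_{\lambda}$ is an atom of the same level as $a_{\lambda}$ and of the same (ordinary) type, which already supplies the per-pair ``morphism of types'' required by the definition of $\lambda$-type.

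Next, to upgrade this to a statement about the whole neighbourhood, I need to verify that $g$ realises a bijection between $\mathcal{A}_{\lambda}(a)$ and $\mathcal{A}_{\lambda}(b)$. Because $g$ acts as an isometry of $\Gamma$ and $\dG$ on atoms of a fixed level is defined through $\dG$ on vertices, the assignment $a_{\lambda} \mapsto a_{\lambda}g$ preserves distance to the centre; combined with $ag = b$ (which is the morphism conclusion applied to the centre itself), this gives $a_{\lambda}g \in \mathcal{A}_{\lambda}(b)$. The reverse inclusion follows by a symmetric argument applied to $g^{-1}$: by construction $g^{-1}$ also witnesses a geometric $\lambda$-equivalence, now from $b$ to $a$, so every $b_{\lambda} \in \mathcal{A}_{\lambda}(b)$ is the image under $g$ of some $a_{\lambda} \in \mathcal{A}_{\lambda}(a)$. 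Putting these two ingredients together yields exactly the data required by the definition of $\lambda$-type.

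The step I expect to be the main obstacle is the bookkeeping around the pairing $a_{\lambda} \leftrightarrow b_{\lambda}$. The definition of geometric $\lambda$-equivalence is phrased with a universal quantifier over pairs, but only the matched pairs $(a_{\lambda}, a_{\lambda}g)$ really carry content; one has to argue that as $a_{\lambda}$ ranges over $\mathcal{A}_{\lambda}(a)$ the images $a_{\lambda}g$ range over \emph{all} of $\mathcal{A}_{\lambda}(b)$ without repetition. This reduces to the isometric nature of the $G$-action, the finiteness of $\mathcal{A}_{\lambda}(a)$ noted after the definition of $\lambda$-neighbourhood, and the already-established fact that $g$ maps atoms to atoms of the same level, but it is the one place where care is needed to avoid a non-surjective embedding that would fall short of inducing the required bijection.
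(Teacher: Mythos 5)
Your proposal is correct and follows the same route as the paper: restrict the geometric $\lambda$-equivalence to each atom of the neighbourhood, apply Proposition~\ref{geometric_implies_morphism} to get a morphism of types on each, and conclude. The paper's proof is in fact terser than yours --- it does not spell out the bijectivity of $a_{\lambda} \mapsto a_{\lambda}g$, which you justify correctly via the isometric action and the symmetric argument with $g^{-1}$.
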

\begin{proof}
By definition, the element $g$ that induces the geometric $\lambda$-equivalence also induces a geometric equivalence on each atom that belongs to $\mathcal{A}_{\lambda}(a)$. By Proposition~\ref{geometric_implies_morphism}, we have that $g$ induces a morphism on each atom. Hence the claim.
\end{proof}

\noindent All that is left to do is prove that the number of geometric ${\lambda}$-equivalence classes is finite. But again this follow almost immediately by \cite{BBM}.

\begin{lem}
\label{geometric_lambda_finite}
The number of equivalence classes with respect to the geometric $\lambda$-equivalence is finite.
\end{lem}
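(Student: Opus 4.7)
The plan is to adapt the argument used to prove Theorem~\ref{finite-types} (as sketched earlier), by tracking the geometric data not of a single atom but of the entire $\lambda$-neighborhood $\mathcal{A}_{\lambda}(a)$ at once. I expect that since $\mathcal{A}_{\lambda}(a)$ has a cardinality that is uniformly bounded in $a$ (local finiteness of $\Gamma$ together with Proposition~\ref{diam} and the argument behind Theorem~\ref{finite_to_one} bound the number of atoms whose tips lie within $\lambda$ of $T(a)$), it is enough to show that all of the data determining the geometric $\lambda$-equivalence class of $a$ can be confined, up to the $G$-action, inside a bounded region of $\Gamma$.

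The first step is to establish a uniform constant $R = R(\lambda,\lambda_{a},\delta)$ such that, for every $a_{\lambda} \in \mathcal{A}_{\lambda}(a)$, the set $P(a_{\lambda},S_{n_{\lambda}})$ lies in the ball of radius $R$ around any fixed proximal point $p \in P(a,S_{n})$. This should follow by combining the \hyperref[Conj3]{Hooking Lemma}, Proposition~\ref{diam}, and Lemma~\ref{visible_proximal_b} with a triangle inequality: the tips of $a$ and $a_{\lambda}$ are $\lambda$-close, proximal points are $(4\delta+2)$-close to nearest neighbors, and nearest neighbors are $\lambda_{a}$-close to the tips.

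Next I would invoke cocompactness of the $G$-action to find a finite $K \subset \Gamma$ with $KG = \Gamma$, and pick $h \in G$ with $ph \in K$. This translates the entire finite collection $\bigcup_{a_{\lambda}} P(a_{\lambda},S_{n_{\lambda}}) \cdot h$ into the bounded region $B_{R}(K)$. Inside this bounded region there are only finitely many possibilities for (i) which finite subsets occur as $P(a_{\lambda},S_{n_{\lambda}}) \cdot h$, (ii) the cone type assigned to each such point (finite by Proposition~\ref{cone_type_finite}), and (iii) the restrictions of the integer-valued functions $\overline{d}_{a_{\lambda}} \cdot h$ to these sets, which take values in a bounded interval. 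From this one concludes that there are only finitely many geometric $\lambda$-equivalence classes.

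I expect the genuinely non-routine step to be the first one, namely the localization of $\bigcup_{a_\lambda \in \mathcal A_\lambda(a)} P(a_{\lambda},S_{n_{\lambda}})$ inside a ball around a single point $p \in P(a,S_{n})$ of size depending only on $\lambda,\lambda_{a},\delta$; once this is in place, everything else is a direct finite-combinatorial upgrade of the proof of Theorem~\ref{finite-types}, with the single atom replaced by the uniformly finite neighborhood $\mathcal{A}_{\lambda}(a)$.
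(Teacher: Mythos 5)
Your proposal is correct and follows essentially the same route as the paper: the paper's proof simply says to argue as in the proof of Theorem~\ref{finite-types} after observing that the union of the proximal sets of the atoms in $\mathcal{A}_{\lambda}(a)$ has uniformly bounded diameter by the \hyperref[Conj3]{Hooking Lemma}, which is exactly your localization step followed by the same cocompactness count of proximal sets, cone types, and restricted distance functions.
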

\begin{proof}
One can argue as in the \hyperref[proof_finite_types]{proof of Theorem~\ref{finite-types}} and by noticing that the union of all the proximal sets of atoms in $\mathcal{A}_{_{\lambda}}(a)$ has a finite diameter by virtue of the \hyperref[Conj3]{Hooking Lemma}.
\end{proof}

\noindent We are now ready to prove the following

\begin{prop}
The set $\Delta_{/\sim}$ is finite.
\end{prop}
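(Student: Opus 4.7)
The plan is to show that every class in $\Delta_{/\sim}$ has a representative whose first coordinate lies in a finite set of ``canonical'' atoms, and whose second coordinate lies in the (finite) $\lambda$-neighborhood of that canonical atom. First, by the Remark immediately following Definition~\ref{gluing_automata_def}, the relation $(a,b)\sim(c,d)$ reduces to the existence of some $g\in G$ that is simultaneously a morphism from $a$ to $c$ and from $b$ to $d$. I will work with this cleaner formulation rather than directly with markings.

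Next, I would invoke Lemma~\ref{geometric_lambda_finite} to fix one representative $c_t$ per $\lambda$-geometric equivalence class $t$; there are only finitely many such classes. For each $c_t$, the $\lambda$-neighborhood $\mathcal{A}_\lambda(c_t)$ is finite, as noted in the paper (it follows from the \hyperref[Conj3]{Hooking Lemma} together with the bound on tip diameters from Proposition~\ref{diam} and local finiteness of $\Gamma$).

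Given any $(a,b)\in\Delta$, let $t$ be the $\lambda$-geometric class of $a$ and pick $g\in G$ realizing the equivalence between $a$ and $c_t$. By Lemma~\ref{geometric_lambda}, such a $g$ induces a bijection $\mathcal{A}_\lambda(a)\to\mathcal{A}_\lambda(c_t)$ consisting of type-preserving morphisms on every atom of $\mathcal{A}_\lambda(a)$. Since $b\in\mathcal{A}_\lambda(a)$, setting $d:=bg$ we have $d\in\mathcal{A}_\lambda(c_t)$ and $g$ induces morphisms both $a\to c_t$ and $b\to d$. By the Remark above, $(a,b)\sim(c_t,d)$. Hence every class in $\Delta_{/\sim}$ has a representative in the finite set $\bigsqcup_t\{c_t\}\times\mathcal{A}_\lambda(c_t)$, yielding the bound $|\Delta_{/\sim}|\le\sum_t|\mathcal{A}_\lambda(c_t)|<\infty$.

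The main point to be careful about is the passage from the marking-based definition of $\sim$ to the morphism-based one via the Remark, and the use of Lemma~\ref{geometric_lambda} to guarantee that the element $g$ realizing the $\lambda$-geometric equivalence acts as a morphism on every atom of $\mathcal{A}_\lambda(a)$ simultaneously (not merely on $a$ itself). Both ingredients are already established, so once one is careful about which formulation of $\sim$ is in use, the proof reduces to the combinatorial observation that a finite union of finite sets is finite.
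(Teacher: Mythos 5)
Your proof is correct and follows essentially the same route as the paper: cover $\Delta$ by the finite $\lambda$-neighborhoods $\mathcal{A}_{\lambda}(a)$, use Lemma~\ref{geometric_lambda} and Lemma~\ref{geometric_lambda_finite} to reduce the first coordinate to finitely many representatives, and observe that the element $g$ realizing the geometric $\lambda$-equivalence carries $(a,b)$ to a pair in the canonical finite list. Your write-up is in fact slightly more explicit than the paper's about invoking the remark that $\sim$ can be rephrased in terms of morphisms, which is a point the paper's proof uses only implicitly.
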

\begin{proof}
The key idea is that there exists a way to cover $\Delta$ by $\mathcal{A}_{\lambda}(a)$ as $a$ ranges in $\mathcal{A}$, or, more explicitly, for each $(a,b) \in \Delta$, we have that $b \in \mathcal{A}_{\lambda}(a)$. \\
By combining Lemma~\ref{geometric_lambda} and Lemma~\ref{geometric_lambda_finite}, we have that there are finitely many $\lambda$-types. Set $C_{\lambda}$ to be the finite number of $\lambda$-types. We also know that $|\mathcal{A}_{\lambda}(a)|$ is finite, and in particular there are finitely many pairs $(a,b)$ as $b \in \mathcal{A}_{\lambda}(a)$.\\
Finally, if $(a,b) \sim (c,d)$ then there exists $g \in G$ that induces a map $g:\mathcal{A}_{\lambda}(a) \rightarrow \mathcal{A}_{\lambda}(c)$ and such that $(a,b)g=(c,d)$. So there are at most $C_{\lambda}(|\mathcal{A}_{\lambda}(a)|-1)$ elements in $\Delta_{/\sim}$.
\end{proof}

To summarize what we achieved in this section, we explicit give this
\begin{thm}\label{rational_gluing_main_thm}
The quotient map $\pi_{h}: \partial_{h} \Gamma \twoheadrightarrow \partial \Gamma$ defines a rational equivalence relation.
\end{thm}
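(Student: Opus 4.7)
The plan is to harvest the machinery built throughout this section and assemble it into a one-line verification. The rigid structure produced by Proposition~\ref{rigid-structures-walk-among-us} gives a canonical bijective coding of horofunctions by infinite strings over the alphabet $R$ of rigid types, so $\partial_h \Gamma \hookrightarrow R^{\omega}$ and the $\pi_h$-fiber relation becomes a subset of
$$R^{\omega} \times R^{\omega} = (R \times R)^{\omega}.$$
By Definition~\ref{rational_subset}, to prove rationality of this subset it suffices to exhibit a synchronous deterministic finite-state automaton over $R \times R$ that accepts exactly the pairs lying in the same $\pi_h$-fiber.

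The candidate is precisely the automaton $\mathcal{M} = (\Delta_{/\sim}, R^2, \rightarrow, (a_0, a_0)_\sim)$ constructed in Definition~\ref{gluing_automata_def}. The preceding three results furnish every ingredient needed: the proposition after Definition~\ref{gluing_automata_def} shows that, given two horofunctions $u=(u_n)_{n=1}^\infty$ and $v=(v_n)_{n=1}^\infty$ presented by their rigid codings, $\mathcal{M}$ admits an infinite run on the input word in $(R \times R)^\omega$ if and only if $u\pi_h = v\pi_h$; the corollary that follows establishes determinism (using the rigidity axioms $\psi_{\overline a}=\psi_a \tau_i$) and the recognizer property for horofunction codings; and finally the proposition just proved bounds $|\Delta_{/\sim}|$ using the $\lambda$-type refinement (Lemma~\ref{geometric_lambda} together with Lemma~\ref{geometric_lambda_finite}), so $\mathcal{M}$ is genuinely finite-state.

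Thus the proof reduces to three lines: identify $\partial_h \Gamma$ with a subset of $R^{\omega}$ via the rigid coding; observe that the accepted language of $\mathcal{M}$ inside $(R \times R)^{\omega}$ is, by the recognition proposition, exactly the graph of the equivalence relation induced by $\pi_h$; invoke Definition~\ref{rational_subset} and the definition of rational equivalence relation to conclude.

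The real obstacle was not this final packaging but the two preparatory steps already completed: the gluing constant $\lambda$ from Theorem~\ref{Conj1}, which makes the state set $\Delta = \{(a,b) \in \mathcal{A}_n \times \mathcal{A}_n : \dG(a,b) \le \lambda\}$ a meaningful finite-ish object that captures $\pi_h$-equivalence at every level simultaneously, and the quotient by $\sim$ combined with the $\lambda$-type bound, which cuts the state set down to genuinely finite cardinality while preserving determinism through the rigid structure. Once those are in place, the theorem is essentially a definition-chase.
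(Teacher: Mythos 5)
Your proposal is correct and matches the paper's intent exactly: the paper states Theorem~\ref{rational_gluing_main_thm} as a summary of the section, with the proof consisting precisely of the pieces you assemble — the rigid coding of horofunctions over $R$, the automaton $\mathcal{M}$ of Definition~\ref{gluing_automata_def}, the proposition showing $\mathcal{M}$ accepts exactly the pairs in the same $\pi_{h}$-fiber, the corollary giving determinism and the recognizer property, and the finiteness of $\Delta_{/\sim}$ via $\lambda$-types. Nothing is missing.
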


As a final remark, we point out that since two atoms in a $\lambda$-neighborhood may have the same rigid type, we cannot conclude that the gluing relation is semi-Markovian as for other tree structures on the Gromov boundary. But something more can be said about $\lambda$-types.

\begin{prop}
The $\lambda$-types are a self-similar structure for the tree of atoms.
\end{prop}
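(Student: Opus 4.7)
The plan is to verify three things: that $\lambda$-types form a finite partition, that any two atoms of the same $\lambda$-type admit rooted-tree isomorphisms between their descendant subtrees that preserve $\lambda$-types, and that the resulting family of isomorphisms is closed under inverse, composition, and restriction. Finiteness is immediate from Lemma~\ref{geometric_lambda} combined with Lemma~\ref{geometric_lambda_finite}. For the remaining points, my candidate isomorphisms are precisely the maps induced by the elements $g \in G$ that witness $\lambda$-equivalences between atoms.

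Fix $a,b$ of the same $\lambda$-type and let $g \in G$ realize the $\lambda$-equivalence. Since $g$ induces a morphism on $a$ (and in fact on every element of $\mathcal{A}_{\lambda}(a)$), it sends descendants of $a$ bijectively onto descendants of $b$, giving a rooted-tree isomorphism $\mathcal{A}(\Gamma)_{a} \to \mathcal{A}(\Gamma)_{b}$. The real work is to show this isomorphism respects $\lambda$-types at every deeper level, i.e., that for every descendant $a'$ of $a$ the same $g$ realizes a $\lambda$-equivalence between $a'$ and $a'g$. The key geometric observation is the following: if $a'_{\lambda} \in \mathcal{A}_{\lambda}(a')$, pick witnesses $x \in a'$ and $x' \in a'_{\lambda}$ with $\dG(x,x') \leq \lambda$, and let $\tilde a$ be the ancestor at the level of $a$ of the atom containing $x'$. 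Then $\dG(a,\tilde a) \leq \dG(x,x') \leq \lambda$, so $\tilde a \in \mathcal{A}_{\lambda}(a)$. Consequently every atom in $\mathcal{A}_{\lambda}(a')$ is a descendant of some atom of $\mathcal{A}_{\lambda}(a)$ on which $g$ already acts as a morphism, and restriction of a morphism to a descendant atom is a morphism. The symmetric argument using $g^{-1}$ yields $\mathcal{A}_{\lambda}(a')g = \mathcal{A}_{\lambda}(a'g)$, and $g$ is a morphism on each member; hence $a'$ and $a'g$ share the same $\lambda$-type.

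With this descent property the closure axioms are routine: if $g_{1}$ realizes a $\lambda$-equivalence $a \to b$ and $g_{2}$ realizes $b \to c$, then $g_{1}g_{2}$ realizes $a \to c$, because isometries compose and the conditions defining geometric $\lambda$-equivalence transport through the composition; inversion is symmetric; and closure under restriction is precisely the descent property just established. The main obstacle I expect is the descent step itself, as one must ensure that no $\lambda$-neighbor of a descendant can fall outside the reach of the data transported by $g$ at the original level. Everything downstream is essentially formal, relying on the isometric action of $G$ on $\Gamma$ to correctly move proximal sets, cones and the functions $\overline{d}_{a_{\lambda}}$ between corresponding atoms.
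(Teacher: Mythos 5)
Your proposal is correct and follows essentially the same route as the paper: the isomorphisms are the group elements witnessing the $\lambda$-equivalences, and the crux is that such a $g$, being an isometry and a morphism on every atom of the $\lambda$-neighborhood, carries $\lambda$-neighborhoods of descendants to $\lambda$-neighborhoods of their images. Your explicit ``descent'' argument (every $\lambda$-neighbor of a descendant sits inside a $\lambda$-neighbor of the ancestor, by monotonicity of $\dG$ along levels) just spells out the step the paper states in one line.
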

\begin{proof}
The claim follows easily from the fact that $\lambda$-types are finite and the following argument: the element $g \in G$ that maps one $\lambda$-neighborhood into another is an isometry and it is a morphism on each atom of the neighborhood. Hence, it preserves the $\lambda$-neighborhoods of chidren and the types of the atoms contained in them.
\end{proof}

\section{Example} \label{8}

In this section, we will deal with the group

$$\langle g_{1},g_{2},g_{3},g_{4} \mid g_{i}^{2}, \ \  (g_{i}g_{j})^{6}, \ \ i \in \{ 1,2,3,4\}, \ j>i \rangle.$$

Geometrically, we can represent each of the relations with an hexagon of edge $g_{i}g_{j}$ (see Figure~\ref{generator_relator_fractal}.(\subref{Relator_fractal})). Since we have four generators and all of them are involutions, we can imagine the situation depicted in Figure~\ref{generator_relator_fractal}.(\subref{generators_fractal}), that is the vertex $g_{i}$ coincide with its inverse and we have six “hexagonal'' relations.
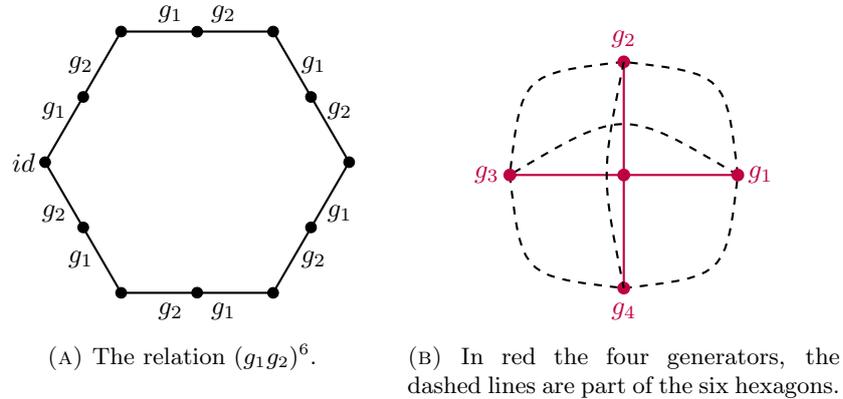
\begin{figure}
    \centering
\begin{subfigure}[t]{0.45\textwidth}
\centering
    \begin{tikzpicture}
    \draw[thick] (180:2) node[left] {$id$} ;
    \foreach \a in {160,220,280,340,100,40} { 
\draw[thick] (\a:2) node[] {$g_{1}$} ;}
\foreach \a in {320,20,80,140,200,260} { 
\draw[] (\a:2) node[] {$g_{2}$} ;}

        \draw[thick] (2,0)
  \foreach \x in {0,60,120,180,240,300,360}
    { --  (\x:2)};

     \foreach \a in {0,60,120,180,240,300} { 
\draw[fill,color=black] (\a:2cm) circle (2pt);} 

    \begin{scope}[rotate=30]
        \foreach \a in {0,60,120,180,240,300} { 
\draw[fill,color=black] (\a:1.73cm) circle (2pt);

}
    \end{scope}
    \end{tikzpicture}
    \subcaption{The relation $(g_{1}g_{2})^{6}$.}
    \label{Relator_fractal}
\end{subfigure}
\begin{subfigure}[t]{0.45\textwidth}
\centering
    \begin{tikzpicture}
     \draw[thick,fill,purple] (0:0) circle (2pt);
    \draw[thick,black, dashed] 
  \foreach \x in {0,90,180,270}
    { (\x:1.5) .. controls (\x+45:1.9)  ..   (\x+90:1.5)};
    \draw[thick, black,dashed]
    (0:1.5) .. controls (90:0.9) ..  (180:1.5);
    
    \draw[thick,fill,purple] 
    \foreach \x in {0,90,180,270}
    { (0,0) --   (\x:1.5) circle (2pt)};
    \draw[thick,fill,purple]   (0:1.8) node {$g_{1}$};
    \draw[thick,fill,purple]   (90:1.8) node {$g_{2}$};
    \draw[thick,fill,purple]   (180:1.8) node {$g_{3}$};
    \draw[thick,fill,purple]   (270:1.8) node {$g_{4}$};
    \draw[thick, black,dashed]
    {(90:1.5) .. controls  (90+90:0.3) ..  (90+180:1.5)};
    \end{tikzpicture}
    \subcaption{In red the four generators, the dashed lines are part of the six hexagons.}
    \label{generators_fractal}
\end{subfigure}
\caption{Generators and relations of the group.}
\label{generator_relator_fractal}
\end{figure}

Now, we can consider two types of atoms at the first level: a wide one (in Figure~\ref{atoms_fractal} there are two of them, outlined by blue lines) and a narrow one (there is one of them in Figure~\ref{atoms_fractal} and it is the green one). The narrow type, unlike the wide, will not split for the next four levels, this is due to the fact that that we have to wait until $B_{6}(id)$ to intersect the atom. So, the only child has a different type at each step, but all of them are homeomorphic.\\
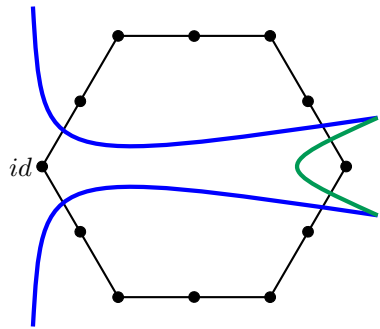
\begin{figure}
    \centering
    \begin{tikzpicture}
    \draw[thick] (180:2) node[left] {$id$} ;

        \draw[thick] (2,0)
  \foreach \x in {0,60,120,180,240,300,360}
    { --  (\x:2)};

     \foreach \a in {0,60,120,180,240,300} { 
\draw[fill,color=black] (\a:2cm) circle (2pt);} 

\draw[ultra thick,blue] (135:3) .. controls (180:2) .. (15:2.5);

\draw[ultra thick,blue] (225:3) .. controls (180:2) .. (-15:2.5);

\draw[ultra thick,ForestGreen] (-15:2.5) .. controls (0:1) .. (15:2.5);

    \begin{scope}[rotate=30]
        \foreach \a in {0,60,120,180,240,300} { 
\draw[fill,color=black] (\a:1.73cm) circle (2pt);

}
    \end{scope}
    \end{tikzpicture}
    \caption{The two types of atom at the first level seen on a single relation $(g_{i}g_{j})^{6}$.}
    \label{atoms_fractal}
    \end{figure}

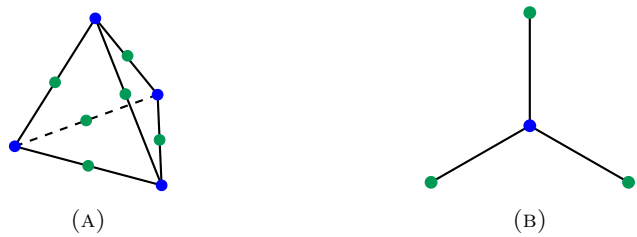
\begin{figure}
\centering

\begin{subfigure}[t]{0.45\textwidth}
\centering
    \begin{tikzpicture}
        \draw[thick] (0,0) -- (-15:2);
        \draw[thick,dashed] (0,0) -- (20:2);
        \draw[thick] (20:2) -- (-15:2);
        \draw[thick] (0,0) -- (58:2);
        \draw[thick] (-15:2) --  (58:2);
        \draw[thick] (20:2) -- (58:2);

        \draw[fill,blue] (0,0) circle (2pt);
        \draw[fill,blue] (-15:2) circle (2pt);
        \draw[fill,blue] (20:2) circle (2pt);
        \draw[fill,blue] (58:2) circle (2pt);
        \draw[fill,ForestGreen] (-15:1) circle (2pt);
        \draw[fill,ForestGreen] (20:1) circle (2pt);
        \draw[fill,ForestGreen] (58:1) circle (2pt);
        \draw[fill,ForestGreen] (-15:2)++(111.5:1.3) circle (2pt);
        \draw[fill,ForestGreen] (20:2)++(127.5:0.65) circle (2pt);
        \draw[fill,ForestGreen] (20:2)++(-87.5:0.6) circle (2pt);
        
    \end{tikzpicture}
    \subcaption{}
    \label{HG_fractal}
\end{subfigure}
\begin{subfigure}[t]{0.45\textwidth}
\centering
    \begin{tikzpicture}

    \draw[thick,fill,black] 
    \foreach \x in {-30,90,210}
    { (0,0) --   (\x:1.5) };

\foreach \x in {-30,90,210}
{
\draw[thick,fill,ForestGreen]  (\x:1.5) node {} circle (2pt);}

    \draw[thick,fill,blue] (0:0) circle (2pt);

    \end{tikzpicture}
    \subcaption{}
    \label{HG_portion}
\end{subfigure}
    \caption[The first horizontal graph]{The first horizontal graph and a portion of it.}
    \label{HG_fractal_portion}
\end{figure}
To construct the horizontal graph (again here we suppose $\lambda_{e}=1$ for the sake simplicity, as in the previous example), we have to imagine the four wide type atoms as vertices of a tetrahedron, while the six narrow type atoms are the middle point of the edges (see Figure~\ref{HG_fractal_portion}.(\subref{HG_fractal})).\\

Due to the self-similar nature of the atoms, we can focus only to one portion of the tetrahedron: we consider a wide atom and its three adjacent narrow atoms (see Figure~\ref{HG_fractal_portion}.(\subref{HG_portion})). Then we just focus on this portion and it can be seen that the sequence of horizontal graphs is the one depicted in Figure~\ref{fractal_approx}. In particular, in these four steps the narrow type atom remains a vertex, while the expansion is made by the wide type atom. This process leads to an Apollonian gasket.

\begin{figure}
    \begin{tikzpicture}

    \draw[thick,fill,black] 
    \foreach \x in {-30,90,210}
    { (0,0) --   (\x:1.5) };

\begin{scope}[xshift=200]
\draw[thick] (0,1)
  \foreach \x in {90,150,210,270,330,30,90}
    { --  (\x:1)};
    \foreach \a in {90,210,330}
    \draw[thick] (\a:1) -- (\a:3);
    \foreach \a in {90,210,330}
    \draw[thick,fill]  (\a:2) node {} circle (1pt);
\end{scope}

\begin{scope}[yshift=-200]
\begin{scope}[xshift=37]
\draw[thick] (0,1)
  \foreach \x in {90,150,210,270,330,30,90}
    { --  (\x:.5)};
    \foreach \a in {90,210}
    \draw[thick] (\a:.5) -- (\a:1.5);
    \foreach \a in {90,210,330}
    \draw[thick,fill]  (\a:1) node {} circle (1pt);
    \draw[thick] (330:.5) -- (330:2);
    \draw[thick,fill]  (330:1.5) node {} circle (1pt);
\end{scope}
\begin{scope}[xshift=-37]
\draw[thick] (0,1)
  \foreach \x in {90,150,210,270,330,30,90}
    { --  (\x:.5)};
    \foreach \a in {90,330}
    \draw[thick] (\a:.5) -- (\a:1.5);
    \foreach \a in {90,210,330}
    \draw[thick,fill]  (\a:1) node {} circle (1pt);
     \draw[thick] (210:.5) -- (210:2);
    \draw[thick,fill]  (210:1.5) node {} circle (1pt);
\end{scope}
\begin{scope}[yshift=64]
\draw[thick] (0,1)
  \foreach \x in {90,150,210,270,330,30,90}
    { --  (\x:.5)};
    \foreach \a in {210,330}
    \draw[thick] (\a:.5) -- (\a:1.5);
    \foreach \a in {90,210,330}
    \draw[thick,fill]  (\a:1) node {} circle (1pt);
     \draw[thick] (90:.5) -- (90:2);
    \draw[thick,fill]  (90:1.5) node {} circle (1pt);
\end{scope}
\end{scope}

\begin{scope}[xshift=200,yshift=-210]
\begin{scope}[xshift=24.5]
\begin{scope}[xshift=37]
\draw[thick] (0,1)
  \foreach \x in {90,150,210,270,330,30,90}
    { --  (\x:.25)};
    \foreach \a in {90,210,330}
    \draw[thick] (\a:.25) -- (\a:.75);
    \foreach \a in {90,210,330}
    \draw[thick,fill]  (\a:.5) node {} circle (1pt);
\end{scope}
\begin{scope}[xshift=0]
\draw[thick] (0,1)
  \foreach \x in {90,150,210,270,330,30,90}
    { --  (\x:.25)};
    \foreach \a in {90,330}
    \draw[thick] (\a:.25) -- (\a:.75);
    \foreach \a in {90,210,330}
    \draw[thick,fill]  (\a:.5) node {} circle (1pt);
     \draw[thick] (210:.25) -- (210:1);
    \draw[thick,fill]  (210:.75) node {} circle (1pt);
\end{scope}
\begin{scope}[yshift=39,xshift=18.5]
\draw[thick] (0,1)
  \foreach \x in {90,150,210,270,330,30,90}
    { --  (\x:.25)};
    \foreach \a in {210,330}
    \draw[thick] (\a:.25) -- (\a:.75);
    \foreach \a in {90,210,330}
    \draw[thick,fill]  (\a:.5) node {} circle (1pt);
     \draw[thick] (90:.25) -- (90:1);
    \draw[thick,fill]  (90:.75) node {} circle (1pt);
\end{scope}
\end{scope}

\begin{scope}[xshift=-61.5]
\begin{scope}[xshift=37]
\draw[thick] (0,1)
  \foreach \x in {90,150,210,270,330,30,90}
    { --  (\x:.25)};
    \foreach \a in {90,210}
    \draw[thick] (\a:.25) -- (\a:.75);
    \foreach \a in {90,210,330}
    \draw[thick,fill]  (\a:.5) node {} circle (1pt);
    \draw[thick] (330:.25) -- (330:1);
    \draw[thick,fill]  (330:.75) node {} circle (1pt);
\end{scope}
\begin{scope}[xshift=0]
\draw[thick] (0,1)
  \foreach \x in {90,150,210,270,330,30,90}
    { --  (\x:.25)};
    \foreach \a in {90,210,330}
    \draw[thick] (\a:.25) -- (\a:.75);
    \foreach \a in {90,210,330}
    \draw[thick,fill]  (\a:.5) node {} circle (1pt);
\end{scope}
\begin{scope}[yshift=39,xshift=18.5]
\draw[thick] (0,1)
  \foreach \x in {90,150,210,270,330,30,90}
    { --  (\x:.25)};
    \foreach \a in {210,330}
    \draw[thick] (\a:.25) -- (\a:.75);
    \foreach \a in {90,210,330}
    \draw[thick,fill]  (\a:.5) node {} circle (1pt);
     \draw[thick] (90:.25) -- (90:1);
    \draw[thick,fill]  (90:.75) node {} circle (1pt);
\end{scope}
\end{scope}

\begin{scope}[xshift=-18.5,yshift=82]
\begin{scope}[xshift=37]
\draw[thick] (0,1)
  \foreach \x in {90,150,210,270,330,30,90}
    { --  (\x:.25)};
    \foreach \a in {90,210}
    \draw[thick] (\a:.25) -- (\a:.75);
    \foreach \a in {90,210,330}
    \draw[thick,fill]  (\a:.5) node {} circle (1pt);
    \draw[thick] (330:.25) -- (330:1);
    \draw[thick,fill]  (330:.75) node {} circle (1pt);
\end{scope}
\begin{scope}[xshift=0]
\draw[thick] (0,1)
  \foreach \x in {90,150,210,270,330,30,90}
    { --  (\x:.25)};
    \foreach \a in {90,330}
    \draw[thick] (\a:.25) -- (\a:.75);
    \foreach \a in {90,210,330}
    \draw[thick,fill]  (\a:.5) node {} circle (1pt);
     \draw[thick] (210:.25) -- (210:1);
    \draw[thick,fill]  (210:.75) node {} circle (1pt);
\end{scope}
\begin{scope}[yshift=39,xshift=18.5]
\draw[thick] (0,1)
  \foreach \x in {90,150,210,270,330,30,90}
    { --  (\x:.25)};
    \foreach \a in {90,210,330}
    \draw[thick] (\a:.25) -- (\a:.75);
    \foreach \a in {90,210,330}
    \draw[thick,fill]  (\a:.5) node {} circle (1pt);
\end{scope}
\end{scope}
\end{scope}
    \end{tikzpicture}
    \caption{The first four portions of horizontal graphs.}
\label{fractal_approx}
\end{figure}
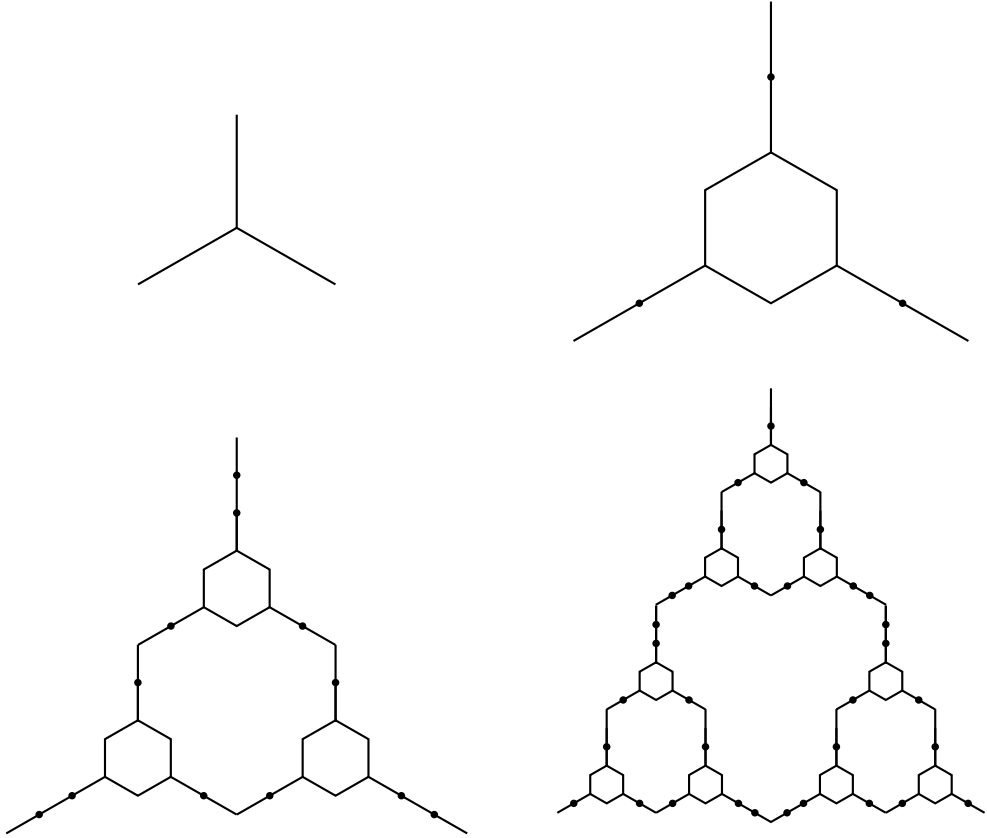

\subsection{Gluing automata} We want to discuss the construction of the gluing automaton for this example. We do not exhibit the full automaton, but we provide a sketch of how to build it. \

We start by saying that the type automaton is depicted in Figure~\ref{fractal_automaton}, where $\lambda$ is set to $1$ as for the other examples and we highlight the blue and green states corresponding respectively to the wide and the narrow type at the first level (the initial state is $A$ as always). Names for the other states are given by following a geometric intuition that we prefer to omit since it is not useful for for this description.\

We proceed by levels and we only show the first one. We need to list all the elements in $\Delta$ at the current level (we can look at the tetrahedron in Figure~\ref{HG_fractal_portion}.(\subref{HG_fractal})). So, in this case we have
\begin{center}
\begin{tabular}{ c|cccc } 
  & $w_{1}$ & $w_{2}$ & $w_{3}$ & $w_{4}$  \\ 
  \hline
 $n_{1}$ & 1 & 0 & 0 & 1\\ 
 $n_{2}$& 1 & 0 & 1 & 0\\
  $n_{3}$& 1 & 1 & 0 & 0\\ 
   $n_{4}$ & 0 & 1 & 0 & 1\\ 
    $n_{5}$ & 0 & 1 & 1 & 0\\ 
    $n_{6}$ & 0 & 0 & 1 & 1\\ 
\end{tabular}
\end{center}
and note that each pair need to be counted twice $(w_{i},n_{j})$ and $(n_{j},w_{i})$.\

In order to compute the states of $\mathcal{M}$, we should provide elements of the group that send pairs to pairs. Note that this does not mean that whenever we have a pair of atoms $(a,b)$ and another one $(a',b')$ such that $a'$ shares the same type with $a$ and $b'$ with $b$, there exists such element. Once completed the first level, we pass to the second having in mind that now we already have some states in $\mathcal{M}$ and hence new elements in $\Delta$ may be in the same equivalence class of an element of the previous level. Moreover, we have to add all the possible transitions according to the rigid structure. The procedure ends when we are sure that all possible rigid types have been processed and this can be done by looking at the type automaton. 

\begin{figure}
\resizebox{0.9\textwidth}{!}{
\begin{tikzpicture}[node distance=5cm,on grid,auto, thick] 
\tikzset{%
    in place/.style={
      auto=false,
      fill=white,
      inner sep=2pt,
    },
  }
   \node[state,accepting] at (-4,12) (A)  [fill=purple,draw=none,circular drop shadow,text=white] {$A$}; 
   \node[state,accepting] at (-4,9) (W)  [fill=blue,draw=none,drop shadow,text=white] {$W$}; 
   \node[state,accepting] at (2,12) (N)  [fill=ForestGreen,draw=none,drop shadow,text=white] {$N$}; 
   \node[state,accepting] at (3, 14) (N1)  [fill=purple,draw=none,drop shadow,text=white]{$N1$}; 
   \node[state] at (5,14) (N2) [fill=purple,draw=none,circular drop shadow,text=white] {$N2$};
   \node[state]  at (6,12) (N3) [fill=purple,draw=none,circular drop shadow,text=white]  {$N3$}; 
   \node[state] at (5,10) (N5)   [fill=purple,draw=none,circular drop shadow,text=white]  {$N5$};
   \node[state]  at (4,12) (N4) [fill=purple,draw=none,circular drop shadow,text=white] {$N4$}; 
   \node[state,accepting] at (3,10) (Nt)  [fill=purple,draw=none,circular drop shadow,text=white]   {$\tilde{N}$};
   \node[state,accepting] at (-4,6) (W')[fill=purple,draw=none,circular drop shadow,text=white]   {$W'$};
   \node[state,accepting] at (-4,3) (W'')[fill=purple,draw=none,circular drop shadow,text=white]   {$W''$};
   \node[state,accepting] at (-1,6) (N')[fill=purple,draw=none,circular drop shadow,text=white]   {$N'$};
    \node[state,accepting] at (2,6) (N'1)[fill=purple,draw=none,circular drop shadow,text=white]   {$N'1$};
    \node[state,accepting] at (-1,3) (N'')[fill=purple,draw=none,circular drop shadow,text=white]   {$N''$};
  
    \path[->] 
    (A) edge  node[in place] {$4$} (W)
       edge  node [in place]{$6$} (N) 

     (N)  edge node[in place] {$1$} (N1)
    (N1)  edge node[in place] {$1$} (N2)
    (N2)  edge node[in place] {$1$} (N3)
    (N3)  edge node[in place] {$1$} (N4)
    (N4)  edge node[in place] {$1$} (N5)
    (N5)  edge node[in place] {$1$} (Nt)
        edge [loop right] node[in place] {$4$} ()
    (N'')  edge node[in place] {$2$} (W'')
        
    (W')  edge[loop left]  node[in place] {$2$} ()
           edge  node[in place] {$2$} (N')
            edge  node[pos=.25,in place] {$1$} (N'')
           edge  node[in place] {$1$} (W'')
           edge  node[in place] {$3$} (N)
           
      (W)  edge  node[in place] {$3$} (W')
          edge  node[in place] {$3$} (N)
          edge  node[in place] {$3$} (N')

       (W'')  edge  node[pos=.25,in place] {$2$} (N')
             edge[bend left=60]  node[in place] {$2$} (W)
         
     (N') edge  node[in place] {$1$} (N'1)
     (N'1) edge  node[in place] {$1$} (N'')
     (Nt) edge  node[in place] {$2$} (W')
          edge  node[in place] {$4$} (N');

 \draw[->]
    (N'') to (3,3) to node[in place] {$2$} (7,7) to (7,14) to (N2);

\draw[->]
    (W'') to (-6,3) to (-6,13) to node[in place] {$1$} (2,15) to (5,15) to (N2);
\end{tikzpicture}
}
\caption[The type automaton for the fractal example]{The type automaton: labels denote the number of arrows of that kind, the blue state and the letter $W$ mean the wide type and the same holds for the green state and the letter $N$ (narrow). }
\label{fractal_automaton}
\end{figure}

\section*{Acknowledgements}
This work is part of the author’s Ph.D. thesis for the joint Ph.D. program in Mathematics of University of Milano-Bicocca, University of Pavia and INdAM. The author would like to thank his advisors James Belk and Francesco Matucci for proposing the topic to him and for all of their insightful guidance and suggestions. The author gratefully acknowledges Collin Bleak and Matthew C. B. Zaremsky
for helpful corrections and comments. The author is also grateful to James Belk, Collin Bleak and Francesco Matucci for kindly providing the images of the atoms of the hyperbolic disk tiling (Figure~\ref{atoms_tiling}.(\subref{atoms_tiling_A}) and Figure~\ref{4-5_tiling}) from their work \cite{BBM}.

\bibliographystyle{alphaurl}
\bibliography{references}

\end{document}